\pgfplotsset{compat=1.18}
\definecolor{uuuuuu}{rgb}{0.27,0.27,0.27}
\definecolor{sqsqsq}{rgb}{0.1255,0.1255,0.1255}
\newtheorem{definition}{Definition} [section]
\newtheorem{theorem}[definition]{Theorem}
\newtheorem{lemma}[definition]{Lemma}
\newtheorem{proposition}[definition]{Proposition}
\newtheorem{corollary}[definition]{Corollary}
\newtheorem{conjecture}[definition]{Conjecture}
\newtheorem{claim}[definition]{Claim}
\newtheorem{problem}[definition]{Problem}
\newtheorem{fact}[definition]{Fact}
\newcommand{\norm}[1]{\left\lVert#1\right\rVert}
\tikzset{unlabeled_vertex/.style={inner sep=1.7pt, outer sep=0pt, circle, fill}} 
\tikzset{labeled_vertex/.style={inner sep=2.2pt, outer sep=0pt, rectangle, fill=yellow, draw=black}} 
\tikzset{edge_color0/.style={color=black,line width=1.2pt,opacity=0.5}} 
\tikzset{edge_color1/.style={color=red,  line width=1.2pt,opacity=1}} 
\tikzset{edge_color2/.style={color=blue, line width=1.2pt,opacity=1}} 
\tikzset{edge_color3/.style={color=green,line width=1.2pt}} 
\tikzset{edge_color4/.style={color=red,  line width=1.2pt,dotted}} 
\tikzset{edge_color5/.style={color=blue, line width=1.2pt,dotted}} 
\tikzset{edge_color6/.style={color=green, line width=1.2pt,dotted}} 
\tikzset{edge_color7/.style={color=orange, line width=1.2pt}} 
\tikzset{edge_color8/.style={color=gray, line width=1.2pt}} 
\tikzset{edge_thin/.style={color=black}} 
\tikzset{edge_hidden/.style={color=black,dotted,opacity=0}} 
\tikzset{vertex_color1/.style={inner sep=1.7pt, outer sep=0pt, draw, circle, fill=red}} 
\tikzset{vertex_color2/.style={inner sep=1.7pt, outer sep=0pt, draw, circle, fill=blue}} 
\tikzset{vertex_color3/.style={inner sep=1.7pt, outer sep=0pt, draw, circle, fill=green}} 
\tikzset{labeled_vertex_color1/.style={inner sep=2.2pt, outer sep=0pt, draw, rectangle, fill=red}} 
\tikzset{labeled_vertex_color2/.style={inner sep=2.2pt, outer sep=0pt, draw, rectangle, fill=blue}} 
\tikzset{labeled_vertex_color3/.style={inner sep=2.2pt, outer sep=0pt, draw, rectangle, fill=green}}
\tikzset{
vtx/.style={inner sep=1.1pt, outer sep=0pt, circle, fill,draw}, 
vtxl/.style={inner sep=1.1pt, outer sep=0pt, rectangle, fill=yellow,draw=black}, 
hyperedge/.style={fill=pink,opacity=0.5,draw=black}, 
}
\begin{document}
\title{\bf\Large Nondegenerate Tur\'{a}n problems under $(t,p)$-norms}
\date{\today}
\author[1,2]{Wanfang Chen\thanks{Research was supported by China Scholarship Council (CSC)~No.~202306140125. \\ Email: \texttt{52215500039@stu.ecnu.edu.cn}}}
\author[3]{Daniel I\v{l}kovi\v{c}\thanks{Research was supported by the MUNI Award in Science and Humanities (MUNI/I/1677/2018) of the Grant Agency of Masaryk University. Email: \texttt{493343@mail.muni.cz}}}
\author[1]{Jared Le\'{o}n\thanks{Research was supported by the Warwick Mathematics Institute Centre for Doctoral Training. \\ Email: \texttt{jared.leon.m@gmail.com}}}
\author[1]{Xizhi Liu\thanks{Research was supported by ERC Advanced Grant 101020255. Email: \texttt{xizhi.liu.ac@gmail.com}}}
\author[1]{Oleg Pikhurko\thanks{Research was supported by ERC Advanced Grant 101020255. Email: \texttt{o.pikhurko@warwick.ac.uk}}}
\affil[1]{Mathematics Institute and DIMAP,
            University of Warwick, 
            Coventry, CV4 7AL, UK}
\affil[2]{School of Mathematical Sciences and               Shanghai Key Laboratory of PMMP, 
            East China Normal University, 
            Shanghai, 200241, China}
\affil[3]{Faculty of Informatics, 
            Masaryk University, 
            Botanick\'a 68A, 602 00 Brno, 
            Czech Republic.}
\maketitle
\begin{abstract}
Given integers $r > t \ge 1$ and a real number $p > 0$, the $(t,p)$-norm $\norm{\mathcal{H}}_{t,p}$ of an $r$-graph $\mathcal{H}$ is the sum of the $p$-th power of the degrees $d_{\mathcal{H}}(T)$ over all $t$-subsets $T \subset V(\mathcal{H})$. 
We conduct a systematic study of the Tur\'{a}n-type problem of determining $\mathrm{ex}_{t,p}(n,\mathcal{F})$, which is the maximum of $\norm{\mathcal{H}}_{t,p}$ over all $n$-vertex $\mathcal{F}$-free $r$-graphs $\mathcal{H}$. 

We establish several basic properties for the $(t,p)$-norm of $r$-graphs, enabling us to derive general theorems from the recently established framework in~\cite{CL24} that are useful for determining $\mathrm{ex}_{t,p}(n,\mathcal{F})$ and proving the corresponding stability. 
 
We determine the asymptotic value of $\mathrm{ex}_{t,p}(n,H_{F}^{r})$ for all feasible combinations of $(r,t,p)$ and for every graph $F$ with chromatic number greater than $r$, where $H_{F}^{r}$ represents the expansion of $F$. 
In the case where $F$ is edge-critical and $p \ge 1$, we establish strong stability and determine the exact value of $\mathrm{ex}_{t,p}(n,H_{F}^{r})$ for all sufficiently large $n$. 
These results extend the seminal theorems of Erd\H{o}s--Stone--Simonovits, Andr\'{a}sfai--Erd\H{o}s--S\'{o}s, Erd\H{o}s--Simonovits, and a classical theorem of Mubayi. 

For the $3$-uniform generalized triangle $F_5$, we determine the exact value of $\mathrm{ex}_{2,p}(n,F_5)$ for all $p \ge 1$ and its asymptotic value for all $p \in [1/2, 1]\cup \{k^{-1} \colon k \in 6\mathbb{N}^{+}+\{0,2\}\}$. 
This extends old theorems of Bollob\'{a}s, Frankl--F\"{u}redi, and a recent result of Balogh--Clemen--Lidick\'{y}. 
Our proofs utilize results on the graph inducibility problem, Steiner triple systems, and the feasible region problem introduced by Liu--Mubayi.

Our results reveal two interesting phenomena:
\begin{itemize}
    \item The extremal structure for $\mathrm{ex}_{t,p}(n,H_{F}^{r})$ remains consistent for all $p > 0$, whereas the number of extremal structures for $\mathrm{ex}_{t,p}(n,F_5)$ transitions from a single structure to infinitely many as $p$ approaches $0$.  
    \item Both problems exhibit degree-stability when $p \ge 1$, but not when $p <1$. 
\end{itemize}

\medskip

\textbf{Keywords:} hypergraphs, nondegenerate Tur\'{a}n problems, degree powers, stability, vertex-extendability. 


\end{abstract}
\section{Introduction}\label{SEC:Intorduction}
Given an integer $r\ge 2$, an \textbf{$r$-uniform hypergraph} (henceforth \textbf{$r$-graph}) $\mathcal{H}$ is a collection of $r$-subsets of some finite set $V$.
We identify a hypergraph $\mathcal{H}$ with its edge set and use $V(\mathcal{H})$ to denote its vertex set. 
The size of $V(\mathcal{H})$ is denoted by $v(\mathcal{H})$. 
Given an integer $i \in [r-1]$, 
the \textbf{$i$-th shadow} of $\mathcal{H}$ is 
\begin{align*}
    \partial_{i}\mathcal{H}
    \coloneqq \left\{e\in \binom{V(\mathcal{H})}{r-i} \colon \exists E\in \mathcal{H} \text{ such that } e\subset E\right\}. 
\end{align*}
The \textbf{link} of an $i$-set $T \subset V(\mathcal{H})$ is   
\begin{align*}
    L_{\mathcal{H}}(T)
    \coloneqq \left\{e \in \binom{V(\mathcal{H})}{r-i} \colon T \cup e \in \mathcal{H}\right\}. 
\end{align*}
The \textbf{degree} of $T$ in $\mathcal{H}$ is $d_{\mathcal{H}}(T) \coloneqq |L_{\mathcal{H}}(T)|$. 
For integers $n \ge r \ge 2$, let $K_{n}^{r}$ denote the complete $r$-graph on $n$ vertices. 
We will omit the superscript $r$ when $r=2$. 

Given a family $\mathcal{F}$ of $r$-graphs, we say $\mathcal{H}$ is \textbf{$\mathcal{F}$-free}
if it does not contain any member of $\mathcal{F}$ as a subgraph.
Studying the extremal properties of $\mathcal{F}$-free $r$-graphs is a central topic in Extremal Combinatorics. 
For example, determining the maximum number of edges in an $n$-vertex $\mathcal{F}$-free $r$-graph is the well-known Tur\'{a}n problem (see~\cite{Fur91,Sid95,Keevash11} for its history and related results), starting with the seminal work of Tur\'{a}n~\cite{TU41}. 
Another example is determining the maximum number of copies of a fixed $r$-graph $Q$ in an $n$-vertex $\mathcal{F}$-free $r$-graph, known as the generalized Tur\'{a}n problem,  which began with the seminal work of Erd\H{o}s~\cite{Erdos62} and was popularized by Alon--Shikhelman~\cite{AS16}. 

In this work, we consider the following Tur\'{a}n-type problem, which is a common generalization of the classical Tur\'{a}n problem and the generalized Tur\'{a}n problem for counting stars (see Section~\ref{SEC:Remark} for related discussions). 

Given integers $r > t \ge 1$ and a real number $p> 0$, the \textbf{$(t,p)$-norm}\footnote{We have slightly abused the use of notation here, as it is no longer a norm when $p < 1$.} of an $r$-graph $\mathcal{H}$ is
\begin{align*}
    \norm{\mathcal{H}}_{t,p}
    \coloneqq 
    \sum_{T\in \binom{V(\mathcal{H})}{t}} d_{\mathcal{H}}^{p}(T)
    = \sum_{T\in \partial_{r-t}\mathcal{H}} d_{\mathcal{H}}^{p}(T), 
\end{align*}
where $d_{\mathcal{H}}^{p}(T)\coloneqq \left(d_{\mathcal{H}}(T)\right)^{p}$. 
The \textbf{$(t,p)$-degree} of a vertex $v\in V(\mathcal{H})$ is 
\begin{align*}
    d_{\mathcal{H},t,p}(v)
    \coloneqq \norm{\mathcal{H}}_{t,p} - \norm{\mathcal{H}-v}_{t,p}, 
\end{align*}
where $\mathcal{H} - v$ denotes the $r$-graph obtained from $\mathcal{H}$ by removing $v$ and all edges containing $v$.
We use $\delta_{t,p}(\mathcal{H})$, $\Delta_{t,p}(\mathcal{H})$, and $d_{t,p}(\mathcal{H})$
to denote the \textbf{minimum, maximum}, and \textbf{average $(t,p)$-degree} of $\mathcal{H}$, respectively. 
Observe that $\norm{\mathcal{H}}_{t,1} = \binom{r}{t}\cdot |\mathcal{H}|$ and $\norm{\mathcal{H}}_{t,0} = |\partial_{r-t}\mathcal{H}|$. 

Given a family $\mathcal{F}$ of $r$-graphs, the \textbf{$(t,p)$-Tur\'{a}n number} of $\mathcal{F}$ is 
\begin{align*}
    \mathrm{ex}_{t,p}(n,\mathcal{F})
    \coloneqq \max\left\{\norm{\mathcal{H}}_{t,p} \colon \text{$v(\mathcal{H}) = n$ and $\mathcal{H}$ is $\mathcal{F}$-free}\right\},  
\end{align*}
and the \textbf{$(t,p)$-Tur\'{a}n density} (whose existence will be established in Proposition~\ref{PROP:limit-exist}) of $\mathcal{F}$ is   
\begin{align*}
    \pi_{t,p}(\mathcal{F})
    \coloneqq \lim_{n\to \infty}\frac{\mathrm{ex}_{t,p}(n,\mathcal{F})}{\binom{n}{t}(n-t)^{p(r-t)}}
    = \lim_{n\to \infty}\frac{t! \cdot \mathrm{ex}_{t,p}(n,\mathcal{F})}{n^{t+p(r-t)}}. 
\end{align*}
For convenience, we define the \textbf{$(t,p)$-extremal degree} of $\mathcal{F}$ as follows$\colon$ 
\begin{align*}
    \mathrm{exdeg}_{t,p}(n,\mathcal{F}) 
    \coloneqq 
    \frac{(t+p(r-t)) \cdot \mathrm{ex}_{t,p}(n,\mathcal{F})}{n}. 
\end{align*}
These notations are extensions of the well-known \textbf{Tur\'{a}n number} $\mathrm{ex}(n,\mathcal{F})$ and \textbf{Tur\'{a}n density} $\pi(\mathcal{F})$, since $\mathrm{ex}(n,\mathcal{F}) = \mathrm{ex}_{t,1}(n,\mathcal{F})/\binom{r}{t}$ and $\pi(\mathcal{F}) = \pi_{t,1}(\mathcal{F})/\binom{r}{t}$. 
We say that a family $\mathcal{F}$ is \textbf{$(t,p)$-nondegenerate} (resp. nondegenerate) if $\pi_{t,p}(\mathcal{F}) > 0$ (resp. $\pi(\mathcal{F}) > 0$).
Using a theorem of Erd\H{o}s~\cite{E64}, it is not hard to show that for every $r > t \ge 1$ and $p > 0$, a family $\mathcal{F}$ of $r$-graphs is $(t,p)$-nondegenerate iff it is nondegenerate. 

The study of $\mathrm{ex}_{1,p}(n,\mathcal{F})$ for graph families $\mathcal{F}$ was initiated by Caro--Yuster~\cite{CY00} who extended the seminal Tur\'{a}n Theorem~\cite{TU41} by determining $\mathrm{ex}_{1,p}(n,K_{\ell+1})$ for all $\ell \ge 2$ and $p > 1$. 
Numerous results concerning graphs were subsequently obtained by various researchers (see e.g.~\cite{BN00,PT05,Nik09,BN12,GLS15,Ger24}). 
The study of $\mathrm{ex}_{r-1,2}(n,\mathcal{F})$ for $r$-graph families $\mathcal{F}$ with $r \ge 3$ was initiated very recently by Balogh--Clemen--Lidick\'{y}~\cite{BCL22,BCL22b}. 
They determined the asymptotic values of $\mathrm{ex}_{2,2}(n,K_{4}^{3})$ and $\mathrm{ex}_{2,2}(n,K_{5}^{3})$ among many results, utilizing computer-assisted flag algebra computations~\cite{Raz07}. 
These results are particularly interesting given the notorious difficulty of determining $\mathrm{ex}(n,K_{\ell+1}^{3})$ for any $\ell \ge 3$, a problem raised by Tur\'{a}n~\cite{TU41}. 

In this work, we undertake a systematic study of  $\mathrm{ex}_{t,p}(n,\mathcal{F})$ for $r > t \ge 1$ and $p > 0$. 
We establish several basic properties concerning the $(t,p)$-norm of $r$-graphs (Section~\ref{SEC:General-property}), and adapt the recently established framework from~\cite{CL24} to $(t,p)$-norm Tur\'{a}n problems (Theorems~\ref{THM:Lp-general-a} and~\ref{THM:Lp-general-b}). 
We present applications of the general theorems to two classical examples: the generalized triangle $F_5$ (Section~\ref{SEC:intro-generalized-triangle}) and the expansion of graphs (Section~\ref{SEC:intro-expansion}). 
It is worth noting that there are several additional classes of hypergraphs, such as those considered in~\cite{LMR23unif}, to which our general theorems could potentially apply. 
However, these applications typically require extensive and nontrivial calculations (see Sections~\ref{SUBSEC:F5-large-STS} and~\ref{SUBSEC:proof-F5-inequalities} for example), which we did not undertake but leave for interested readers to explore.

\subsection{The generalized triangle}\label{SEC:intro-generalized-triangle}
The first nondegenerate hypergraph for which we know the Tur\'{a}n density is (perhaps) the ($3$-uniform) \textbf{generalized triangle} $F_5$, which is the $5$-vertex $3$-graph with edge set 
\begin{align*}
    \left\{ \{1,2,3\}, \{1,2,4\}, \{3,4,5\} \right\}. 
\end{align*}
The $3$-graph $F_5$ is a classical example in hypergraph Tur\'{a}n problems and has a rich research history (see e.g.~\cite{FF83,Sido87,KM04,KLM14,BIJ17,LIU19,LM21feasible,Liu20a,LMR23unif,HLLYZ23}), beginning with the seminal work of Bollob\'{a}s~\cite{BO74}.    

Using the stability method, Balogh--Clemen--Lidick\'{y}, building on the argument of Bollob\'{a}s, determined $\mathrm{ex}_{2}(n,F_5)$ for large $n$ in~\cite{BCL22b}. 
In the following theorems, we determine $\mathrm{ex}_{2,p}(n,F_5)$ for all real numbers $p \ge 1$ when $n$ is large, and determine $\pi_{2,p}(F_5)$ for every $p \in [1/2,1] \cup \{k^{-1} \colon k \in 6\mathbb{N}^{+}+\{0,2\}\}$. 
Our proof for $p \ge 1$ is based on the recently established framework in~\cite{CL24}, which differs from the approach used by Balogh--Clemen--Lidick\'{y}. Additionally, our result for the case where $p =2$ strengthens the result of Balogh--Clemen--Lidick\'{y}. 
The case where $p \in (0,1)$ is particularly interesting and seems quite challenging to fully resolve. 
The extremal constructions in this case are closely related to Steiner triple systems, and the proof uses results from graph inducibility problems and the feasible region problem introduced in~\cite{LM21feasible}.

\begin{theorem}\label{THM:Lp-F5-p-large}
    For every real number $p \ge 1$ there exist $\varepsilon >0$ and $N_0>0$ such that the following statements hold for every integer $n \ge N_{0}$. 
    \begin{enumerate}[label=(\roman*)]
        \item Every $n$-vertex $F_5$-free $3$-graph with $\delta_{2,p}(\mathcal{H}) \ge (1-\varepsilon)\cdot \mathrm{exdeg}_{2,p}(n,F_5)$ is $3$-partite. 
        \item $\mathrm{ex}_{2,p}(n,F_5) = \norm{\mathcal{G}}_{2,p}$ for some $n$-vertex complete $3$-partite $3$-graph $\mathcal{G}$.  
    \end{enumerate} 
\end{theorem}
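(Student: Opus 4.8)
The plan is to apply the general framework from \cite{CL24}, adapted to $(t,p)$-norms via Theorems \ref{THM:Lp-general-a} and \ref{THM:Lp-general-b}, to the generalized triangle $F_5$ with $r=3$, $t=2$. The three ingredients such a framework typically demands are: (1) the correct asymptotic value of $\mathrm{ex}_{2,p}(n,F_5)$, i.e. a determination of $\pi_{2,p}(F_5)$, together with identification of the (asymptotically) extremal configuration; (2) a stability statement saying every near-extremal $F_5$-free $3$-graph is close in edit distance to the extremal configuration; and (3) a "vertex-extendability" / degree-stability property that upgrades approximate stability to the exact structural conclusion. For $F_5$, the conjectured extremal object is the balanced complete $3$-partite $3$-graph (the construction underlying Bollob\'as's and Frankl--F\"uredi's theorems), so the target density is the $(2,p)$-norm of the balanced complete $3$-partite $3$-graph on $n$ vertices; one first computes this value exactly as a function of $p$ and shows, via a convexity/optimization argument over part sizes, that among complete $3$-partite $3$-graphs the balanced one maximizes $\norm{\cdot}_{2,p}$ for $p\ge 1$ (this gives part (ii) modulo knowing the global maximum is $3$-partite).

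First I would establish $\pi_{2,p}(F_5)$ and approximate stability. Here I expect to reuse the classical fact that $F_5$-freeness forces the link of every pair to behave like an independent-set structure: concretely, if $xy\in\partial\mathcal H$ then $L_{\mathcal H}(xy)$ is an independent set in the "link graph" sense that no two vertices of $L_{\mathcal H}(xy)$ are joined through a common third neighbor forming the third edge of $F_5$. Bollob\'as's shifting/stability argument (as sharpened by Balogh--Clemen--Lidick\'y \cite{BCL22b}) then yields that extremal $F_5$-free $3$-graphs are asymptotically complete $3$-partite. The new content for $p\ge1$ is that the $(t,p)$-norm is a convex function of the degree sequence, so moving mass toward the $3$-partite profile can only increase $\norm{\mathcal H}_{2,p}$; combined with the edge-count stability this should upgrade to $(2,p)$-norm stability: any $\mathcal H$ with $\norm{\mathcal H}_{2,p}\ge(1-o(1))\mathrm{ex}_{2,p}(n,F_5)$ is $o(n^3)$-close to balanced complete $3$-partite. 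Then I would verify the hypotheses of the $(t,p)$-adapted framework theorem — principally a "local" statement that in a near-extremal $F_5$-free $3$-graph, any vertex of near-maximum $(2,p)$-degree sees essentially two of the three parts — which is exactly the degree-stability / vertex-extendability input. Feeding this into Theorem \ref{THM:Lp-general-b} yields part (i): the minimum-$(2,p)$-degree threshold $(1-\varepsilon)\mathrm{exdeg}_{2,p}(n,F_5)$ forces exact $3$-partiteness.

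With part (i) in hand, part (ii) follows by a clean-up argument: take an extremal $\mathcal H$ (which necessarily has $\delta_{2,p}(\mathcal H)$ at least the average, hence above the threshold, by a standard "delete the min-degree vertex and induct" argument showing extremal graphs are degree-regular enough), conclude it is $3$-partite, and then observe that among $3$-partite $3$-graphs on a fixed tripartition the one maximizing $\norm{\cdot}_{2,p}$ is the complete one (adding any missing edge strictly increases every relevant degree, hence the norm, for $p>0$). Optimizing over tripartitions via convexity of $x\mapsto x^p$ for $p\ge1$ then pins down $\mathcal G$ as the balanced (or near-balanced, depending on divisibility of $n$) complete $3$-partite $3$-graph, giving $\mathrm{ex}_{2,p}(n,F_5)=\norm{\mathcal G}_{2,p}$.

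The main obstacle I anticipate is verifying the vertex-extendability / degree-stability hypothesis for the $(t,p)$-norm rather than for edge count: the usual $F_5$ arguments control $|L_{\mathcal H}(xy)|$ and the shadow, but here one must control the $p$-th-power-weighted sum of these quantities and show that a single vertex whose link deviates from the $3$-partite pattern costs $\Omega(n^{1+p})$ in $\norm{\mathcal H}_{2,p}$ — this requires a careful convexity-based accounting (for $p\ge1$ the power mean inequalities cooperate, which is precisely why the theorem is stated for $p\ge1$). A secondary technical point is handling the exact (non-asymptotic) divisibility of $n$ into three parts when identifying $\mathcal G$; this is routine but needs the stability to be strong enough (edit distance $O(1)$, not just $o(n^3)$) to rule out unbalanced tripartitions, which is where the framework of \cite{CL24} does the heavy lifting.
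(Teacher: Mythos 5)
There is a genuine gap, and it starts with a false premise. You assert that for $p\ge 1$ the (asymptotically) extremal configuration is the \emph{balanced} complete $3$-partite $3$-graph and that "convexity of $x\mapsto x^p$" shows the balanced tripartition maximizes $\norm{\cdot}_{2,p}$. This is wrong: for $p>1$ one has $\pi_{2,p}(F_5)=2\lambda_{2,p}(K_3^3)=2h_p^{\ast}$ (Proposition~\ref{PROP:2-p-density-F5-p-large}), and the maximizer of $h_p(x_1,x_2,x_3)=x_1x_2x_3^p+x_1x_2^px_3+x_1^px_2x_3$ on the simplex is \emph{not} $(1/3,1/3,1/3)$ once $p>1$ (compare Lemma~\ref{LEMMA:T3-inequalities}~\ref{LEMMA:T3-inequalities-1}); indeed the remark after the theorem states the parts become increasingly unbalanced as $p$ grows, and Theorem~\ref{THM:Lp-F5-p-small-a} gives the balanced value $2/3^{1+p}$ only for $p\in[1/2,1]$. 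Consequently your plan of importing Bollob\'as/Balogh--Clemen--Lidick\'y \emph{edge-count} stability toward the balanced $3$-partite structure and "upgrading by convexity" to $(2,p)$-norm stability does not go through: a near-maximizer of $\norm{\cdot}_{2,p}$ for large $p$ need not be close to the balanced profile, and no $(2,p)$-edge-stability statement for $p>1$ is available (nor proved in the paper); the exact statement of part (ii) likewise cannot end with "balanced, up to divisibility." Your part (ii) clean-up also leans on extremal graphs having min $(2,p)$-degree at least the average, which is not what vertex deletion gives; the paper instead derives (ii) from degree-stability via Proposition~\ref{PROP:Lp-min-degree-extremal}.

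The second, independent omission is the actual technical core of the paper's argument, which your proposal never touches. The paper runs Theorem~\ref{THM:Lp-general-a} (symmetrization, not edge-stability) with $\hat{\mathcal F}=\mathcal T_3=\{K_4^{3-},F_5\}$ and the hereditary family $\mathfrak S$ of \emph{Steiner-triple-system-colorable} $3$-graphs: symmetrized $\mathcal T_3$-free $3$-graphs are blowups of arbitrary STSs, not just of $K_3^3$. One must therefore show that a member of $\mathfrak S$ with near-extremal minimum $(2,p)$-degree is in fact $3$-partite, i.e.\ exclude all nontrivial STSs (Proposition~\ref{PORP:min-deg-STS-color}). This requires bounding $\lambda_{2,p}(\mathcal S)$ for every STS $\mathcal S$, which the paper does by an induction on $p$ using the Brown--Sidorenko inducibility-of-stars bound (Lemma~\ref{LEMMA:inducibility-star}), H\"older-type interpolation (Proposition~\ref{PROP:star-poly-Holder-inequ}), and the delicate numerical estimates of Lemmas~\ref{LEMMA:T3-induction-number-of-parts}--\ref{LEMMA:T3-inequalities}. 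Without this step (or a substitute), establishing vertex-extendability "with respect to $3$-partite" is not sufficient, because the symmetrization framework only delivers membership in $\mathfrak S$; and without correcting the extremal shape, the quantitative degree-stability threshold $(1-\varepsilon)\,\mathrm{exdeg}_{2,p}(n,F_5)$ you aim for is pegged to the wrong target value.
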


Note that Theorem~\ref{THM:Lp-F5-p-large} shows that the extremal construction for $p \ge 1$ is always $3$-partite, but it should be noted that the ratios of these three parts are not necessarily balanced for every $p\ge 1$. In fact, the ratio becomes more unbalanced as $p$ increases. 

For $p \in [1/2, 1]$, we show that the extremal construction is nearly balanced 3-partite and establish the corresponding edge-stability. 
An interesting phenomenon arises$\colon$ unlike the case when $p \ge 1$, the $(2,p)$-Tur\'{a}n problem for $F_5$ is no longer degree-stable when $p < 1$. 
A construction that illustrates this phenomenon is included in Section~\ref{APPENDIX:SEC:construction} of the Appendix for the case\footnote{Constructions for general
$p$ can be obtained easily in a similar manner.} $p = 1/2$. 

\begin{theorem}\label{THM:Lp-F5-p-small-a}
    Suppose that $p$ is a real number in $[1/2, 1]$. 
    Then $\pi_{2,p}(F_5) = \frac{2}{3^{1+p}}$. 
    Moreover, for every $\delta>0$, there exist $\varepsilon >0$ and $N_0>0$ such that the following holds for $n \ge N_0$. 
    Every $n$-vertex $F_5$-free $3$-graph with $\norm{\mathcal{H}}_{2,p} \ge (1-\varepsilon)\cdot \mathrm{ex}_{2,p}(n,F_5)$ is $3$-partite after removing at most $\delta n^3$ edges.
\end{theorem}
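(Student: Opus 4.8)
The plan is to split the statement into the density computation $\pi_{2,p}(F_5) = \frac{2}{3^{1+p}}$ and the edge-stability claim, and to treat the lower and upper bounds of the density separately. For the lower bound, I would exhibit the balanced complete $3$-partite $3$-graph $\mathcal{T}_n$ on parts of size $n/3$: here every pair $T$ inside a part has degree $2n/3 + O(1)$, every pair $T$ across two parts has degree $n/3 + O(1)$, and there are $\tfrac12 n^2 + O(n)$ pairs of the first type and $n^2/3 + O(n)$ of the second, giving $\norm{\mathcal{T}_n}_{2,p} = \tfrac12 n^2 (2n/3)^p + \tfrac13 n^2 (n/3)^p + O(n^{1+p}) = \tfrac{n^{2+p}}{3^{1+p}}\big(2^{p-1}\cdot 3 \cdot \tfrac13 \cdot \tfrac{?}{}\big)$ — more carefully, collecting the $n^{2+p}$ coefficient gives $\tfrac{1}{2}\cdot\tfrac{2^p}{3^p} + \tfrac13\cdot\tfrac{1}{3^p} = \tfrac{2^{p-1}+1/3}{3^p}$; comparing with $\binom n2 (n-2)^{p}\cdot\frac{2}{3^{1+p}}$ one checks this matches only after optimizing the part ratios, so in fact I would not fix the parts as balanced a priori but instead maximize $\norm{\mathcal{G}}_{2,p}$ over all complete $3$-partite $\mathcal{G}$ with part ratios $(x_1,x_2,x_3)$, obtain a continuous optimization problem, and verify that for $p \in [1/2,1]$ the maximum is attained at the balanced point $x_1=x_2=x_3=1/3$ with value $\frac{2}{3^{1+p}}$ (this uses a convexity/Schur-type argument in the $x_i$, valid precisely on this range of $p$).

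For the upper bound and stability, I would invoke the framework of~\cite{CL24} adapted to $(t,p)$-norms via Theorems~\ref{THM:Lp-general-a}–\ref{THM:Lp-general-b}: it suffices to show that $F_5$-freeness forces an $F_5$-free $3$-graph $\mathcal{H}$ with $\norm{\mathcal{H}}_{2,p}$ close to $\mathrm{ex}_{2,p}(n,F_5)$ to be, after deleting $o(n^3)$ edges, a blow-up of the single edge on $3$ vertices. The standard route is a removal-lemma / supersaturation argument: the classical Bollob\'as–Frankl–F\"uredi theory gives that $F_5$-free $3$-graphs have Tur\'an density $2/9$ and that near-extremal ones (for edge count) are approximately tripartite; here I would first show that the $(2,p)$-near-extremal $\mathcal{H}$ must also be near-extremal for the \emph{edge count} (because among $3$-graphs with $|\mathcal{H}|\le (2/9+o(1))\binom n3$, the $(t,p)$-norm is, by convexity of $x\mapsto x^p$ together with the degree-sequence constraints, maximized close to the regular tripartite profile), then apply the edge-count stability of $F_5$ (Frankl–F\"uredi~\cite{FF83}, or the version in~\cite{BCL22b}) to conclude tripartiteness up to $\delta n^3$ edges. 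An alternative, cleaner route is to run the Zykov-symmetrization / local-move argument directly on $\norm{\cdot}_{2,p}$: show that any $F_5$-free $\mathcal{H}$ can be transformed, without decreasing $\norm{\mathcal{H}}_{2,p}$ and without creating $F_5$, into one where the ``non-neighbourhood'' relation on vertices (two vertices $u,v$ with no edge containing both through a common link pattern forbidden by $F_5$) is an equivalence relation with at most $3$ classes — this is exactly the mechanism by which $F_5$-free forces tripartite.

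The main obstacle I anticipate is controlling the $(t,p)$-norm under the symmetrization move when $p<1$: replacing the link of a vertex $u$ by a copy of the link of $v$ changes many pair-degrees $d_{\mathcal{H}}(T)$ simultaneously, and since $x\mapsto x^p$ is concave for $p<1$, one cannot argue monotonicity edge-by-edge as in the $p\ge 1$ case — indeed the theorem itself only claims edge-stability, not degree-stability, precisely because the move can strictly decrease $\delta_{2,p}$. I would handle this by working at the level of the \emph{limit object} (a $3$-graphon / measure) rather than finite $\mathcal{H}$: pass to a convergent subsequence, phrase the extremal problem as maximizing $\int d^p$ over $F_5$-free graphons, use that the Frankl–F\"uredi structure theorem pins down the support to a tripartite pattern, and then the only remaining freedom is the part sizes, which is the finite-dimensional optimization already solved in the first paragraph. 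Transferring back to finite $\mathcal{H}$ with an $o(n^3)$ edit then gives the stated edge-stability; the restriction $p\ge 1/2$ will enter exactly where the finite-dimensional optimization's maximum ceases to be balanced (for smaller $p$ other $3$-partite ratios, or even non-tripartite Steiner-type configurations, take over, which is why the theorem stops at $1/2$).
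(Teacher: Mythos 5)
There is a genuine gap, and it sits exactly at the step you describe as the reduction to edge-count stability. For $p<1$ the map $x\mapsto x^p$ is \emph{concave} (you write ``convexity''), so Jensen gives $\norm{\mathcal{H}}_{2,p}\le \big(3|\mathcal{H}|\big)^{p}\,|\partial\mathcal{H}|^{1-p}$, i.e.\ for a fixed number of edges the $(2,p)$-norm is maximized by spreading the degrees over as \emph{large} a shadow as possible. Consequently the Tur\'{a}n bound $|\mathcal{H}|\le(2/9+o(1))\binom{n}{3}$ alone only yields, in normalized form, $y^{p}x^{1-p}\le (2/9)^{p}$, which strictly exceeds the target $2/3^{1+p}$ for every $p<1$ (at $p=1/2$: $\sqrt{2}/3>2/3^{3/2}$). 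So you cannot deduce that a $(2,p)$-near-extremal $F_5$-free $\mathcal{H}$ is near-extremal in edge count, and hence cannot invoke Frankl--F\"{u}redi/Balogh--Clemen--Lidick\'{y} edge-stability; there is a ``phantom'' profile (edge density $2/9$, shadow density near $1$) that your argument does not exclude. Excluding it requires a joint constraint between $|\mathcal{H}|$ and $|\partial\mathcal{H}|$ for $F_5$-free (after the removal-lemma reduction to $\mathcal{T}_3$-free via Proposition~\ref{PROP:tp-density-blowup-lemma}) $3$-graphs. This is precisely what the paper uses: the feasible-region theorem (Theorem~\ref{THM:LM-feasible-region-T3}), $\rho(\mathcal{H})\le\min\{\rho(\partial\mathcal{H})^{3/2}/\sqrt{6},\,\rho(\partial\mathcal{H})(1-\rho(\partial\mathcal{H}))\}+o(1)$, which combined with~\eqref{equ:PROP:tp-norm-Jensen-small} reduces the problem to maximizing the one-variable function $\min\{x^{1+p/2}6^{-p/2},\,x(1-x)^{p}\}$; the maximum is $2/3^{1+p}$, attained at the crossing point $x=2/3$, exactly when $p\ge1/2$ (this is where the hypothesis enters), and the edge-stability then follows from the corresponding stability theorems for the feasible region. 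Your proposal never identifies this ingredient or any substitute for it.

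Your fallback routes do not close this gap. The graphon argument is circular: the Frankl--F\"{u}redi structure theorem describes edge-count-extremal configurations, so it does not ``pin down the support to a tripartite pattern'' for the maximizer of $\int d^{p}$ over all $F_5$-free limit objects --- indeed for $p<1/2$ the optimizers are blowups of larger Steiner triple systems (Theorem~\ref{THM:Lp-F5-p-small-b}), so any such structural claim must use $p\ge 1/2$ quantitatively, and your sketch supplies no mechanism for that. The Zykov/symmetrization route fails for $p<1$ for the reason you yourself note (Proposition~\ref{PROP:star-polynomial-sym-increase} needs $p\ge1$). Two smaller corrections: in a complete $3$-partite $3$-graph a pair inside a part has degree $0$, not $2n/3$, so the lower bound is the straightforward computation $\norm{T^3(n,3)}_{2,p}=(1+o(1))\tfrac{n^2}{3}(\tfrac n3)^p$ giving $2/3^{1+p}$ with no optimization over part ratios needed; and for the stability statement one needs the near-extremal shadow/edge densities pinned to $(2/3,2/9)$ before applying a Kruskal--Katona-type stability result, which again comes out of the feasible-region analysis rather than from edge-count stability alone.
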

As mentioned earlier, the case where $p \in (0,1/2)$ is closely connected to Steiner triple systems and seems difficult to fully resolve. 
Recall that a \textbf{Steiner triple system} (STS) is a $3$-graph in which every pair of vertices is contained in exactly one edge. 
It is a classic and well-known result~\cite{Bose39,Sko58} that a $k$-vertex STS exists iff $k \in 6\mathbb{N} + \{1,3\}$. 
For simplicity, we use $\mathrm{STS}(k)$ to denote the collection of all STSs on $k$ vertices. 

Given two $r$-graphs $\mathcal{H}$ and $\mathcal{G}$, a map $\phi \colon V(\mathcal{H}) \to V(\mathcal{G})$ is a \textbf{homomorphism} if $\phi(e) \in \mathcal{G}$ for all $e \in \mathcal{H}$. 
When such a homomorphism exists, we say $\mathcal{H}$ is \textbf{$\mathcal{G}$-colorable}. 

\begin{theorem}\label{THM:Lp-F5-p-small-b}
    Suppose that $p$ is a real number in $[0, 1/2]$. 
    Then $\pi_{2,p}(F_5) \le \frac{p^p}{(p+1)^{p+1}}$. 
    In addition, for every $k \in 6\mathbb{N}^{+} + \{0,2\}$, there exists $\alpha_k >0$ such that the following statements hold. 
    \begin{enumerate}[label=(\roman*)]
        \item\label{THM:Lp-F5-p-small-b-1} For every $p \in \left[k^{-1}, k^{-1} + \alpha_k\right]$, we have $\pi_{2,p}(F_5) = \frac{k}{(k+1)^{p+1}}$. 
        In particular, if $p = k^{-1}$, then $\pi_{2,p}(F_5) = \frac{p^p}{(p+1)^{p+1}}$. 
        \item\label{THM:Lp-F5-p-small-b-2} For every $p\in \left[k^{-1}, k^{-1} + \alpha_k\right]$ and $\delta>0$, there exist $\varepsilon >0$ and $N_0>0$ such that every $F_5$-free $3$-graph on $n \ge N_0$ vertices with $\norm{\mathcal{H}}_{2,p} \ge (1-\varepsilon)\cdot \mathrm{ex}_{2,p}(n,F_5)$ is $\mathcal{S}$-colorable for some $\mathcal{S} \in \mathrm{STS}(k+1)$ after removing at most $\delta n^3$ edges. 
    \end{enumerate} 
\end{theorem}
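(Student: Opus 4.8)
The plan is to reduce the theorem to two ingredients: a concavity inequality that turns $\norm{\mathcal{H}}_{2,p}$ into a monomial in the edge and shadow densities of $\mathcal{H}$, and the known structure — plus stability — of the feasible region of $F_5$ in the sense of Liu--Mubayi~\cite{LM21feasible}.

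\textbf{Step 1: reduction to the feasible region.} Let $\mathcal{H}$ be an $F_5$-free $3$-graph on $n$ vertices, write $e=|\mathcal{H}|$ and $s=|\partial_1\mathcal{H}|$, and recall $\sum_{T\in\partial_1\mathcal{H}}d_{\mathcal{H}}(T)=3e$. Since $x\mapsto x^p$ is concave for $p\in(0,1]$, Jensen's inequality gives
\[
\norm{\mathcal{H}}_{2,p}=\sum_{T\in\partial_1\mathcal{H}}d_{\mathcal{H}}(T)^p\ \le\ s\Big(\tfrac{3e}{s}\Big)^p=3^p\,e^p\,s^{1-p}.
\]
Passing to the densities $\sigma:=s/\binom{n}{2}$ and $\eta:=e/\binom{n}{3}$, the constants collapse (one checks $3^p\binom n3^{p}\binom n2^{1-p}=(\tfrac12+o(1))\,n^{2+p}$), so by Proposition~\ref{PROP:limit-exist},
\[
\pi_{2,p}(F_5)\ \le\ \sup\big\{\,\eta^{p}\sigma^{1-p}\ :\ (\sigma,\eta)\in\Omega(F_5)\,\big\},
\]
where $\Omega(F_5)\subseteq[0,1]^2$ is the closure of the attainable (shadow density, edge density) pairs of $F_5$-free $3$-graphs. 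The feasible-region results for $F_5$ (which use Steiner triple systems and graph inducibility) identify the ``upper'' boundary of $\Omega(F_5)$ with the curve through the points $P_k:=\big(\tfrac{k}{k+1},\tfrac{k}{(k+1)^2}\big)$ arising from balanced blow-ups of STSs on $k+1$ vertices; since $\eta^p\sigma^{1-p}$ is increasing in each coordinate and $\log(\eta^p\sigma^{1-p})$ is concave, its supremum over $\Omega(F_5)$ is attained at one of these corners, whence
\[
\pi_{2,p}(F_5)\ \le\ \max_{k}\ \eta(P_k)^p\,\sigma(P_k)^{1-p}\ =\ \max_{k}\ \frac{k}{(k+1)^{p+1}},
\]
the maximum over $k$ with $k+1$ an admissible STS order.

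\textbf{Step 2: optimization and the construction.} The map $k\mapsto k/(k+1)^{p+1}$ is unimodal on $(0,\infty)$ with maximum at $k=1/p$, of value $p^p/(p+1)^{p+1}$; this yields the general bound $\pi_{2,p}(F_5)\le p^p/(p+1)^{p+1}$ for every $p\in[0,1/2]$. When $p=k^{-1}$ for an admissible $k$, the continuous optimizer $1/p=k$ is itself admissible, so the discrete maximum equals $p^p/(p+1)^{p+1}$; since $k'/(k'+1)^{p+1}<k/(k+1)^{p+1}$ for every other admissible $k'$ at $p=k^{-1}$ (and only finitely many $k'$ need checking, the rest being uniformly smaller), this persists, with room to spare, on $[k^{-1},k^{-1}+\alpha_k]$ for some $\alpha_k>0$. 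For the matching lower bound, fix $\mathcal{S}\in\mathrm{STS}(k+1)$ and let $\mathcal{H}$ be its balanced blow-up on $n$ vertices: the uniqueness of the triple of $\mathcal{S}$ through a given pair forces $\mathcal{H}$ to be $F_5$-free (two edges of $F_5$ share a pair, hence lie over the same triple of $\mathcal{S}$ and so agree in their third part, contradicting the third edge of $F_5$), and a direct count gives $\norm{\mathcal{H}}_{2,p}=\big(\tfrac{k}{2(k+1)^{p+1}}+o(1)\big)n^{2+p}$, i.e. $\pi_{2,p}(F_5)\ge\tfrac{k}{(k+1)^{p+1}}$. This proves part~(i).

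\textbf{Step 3: edge-stability.} Fix $p\in[k^{-1},k^{-1}+\alpha_k]$ and $\delta>0$, and let $\mathcal{H}$ be $F_5$-free on $n$ vertices with $\norm{\mathcal{H}}_{2,p}\ge(1-\varepsilon)\,\mathrm{ex}_{2,p}(n,F_5)$. Combining this with $\norm{\mathcal{H}}_{2,p}\le 3^pe^ps^{1-p}$ and the fact that $P_k$ is the \emph{strict} maximizer of $\eta^p\sigma^{1-p}$ on $\Omega(F_5)$ forces, for $\varepsilon$ small, the pair $(\sigma,\eta)$ to be $o(1)$-close to $P_k$; moreover near-equality in the Jensen step (strict concavity of $x\mapsto x^p$ for $p<1$) forces all but $o(n^2)$ of the pairs in $\partial_1\mathcal{H}$ to have codegree $(1+o(1))\,\tfrac{n}{k+1}$. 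Feeding this near-extremal data into the stability version of the feasible-region/STS analysis shows that $\mathcal{H}$ agrees with a blow-up of some $\mathcal{S}\in\mathrm{STS}(k+1)$ up to $\delta n^3$ edges; after deleting these, $\mathcal{H}$ is a sub-$3$-graph of a blow-up of $\mathcal{S}$, hence $\mathcal{S}$-colorable. (Alternatively, this step can be packaged through the framework of Theorems~\ref{THM:Lp-general-a}--\ref{THM:Lp-general-b} with the extremal family taken to be all blow-ups of STSs on $k+1$ vertices.)

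\textbf{Main obstacle.} The Jensen reduction and the one-variable calculus are routine; the real content, and what I expect to be hardest, is the feasible-region input for $F_5$: that $\Omega(F_5)$ lies below the level curve $\{\eta^p\sigma^{1-p}=k/(k+1)^{p+1}\}$ near $P_k$, together with the stability statement that an $F_5$-free $3$-graph with (shadow density, edge density) near $P_k$ and nearly uniform codegrees must be edit-close to an STS-blow-up. This is precisely where the results on Steiner triple systems, graph inducibility, and the Liu--Mubayi feasible region enter, and it is unavailable when the continuous optimizer $1/p$ is not an admissible STS order — which is why only the non-matching upper bound $p^p/(p+1)^{p+1}$ is claimed for general $p\in[0,1/2]$.
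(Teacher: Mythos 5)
Your overall route is the paper's route: the same Jensen/H\"{o}lder step (Proposition~\ref{PROP:tp-norm-Jensen-small}, i.e.\ $\norm{\mathcal{H}}_{2,p}\le (3|\mathcal{H}|)^p|\partial\mathcal{H}|^{1-p}$), reduction via $\mathcal{T}_3\le_{\mathrm{hom}}F_5$ and Proposition~\ref{PROP:tp-density-blowup-lemma}, the feasible-region input of Liu--Mubayi, the one-variable optimization, balanced STS$(k+1)$ blowups for the lower bound, and stability by pushing the near-extremal pair $(\sigma,\eta)$ to the corner and invoking the feasible-region stability theorem of~\cite{Liu20a}. However, there is a genuine gap in how you justify the upper bounds. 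You assert that the feasible-region results ``identify the upper boundary of $\Omega(F_5)$ with the curve through the points $P_k$'' and that the supremum of $\eta^p\sigma^{1-p}$ is attained at a corner. That identification is not a known result (it would essentially settle the conjecture stated in Section~\ref{SEC:Remark}); what is available is the envelope bound of Theorem~\ref{THM:LM-feasible-region-T3}, $\eta\le\min\{\sigma^{3/2}/\sqrt{6},\,\sigma(1-\sigma)\}+o(1)$, plus a piecewise-linear upper bound near the STS corners (the remark below~{\cite[Theorem~1.15]{LM21feasible}}). For general $p\in[0,1/2]$ and for $p=k^{-1}$ exactly, your argument survives without the boundary claim, because $\eta\le\sigma(1-\sigma)+o(1)$ together with Fact~\ref{FACT:inequality-b}~\ref{FACT:inequality-b-2} already gives $\pi_{2,p}\le p^p/(p+1)^{p+1}$, with equality of the bound and the STS$(k+1)$ construction when $1/p=k$ is admissible; this is exactly what the paper does.

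Where the gap bites is part~\ref{THM:Lp-F5-p-small-b-1} on the interval $p\in(k^{-1},k^{-1}+\alpha_k]$. Your ``persists with room to spare'' step compares corner values $k'/(k'+1)^{p+1}$ against each other, but that only shows $P_k$ beats the other corners; it does not bound $\pi_{2,p}(F_5)$ from above by the corner value, since for $p\neq k^{-1}$ the only bound you have actually established is $x(1-x)^p\le p^p/(p+1)^{p+1}$, which is \emph{strictly larger} than $k/(k+1)^{p+1}$. To close this you need the refined feasible-region input near the corner --- the piecewise-linear bound from~\cite{LM21feasible} --- together with the (nontrivial) one-variable computation that $\max_x \ell(x)^p x^{1-p}$ equals $k/(k+1)^{p+1}$ precisely for $p$ in an interval of length $\alpha_k=(k^3-k)^{-1}$; this is the content of Lemma~\ref{LEMMA:F5-Lp-p-small-calculations} in the Appendix, which the paper relies on and which your proposal replaces by the unproven global corner-attainment claim. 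Your lower-bound verification (STS blowups are $F_5$-free, with $\norm{\cdot}_{2,p}$ count) and the stability outline (near-equality in Jensen plus closeness of $(\sigma,\eta)$ to $P_k$, then~{\cite[Theorem~1.6]{Liu20a}}) are correct and match the paper.
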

\textbf{Remarks.}
Some straightforward but tedious calculations show that one can choose $\alpha_k = (k^3-k)^{-1}$; for details, we refer the reader to Lemma~\ref{LEMMA:F5-Lp-p-small-calculations} in the Appendix. 
For simplicity, we will only show the proof of Theorem~\ref{THM:Lp-F5-p-small-b} for the case $p = k^{-1}$. 
Extending the proof to the general case can be achieved easily by replacing Theorem~\ref{THM:LM-feasible-region-T3} with the piecewise linear bound as in the remark below~{\cite[Theorem~1.15]{LM21feasible}}. 

Proof of Theorem~\ref{THM:Lp-F5-p-large} is presented in Section~\ref{SEC:Proof-Lp-F5-large}. 
Proof of Theorems~\ref{THM:Lp-F5-p-small-a} and~\ref{THM:Lp-F5-p-small-b} is presented in Section~\ref{SEC:Proof-Lp-F5-small}. 
\subsection{The expansion of graphs}\label{SEC:intro-expansion}
%
Given a graph $F$, the ($r$-uniform) \textbf{expansion} $H_{F}^{r}$ of $F$ is the $r$-graph obtained from $F$ by adding $r-2$ new vertices into each edge, ensuring that these $(r-2)$-sets are pairwise disjoint. 
For simplicity, let $H_{\ell+1}^{r} \coloneqq H_{K_{\ell+1}}^{r}$. 
Expansions are an important class of hypergraphs introduced by Mubayi~\cite{MU06} to provide the first explicitly defined examples that yield an infinite family of numbers realizable as Tur\'{a}n densities for hypergraphs.

We say a graph $F$ is \textbf{edge-critical} if there exists an edge $e\in F$ such that the chromatic number of $F\setminus\{e\}$ is strictly smaller than that of $F$. 
In the following theorem, we determine, for every edge-critical graph $F$ and for every real number $p > 1$, the exact value of $\mathrm{ex}_{t,p}(n,H_{F}^{r})$ and prove its corresponding degree-stability. 
This extends the classical theorems of Simonovits~\cite{SI68}, Andr\'{a}sfai--Erd\H{o}s--S\'{o}s~\cite{AES74}, Erd\H{o}s--Simonovits~\cite{ES73} on edge-critical graphs, as well as the classic theorems of Mubayi~\cite{MU06} and Pikhurko~\cite{PI13} on expansion of complete graphs.   

\begin{theorem}\label{THM:Lp-clique-expansion-p-large}
    Let $\ell \ge r > t \ge 1$ be integers, $p \ge 1$ be a real number, and $F$ be an edge-critical graph with $\chi(F) = \ell+1$. 
    There exist $\varepsilon >0$ and $N_0>0$ such that the following statements hold for every integer $n \ge N_{0}$. 
    \begin{enumerate}[label=(\roman*)]
        \item Every $n$-vertex $H_{F}^{r}$-free $r$-graph with $\delta_{t,p}(\mathcal{H}) \ge (1-\varepsilon)\cdot \mathrm{exdeg}_{t,p}(n,H_{F}^{r})$ is $\ell$-partite. 
        \item $\mathrm{ex}_{t,p}(n,H_{F}^{r}) = \norm{\mathcal{G}}_{t,p}$ for some $n$-vertex complete $\ell$-partite $r$-graph $\mathcal{G}$.  
    \end{enumerate} 
\end{theorem}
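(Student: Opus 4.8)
The plan is to deduce Theorem~\ref{THM:Lp-clique-expansion-p-large} from the general framework of~\cite{CL24} (adapted to the $(t,p)$-norm in Theorems~\ref{THM:Lp-general-a} and~\ref{THM:Lp-general-b}), by verifying the two hypotheses that framework requires: first, that the relevant $(t,p)$-Tur\'{a}n density $\pi_{t,p}(H_F^r)$ is attained (asymptotically) by a unique ``stable'' construction, namely the balanced complete $\ell$-partite $r$-graph $T_\ell^r(n)$ (equivalently, the optimal blow-up of the single edge on $\ell$ parts); and second, that the extremal problem is \emph{vertex-extendable} in the sense that an $H_F^r$-free $r$-graph with near-minimum $(t,p)$-degree which is close to $\ell$-partite must in fact be exactly $\ell$-partite. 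Once both are in place, part (i) follows directly from the stability-plus-extendability machinery, and part (ii) follows because among all $\ell$-partite $H_F^r$-free $r$-graphs on $n$ vertices the $(t,p)$-norm is maximized by a complete $\ell$-partite $r$-graph (an $H_F^r$-free $\ell$-partite $r$-graph can always be completed to the full $\ell$-partite $r$-graph without creating $H_F^r$, since $\chi(F)=\ell+1$ forces any copy of $H_F^r$ to have a ``core'' edge inside one part).

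The key steps, in order, are as follows. (1) Establish the asymptotic value: show $\pi_{t,p}(H_F^r)$ equals the $(t,p)$-norm density of the balanced complete $\ell$-partite $r$-graph. The upper bound comes from the Erd\H{o}s--Stone--Simonovits-type phenomenon for expansions (a supersaturation/removal argument: since $\chi(F)=\ell+1$, an $H_F^r$-free $r$-graph has $o(n^r)$ edges crossing any fixed $\ell$-partition after symmetrization, so its shadow and degrees are controlled), combined with the convexity/ averaging properties of $x\mapsto x^p$ for $p\ge 1$ established in Section~\ref{SEC:General-property}, which guarantee that among $\ell$-partite $r$-graphs the $(t,p)$-norm is maximized by the balanced complete one. (2) Verify uniqueness/stability of the extremal construction: an $H_F^r$-free $r$-graph with $(t,p)$-norm within $(1-o(1))$ of the maximum must be $o(n^r)$-close to balanced complete $\ell$-partite; this is the edge-stability input to~\cite{CL24}, and it follows from the corresponding stability for the Tur\'{a}n problem of $H_F^r$ (Mubayi~\cite{MU06}, Pikhurko~\cite{PI13}) together with the fact that the $(t,p)$-norm, being a convex function of the degree sequence, is ``stable'' under the same partition structure. (3) Prove vertex-extendability: this is the heart of the argument and the part that must be done by hand. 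Take an $H_F^r$-free $r$-graph $\mathcal{H}$ on $n$ vertices with $\delta_{t,p}(\mathcal{H}) \ge (1-\varepsilon)\mathrm{exdeg}_{t,p}(n,H_F^r)$ that is $o(n^r)$-close to $\ell$-partite; we must show it is genuinely $\ell$-partite. The standard route (cf.~\cite{CL24}) is: remove a vertex $v$, apply the stability result to $\mathcal{H}-v$ to get a genuine partition $V_1\cup\dots\cup V_\ell$ of $V(\mathcal{H})\setminus\{v\}$ with no crossing structure forbidden, then show that the $(t,p)$-degree lower bound on $v$ forces the link of $v$ to respect some $\ell$-partition as well, hence $v$ can be placed into one of the parts. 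The obstruction to avoid is that $v$'s link, together with edges inside the parts, could create $H_F^r$; here one uses that $F$ is edge-critical, so $F\setminus\{e\}$ is $\ell$-colorable, meaning a copy of $H_F^r$ through $v$ would need the two non-$v$ endpoints of the critical edge to lie in a common part — and a counting argument on $d_{t,p}$ bounds how many such pairs $v$ can be adjacent to.

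(4) Finally, derive the exact result: combine vertex-extendability (part (i)) with the observation that any $n$-vertex $\ell$-partite $H_F^r$-free $r$-graph has $(t,p)$-norm at most that of the complete $\ell$-partite $r$-graph on the same part sizes (add all missing within-partition-class $r$-edges — wait, there are none — add all crossing $r$-sets; none of these completes a copy of $H_F^r$ because $H_F^r\not\subseteq$ complete $\ell$-partite $r$-graph as $\chi(F)>\ell$), so $\mathrm{ex}_{t,p}(n,H_F^r)$ is attained at a complete $\ell$-partite $r$-graph; a final optimization over the $\ell$ part sizes (a smooth convex/concave optimization, not necessarily balanced when $p>1$) pins down which one, but the theorem only asserts existence, so this last optimization can be left implicit.

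I expect the main obstacle to be \textbf{step (3)}, the vertex-extendability verification: translating the edge-criticality of $F$ into a clean statement about which configurations in the link of a single vertex are forbidden, and then showing the $(t,p)$-degree bound is tight enough to rule out a vertex whose link straddles two parts. The subtlety is that for $r>2$ the expansion $H_F^r$ uses $r-2$ fresh vertices per edge, so ``a copy of $H_F^r$ through $v$'' is a more flexible object than in the graph case, and one has to be careful that the near-extremal degree condition, after passing to $\mathcal{H}-v$ and invoking stability, still leaves enough room to locate $v$; this is precisely where the general lemmas of Section~\ref{SEC:General-property} about how $d_{t,p}(v)$ relates to the ordinary degree sequence of the link of $v$ will be needed.
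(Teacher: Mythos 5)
Your overall plan (invoke the framework of~\cite{CL24} via Theorems~\ref{THM:Lp-general-a}/\ref{THM:Lp-general-b}, make vertex-extendability the main hand-verified step, and deduce (ii) from (i) by completing an $\ell$-partite $H_F^r$-free $r$-graph to the complete $\ell$-partite one, as in Proposition~\ref{PROP:Lp-min-degree-extremal}) is indeed the route the paper intends; the paper omits the details and says the argument mirrors the $F_5$ proof of Theorem~\ref{THM:Lp-F5-p-large}, deferring technical lemmas to~{\cite[Theorem~2.5]{CL24}}. However, your steps (1)--(2) contain a genuine error. For $p>1$ the $(t,p)$-extremal profile is \emph{not} balanced: $\pi_{t,p}(H_F^r)=t!\,\lambda_{t,p}(K_\ell^r)$ (Corollary~\ref{CORO:Lp-expansion-ESS}), and the Lagrangian maximizer of $L_{K_\ell^r,t,p}$ is in general an unbalanced point of the simplex — the paper stresses exactly this after Theorem~\ref{THM:Lp-F5-p-large} ("the ratio becomes more unbalanced as $p$ increases"), and (ii) deliberately says "some" complete $\ell$-partite $r$-graph. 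Convexity of $x\mapsto x^p$ pushes toward unbalanced, not balanced, configurations here (it is what makes the norm symmetrization-increasing in Proposition~\ref{PROP:star-polynomial-sym-increase}). Consequently your step (2) collapses: a graph with $\norm{\mathcal H}_{t,p}\ge(1-\varepsilon)\mathrm{ex}_{t,p}(n,H_F^r)$ for $p>1$ need not have anywhere near $\mathrm{ex}(n,H_F^r)$ edges (the unbalanced optimal complete $\ell$-partite graphs have edge density bounded away from the Tur\'an density), so the hypotheses of the Erd\H{o}s--Simonovits/Mubayi/Pikhurko edge-stability theorems are simply not met, and the claimed $(t,p)$-edge-stability cannot be imported from them.

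The paper's (referenced) argument avoids this entirely by taking the symmetrization route of Theorem~\ref{THM:Lp-general-a}: set $\hat{\mathcal F}=\mathcal K_{\ell+1}^r$ (weak expansions), which satisfies $\mathcal K_{\ell+1}^r\le_{\mathrm{hom}}H_F^r$, is blowup-invariant, and is symmetrized-stable with respect to $\mathfrak K_\ell^r$ (a symmetrized $\mathcal K_{\ell+1}^r$-free $r$-graph is $K_\ell^r$-colorable) — this supplies the structural input with no edge-stability and no balancedness claim, and is in fact why this case is "much simpler" than $F_5$, where excluding nontrivial STSs required the Lagrangian estimates of Section~\ref{SUBSEC:F5-large-STS}. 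Your step (3) is the right heart of the matter and your sketch (apply stability to $\mathcal H-v$, use Corollary~\ref{CORO:Lp-degree-expression-Lagrangian}-type bounds and Lemma~\ref{LEMMA:Lp-basic-property-local-Lipschitz} to control $d_{t,p}$, use edge-criticality of $F$ to forbid a vertex whose link meets a part in a covered pair) is in the right spirit, but as written it presupposes the flawed balanced-stability input; to repair the proposal, replace (1)--(2) by the $\le_{\mathrm{hom}}$/symmetrization hypotheses of Theorem~\ref{THM:Lp-general-a} and carry out the extendability step against a possibly unbalanced $\ell$-partition whose part ratios are only known to be near a maximizer of $L_{K_\ell^r,t,p}$ (all parts of linear size, via an analogue of~\eqref{equ:proof:PROP:vertex-extendability-F5-2}).
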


For the case $p \in (0,1)$, we utilize the theorems on the feasible region problems established in~\cite{LM21feasible,LM23KKstability} to determine the value of $\pi_{t,p}(H_{F}^{r})$ and prove its corresponding edge-stability. 
Similar to the case of $F_5$, the $(t,p)$-norm Tur\'{a}n problem for $H_{F}^{r}$ does not have degree-stability when $p < 1$.  
The constructions can be obtained using a similar approach to that of $F_5$, so we omit them here. 

\begin{theorem}\label{THM:Lp-clique-expansion-p-small}
    Let $\ell \ge r > t \ge 1$ be integers, $p \in (0,1)$ be a real number, and $F$ be a graph with $\chi(F) = \ell+1$. 
    Then 
    \begin{align*}
        \pi_{t,p}(H_{F}^{r})
        = t!\binom{\ell}{t}\binom{\ell-t}{r-t}^{p}\left(\frac{1}{\ell}\right)^{t+p(r-t)}. 
    \end{align*}
    Moreover, for every $\delta >0$ there exist $\varepsilon >0$ and $N_0>0$ such that every $H_{F}^{r}$-free $r$-graph $\mathcal{H}$ on $n \ge N_0$ vertices with $\norm{\mathcal{H}}_{t,p} \ge (1-\varepsilon)\cdot \mathrm{ex}_{t,p}(n,H_{F}^{r})$ is $\ell$-partite after removing at most $\delta n^r$ edges. 
\end{theorem}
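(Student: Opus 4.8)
\textbf{Proof proposal for Theorem~\ref{THM:Lp-clique-expansion-p-small}.}

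The plan is to split the statement into an upper bound on $\norm{\mathcal{H}}_{t,p}$ together with edge-stability, and a matching lower bound from an explicit construction. For the lower bound, take $\mathcal{T}_{\ell}(n)$ to be the complete $\ell$-partite $r$-graph on $n$ vertices with parts as balanced as possible; every $(r-2)$-set of vertices spanning only $r-2$ distinct parts (one vertex per part) fails to support an $F$-subdivision when $\chi(F)=\ell+1$, since the expansion $H_F^r$ requires the core pairs to live in an $(\ell+1)$-chromatic pattern, so $\mathcal{T}_\ell(n)$ is $H_F^r$-free. A direct computation of $\norm{\mathcal{T}_\ell(n)}_{t,p}$: each $t$-set $T$ lying in $t$ distinct parts has degree $\binom{\ell-t}{r-t}(n/\ell)^{r-t}(1+o(1))$, and there are $\binom{\ell}{t}(n/\ell)^t(1+o(1))\cdot t!/t!$ such $t$-sets up to the multinomial count, giving $\norm{\mathcal{T}_\ell(n)}_{t,p} = t!\binom{\ell}{t}\binom{\ell-t}{r-t}^p(1/\ell)^{t+p(r-t)} n^{t+p(r-t)}(1+o(1))$, which matches the claimed $\pi_{t,p}(H_F^r)$. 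One should check balancedness is optimal among complete $\ell$-partite $r$-graphs by a convexity/smoothing argument in the part sizes, but for the asymptotic Tur\'an density only the leading term matters.

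For the upper bound, I would first pass from $(t,p)$-norm to edge count. By the relationship $\norm{\mathcal{H}}_{t,0}=|\partial_{r-t}\mathcal{H}|$ and $\norm{\mathcal{H}}_{t,1}=\binom{r}{t}|\mathcal{H}|$, together with the power-mean / convexity inequalities established in Section~\ref{SEC:General-property} (and the fact that $0<p<1$ makes $x\mapsto x^p$ concave), the $(t,p)$-norm of an $H_F^r$-free $r$-graph is maximized, up to the right normalization, when the degree sequence over $t$-sets is as flat as possible on its support, which pushes $\mathcal{H}$ toward being (close to) the $\ell$-partite Tur\'an construction. Concretely, I expect to invoke the feasible region machinery of Liu--Mubayi~\cite{LM21feasible,LM23KKstability}: the feasible region of $H_F^r$ records the asymptotically attainable pairs $(|\partial_{r-t}\mathcal{H}|/\binom{n}{r-t},\ |\mathcal{H}|/\binom{n}{r})$, and for expansions of $(\ell+1)$-chromatic graphs this region is governed by the $\ell$-partite construction (this is essentially the content of the Kruskal--Katona-type stability results cited). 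Jensen's inequality applied with the concave function $x^p$ then converts the known shadow/edge tradeoff into the sharp bound $\norm{\mathcal{H}}_{t,p}\le (\pi_{t,p}(H_F^r)+o(1))n^{t+p(r-t)}$, and the stability version of the feasible-region theorem yields that near-extremal $\mathcal{H}$ has its link structure close to that of $\mathcal{T}_\ell(n)$.

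From approximate structure to the stated $\ell$-partite-after-removing-$\delta n^r$-edges conclusion, I would run a standard cleaning/stability argument: near-extremality in $(t,p)$-norm forces almost all $t$-sets to have degree close to the $\ell$-partite value, hence (by an averaging and a defect form of Kruskal--Katona) $\mathcal{H}$ agrees with some complete $\ell$-partite $r$-graph on all but $o(n^r)$ edges; one then shows the "parts" are honestly disjoint and cover almost all vertices, absorbing the exceptional vertices into the $\delta n^r$ error. This is exactly the kind of conclusion the general framework of~\cite{CL24} (Theorems~\ref{THM:Lp-general-a} and~\ref{THM:Lp-general-b} as adapted here) is designed to deliver once the density $\pi_{t,p}$ and the edge-stability of the corresponding ordinary Tur\'an problem are in hand, so the bulk of the work is checking the hypotheses of that framework for $H_F^r$ with $p<1$.

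The main obstacle I anticipate is the transfer step in the regime $p<1$: because $x\mapsto x^p$ is concave, the $(t,p)$-norm rewards spreading degree mass, so one cannot simply bound $\norm{\mathcal{H}}_{t,p}$ by $\norm{\mathcal{T}_\ell(n)}_{t,p}$ edge-by-edge or by a crude Jensen step — one genuinely needs the feasible-region inequality controlling the joint behaviour of $|\partial_{r-t}\mathcal{H}|$ and $|\mathcal{H}|$ (equivalently, the full degree distribution over $t$-sets), and its stability counterpart, to rule out "flat but non-$\ell$-partite" configurations. Making the error terms in the feasible-region stability theorem uniform enough to feed into the $o(n^r)$-removal conclusion, and verifying that no degenerate low-degree $t$-sets contribute a non-negligible amount to the $p$-th power sum, is where the real care is needed; the construction side and the reduction to ordinary Tur\'an stability are comparatively routine.
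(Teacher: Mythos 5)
Your proposal follows essentially the same route as the paper: bound $\norm{\mathcal{H}}_{t,p}$ by $\bigl(\binom{r}{t}|\mathcal{H}|\bigr)^{p}\cdot|\partial_{r-t}\mathcal{H}|^{1-p}$ via the Jensen-type inequality (Proposition~\ref{PROP:tp-norm-Jensen-small}), feed this two-variable shadow/edge tradeoff into the Liu--Mubayi feasible-region theorem and its stability counterpart, and match the resulting bound with the balanced complete $\ell$-partite construction. The only step the paper makes explicit that you leave implicit is the reduction from $H_{F}^{r}$-free to weak-expansion-free graphs via $\mathcal{K}_{\ell+1}^{r}\le_{\mathrm{hom}}H_{F}^{r}$ and Proposition~\ref{PROP:tp-density-blowup-lemma} (the feasible-region results are stated for $\mathcal{K}_{\ell+1}^{r}$, not for $H_{F}^{r}$ itself), which is routine.
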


An immediate corollary (using Proposition~\ref{PROP:tp-density-blowup-lemma}) of Theorems~\ref{THM:Lp-clique-expansion-p-large} and~\ref{THM:Lp-clique-expansion-p-small} is the following extension of the seminal Erd\H{o}s--Stone--Simonovits Theorem~\cite{ES66}. 

\begin{corollary}\label{CORO:Lp-expansion-ESS}
    Suppose that $\ell \ge r > t \ge 1$ are integers and $F$ is a graph with $\chi(F) = \ell+1$. 
    Then 
    \begin{align*}
        \pi_{t,p}(H_{F}^{r})
        = 
        \begin{cases}
            t!\binom{\ell}{t}\binom{\ell-t}{r-t}^{p}\left(\frac{1}{\ell}\right)^{t+p(r-t)}, & \quad\text{if}\quad p \in (0,1), \\
            t! \cdot \lambda_{t,p}(K_{\ell}^{r}), & \quad\text{if}\quad p \ge 1. 
        \end{cases}
    \end{align*}
\end{corollary}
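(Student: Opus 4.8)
\textbf{Proof proposal for Corollary~\ref{CORO:Lp-expansion-ESS}.}

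The plan is to reduce the general statement to the two theorems already proved for special classes of $F$. The key observation is that $\pi_{t,p}(H_F^r)$ depends on $F$ only through its chromatic number. For the upper bound, I would find a suitable edge-critical supergraph; for the lower bound, a suitable subgraph.

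First, for the lower bound, note that since $\chi(F) = \ell+1$, the graph $F$ contains $K_2$ as a subgraph trivially, but more usefully we should argue in the opposite direction: any $H_F^r$-free $r$-graph is in particular constrained, so we want a construction that is $H_F^r$-free with large $(t,p)$-norm. Take $\mathcal{G}$ to be the balanced complete $\ell$-partite $r$-graph on $n$ vertices (the $r$-graph whose edges are all $r$-sets meeting at least $\ldots$ — more precisely, all $r$-sets with at most one vertex in each part when $\ell \ge r$, or all $r$-sets not contained in a single part). Since $\mathcal{G}$ has chromatic number (in the appropriate sense) at most $\ell$, it contains no copy of $H_{K_{\ell+1}}^r$, hence no copy of $H_F^r$ for any $F$ with $\chi(F) = \ell+1$ (because $\chi(H_F^r)$-type obstructions: a homomorphism argument shows $H_F^r \to \mathcal{G}$ would force $F \to K_\ell$). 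A direct computation of $\norm{\mathcal{G}}_{t,p}$ for the balanced complete $\ell$-partite $\mathcal{G}$ gives exactly $t!\binom{\ell}{t}\binom{\ell-t}{r-t}^p(1/\ell)^{t+p(r-t)} \cdot \binom{n}{t}(n-t)^{p(r-t)}(1+o(1))$ in the $p \in (0,1)$ case, and $t! \cdot \lambda_{t,p}(K_\ell^r) \cdot \binom{n}{t}(n-t)^{p(r-t)}(1+o(1))$ in the $p \ge 1$ case — here one should check that the balanced partition is the optimizer defining $\lambda_{t,p}(K_\ell^r)$ when $p \ge 1$ (convexity of $x \mapsto x^p$), whereas for $p < 1$ concavity makes the balanced partition optimal among complete $\ell$-partite $r$-graphs on $n$ vertices, matching the stated closed form. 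This yields $\pi_{t,p}(H_F^r) \ge$ the claimed value.

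For the upper bound, I would use that every graph $F$ with $\chi(F) = \ell+1$ embeds into some edge-critical graph $F'$ with $\chi(F') = \ell+1$: for instance, take $F' = K_{\ell+1}$ itself, which is edge-critical with chromatic number $\ell+1$, and note $F \subseteq K_{|V(F)|}$, so $H_F^r \subseteq H_{K_{|V(F)|}}^r$; but this changes the chromatic number. The correct move: any $H_F^r$-free $r$-graph is also $H_{K_{\ell+1}}^r$-free only if $K_{\ell+1} \subseteq F$, which need not hold. Instead I use monotonicity the other way — since $F$ has a proper $(\ell+1)$-coloring, $F$ is a subgraph of the complete $(\ell+1)$-partite graph $K_{\ldots}$ on $|V(F)|$ vertices with appropriate part sizes, call it $F^*$; this $F^*$ is edge-critical with $\chi(F^*) = \ell+1$ (removing any edge within... actually a complete multipartite graph becomes $\ell$-colorable after deleting a suitable edge only if some part has size... hmm). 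The cleanest choice is $F^* = K_{\ell+1}$ when $F$ contains $K_{\ell+1}$, and in general the book graph or $K_{\ell+1}$ plus pendant structure; but the safe and standard fact is that $F \subseteq F^*$ for the edge-critical graph $F^* = $ the join $K_{\ell-1} + C_{2m+1}$ or simply observe it suffices to bound $\pi_{t,p}(H_F^r) \le \pi_{t,p}(H_{F^*}^r)$ where $F^* \supseteq F$ has $\chi(F^*) = \ell+1$ and is edge-critical, then apply Theorem~\ref{THM:Lp-clique-expansion-p-large} for $p \ge 1$ (giving $\pi = t!\lambda_{t,p}(K_\ell^r)$ since the extremal complete $\ell$-partite $r$-graph realizes $\lambda_{t,p}(K_\ell^r)$) and Theorem~\ref{THM:Lp-clique-expansion-p-small} for $p \in (0,1)$ (giving the stated formula directly, as that theorem already allows arbitrary $F$ with $\chi(F) = \ell+1$). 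Actually Theorem~\ref{THM:Lp-clique-expansion-p-small} is stated for all $F$ with $\chi(F) = \ell+1$ with no edge-criticality hypothesis, so the $p \in (0,1)$ case of the corollary is literally that theorem; only the $p \ge 1$ case needs work.

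So the real content is: for $p \ge 1$ and general (not necessarily edge-critical) $F$ with $\chi(F) = \ell+1$, show $\pi_{t,p}(H_F^r) = t! \cdot \lambda_{t,p}(K_\ell^r)$. The lower bound is the balanced construction above (or more precisely the $\lambda_{t,p}(K_\ell^r)$-optimal one). For the upper bound, I would pass to an edge-critical $F' \supseteq F$ with $\chi(F') = \ell+1$ — such $F'$ exists: take any proper $(\ell+1)$-coloring of $F$, add edges to make it a complete $(\ell+1)$-partite graph $F''$ on the same vertex set, and then $F''$ contains an edge-critical spanning-chromatic subgraph (e.g., delete edges until it becomes edge-critical while keeping $\chi = \ell+1$; the minimal such graph is edge-critical by definition). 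Then $F \subseteq F' $ gives $H_F^r \subseteq H_{F'}^r$, so $H_{F'}^r$-free $\Rightarrow$... no — containment goes the wrong way for freeness. I need $F' \subseteq F$ with $F'$ edge-critical and $\chi(F') = \ell+1$: every graph of chromatic number $\ell+1$ contains an edge-critical (in fact vertex-critical, hence edge-critical) subgraph with chromatic number $\ell+1$ — this is standard (take a subgraph minimal with respect to $\chi = \ell+1$). Then $H_{F'}^r \subseteq H_F^r$, so every $H_F^r$-free $r$-graph is $H_{F'}^r$-free, giving $\pi_{t,p}(H_F^r) \le \pi_{t,p}(H_{F'}^r) = t!\lambda_{t,p}(K_\ell^r)$ by Theorem~\ref{THM:Lp-clique-expansion-p-large}. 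Combined with the matching lower bound, this completes the $p \ge 1$ case and hence the corollary. The main obstacle is purely bookkeeping: verifying that the balanced (resp. $\lambda_{t,p}$-optimal) complete $\ell$-partite construction is genuinely $H_F^r$-free and computing its $(t,p)$-norm asymptotics, together with confirming that Proposition~\ref{PROP:tp-density-blowup-lemma} indeed licenses gluing these two regimes into the single piecewise statement.
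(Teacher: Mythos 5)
The $p\in(0,1)$ case of your argument is fine (it is literally Theorem~\ref{THM:Lp-clique-expansion-p-small}), and so is the lower bound for $p\ge 1$ via complete $\ell$-partite constructions. The gap is in your upper bound for $p\ge 1$: after passing to an edge-critical subgraph $F'\subseteq F$ with $\chi(F')=\ell+1$, you claim that $H_{F'}^{r}\subseteq H_{F}^{r}$ implies that every $H_{F}^{r}$-free $r$-graph is $H_{F'}^{r}$-free. The implication runs the other way: since a copy of $H_{F}^{r}$ contains a copy of $H_{F'}^{r}$, it is $H_{F'}^{r}$-freeness that implies $H_{F}^{r}$-freeness, so the subgraph relation only yields $\mathrm{ex}_{t,p}(n,H_{F'}^{r})\le \mathrm{ex}_{t,p}(n,H_{F}^{r})$, i.e.\ $\pi_{t,p}(H_{F'}^{r})\le \pi_{t,p}(H_{F}^{r})$ --- the direction you already have from the construction, and useless for an upper bound. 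Concretely, if $F$ is two vertex-disjoint copies of $K_{\ell+1}$ and $F'=K_{\ell+1}$, then a single copy of $H_{\ell+1}^{r}$ together with isolated vertices is $H_{F}^{r}$-free but contains $H_{F'}^{r}$, so your claimed implication fails.

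The missing ingredient is Proposition~\ref{PROP:tp-density-blowup-lemma}, which you set aside too quickly: you dismissed the reduction to $K_{\ell+1}$ on the grounds that $H_{F}^{r}$-freeness does not imply $H_{K_{\ell+1}}^{r}$-freeness unless $K_{\ell+1}\subseteq F$, but the proposition only needs \emph{approximate} freeness, obtained via the removal lemma from a homomorphism. Since $\chi(F)=\ell+1$, any proper $(\ell+1)$-colouring $\phi$ of $F$ extends to a homomorphism $H_{F}^{r}\to H_{K_{\ell+1}}^{r}$ (send the $r-2$ expansion vertices of an edge $uv$ of $F$ to the expansion vertices of the edge $\phi(u)\phi(v)$ of $K_{\ell+1}$), so $\{H_{K_{\ell+1}}^{r}\}\le_{\mathrm{hom}}\{H_{F}^{r}\}$ and Proposition~\ref{PROP:tp-density-blowup-lemma} gives $\pi_{t,p}(H_{F}^{r})\le \pi_{t,p}(H_{K_{\ell+1}}^{r})$. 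As $K_{\ell+1}$ is edge-critical with chromatic number $\ell+1$, Theorem~\ref{THM:Lp-clique-expansion-p-large} (together with Fact~\ref{FACT:t-p-Lagrangian}) yields $\pi_{t,p}(H_{K_{\ell+1}}^{r})=t!\cdot\lambda_{t,p}(K_{\ell}^{r})$ for $p\ge 1$, matching your lower bound; this homomorphism-plus-removal reduction is the paper's intended one-line derivation, and with it in place of your subgraph step the rest of your outline goes through.
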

\textbf{Remark.} 
The value $\lambda_{t,p}(K_{\ell}^{r})$ will be defined in Section~\ref{SEC:Prelim} and can be determined by solving an optimization problem.

The proof of Theorem~\ref{THM:Lp-clique-expansion-p-small} is presented in Section~\ref{SEC:Proof-Lp-F5-small}. 
The proof of Theorem~\ref{THM:Lp-clique-expansion-p-large} follows a similar strategy to that of  Theorem~\ref{THM:Lp-F5-p-large} but is much simpler, as it does not require the complicated calculations presented in Sections~\ref{SUBSEC:F5-large-STS} and~\ref{SUBSEC:proof-F5-inequalities}. 
To prevent the paper from becoming excessively long, we omit it here and refer the interested reader to the proof of~{\cite[Theorem~2.5]{CL24}} for the necessary technical lemmas. 

In the next section, we introduce some definitions and present some preliminary results. 
In Section~\ref{SEC:General-property}, we establish general properties of the $(t,p)$-norm of hypergraphs and state the general theorems on $(t,p)$-norm Tur\'{a}n problems, which are consequences of more general theorems established in~\cite{CL24}.  
Section~\ref{SEC:Remark} contains additional remarks and some open problems. 

\section{Preliminaries}\label{SEC:Prelim}
A family $\mathfrak{F}$ of $r$-graphs is \textbf{hereditary} if, for every $F \in \mathfrak{F}$, all subgraphs of $F$ are also included in $\mathfrak{F}$. 
We use $\mathfrak{G}^{r}$ to denote the collection of all $r$-graphs. 
Recall that an $r$-graph $\mathcal{H}$ is \textbf{$\mathcal{G}$-colorable} if there exists a homomorphism from $\mathcal{H}$ to $\mathcal{G}$. 
Extending the definition of the chromatic number for graphs, the \textbf{chromatic number} $\chi(Q)$ of an $r$-graph $Q$ is the minimum integer $\ell$ such that $Q$ is $K_{\ell}^r$-colorable. 
For integers $\ell \ge r \ge 2$, we use $\mathfrak{K}_{\ell}^{r}$ to denote the collection of all $K_{\ell}^{r}$-colorable $r$-graphs. 
Note that $\mathfrak{K}_{\ell}^{r}$ is a hereditary family.

Given an $r$-graph $\mathcal{G}$ on $[m]$ and pairwise disjoint sets $V_1, \ldots, V_m$, we use $\mathcal{G}(V_1, \ldots, V_m)$ to denote the $r$-graph obtained from $\mathcal{G}$ by replacing every vertex $i\in [m]$ with the set $V_i$ and every edge $\{i_1, \ldots, i_m\} \in \mathcal{G}$ with the complete $r$-partite $r$-graph with parts $V_{i_1}, \ldots, V_{i_m}$. 
The $r$-graph $\mathcal{G}(V_1, \ldots, V_m)$ is called a \textbf{blowup} of $\mathcal{G}$. 
For every integer $k \ge 1$, we use $\mathcal{G}(k)$ to denote the blowup $\mathcal{G}(V_1, \ldots, V_m)$ in which each $V_i$ has size $k$. 
For every family $\mathcal{F}$, we use $\mathcal{F}(k)$ to denote the set $\{F(k) \colon F\in \mathcal{F}\}$. 
For convenience, we use $K^{r}(V_1, \ldots, V_{\ell})$ to denote the complete $\ell$-partite $r$-graph with parts $V_1, \ldots, V_{\ell}$. 
The generalized Tur\'{a}n graph $T^{r}(n,\ell)$ is the balanced complete $\ell$-partite $r$-graph on $n$ vertices. 

Given an $r$-graph $\mathcal{H}$ and a set $S\subset V(\mathcal{H})$, we use $\mathcal{H}[S]$ to denote the \textbf{induced subgraph} of $\mathcal{H}$ on $S$, and use $\mathcal{H} - S$ to denote the induced subgraph of $\mathcal{H}$ on $V(\mathcal{H})\setminus S$. 
For a vertex $v \in V(\mathcal{H})$ the \textbf{neighborhood} of $v$ in $\mathcal{H}$ is   
\begin{align*}
    N_{\mathcal{H}}(v)
    \coloneqq 
    \left\{u\in V(\mathcal{H}) \colon \exists e\in \mathcal{H} \text{ such that } \{u,v\} \subset e \right\}.
\end{align*}
Given a pair of vertices $u,v\in V(\mathcal{H})$,
we say $\{u,v\}$ is \textbf{uncovered} if there is no edge in $\mathcal{H}$ containing both $u$ and $v$. 
We say $u,v$ are \textbf{equivalent} if $L_{\mathcal{H}}(u) = L_{\mathcal{H}}(v)$ (in particular, this means that $\{u,v\}$ is uncovered).
We use $\mathcal{H}_{u\to v}$ to denote the $r$-graph obtained from $\mathcal{H}$ by \textbf{symmetrizing $u$ into $v$}, that is 
\begin{align*}
    \mathcal{H}_{u\to v} 
    \coloneqq \left(\mathcal{H} \setminus \{E\in \mathcal{H} \colon u\in E\} \right) \cup \left\{e \cup \{u\} \colon e\in L_{\mathcal{H}}(v) \right\}. 
\end{align*}
An $r$-graph $\mathcal{H}$ is \textbf{symmetrized} if every two uncovered vertices in $\mathcal{H}$ are equivalent.

We say a map $\Gamma \colon \mathfrak{G}^{r} \to \mathbb{R}$ is \textbf{symmetrization-increasing} if for every $\mathcal{H} \in \mathfrak{G}^{r}$ and for every pair of uncovered vertices $u,v \in V(\mathcal{H})$,  
    \begin{align}\label{equ:sym-increase}
        \text{either}\quad 
        \Gamma(\mathcal{H}) 
        < \max\left\{\Gamma(\mathcal{H}_{u\to v}),\ \Gamma(\mathcal{H}_{v\to u})\right\}
        \quad\text{or}\quad 
        \Gamma(\mathcal{H})
        = \Gamma(\mathcal{H}_{u\to v})
        = \Gamma(\mathcal{H}_{v\to u}).
    \end{align}   
Notice that~\eqref{equ:sym-increase} holds iff there exists $\alpha \in (0,1)$ such that 
\begin{align*}
    \Gamma(\mathcal{H})
    \le \alpha \cdot \Gamma(\mathcal{H}_{u\to v}) + (1-\alpha) \cdot \Gamma(\mathcal{H}_{v\to u}). 
\end{align*}
Note that we typically choose $\alpha = 1/2$ when proving that a specific map $\Gamma$ is symmetrization-increasing.

Given a family $\mathcal{F}$ of $r$-graphs, the \textbf{$\mathcal{F}$-freeness indicator map} $\mathbbm{1}_{\mathcal{F}} \colon \mathfrak{G}^{r} \to \{0,1\}$ is defined as follows$\colon$
\begin{align*}
    \mathbbm{1}_{\mathcal{F}}(\mathcal{H})
    \coloneqq 
    \begin{cases}
        1, & \quad\text{if $\mathcal{H}$ is $\mathcal{F}$-free}, \\
        0, & \quad \text{otherwise}. 
    \end{cases}
\end{align*}
Following the definition in~\cite{LMR23unif}, a family $\mathcal{F}$ of $r$-graphs is \textbf{blowup-invariant} if blowups of every $\mathcal{F}$-free $r$-graph remain $\mathcal{F}$-free.
It is easy to see that if $\mathcal{F}$ is blowup-invariant and $\mathcal{H}$ is an $\mathcal{F}$-free $r$-graph, then $\mathcal{H}_{u \to v}$ is $\mathcal{F}$-free for every uncovered pair $\{u,v\}$ in $\mathcal{H}$.  
Hence, the map $\mathbbm{1}_{\mathcal{F}}$ is symmetrization-increasing for every blowup-invariant $\mathcal{F}$. 

Now we are ready to state the first property of the $(t,p)$-norm. 

\begin{proposition}\label{PROP:star-polynomial-sym-increase}
    Let $r> t \ge 1$ be integers and $p \ge 1$ be a real number. The map $\Gamma \colon \mathfrak{G}^{r} \to \mathbb{R}$, defined by 
        \begin{align*}
            \Gamma(\mathcal{H})
            \coloneqq \norm{\mathcal{H}}_{t,p}
            \quad\text{for every } \mathcal{H} \in \mathfrak{G}^{r}, 
        \end{align*}
        is symmetrization-increasing.
        Consequently, for every blowup-invariant family $\mathcal{F}$, the map $\Gamma' \colon \mathfrak{G}^{r} \to \mathbb{R}$, defined by  
        \begin{align*}
            \Gamma'(\mathcal{H})
            \coloneqq \norm{\mathcal{H}}_{t,p} \cdot \mathbbm{1}_{\mathcal{F}}(\mathcal{H})
            \quad\text{for every } \mathcal{H} \in \mathfrak{G}^{r}, 
        \end{align*}
        is symmetrization-increasing.
\end{proposition}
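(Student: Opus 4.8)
The plan is to show the first claim---that $\norm{\cdot}_{t,p}$ is symmetrization-increasing---since the consequence for $\Gamma'$ then follows formally: for a blowup-invariant family $\mathcal{F}$, the map $\mathbbm{1}_{\mathcal{F}}$ is symmetrization-increasing (in fact both $\mathcal{H}_{u\to v}$ and $\mathcal{H}_{v\to u}$ are $\mathcal{F}$-free whenever $\mathcal{H}$ is, as noted in the excerpt), so on an $\mathcal{F}$-free $\mathcal{H}$ we have $\Gamma'(\mathcal{H}) = \norm{\mathcal{H}}_{t,p}$ and the two symmetrizations also have their indicator equal to $1$, while on a non-$\mathcal{F}$-free $\mathcal{H}$ we have $\Gamma'(\mathcal{H}) = 0 \le \max\{\Gamma'(\mathcal{H}_{u\to v}), \Gamma'(\mathcal{H}_{v\to u})\}$, which together with the first inequality of~\eqref{equ:sym-increase} being non-strict when there is equality suffices. (One should be slightly careful to check the case distinction in~\eqref{equ:sym-increase} goes through; if $\mathcal{H}$ is not $\mathcal{F}$-free then $\Gamma'(\mathcal{H})=0$ and either some symmetrization is also not $\mathcal{F}$-free giving the equality branch, or both are $\mathcal{F}$-free with positive norm giving the strict branch, or $\norm{\mathcal H}_{t,p}=0$ in which case both symmetrizations vanish too.)

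For the main claim, fix an uncovered pair $u,v$ in $\mathcal{H}$. Since $\{u,v\}$ is uncovered, no edge contains both $u$ and $v$, so every $t$-set $T$ contains at most one of them. I would partition the $t$-sets $T \subset V(\mathcal{H})$ into four classes according to whether $u \in T$, $v \in T$, or neither, and track how $d_{\mathcal{H}}^p(T)$ changes under $\mathcal{H} \mapsto \mathcal{H}_{u\to v}$ versus $\mathcal{H} \mapsto \mathcal{H}_{v\to u}$. For $T$ avoiding both $u$ and $v$, the degree $d(T)$ can only change via edges through $u$ or $v$; for $T$ containing $u$ (resp. $v$), $d_{\mathcal{H}}(T)$ equals the relevant link size. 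The key observation is that after symmetrizing $u$ into $v$, the vertices $u$ and $v$ become equivalent with link $L_{\mathcal{H}}(v)$, and symmetrically for $v$ into $u$. I would then aim to show the averaging inequality with $\alpha = 1/2$:
\begin{align*}
    \norm{\mathcal{H}}_{t,p} \le \tfrac12 \norm{\mathcal{H}_{u\to v}}_{t,p} + \tfrac12 \norm{\mathcal{H}_{v\to u}}_{t,p},
\end{align*}
which by the remark after~\eqref{equ:sym-increase} is equivalent to being symmetrization-increasing. This reduces to a sum of local inequalities, one per orbit of $t$-sets under the swap $u \leftrightarrow v$, each of which should follow from convexity of $x \mapsto x^p$ for $p \ge 1$ (Jensen / the inequality $\tfrac12 a^p + \tfrac12 b^p \ge (\tfrac{a+b}{2})^p$, or more precisely matching $d_{\mathcal H}(T)$ against an average of the two symmetrized degrees).

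Concretely, the terms to handle are: (a) $t$-sets $T$ with $u,v \notin T$, where one compares $d_{\mathcal H}(T) = |L_{\mathcal H}(T)|$ against $|L_{\mathcal{H}_{u\to v}}(T)|$ and $|L_{\mathcal{H}_{v\to u}}(T)|$---here one splits $L_{\mathcal H}(T)$ by whether an edge meets $u$, meets $v$, or neither, and the "neither" part is common to all three while the $u$-part and $v$-part get redistributed; and (b) pairs $\{T \cup \{u\}, T \cup \{v\}\}$ with $u,v \notin T$, which form a swap-orbit of size two, where $d_{\mathcal H}(T\cup\{u\}) + d_{\mathcal H}(T\cup\{v\})$ is preserved in total but one wants $d^p(T\cup u) + d^p(T\cup v) \le \tfrac12[\,(\text{after } u\to v)\,] + \tfrac12[\,(\text{after } v\to u)\,]$, using that after $u\to v$ both $T\cup u$ and $T\cup v$ have degree $d_{\mathcal H}(T\cup v)$, and after $v \to u$ both have degree $d_{\mathcal H}(T\cup u)$; convexity of $x^p$ gives $a^p + b^p \le \tfrac12(2a^p) + \tfrac12(2b^p)$ trivially, so actually this orbit is an equality and the real content is in (a), where the redistribution of the $u$- and $v$-parts of the link among the pieces is governed by $a^p + b^p$ vs. $(a')^p + (b')^p$ with majorization. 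The main obstacle is bookkeeping: setting up the case analysis for class (a) cleanly so that each contribution is visibly an instance of convexity, and making sure the pieces of the links are correctly identified after symmetrization (in particular that an edge of $\mathcal H$ through $u$ but not $v$ maps to an edge through $u$ in $\mathcal{H}_{u\to v}$ only if its trace lies in $L_{\mathcal H}(v)$, and otherwise disappears). Once the correct decomposition is in place, each piece is an elementary consequence of the convexity of $t \mapsto t^p$ for $p \ge 1$, so no genuinely hard step remains beyond this organization.
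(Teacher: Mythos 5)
Your plan is correct and follows essentially the same route as the paper: prove the averaging inequality $\norm{\mathcal{H}}_{t,p}\le \tfrac12\norm{\mathcal{H}_{u\to v}}_{t,p}+\tfrac12\norm{\mathcal{H}_{v\to u}}_{t,p}$ by partitioning $t$-sets according to whether they contain $u$, $v$, or neither, observing that the classes through $u$ or $v$ contribute with equality, and handling the remaining class via the split of $L_{\mathcal{H}}(T)$ into the parts meeting $u$, meeting $v$, or neither together with convexity of $x\mapsto x^p$ (the paper's $z_u,z_v,z_0$ and Jensen step). Your deduction of the statement for $\Gamma'$ from blowup-invariance also matches the paper's (brief) treatment, so no gap remains beyond writing out the bookkeeping you describe.
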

\begin{proof}[Proof of Proposition~\ref{PROP:star-polynomial-sym-increase}]
    Let $\mathcal{H}$ be an $r$-graph and $u,v \in V(\mathcal{H})$ be a pair of uncovered vertices. 
    For convenience, let $V \coloneqq V(\mathcal{H})$, $\mathcal{T}_{0}\coloneqq \binom{V\setminus \{u,v\}}{t}$, 
    \begin{align*}
       \mathcal{T}_{u} 
       \coloneqq \left\{T \in \binom{V\setminus \{v\}}{t} \colon u \in T\right\}, 
       \quad\text{and}\quad 
       \mathcal{T}_{v} 
       \coloneqq \left\{T \in \binom{V\setminus \{u\}}{t} \colon v \in T\right\}. 
    \end{align*}
    Let $\mathcal{H}_0 \coloneqq \mathcal{H}$, $\mathcal{H}_1 \coloneqq \mathcal{H}_{u\to v}$, and $\mathcal{H}_2 \coloneqq \mathcal{H}_{v\to u}$. 
    For each $i\in \{0,1,2\}$, since $\{u,v\}$ is uncovered in $\mathcal{H}_i$, we have $\partial_{r-t}\mathcal{H}_{i} \subset \mathcal{T}_{0} \cup \mathcal{T}_{u} \cup \mathcal{T}_{v}$, and hence, 
    \begin{align*}
        \Gamma(\mathcal{H}_i)
        = \sum_{T\in \mathcal{T}_u} d^{p}_{\mathcal{H}_i}(T) 
        + \sum_{T\in \mathcal{T}_v} d^{p}_{\mathcal{H}_i}(T) 
        + \sum_{T\in \mathcal{T}_0} d^{p}_{\mathcal{H}_i}(T). 
    \end{align*}
    It is easy to see from the definition of symmetrization that 
    \begin{align*}
        2 \sum_{T\in \mathcal{T}_u} d^{p}_{\mathcal{H}_0}(T)
        & = \sum_{T\in \mathcal{T}_u} d^{p}_{\mathcal{H}_2}(T) + \sum_{T\in \mathcal{T}_v} d^{p}_{\mathcal{H}_2}(T), 
        \quad\text{and}\quad  \\
        2 \sum_{T\in \mathcal{T}_v} d^{p}_{\mathcal{H}_0}(T)
        & = \sum_{T\in \mathcal{T}_u} d^{p}_{\mathcal{H}_1}(T) + \sum_{T\in \mathcal{T}_v} d^{p}_{\mathcal{H}_1}(T).
    \end{align*}
    So it suffices to show that 
    \begin{align}\label{equ:deg-T-p-power}
        2 \sum_{T\in \mathcal{T}_0} d^{p}_{\mathcal{H}_0}(T)
        \le \sum_{T\in \mathcal{T}_0} d^{p}_{\mathcal{H}_1}(T) 
            + \sum_{T\in \mathcal{T}_0} d^{p}_{\mathcal{H}_2}(T).
    \end{align}
    Fix $T \in \mathcal{T}_0$. 
    Let $z_{u} \coloneqq d_{\mathcal{H}}(T \cup \{u\})$, $z_{v} \coloneqq d_{\mathcal{H}}(T \cup \{v\})$, and $z_0 \coloneqq d_{\mathcal{H}}(T) - z_u-z_v$. 
    It is clear from the definition that 
    \begin{align*}
        d^{p}_{\mathcal{H}_0}(T)
         = \left(z_u+z_v+z_0\right)^{p}, \quad
        d^{p}_{\mathcal{H}_1}(T)
         = \left(2z_v+z_0\right)^{p}, \quad\text{and}\quad
        d^{p}_{\mathcal{H}_2}(T)
         = \left(2z_u+z_0\right)^{p}. 
    \end{align*}
    Since $p \ge 1$, it follows from Jensen's inequality that 
    \begin{align*}
        \frac{\left(2z_v+z_0\right)^{p} + \left(2z_u+z_0\right)^{p}}{2} 
        \ge \left(\frac{2z_v+z_0 + 2z_u+z_0}{2}\right)^p
        = \left(z_u+z_v+z_0\right)^{p}. 
    \end{align*}
    Therefore, $2d^{p}_{\mathcal{H}_0}(T) \le d^{p}_{\mathcal{H}_1}(T) + d^{p}_{\mathcal{H}_2}(T)$. 
    Summing over all $T\in \mathcal{T}_0$, we obtain~\eqref{equ:deg-T-p-power}, and hence, complete the proof of Proposition~\ref{PROP:star-polynomial-sym-increase}. 
\end{proof}

The following result justifies the definition of $\pi_{t,p}(\mathcal{F})$. 
Its proof can be obtained with a minor modification to that of~{\cite[Proposition~1.8]{BCL22}}, which is itself an extension of the averaging argument used by Katona--Nemetz--Simonovits~\cite{KNS64}.
\begin{proposition}\label{PROP:limit-exist}
    Let $r > t \ge 1 $ be integers and $p \ge 1$ be a real number. For every family $\mathcal{F}$ of $r$-graphs, the limit $\lim\limits_{n\to \infty} \frac{\mathrm{ex}_{t,p}(n,\mathcal{F})}{n^{t+p(r-t)}}$ exists. 
    In particular, if $\mathcal{F}$ is a nondegenerate family of $r$-graphs, then for every $\delta \in [0,1]$, 
    \begin{align}\label{equ:PROP:limit-exist}
        \mathrm{ex}_{t,p}(n-\delta n,\mathcal{F})
        & \le \mathrm{ex}_{t,p}(n,\mathcal{F}) - \delta \left(t+p(r-t)\right) \cdot \mathrm{ex}_{t,p}(n,\mathcal{F})  \notag \\
        & \quad + \delta^2 \left(t+p(r-t)\right)^2 \cdot \mathrm{ex}_{t,p}(n,\mathcal{F}) 
        + o(n^{t+p(r-t)}). 
    \end{align}
\end{proposition}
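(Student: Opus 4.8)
The plan is to establish the existence of the limit $\lim_{n\to\infty} \mathrm{ex}_{t,p}(n,\mathcal{F})/n^{t+p(r-t)}$ by showing that the normalized sequence is essentially monotone, via a vertex-deletion averaging argument in the spirit of Katona--Nemetz--Simonovits. First I would take an extremal $n$-vertex $\mathcal{F}$-free $r$-graph $\mathcal{H}$ with $\norm{\mathcal{H}}_{t,p} = \mathrm{ex}_{t,p}(n,\mathcal{F})$, and for a parameter $m < n$, average $\norm{\mathcal{H}[S]}_{t,p}$ over all $m$-subsets $S \subset V(\mathcal{H})$. Since each induced subgraph $\mathcal{H}[S]$ is $\mathcal{F}$-free, we have $\norm{\mathcal{H}[S]}_{t,p} \le \mathrm{ex}_{t,p}(m,\mathcal{F})$. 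The key computation is that the expected value of $\norm{\mathcal{H}[S]}_{t,p}$ over a uniformly random $m$-set $S$ equals $\sum_{T \in \binom{V}{t}} \Pr[T \subset S] \cdot \mathbb{E}[d_{\mathcal{H}[S]}(T)^p \mid T \subset S]$; here one must be slightly careful because $d_{\mathcal{H}[S]}(T)^p$ is a nonlinear (convex for $p \ge 1$) function of the random variable $d_{\mathcal{H}[S]}(T)$, so Jensen's inequality gives a bound in the direction $\mathbb{E}[d_{\mathcal{H}[S]}(T)^p] \ge (\mathbb{E}[d_{\mathcal{H}[S]}(T)])^p$, which is the wrong direction if one wants a clean reverse bound but is harmless if one only wants an upper estimate on $\mathrm{ex}_{t,p}(m,\mathcal{F})$ in terms of $\mathrm{ex}_{t,p}(n,\mathcal{F})$. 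The cleanest route is to directly compare: for fixed $T$, a uniformly random $(r-t)$-set among the $d_{\mathcal{H}}(T)$ neighbors of $T$ lies inside $S$ with the appropriate hypergeometric probability, so $\mathbb{E}[d_{\mathcal{H}[S]}(T) \mid T \subset S] = d_{\mathcal{H}}(T) \cdot \binom{n-t-(r-t)}{m-t-(r-t)}/\binom{n-t}{m-t}$, and then since $x \mapsto x^p$ is convex one does \emph{not} immediately get what one wants — so instead I would average over \emph{random $m$-sets} the quantity $\norm{\mathcal{H}[S]}_{t,p}$ and bound it \emph{below} by a suitable expression using convexity applied the other way, namely by restricting attention to the contribution of edges that survive, giving $\mathbb{E}[\norm{\mathcal{H}[S]}_{t,p}] \ge c(n,m) \cdot \norm{\mathcal{H}}_{t,p}$ for an explicit constant $c(n,m)$ obtained by Jensen on the degree survival. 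Combined with $\mathbb{E}[\norm{\mathcal{H}[S]}_{t,p}] \le \mathrm{ex}_{t,p}(m,\mathcal{F})$, this yields $\mathrm{ex}_{t,p}(m,\mathcal{F}) \ge c(n,m)\cdot\mathrm{ex}_{t,p}(n,\mathcal{F})$, and computing $c(n,m)$ to leading order shows $\mathrm{ex}_{t,p}(n,\mathcal{F})/n^{t+p(r-t)}$ is, up to $o(1)$ error, nonincreasing, hence convergent as it is bounded below by $0$.

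For the quantitative statement~\eqref{equ:PROP:limit-exist}, I would apply the above with $m = n - \delta n = (1-\delta)n$ and track the constant $c(n,m)$ carefully to second order in $\delta$. The survival probability of a fixed edge, and hence (after Jensen) the multiplicative loss in each degree-power term, is governed by falling factorials $\prod_{i=0}^{r-1}\frac{m-i}{n-i}$ for the edge times the analogous factor for the choice of $T$; expanding $\prod (1-\text{small})$ and using $(1-\delta)^{k} \ge 1 - k\delta$ together with a matching second-order correction $(1-x)^k \ge 1 - kx + \binom{k}{2}x^2$ type bound (or more crudely $1 - k\delta - \binom{k}{2}\delta^2 \le (1-\delta)^k$ type estimates) gives the stated inequality with the exponent $t + p(r-t)$ playing the role of the total "weight" $k$; the nondegeneracy hypothesis $\pi(\mathcal{F}) > 0$ is used precisely to absorb lower-order terms into the $o(n^{t+p(r-t)})$ error, since it guarantees $\mathrm{ex}_{t,p}(n,\mathcal{F}) = \Theta(n^{t+p(r-t)})$ so that an additive $o(n^{t+p(r-t)})$ term is genuinely negligible compared to $\mathrm{ex}_{t,p}(n,\mathcal{F})$.

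The main obstacle I anticipate is handling the nonlinearity of $x \mapsto x^p$ correctly when passing from expected surviving degree to expected $p$-th power of surviving degree: the averaging over $S$ produces a random degree whose $p$-th power one needs to lower-bound, and Jensen points the "wrong way" unless one is careful to set up the averaging so that $\norm{\mathcal{H}[S]}_{t,p}$ is itself being lower-bounded (not upper-bounded) in terms of $\norm{\mathcal{H}}_{t,p}$. The resolution is to note that for each fixed $T$, conditioned on $T \subset S$, we can write $d_{\mathcal{H}[S]}(T)$ as a sum of indicator variables (one per neighbor $(r-t)$-set of $T$), and then use convexity of $x \mapsto x^p$ in the form $\mathbb{E}[X^p] \ge (\mathbb{E} X)^p$ — which \emph{is} the direction we want, since we are lower-bounding $\mathbb{E}[\norm{\mathcal{H}[S]}_{t,p}]$. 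Thus $\mathbb{E}[d_{\mathcal{H}[S]}(T)^p \mid T\subset S] \ge (\mathbb{E}[d_{\mathcal{H}[S]}(T)\mid T\subset S])^p = \big(d_{\mathcal{H}}(T) \cdot q(n,m)\big)^p$ where $q(n,m)$ is the edge-survival conditional probability, and summing over $T$ with weight $\Pr[T\subset S]$ gives $\mathbb{E}[\norm{\mathcal{H}[S]}_{t,p}] \ge \Pr[T_0 \subset S]\cdot q(n,m)^p \cdot \norm{\mathcal{H}}_{t,p}$; the remaining work is the bookkeeping to show $\Pr[T_0\subset S]\cdot q(n,m)^p = (m/n)^{t}\cdot (m/n)^{p(r-t)}(1+o(1))$ with the claimed second-order expansion, which is the routine-calculation part deferred to the reference~{\cite[Proposition~1.8]{BCL22}}.
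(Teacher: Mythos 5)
Your averaging argument for the existence of the limit is correct and is essentially the paper's route: the paper proves this proposition by a minor modification of {\cite[Proposition~1.8]{BCL22}}, i.e.\ exactly the Katona--Nemetz--Simonovits averaging over $m$-subsets that you describe, and your resolution of the convexity issue is the right one --- conditioning on $T\subset S$, writing the surviving degree as a sum of indicators, and using $\mathbb{E}[X^p]\ge(\mathbb{E}X)^p$ is the correct direction precisely because you are \emph{lower}-bounding $\mathbb{E}\,\norm{\mathcal{H}[S]}_{t,p}$ against the upper bound $\mathrm{ex}_{t,p}(m,\mathcal{F})$, which yields the approximate monotonicity of $\mathrm{ex}_{t,p}(n,\mathcal{F})/n^{t+p(r-t)}$ and hence convergence.

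Your derivation of~\eqref{equ:PROP:limit-exist}, however, has a directional gap. The averaging step gives $\mathrm{ex}_{t,p}(m,\mathcal{F}) \ge c(n,m)\cdot \mathrm{ex}_{t,p}(n,\mathcal{F})$ with $c(n,m)\approx (m/n)^{t+p(r-t)}$, i.e.\ a \emph{lower} bound on $\mathrm{ex}_{t,p}(n-\delta n,\mathcal{F})$ in terms of $\mathrm{ex}_{t,p}(n,\mathcal{F})$, whereas~\eqref{equ:PROP:limit-exist} is an \emph{upper} bound on $\mathrm{ex}_{t,p}(n-\delta n,\mathcal{F})$. Tracking $c(n,m)$ to second order therefore cannot produce the stated inequality, and the elementary estimates you invoke ($(1-\delta)^k\ge 1-k\delta$ and its second-order refinements) are lower bounds on $(1-\delta)^k$, i.e.\ again the wrong side: the one-sided averaging inequality by itself would even be consistent with $\mathrm{ex}_{t,p}(m,\mathcal{F})/m^{t+p(r-t)}$ exceeding $\mathrm{ex}_{t,p}(n,\mathcal{F})/n^{t+p(r-t)}$ by a constant factor, in which case the claimed bound fails. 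The way to get~\eqref{equ:PROP:limit-exist} (note the ``In particular'' in the statement) is to use the limit you have just established: with $K\coloneqq t+p(r-t)$ and $m=(1-\delta)n$, convergence gives $\mathrm{ex}_{t,p}(m,\mathcal{F}) \le (1-\delta)^{K}\,\mathrm{ex}_{t,p}(n,\mathcal{F})+o(n^{K})$, and then the upper estimate $(1-\delta)^{K}\le 1-K\delta+K^{2}\delta^{2}$ from Fact~\ref{FACT:inequality-a}~\ref{FACT:inequality-a-2} yields the claim. Relatedly, nondegeneracy is not needed to ``absorb lower-order terms'' --- the additive $o(n^{t+p(r-t)})$ error is already present in the statement; it only matters for the inequality to serve as the smoothness condition of~\cite{CL24}.
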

\textbf{Remark.} 
\begin{itemize}
    \item Inequality~\eqref{equ:PROP:limit-exist} verifies the \textbf{smoothness} (as defined in~\cite{CL24}) of the map $\norm{\cdot}_{t,p} \colon \mathfrak{G}^{r} \to \mathbb{R}$ for all $r > t \ge 1 $ and $p \ge 1$. 
    \item A slight modification of the argument of Katona--Nemetz--Simonovits shows that the limit $\lim\limits_{n\to \infty} \frac{\mathrm{ex}_{t,p}(n,\mathcal{F})}{n^{t+p(r-t)}}$ also exists for $p \in (0,1)$, and we refer the reader to Proposition~\ref{PROP:limit-exits-small-p} in the Appendix for details. 
\end{itemize}

Given two families $\mathcal{F}, \hat{\mathcal{F}}$ of $r$-graphs, we say $\hat{\mathcal{F}} \le_{\mathrm{hom}} \mathcal{F}$ if for every $\hat{F} \in \hat{\mathcal{F}}$ there exists $F\in \mathcal{F}$ such that $F$ is $\hat{F}$-colorable. 
It is clear that for every family $\mathcal{F}$ of $r$-graphs and for every integer $k \ge 1$, $\mathcal{F} \le_{\mathrm{hom}} \mathcal{F}(k)$. 
Another example is $\mathcal{T}_{3} \coloneqq \{F_5, K_{4}^{3-}\} \le_{\mathrm{hom}} \{F_5\}$ since $F_5$ is $K_{4}^{3-}$-colorable. Here, $K_{4}^{3-}$ denote the $3$-graph obtained from the complete $3$-graph $K_{4}^{3}$ by removing one edge. 
The motivation for the definition of $\le_{\mathrm{hom}}$ comes from the following result, which is a simple consequence of the Hypergraph Removal Lemma~\cite{RS09} and Lemma~\ref{LEMMA:Lp-basic-property-global} in Section~\ref{SEC:General-property}. 
\begin{proposition}\label{PROP:tp-density-blowup-lemma}
    Suppose that $\mathcal{F}, \hat{\mathcal{F}}$ are two families of $r$-graphs satisfying $\hat{\mathcal{F}} \le_{\mathrm{hom}} \mathcal{F}$. 
    Then every $\mathcal{F}$-free $r$-graph on $n$ vertices is $\hat{\mathcal{F}}$-free after removing $o(n^r)$ edges. 
    Consequently, (by Lemma~\ref{LEMMA:Lp-basic-property-global}), for all $t > t \ge 1$ and $p > 0$, 
    \begin{align*}
        \pi_{t,p}(\mathcal{F})
        \le \pi_{t,p}(\hat{\mathcal{F}}),  
    \end{align*}
    and, in particular, 
    \begin{align*}
        \pi_{t,p}(\mathcal{F})
        = \pi_{t,p}(\mathcal{F}(k))
        \quad\text{for every}\quad k \ge 1. 
    \end{align*}
\end{proposition}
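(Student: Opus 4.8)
\textbf{Proof proposal for Proposition~\ref{PROP:tp-density-blowup-lemma}.}

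The plan is to prove the structural statement first and then extract the density inequalities from it. For the first claim, suppose $\mathcal{H}$ is an $n$-vertex $\mathcal{F}$-free $r$-graph and fix $\hat{F} \in \hat{\mathcal{F}}$; I want to delete $o(n^r)$ edges so that the result contains no copy of $\hat{F}$. By hypothesis there is some $F \in \mathcal{F}$ that is $\hat{F}$-colorable, i.e.\ admits a homomorphism into $\hat F$; equivalently $F$ is a subgraph of some blowup $\hat{F}(k)$ of $\hat F$. Since $\mathcal{H}$ is $F$-free it is in particular $\hat{F}(k)$-free for this $k$ (a copy of $\hat F(k)$ would contain a copy of every subgraph of $\hat F(k)$, including $F$). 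Now I invoke the Hypergraph Removal Lemma~\cite{RS09}: an $n$-vertex $r$-graph containing only $o(n^{v(\hat F)})$ copies of $\hat F$ can be made $\hat F$-free by deleting $o(n^r)$ edges. So it suffices to check that an $\hat F(k)$-free $r$-graph on $n$ vertices contains at most $o(n^{v(\hat F)})$ copies of $\hat F$ — indeed, if it contained $\Omega(n^{v(\hat F)})$ copies, a supersaturation / blowup argument (a standard consequence of the removal lemma itself, or of the Erd\H{o}s--Simonovits supersaturation principle for hypergraphs) would force a copy of $\hat F(k)$ once $n$ is large, a contradiction. Doing this for each of the finitely many $\hat F \in \hat{\mathcal{F}}$ and taking the union of the deleted edge sets (still $o(n^r)$ in total) yields an $\hat{\mathcal{F}}$-free subgraph, proving the first claim.

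For the density inequality, take any $\mathcal{F}$-free $r$-graph $\mathcal{H}$ on $n$ vertices with $\norm{\mathcal{H}}_{t,p} = \mathrm{ex}_{t,p}(n,\mathcal{F})$. By the first claim there is a subgraph $\mathcal{H}' \subseteq \mathcal{H}$ that is $\hat{\mathcal{F}}$-free with $|\mathcal{H} \setminus \mathcal{H}'| = o(n^r)$. I then want to say that removing $o(n^r)$ edges changes $\norm{\cdot}_{t,p}$ by only $o(n^{t+p(r-t)})$; this is precisely the kind of continuity/stability statement asserted by Lemma~\ref{LEMMA:Lp-basic-property-global} (which the proposition explicitly cites), and it gives $\norm{\mathcal{H}'}_{t,p} \ge \norm{\mathcal{H}}_{t,p} - o(n^{t+p(r-t)}) = \mathrm{ex}_{t,p}(n,\mathcal{F}) - o(n^{t+p(r-t)})$. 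Since $\mathcal{H}'$ is $\hat{\mathcal{F}}$-free on $n$ vertices, $\norm{\mathcal{H}'}_{t,p} \le \mathrm{ex}_{t,p}(n,\hat{\mathcal{F}})$, so $\mathrm{ex}_{t,p}(n,\mathcal{F}) \le \mathrm{ex}_{t,p}(n,\hat{\mathcal{F}}) + o(n^{t+p(r-t)})$. Dividing by $n^{t+p(r-t)}$ and letting $n \to \infty$ (using Proposition~\ref{PROP:limit-exist} and its small-$p$ analogue to know both limits exist) gives $\pi_{t,p}(\mathcal{F}) \le \pi_{t,p}(\hat{\mathcal{F}})$.

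Finally, for the special case $\hat{\mathcal{F}} = \mathcal{F}(k)$: we always have $\mathcal{F} \le_{\mathrm{hom}} \mathcal{F}(k)$ (noted in the text, via the natural homomorphism $F(k) \to F$), so the inequality just proved gives $\pi_{t,p}(\mathcal{F}) \le \pi_{t,p}(\mathcal{F}(k))$. For the reverse direction, any $\mathcal{F}$-free $r$-graph is also $\mathcal{F}(k)$-free (since $F \subseteq F(k)$ for each $F$), hence $\mathrm{ex}_{t,p}(n,\mathcal{F}) \le \mathrm{ex}_{t,p}(n,\mathcal{F}(k))$ for all $n$, giving $\pi_{t,p}(\mathcal{F}) \le \pi_{t,p}(\mathcal{F}(k))$ again — wait, that is the same direction; the correct reverse inequality $\pi_{t,p}(\mathcal{F}(k)) \le \pi_{t,p}(\mathcal{F})$ instead follows from applying the already-established inequality with the roles reversed, i.e.\ from $\mathcal{F}(k) \le_{\mathrm{hom}} \mathcal{F}(k)$ is useless, so one argues: an $\mathcal{F}(k)$-free graph can be made $\mathcal{F}$-free by removing $o(n^r)$ edges because $\mathcal{F} \le_{\hom} \mathcal{F}(k)$ — no. The clean route is: $\mathcal{F}(k) \le_{\hom} \mathcal{F}$ also holds, because each $F \in \mathcal{F}$ maps homomorphically onto itself and $F(k)$ is $F$-colorable; wait $F(k)$ is $F$-colorable means there is a hom $F(k)\to F$, which is the wrong direction for $\le_\hom$. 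So in fact $\mathcal{F}(k) \le_{\hom}\mathcal{F}$ requires for each $F(k)$ an $F' \in \mathcal{F}$ with $F'$ being $F(k)$-colorable; take $F' = F$, and $F$ is $F(k)$-colorable since $F \subseteq F(k)$ so the inclusion composed with identity is a homomorphism. Hence $\mathcal{F}(k) \le_{\hom} \mathcal{F}$, and applying the density inequality yields $\pi_{t,p}(\mathcal{F}(k)) \le \pi_{t,p}(\mathcal{F})$, completing the equality. I expect the main obstacle to be stating cleanly the supersaturation step that upgrades ``$\Omega(n^{v(\hat F)})$ copies of $\hat F$'' to ``a copy of $\hat F(k)$'', and correctly tracking the two directions of $\le_{\hom}$ in the blowup equality — both are routine but easy to get backwards.
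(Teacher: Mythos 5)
Your treatment of the two main assertions is correct and is essentially the route the paper intends (the paper gives no written proof of Proposition~\ref{PROP:tp-density-blowup-lemma}, only the pointer to the Hypergraph Removal Lemma and Lemma~\ref{LEMMA:Lp-basic-property-global}): a homomorphism $F\to\hat F$ places $F$ inside a blowup $\hat F(k)$, so an $\mathcal{F}$-free $\mathcal{H}$ is $\hat F(k)$-free; supersaturation then caps the number of copies of $\hat F$ at $o(n^{v(\hat F)})$, the removal lemma deletes $o(n^r)$ edges, and Lemma~\ref{LEMMA:Lp-basic-property-global} converts the deleted edges into an $o(n^{t+p(r-t)})$ change of the $(t,p)$-norm, giving $\pi_{t,p}(\mathcal{F})\le\pi_{t,p}(\hat{\mathcal{F}})$ (with Proposition~\ref{PROP:limit-exist} and its small-$p$ analogue for the limits). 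One small caveat: your union over ``the finitely many $\hat F\in\hat{\mathcal{F}}$'' silently assumes $\hat{\mathcal{F}}$ is finite; this covers every application in the paper, but for an infinite family you would need to pass to finite subfamilies or invoke the infinite removal lemma.

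The final paragraph, however, has the $\le_{\mathrm{hom}}$ applications backwards, and as written the nontrivial half of $\pi_{t,p}(\mathcal{F})=\pi_{t,p}(\mathcal{F}(k))$ is not established. The proposition says: if $\hat{\mathcal{F}}\le_{\mathrm{hom}}\mathcal{F}$, then $\pi_{t,p}(\mathcal{F})\le\pi_{t,p}(\hat{\mathcal{F}})$, i.e.\ the family on the \emph{right} of $\le_{\mathrm{hom}}$ receives the smaller density. Hence $\mathcal{F}\le_{\mathrm{hom}}\mathcal{F}(k)$ yields $\pi_{t,p}(\mathcal{F}(k))\le\pi_{t,p}(\mathcal{F})$ --- not $\pi_{t,p}(\mathcal{F})\le\pi_{t,p}(\mathcal{F}(k))$ as you assert in your first application --- and this is precisely the nontrivial inequality; the step you reject with ``--- no'' (``an $\mathcal{F}(k)$-free graph can be made $\mathcal{F}$-free by removing $o(n^r)$ edges because $\mathcal{F}\le_{\mathrm{hom}}\mathcal{F}(k)$'') is in fact the correct argument. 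Conversely, your ``clean route'' via $\mathcal{F}(k)\le_{\mathrm{hom}}\mathcal{F}$ (which does hold, witnessed by the inclusion $F\hookrightarrow F(k)$) yields $\pi_{t,p}(\mathcal{F})\le\pi_{t,p}(\mathcal{F}(k))$, the trivial direction you had already obtained from monotonicity ($\mathcal{F}$-free implies $\mathcal{F}(k)$-free), and not $\pi_{t,p}(\mathcal{F}(k))\le\pi_{t,p}(\mathcal{F})$ as you claim. The correct assembly is short: monotonicity gives $\pi_{t,p}(\mathcal{F})\le\pi_{t,p}(\mathcal{F}(k))$, and the already-proved inequality applied to $\mathcal{F}\le_{\mathrm{hom}}\mathcal{F}(k)$ gives $\pi_{t,p}(\mathcal{F}(k))\le\pi_{t,p}(\mathcal{F})$.
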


Given a differentiable map $f \colon \mathbb{R}^{n} \to \mathbb{R}$ with $n$ variables, we use $D_{i}f$ to denote the \textbf{partial derivative} of $f$ with respect to the $i$-th variable.
If $f$ has only one variable, then we use $Df$ to denote the \textbf{derivative} of $f$.  
Given $X\subset \mathbb{R}^{n}$, a map $f \colon X \to \mathbb{R}$ is \textbf{homogeneous of degree $k$} if 
\begin{align*}
    f(\lambda \vec{x}) 
    = \lambda^{k} \cdot f(\vec{x})
    \quad\text{for all $\vec{x} \in X$ and for all $\lambda>0$ with $\lambda \vec{x} \in X$}. 
\end{align*}
Let $r > t \ge 1$ be integers and let $\mathcal{H}$ be an $r$-graph on $[n]$. 
For every real number $p > 0$ define the \textbf{$(t,p)$-Lagrange polynomial} of $\mathcal{H}$ as follows$\colon$
\begin{align*}
    L_{\mathcal{H},t,p}(X_1, \ldots, X_{n})
    \coloneqq 
    \sum_{T\in \partial_{r-t}\mathcal{H}} X_{T}\left(\sum_{I\in L_{\mathcal{H}}(T)}X_I\right)^{p},  
\end{align*}
where $X_{S} \coloneqq \prod_{i\in S}X_i$ for every $S\subset [n]$. 
The \textbf{$(t,p)$-Lagrangian} of $\mathcal{H}$ is defined as follows$\colon$
\begin{align*}
    \lambda_{t,p}(\mathcal{H})
    \coloneqq \max\left\{L_{\mathcal{H},t,p}(x_1, \ldots, x_{n}) \colon (x_1, \ldots, x_{n}) \in \Delta^{n-1}\right\}, 
\end{align*}
where $\Delta^{n-1} \coloneqq \left\{(x_1, \ldots, x_n) \in \mathbb{R}^{n} \colon x_i \ge 0 \text{ for } i\in [n] \text{ and } x_1 + \cdots + x_n = 1\right\}$ is the standard $(n-1)$-dimensional simplex. 
It is clear from the definition that $L_{\mathcal{H},t,p}$ is a homogeneous map of degree $t+p(r-t)$. 

One of the motivations for defining the $(t,p)$-Lagrangian comes from the following simple fact, which is an extension of a crucial property of Lagrangian (see e.g.~\cite{MS65,FR84}). 

\begin{fact}\label{FACT:t-p-Lagrangian}
    Let $r > t \ge 1$ be integers and $p > 0$ be a real number. 
    Suppose that $\mathcal{H}$ is an $n$-vertex $\mathcal{G}$-colorbale $r$-graph. 
    Then 
    \begin{align*}
        \norm{\mathcal{H}}_{t,p}
        = L_{\mathcal{H},t,p}(1,\ldots,1)
        = L_{\mathcal{H},t,p}(1/n,\ldots, 1/n) \cdot n^{t+p(r-t)}
        \le \lambda_{t,p}(\mathcal{G}) \cdot n^{t+p(r-t)}. 
    \end{align*}
\end{fact}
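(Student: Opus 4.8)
The plan is to verify the three claimed equalities/inequality in order, each of which is essentially a direct computation once the definitions are unwound.

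\textbf{First equality.} I would start from the definition of the $(t,p)$-Lagrange polynomial: $L_{\mathcal{H},t,p}(X_1,\dots,X_n) = \sum_{T\in\partial_{r-t}\mathcal{H}} X_T\bigl(\sum_{I\in L_{\mathcal{H}}(T)} X_I\bigr)^p$. Evaluating at $(1,\dots,1)$ makes every monomial $X_S$ equal to $1$, so each summand becomes $1\cdot\bigl(\sum_{I\in L_{\mathcal{H}}(T)} 1\bigr)^p = |L_{\mathcal{H}}(T)|^p = d_{\mathcal{H}}(T)^p$. Summing over $T\in\partial_{r-t}\mathcal{H}$ gives exactly $\sum_{T\in\partial_{r-t}\mathcal{H}} d_{\mathcal{H}}^p(T) = \norm{\mathcal{H}}_{t,p}$, where I use the second form of the definition of the $(t,p)$-norm given in the introduction (the sum over $\partial_{r-t}\mathcal{H}$ coincides with the sum over all $t$-subsets because $d_{\mathcal{H}}(T)=0$ for $T\notin\partial_{r-t}\mathcal{H}$).

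\textbf{Second equality.} This is just homogeneity. The map $L_{\mathcal{H},t,p}$ is homogeneous of degree $t+p(r-t)$ (stated in the excerpt right after its definition: the factor $X_T$ contributes degree $t$ and the $p$-th power of a sum of degree-$(r-t)$ monomials contributes degree $p(r-t)$). Hence $L_{\mathcal{H},t,p}(1,\dots,1) = L_{\mathcal{H},t,p}\bigl(n\cdot(1/n,\dots,1/n)\bigr) = n^{t+p(r-t)}\cdot L_{\mathcal{H},t,p}(1/n,\dots,1/n)$.

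\textbf{The inequality.} Since $\mathcal{H}$ is an $n$-vertex $\mathcal{G}$-colorable $r$-graph, fix a homomorphism $\phi\colon V(\mathcal{H})\to V(\mathcal{G})$, and for each $j\in V(\mathcal{G})$ let $x_j \coloneqq |\phi^{-1}(j)|/n$, so that $(x_j)_{j\in V(\mathcal{G})}\in\Delta^{v(\mathcal{G})-1}$. The point to verify is that $L_{\mathcal{H},t,p}(1/n,\dots,1/n) \le L_{\mathcal{G},t,p}\bigl((x_j)_j\bigr)$. I would prove this by grouping the $t$-sets $T$ of $V(\mathcal{H})$ and the link-sets $I$ according to their images under $\phi$: a $t$-set $T$ with $\phi|_T$ injective maps to a $t$-set $\phi(T)\in\partial_{r-t}\mathcal{G}$ and contributes $n^{-t}$ to the coefficient of $X_{\phi(T)}$, while an $(r-t)$-set $I\in L_{\mathcal{H}}(T)$ with $\phi|_{T\cup I}$ injective lies in $L_{\mathcal{G}}(\phi(T))$. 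Because $s\mapsto s^p$ is monotone and (for the grouping of the inner sum) one has $\sum_{I\in L_{\mathcal{H}}(T)} n^{-(r-t)} \le \sum_{J\in L_{\mathcal{G}}(\phi(T))}\bigl(\text{sum of }x\text{'s over preimages}\bigr)$, the contribution of $\mathcal{H}$ at the uniform point is bounded by the contribution of $\mathcal{G}$ at $(x_j)_j$, which in turn is at most $\lambda_{t,p}(\mathcal{G})$ by definition of the $(t,p)$-Lagrangian. Combining with the first two equalities yields $\norm{\mathcal{H}}_{t,p} \le \lambda_{t,p}(\mathcal{G})\cdot n^{t+p(r-t)}$.

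\textbf{Main obstacle.} The genuinely nontrivial step is the inequality $L_{\mathcal{H},t,p}(1/n,\dots,1/n)\le L_{\mathcal{G},t,p}\bigl((x_j)_j\bigr)$; the two equalities are immediate. The care needed there is bookkeeping: handling $T$ and $I$ whose $\phi$-images are degenerate (not injective) — these simply drop out or can only decrease the left side — and making sure the regrouped inner sum $\bigl(\sum_I X_I\bigr)^p$ on the $\mathcal{H}$ side, evaluated at the uniform point, is dominated termwise (after raising to the $p$-th power, using monotonicity of $s\mapsto s^p$ for $p>0$) by the corresponding inner sum on the $\mathcal{G}$ side. This is exactly the standard ``blow-up / Lagrangian'' comparison, so I expect it to go through cleanly, just with more indices than in the classical $p=1$, $t=r-1$ case.
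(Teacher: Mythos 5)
Your proposal is correct and follows exactly the standard blow-up/Lagrangian comparison that the paper takes for granted (it states this Fact without proof, citing the classical $p=1$ case): the two equalities are the definition plus homogeneity, and the inequality follows by pushing the uniform weighting forward along a homomorphism $\phi$ and comparing with $L_{\mathcal{G},t,p}$ at the point $x_j=|\phi^{-1}(j)|/n$. One small simplification to your bookkeeping: the "degenerate image" cases never arise, since any $T\in\partial_{r-t}\mathcal{H}$ and any $T\cup I$ with $I\in L_{\mathcal{H}}(T)$ lie inside an edge of $\mathcal{H}$, and $\phi$ maps edges to $r$-element edges of $\mathcal{G}$, so it is automatically injective on them.
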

In the following proposition, we establish a useful inequality for the $(t,p)$-Lagrangian of an $r$-graph for different values of $p$.
\begin{proposition}\label{PROP:star-poly-Holder-inequ}
    Let $r > t \ge 1$ be integers and $\mathcal{H}$ be an $r$-graph on $[n]$. 
    Suppose that $0 \le p_1 < p < p_2$ are real numbers. Then 
    \begin{align*}
        L_{\mathcal{H},t,p}(\vec{x})
        \le \left(L_{\mathcal{H},t,p_1}(\vec{x})\right)^{\frac{p_2-p}{p_2-p_1}}
            \left(L_{\mathcal{H},t,p_2}(\vec{x})\right)^{\frac{p-p_1}{p_2-p_1}}
            \quad\text{for every}\quad \vec{x} = (x_1, \ldots, x_n) \in \Delta^{n-1}. 
    \end{align*}
    In particular, 
    \begin{align*}
        \lambda_{t,p}(\mathcal{H})
        \le \left(\lambda_{t,p_1}(\mathcal{H})\right)^{\frac{p_2-p}{p_2-p_1}}
            \left(\lambda_{t,p_2}(\mathcal{H})\right)^{\frac{p-p_1}{p_2-p_1}}. 
    \end{align*}
\end{proposition}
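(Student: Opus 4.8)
The plan is to prove the pointwise inequality first, and then deduce the statement about Lagrangians by evaluating at the maximizer. Fix $\vec{x} \in \Delta^{n-1}$ and, for each $T \in \partial_{r-t}\mathcal{H}$, abbreviate $a_T \coloneqq X_T = \prod_{i \in T} x_i \ge 0$ and $b_T \coloneqq \sum_{I \in L_{\mathcal{H}}(T)} X_I \ge 0$, so that $L_{\mathcal{H},t,q}(\vec{x}) = \sum_{T} a_T b_T^{q}$ for every exponent $q$. Writing $p = \theta p_1 + (1-\theta) p_2$ with $\theta = \frac{p_2 - p}{p_2 - p_1} \in (0,1)$, the summand factorizes as
\begin{align*}
    a_T b_T^{p}
    = \left(a_T b_T^{p_1}\right)^{\theta}\left(a_T b_T^{p_2}\right)^{1-\theta},
\end{align*}
using $a_T = a_T^{\theta} a_T^{1-\theta}$ and $b_T^{p} = b_T^{\theta p_1} b_T^{(1-\theta)p_2}$ (this is where we need $b_T \ge 0$, and if $b_T = 0$ the term vanishes on both sides so it may be dropped). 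Summing over $T$ and applying Hölder's inequality with conjugate exponents $1/\theta$ and $1/(1-\theta)$ gives
\begin{align*}
    \sum_{T} a_T b_T^{p}
    \le \left(\sum_{T} a_T b_T^{p_1}\right)^{\theta}\left(\sum_{T} a_T b_T^{p_2}\right)^{1-\theta}
    = \left(L_{\mathcal{H},t,p_1}(\vec{x})\right)^{\theta}\left(L_{\mathcal{H},t,p_2}(\vec{x})\right)^{1-\theta},
\end{align*}
which is exactly the claimed bound since $1-\theta = \frac{p-p_1}{p_2-p_1}$.

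For the second part, let $\vec{x}^{*} \in \Delta^{n-1}$ attain $\lambda_{t,p}(\mathcal{H}) = L_{\mathcal{H},t,p}(\vec{x}^{*})$. Applying the pointwise inequality at $\vec{x}^{*}$ and then bounding each factor $L_{\mathcal{H},t,p_i}(\vec{x}^{*}) \le \lambda_{t,p_i}(\mathcal{H})$ by the definition of the $(t,p_i)$-Lagrangian (valid since $\vec{x}^{*} \in \Delta^{n-1}$ and the exponents $\theta, 1-\theta$ are nonnegative), we obtain $\lambda_{t,p}(\mathcal{H}) \le \left(\lambda_{t,p_1}(\mathcal{H})\right)^{\theta}\left(\lambda_{t,p_2}(\mathcal{H})\right)^{1-\theta}$, as desired. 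One minor point to dispatch along the way: when $p_1 = 0$ the quantity $b_T^{p_1}$ should be read as $1$ (including the convention $0^0 = 1$), so that $L_{\mathcal{H},t,0}(\vec{x}) = \sum_{T} a_T$ agrees with $|\partial_{r-t}\mathcal{H}|$ at $\vec{x} = (1,\dots,1)$ and the factorization above still holds termwise.

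I do not anticipate a serious obstacle here; the only thing to be careful about is the bookkeeping of which terms can be zero (handled by discarding vanishing summands before applying Hölder) and the edge case $p_1 = 0$ (handled by the $0^0 = 1$ convention). The heart of the argument is simply recognizing $a_T b_T^{p}$ as a weighted geometric mean of $a_T b_T^{p_1}$ and $a_T b_T^{p_2}$ and invoking Hölder.
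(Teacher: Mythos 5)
Your proposal is correct and is essentially the paper's own argument: the paper applies H\"older's inequality with conjugate exponents $s=\frac{p_2-p_1}{p_2-p}$ and $t=\frac{p_2-p_1}{p-p_1}$ to the factors $a_T \coloneqq x_T^{1/s}\bigl(\sum_{I\in L_{\mathcal{H}}(T)}x_I\bigr)^{p_1/s}$ and $b_T \coloneqq x_T^{1/t}\bigl(\sum_{I\in L_{\mathcal{H}}(T)}x_I\bigr)^{p_2/t}$, which is exactly your factorization of each summand as a weighted geometric mean with weights $\theta=1/s$ and $1-\theta=1/t$. The deduction for the Lagrangians by evaluating at a maximizer is also the intended (and in the paper, implicit) step, so there is nothing to fix.
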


We will use the following version of H\"{o}lder's inequality in the proof of Proposition~\ref{PROP:star-poly-Holder-inequ}. 

\begin{fact}[H\"{o}lder's inequality]\label{FACT:Holder-Inequality}
    For every $(a_1, \ldots, a_n), (b_1, \ldots, b_n) \in \mathbb{R}^{n}_{\ge 0}$ and positive real numbers $s,t$ with $\frac{1}{s}+\frac{1}{t} = 1$, 
    \begin{align*}
        \sum_{i=1}^{n} a_i b_i 
        \le \left(\sum_{i=1}^{n} a_i^{s}\right)^{\frac{1}{s}}\left(\sum_{i=1}^{n} b_i^{t}\right)^{\frac{1}{t}}. 
    \end{align*}
\end{fact}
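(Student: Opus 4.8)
The plan is to deduce Fact~\ref{FACT:Holder-Inequality} from the pointwise Young inequality by the standard normalization argument. First I would record Young's inequality: for all real numbers $a,b \ge 0$ and positive reals $s,t$ with $\frac{1}{s}+\frac{1}{t}=1$,
\[
ab \le \frac{a^s}{s} + \frac{b^t}{t}.
\]
If $a=0$ or $b=0$ this is immediate, so assume $a,b>0$. Writing $ab = \exp\!\left(\frac{1}{s}\log(a^s) + \frac{1}{t}\log(b^t)\right)$ and using the convexity of $\exp$ applied to the convex combination of $\log(a^s)$ and $\log(b^t)$ with weights $\frac{1}{s}$ and $\frac{1}{t}$ (which sum to $1$), we get $ab \le \frac{1}{s}\exp(\log(a^s)) + \frac{1}{t}\exp(\log(b^t)) = \frac{a^s}{s} + \frac{b^t}{t}$. (Alternatively, one can fix $b$ and check by single-variable calculus that $a \mapsto \frac{a^s}{s} + \frac{b^t}{t} - ab$ attains its minimum value $0$ on $[0,\infty)$.)

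Next I would treat the degenerate cases of the main inequality: if $\sum_{i=1}^n a_i^s = 0$ then every $a_i = 0$, so the left-hand side vanishes and the inequality holds trivially; the case $\sum_{i=1}^n b_i^t = 0$ is symmetric. So assume $A \coloneqq \left(\sum_{i=1}^n a_i^s\right)^{1/s} > 0$ and $B \coloneqq \left(\sum_{i=1}^n b_i^t\right)^{1/t} > 0$. Applying Young's inequality to $a_i/A$ and $b_i/B$ for each $i \in [n]$ gives
\[
\frac{a_i b_i}{AB} \le \frac{1}{s}\cdot\frac{a_i^s}{A^s} + \frac{1}{t}\cdot\frac{b_i^t}{B^t}.
\]
Summing over $i \in [n]$ and using $\sum_{i=1}^n a_i^s = A^s$ and $\sum_{i=1}^n b_i^t = B^t$ yields $\frac{1}{AB}\sum_{i=1}^n a_i b_i \le \frac{1}{s} + \frac{1}{t} = 1$, and multiplying through by $AB$ gives the desired bound.

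There is no real obstacle here; the only points needing a little care are the degenerate cases where one of the two sums vanishes, and a clean justification of Young's inequality, for which invoking convexity of $\exp$ (equivalently concavity of $\log$) is the shortest route. Since this is a completely classical fact, the authors may simply cite it; if a self-contained proof is wanted, the two-step Young-plus-normalization argument above suffices.
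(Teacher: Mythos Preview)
Your proof is correct and is the standard Young-plus-normalization argument. The paper itself does not give a proof of Fact~\ref{FACT:Holder-Inequality}; it simply records H\"{o}lder's inequality as a well-known fact to be invoked in the proof of Proposition~\ref{PROP:star-poly-Holder-inequ}, so there is nothing to compare against.
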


\begin{proof}[Proof of Proposition~\ref{PROP:star-poly-Holder-inequ}]
    Fix $(x_1, \ldots, x_{n}) \in \Delta^{n-1}$. 
    Let $s \coloneqq \frac{p_2-p_1}{p_2-p}$, $t \coloneqq \frac{p_2-p_1}{p-p_1}$, $a_T \coloneqq x_T^{\frac{1}{s}} \left(\sum_{I\in L_{\mathcal{H}}(T)}x_{I}\right)^{\frac{p_1}{s}}$, $b_T\coloneqq x_T^{\frac{1}{t}} \left(\sum_{I\in L_{\mathcal{H}}(T)}x_{I}\right)^{\frac{p_2}{t}}$. 
    Observe that $\frac{1}{s}+\frac{1}{t} = 1$ and $\frac{p_1}{s}+\frac{p_2}{t} = p$. 
    So, it follows from H\"{o}lder's inequality that 
    \begin{align*}
        L_{\mathcal{H},t,p}(\vec{x})
        & =  \sum_{T \in \partial_{r-t}\mathcal{H}} a_T b_T \\
        & \le \left(\sum_{T \in \partial_{r-t}\mathcal{H}} a_T^{s}\right)^{\frac{1}{s}}
         \left(\sum_{T \in \partial_{r-t}\mathcal{H}} b_T^{t}\right)^{\frac{1}{t}} 
         = \left(L_{\mathcal{H},t,p_1}(\vec{x})\right)^{\frac{1}{s}}\left(L_{\mathcal{H},t,p_2}(\vec{x})\right)^{\frac{1}{t}}, 
    \end{align*}
    proving Proposition~\ref{PROP:star-poly-Holder-inequ}. 
\end{proof}

The following proposition is an extension of~{\cite[Theorem~2.1]{FR84}}. 

\begin{proposition}\label{PROP:tp-Lagrangian-2-covered}
    Let $r > t \ge 1$ be integers, $p \ge 1$ be a real number, and $\mathcal{H}$ be an $r$-graph on $[n]$.
    There exists a vertex set $U \subset [n]$ such that 
    \begin{enumerate}[label=(\roman*)]
        \item the induced subgraph $\mathcal{H}[U]$ is $2$-covered and $\lambda_{t,p}(\mathcal{H}) = \lambda_{t,p}(\mathcal{H}[U])$, 
        \item $D_{i}L_{\mathcal{H},t,p}(x_1, \ldots, x_n) = \left(t+p(r-t)\right) \cdot \lambda_{t,p} (\mathcal{H})$ for every $i\in U$. 
    \end{enumerate}
\end{proposition}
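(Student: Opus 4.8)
The plan is to mimic the classical Frankl--R\"odl argument (the proof of~{\cite[Theorem~2.1]{FR84}}) for the ordinary Lagrangian, adapting it to the extra $p$-th power and the $t$-set structure of $L_{\mathcal{H},t,p}$. First I would take a maximizer $\vec{x} = (x_1,\ldots,x_n) \in \Delta^{n-1}$ of $L_{\mathcal{H},t,p}$ with the minimum number of nonzero coordinates, and set $U \coloneqq \{i \in [n] \colon x_i > 0\}$. Restricting to the face spanned by $U$, we have $\lambda_{t,p}(\mathcal{H}) = L_{\mathcal{H},t,p}(\vec{x}) = L_{\mathcal{H}[U],t,p}(\vec{x}|_U) \le \lambda_{t,p}(\mathcal{H}[U]) \le \lambda_{t,p}(\mathcal{H})$, so equality holds throughout and $\vec{x}|_U$ is a positive maximizer of $L_{\mathcal{H}[U],t,p}$; hence we may as well assume $U = [n]$ and all $x_i > 0$.

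Next I would derive the first-order (Lagrange multiplier) conditions. Since $\vec{x}$ lies in the relative interior of $\Delta^{n-1}$ and $L_{\mathcal{H},t,p}$ is differentiable there, there is a scalar $\mu$ with $D_i L_{\mathcal{H},t,p}(\vec{x}) = \mu$ for every $i \in [n]$. To identify $\mu$, apply Euler's identity for homogeneous functions: because $L_{\mathcal{H},t,p}$ is homogeneous of degree $t + p(r-t)$, we get $\sum_{i=1}^n x_i D_i L_{\mathcal{H},t,p}(\vec{x}) = (t + p(r-t)) \cdot L_{\mathcal{H},t,p}(\vec{x}) = (t+p(r-t)) \cdot \lambda_{t,p}(\mathcal{H})$. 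Since each $D_i L_{\mathcal{H},t,p}(\vec{x}) = \mu$ and $\sum_i x_i = 1$, this forces $\mu = (t+p(r-t)) \cdot \lambda_{t,p}(\mathcal{H})$, which is exactly conclusion~(ii).

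It remains to show that $\mathcal{H}[U] = \mathcal{H}$ is $2$-covered, i.e.\ every pair of vertices of $U$ lies in some edge. Suppose not: there exist $a,b \in U$ with no edge containing both. The idea is the standard transfer: consider moving mass between $x_a$ and $x_b$, i.e.\ look at $f(\theta) \coloneqq L_{\mathcal{H},t,p}(\ldots, x_a + \theta, \ldots, x_b - \theta, \ldots)$. Because no edge contains both $a$ and $b$, every monomial $X_T(\sum_{I \in L_{\mathcal{H}}(T)} X_I)^p$ is, after expansion, a sum of terms each of which contains at most one of $X_a, X_b$ as a factor; grouping by the exponent of the "$a$-or-$b$" variable, I would argue $f$ is, along this segment, a sum of functions each of the form $c(x_a+\theta)^{\alpha}$, $c(x_b-\theta)^{\alpha}$, or constant, with $c \ge 0$ and $\alpha \in \{1, p, p+1\} \subseteq [1,\infty)$ — hence $f$ is convex on the interval $\theta \in [-x_a, x_b]$. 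A convex function on an interval attains its maximum at an endpoint, so we could push $\theta$ to $-x_a$ or $x_b$ without decreasing $f$, producing a maximizer with strictly fewer nonzero coordinates, contradicting minimality of $U$. This establishes~(i).

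The main obstacle is the convexity claim in the last paragraph: because of the $p$-th power, the coefficient in front of a variable that appears inside the "$(\sum_I X_I)^p$" factor is itself a function of $\theta$ through the other link-variables, so the clean decomposition "$f(\theta) = \sum c_j (x_a+\theta)^{\alpha_j} + \sum c_j' (x_b-\theta)^{\alpha_j'} + \text{const}$" needs care — one has to check that a variable cannot appear simultaneously as the outer factor $X_T$ and inside the inner sum (it cannot, since $T \cap L_{\mathcal{H}}(T) = \emptyset$), and that when $a$ (say) appears inside the inner power while $b$ does not, the resulting term $x_T\,(x_a + (\text{terms free of }\theta))^p$ composed with the affine map $\theta \mapsto x_a + \theta$ is still convex in $\theta$ — which it is, being a convex increasing function of an affine function, once one observes the whole inner sum is affine in $\theta$. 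I expect that once this bookkeeping is set up correctly (treating the exponents $1$, $p$, and $p+1$ cases uniformly via "convex $\circ$ affine"), the argument goes through exactly as in Frankl--R\"odl, with $p = 1$ recovering their original statement.
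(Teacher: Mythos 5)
Your proposal is correct and is essentially the paper's own argument: the paper also takes a minimal-support maximizer, proves (ii) via Lagrange multipliers plus Euler's identity, and handles an uncovered pair by transferring mass between the two vertices, where its termwise Jensen inequality comparing $\vec{x}$ with the two endpoint vectors $\vec{y},\vec{z}$ is exactly your convexity-along-the-segment claim (the $\mathcal{T}_1,\mathcal{T}_2$ terms being affine and the $\mathcal{T}_0$ terms being a convex $p$-th power of an affine inner sum). The bookkeeping worry you flag resolves just as you say, so no gap remains.
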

\begin{proof}[Proof of Proposition~\ref{PROP:tp-Lagrangian-2-covered}]
    Fix an $r$-graph $\mathcal{H}$ on $[n]$. 
    Let $\vec{x} = (x_1, \ldots, x_n) \in \Delta^{n-1}$ be a vector satisfying $L_{\mathcal{H},t,p}(\vec{x}) = \lambda_{t,p}(\mathcal{H})$ and such that the size of the set $U \coloneqq \{i\in [n] \colon x_i > 0\}$ is minimized. 
    By relabelling the vertices of $\mathcal{H}$, we may assume that $U = [m]$ for some $m \le n$. We claim that the set $U$ defined here satisfies the conclusions in Proposition~\ref{PROP:tp-Lagrangian-2-covered}. 

    Let $\mathcal{G} \coloneqq \mathcal{H}[U]$. 
    Suppose to the contrary that $\mathcal{G}$ is not $2$-covered. By relabelling the vertices in $U = [m]$, we may assume that $\{1,2\} \subset [m]$ is uncovered in $\mathcal{G}$. 
    Let $\vec{y} \coloneqq (y_1, \ldots, y_m), \vec{z} \coloneqq (z_1, \ldots, z_m) \in \Delta^{m-1}$ be defined as 
    \begin{align*} 
        y_1 = z_2 = x_1+x_2, \quad
        y_2 = z_1 = 0, \quad\text{and}\quad 
        y_k = z_k = x_k \quad\text{for}\quad k \in [3,m]. 
    \end{align*}
    Let $\mathcal{T}_{0}\coloneqq \binom{[3,m]}{t}$, 
    \begin{align*}
        \mathcal{T}_{1} \coloneqq \left\{T \in \binom{[m]\setminus \{2\}}{t} \colon 1 \in T\right\}, 
        \quad\text{and}\quad 
        \mathcal{T}_{2} \coloneqq \left\{T \in \binom{[m]\setminus \{1\}}{t} \colon 2 \in T\right\}. 
    \end{align*}
    Since $\{1,2\}$ is uncovered in $\mathcal{G}$, we have $\partial_{r-t} \mathcal{G} \subset \mathcal{T}_{0} \cup \mathcal{T}_{1} \cup \mathcal{T}_2$. 
    First, notice that 
    \begin{align}\label{equ:PROP:tp-Lagrangian-2-covered-1}
        & \sum_{T\in \mathcal{T}_1} x_T\left(\sum_{I\in L_{\mathcal{G}}(T)} x_I\right)^{p} 
        + \sum_{T\in \mathcal{T}_2} x_T\left(\sum_{I\in L_{\mathcal{G}}(T)} x_I\right)^{p} \notag \\
        & = \frac{x_1}{x_1+x_2} \sum_{T\in \mathcal{T}_1} y_T\left(\sum_{I\in L_{\mathcal{G}}(T)} y_I\right)^{p}
        + \frac{x_2}{x_1+x_2} \sum_{T\in \mathcal{T}_2} z_T\left(\sum_{I\in L_{\mathcal{G}}(T)} z_I\right)^{p} 
    \end{align}
    Next, we consider $\mathcal{T}_0$. Fix $T \in \mathcal{T}_0$ and let 
    \begin{align*}
        & \alpha_0 \coloneqq \sum_{I \in L_{\mathcal{G}}(T) \colon \{1,2\} \cap I = \emptyset} x_I, \quad 
        \alpha_1 \coloneqq \sum_{I \in L_{\mathcal{G}}(T) \colon 1 \in I} x_I, 
        \quad\text{and}\quad  
        \alpha_2 \coloneqq \sum_{I \in L_{\mathcal{G}}(T) \colon 2 \in I} x_I, \\
        & \beta_0 \coloneqq \sum_{I \in L_{\mathcal{G}}(T) \colon \{1,2\} \cap I = \emptyset} y_I, \quad 
        \beta_1 \coloneqq \sum_{I \in L_{\mathcal{G}}(T) \colon 1 \in I} y_I, 
        \quad\text{and}\quad  
        \beta_2 \coloneqq \sum_{I \in L_{\mathcal{G}}(T) \colon 2 \in I} y_I, \\
        & \gamma_0 \coloneqq \sum_{I \in L_{\mathcal{G}}(T) \colon \{1,2\} \cap I = \emptyset} z_I, \quad 
        \gamma_1 \coloneqq \sum_{I \in L_{\mathcal{G}}(T) \colon 1 \in I} z_I, 
        \quad\text{and}\quad  
        \gamma_2 \coloneqq \sum_{I \in L_{\mathcal{G}}(T) \colon 2 \in I} z_I. 
    \end{align*}
    Notice that 
    \begin{align*}
        \alpha_0 = \beta_0 = \gamma_0, 
        \quad \beta_2 = \gamma_1 = 0, 
        \quad
        \beta_1 = \frac{x_1+x_2}{x_1} \alpha_1,
        \quad\text{and}\quad
        \gamma_2 = \frac{x_1+x_2}{x_2} \alpha_2. 
    \end{align*}
    It follows from Jensen's inequality that 
    \begin{align*}
        & \frac{x_1}{x_1+x_2}\left(\sum_{I\in L_{\mathcal{G}}(T)} y_I\right)^{p}
        + \frac{x_2}{x_1+x_2}\left(\sum_{I\in L_{\mathcal{G}}(T)} z_I\right)^{p}  \\
        & = \frac{x_1}{x_1+x_2}\left(\beta_1  + \beta_0\right)^{p} 
        + \frac{x_2}{x_1+x_2}\left( \gamma_2 + \gamma_0\right)^{p} \\
         & \ge \left(\frac{x_1}{x_1+x_2}\left(\beta_1 +  \beta_0\right) + \frac{x_2}{x_1+x_2}\left( \gamma_2 + \gamma_0\right)\right)^{p}  
         = \left(\alpha_1 + \alpha_2 + \alpha_0\right)^{p}
         = \left(\sum_{I\in L_{\mathcal{G}}(T)} x_I\right)^{p}. 
    \end{align*}
    Therefore, 
    \begin{align*}
        \sum_{T\in \mathcal{T}_0} x_{T} \left(\sum_{I\in L_{\mathcal{G}}(T)} x_I\right)^{p}
        & \le \frac{x_1}{x_1+x_2} \sum_{T\in \mathcal{T}_0} x_{T} \left(\sum_{I\in L_{\mathcal{G}}(T)} y_I\right)^{p}
        + \frac{x_2}{x_1+x_2} \sum_{T\in \mathcal{T}_0} x_{T} \left(\sum_{I\in L_{\mathcal{G}}(T)} z_I\right)^{p} \\
        & = \frac{x_1}{x_1+x_2} \sum_{T\in \mathcal{T}_0} y_{T} \left(\sum_{I\in L_{\mathcal{G}}(T)} y_I\right)^{p}
        + \frac{x_2}{x_1+x_2} \sum_{T\in \mathcal{T}_0} z_{T} \left(\sum_{I\in L_{\mathcal{G}}(T)} z_I\right)^{p}, 
    \end{align*}
    which combined with~\eqref{equ:PROP:tp-Lagrangian-2-covered-1}, implies that 
    \begin{align*}
        \lambda_{t,p}(\mathcal{H})
        = L_{\mathcal{H},t,p}(\vec{x})
         = L_{\mathcal{G},t,p}(x_1, \ldots, x_m) 
         \le \frac{x_1}{x_1+x_2} L_{\mathcal{G},t,p}(\vec{y}) + \frac{x_2}{x_1+x_2} L_{\mathcal{G},t,p}(\vec{z}).  
    \end{align*}
    Since $\max\{L_{\mathcal{G},t,p}(\vec{y}), L_{\mathcal{G},t,p}(\vec{z})\} \le \lambda_{t,p}(\mathcal{H})$, it follows from the equality above that $L_{\mathcal{G},t,p}(\vec{y}) = L_{\mathcal{G},t,p}(\vec{z}) = \lambda_{t,p}(\mathcal{H})$. 
    However, this contradicts the minimality of $U$. Therefore, $\mathcal{H}[U]$ is $2$-covered.  

    Since $L_{\mathcal{H},t,p}$ is a homogeneous map of degree $t+p(r-t)$, the equality, $D_{i}L_{\mathcal{H},t,p}(x_1, \ldots, x_n) = \left(t+p(r-t)\right) \cdot \lambda_{L}(\mathcal{H})$ for every $i\in U$, follows easily from Euler's homogeneous function theorem and the theory of Lagrange Multipliers. 
\end{proof}

In the end of this section, we list some inequalities that will be useful later. 

\begin{fact}\label{FACT:inequality-a}
    Let $x\in [0,1]$ be a real number.
    The following statements hold. 
    \begin{enumerate}[label=(\roman*)]
        \item\label{FACT:inequality-a-1} For every $p \in [0,1]$, $1-px-(1-p)x^2 \le (1-x)^p \le 1-px$. 
        \item\label{FACT:inequality-a-2} For every $p \ge 1$, $1-px \le (1-x)^p \le  1-px + p^2 x^2$.  
    \end{enumerate}
    In particular, 
    \begin{align}\label{equ:FACT:inequality-a-3}
        |(1-x)^p -(1-px)| \le (p^2+1) x^2 
        \quad\text{for every } p \ge 0, 
    \end{align}
    and 
    \begin{align}\label{equ:FACT:inequality-a-4}
        x_1^{p} - (x_1-x_2)^{p} 
        \le p x_1^{p-1}x_2 \quad\text{for all}\quad x_1 \ge x_2 \ge 0,\ p \ge 1. 
    \end{align}
\end{fact}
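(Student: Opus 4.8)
\textbf{Proof plan for Fact~\ref{FACT:inequality-a}.}

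The plan is to treat the two items separately and then deduce the two displayed consequences. For item~\ref{FACT:inequality-a-1}, fix $p\in[0,1]$ and set $g(x)\coloneqq(1-x)^p$ on $[0,1]$. The upper bound $(1-x)^p\le 1-px$ is the standard concavity/Bernoulli inequality: the function $x\mapsto(1-x)^p$ is concave on $[0,1]$ (its second derivative is $-p(1-p)(1-x)^{p-2}\le 0$), so it lies below its tangent line at $x=0$, which is $1-px$. For the lower bound $1-px-(1-p)x^2\le(1-x)^p$, I would let $h(x)\coloneqq(1-x)^p-1+px+(1-p)x^2$, note $h(0)=0$, and compute $Dh(x)=-p(1-x)^{p-1}+p+2(1-p)x$; since $h(1) = p + (1-p) - 1 + 0 = p\cdot 0 \ge 0$ at the right endpoint we still need the interior, so instead I would verify $Dh(0)=0$ and $D^2h(x)=p(1-p)(1-x)^{p-2}+2(1-p)\ge0$, giving that $Dh$ is nondecreasing, hence $Dh\ge Dh(0)=0$, hence $h$ is nondecreasing, hence $h\ge h(0)=0$. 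That settles~\ref{FACT:inequality-a-1}.

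For item~\ref{FACT:inequality-a-2}, fix $p\ge1$. The lower bound $1-px\le(1-x)^p$ is again Bernoulli, now in its convex regime: $x\mapsto(1-x)^p$ is convex on $[0,1]$, so it lies above its tangent at $x=0$. For the upper bound $(1-x)^p\le 1-px+p^2x^2$, I would set $h(x)\coloneqq 1-px+p^2x^2-(1-x)^p$ and show $h\ge0$ on $[0,1]$ by the same endpoint-plus-derivative argument: $h(0)=0$, $Dh(x)=-p+2p^2x+p(1-x)^{p-1}$, $Dh(0)=0$, and $D^2h(x)=2p^2-p(p-1)(1-x)^{p-2}$. Here one must be slightly careful since $D^2h$ need not be globally nonnegative; but $(1-x)^{p-2}$ — whether $p\le2$ (bounded by $1$ on $[0,1]$, or blowing up only near $x=1$, needs a separate look) or $p\ge2$ (bounded by $1$) — can be controlled. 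The cleanest route avoiding case analysis on $p$ near $x=1$: note that for $x\in[0,1]$ we have $(1-x)^p\le(1-x)\le 1-x\le 1-px+p^2x^2$ whenever $p\ge 1$ reduces the exponent... actually simpler is to observe $(1-x)^p \le e^{-px} \le 1 - px + \tfrac{(px)^2}{2} \le 1-px+p^2x^2$ using $e^{-y}\le 1-y+y^2/2$ for $y\ge0$; this sidesteps the convexity bookkeeping entirely. I would use this $e^{-px}$ comparison for the upper bound in~\ref{FACT:inequality-a-2}.

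Finally, for the consequences:~\eqref{equ:FACT:inequality-a-3} follows by combining the two-sided bounds: if $p\in[0,1]$, items~\ref{FACT:inequality-a-1} give $-(1-p)x^2\le(1-x)^p-(1-px)\le0$, and $(1-p)x^2\le x^2\le(p^2+1)x^2$; if $p\ge1$, item~\ref{FACT:inequality-a-2} gives $0\le(1-x)^p-(1-px)\le p^2x^2\le(p^2+1)x^2$; either way $|(1-x)^p-(1-px)|\le(p^2+1)x^2$. For~\eqref{equ:FACT:inequality-a-4}, if $x_1=0$ both sides are $0$; otherwise write $x_1^p-(x_1-x_2)^p=x_1^p\bigl(1-(1-x_2/x_1)^p\bigr)$, apply the lower Bernoulli bound $(1-x_2/x_1)^p\ge 1-p(x_2/x_1)$ from item~\ref{FACT:inequality-a-2} (valid since $x_2/x_1\in[0,1]$ and $p\ge1$), and conclude $x_1^p-(x_1-x_2)^p\le x_1^p\cdot p(x_2/x_1)=px_1^{p-1}x_2$. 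The main obstacle is purely the choice of clean argument for the upper bound in~\ref{FACT:inequality-a-2} — the naive second-derivative approach has an awkward sign issue near $x=1$, which is why I would route it through the exponential inequality instead.
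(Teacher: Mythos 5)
The paper states this Fact without proof, so the only question is whether your argument stands on its own. Your upper bound in~\ref{FACT:inequality-a-1} (concavity/tangent line), both bounds in~\ref{FACT:inequality-a-2} (Bernoulli for the lower bound, and the chain $(1-x)^p\le e^{-px}\le 1-px+\tfrac{p^2x^2}{2}\le 1-px+p^2x^2$, valid since $e^{-y}\le 1-y+y^2/2$ for $y\ge0$), and the deductions of~\eqref{equ:FACT:inequality-a-3} and~\eqref{equ:FACT:inequality-a-4} are all correct.

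The gap is in the lower bound of~\ref{FACT:inequality-a-1}. For $h(x)=(1-x)^p-1+px+(1-p)x^2$ you assert $D^2h(x)=p(1-p)(1-x)^{p-2}+2(1-p)\ge 0$, but differentiating $-p(1-x)^{p-1}$ gives $-p(1-p)(1-x)^{p-2}$, so in fact
\begin{align*}
D^2h(x)=(1-p)\bigl(2-p(1-x)^{p-2}\bigr),
\end{align*}
which is negative near $x=1$ for every $p\in(0,1)$ because $(1-x)^{p-2}\to\infty$ (e.g.\ $p=1/2$, $x=0.99$ gives $D^2h\approx-249$). Your intermediate conclusion that $h$ is nondecreasing is also false: $h(0)=h(1)=0$ while $h>0$ in the interior (at $p=1/2$, $x=1/2$, $h\approx0.08$), so $h$ must decrease on part of $[0,1]$. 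The inequality itself is true and the repair is short: the cases $p\in\{0,1\}$ are immediate, and for $p\in(0,1)$ the correct second derivative shows $h$ is convex on $[0,x_0]$ and concave on $[x_0,1]$, where $x_0=1-(p/2)^{1/(2-p)}\in(0,1)$. On $[0,x_0]$, convexity together with $Dh(0)=0$ gives $Dh\ge0$, hence $h\ge h(0)=0$ there; on $[x_0,1]$, concavity puts $h$ above the chord joining $\bigl(x_0,h(x_0)\bigr)$ and $\bigl(1,h(1)\bigr)$, and both endpoint values are nonnegative, so $h\ge0$ there as well. With this correction the whole proof goes through.
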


\begin{fact}\label{FACT:inequality-b}
    Suppose that $x\in [0,1]$ and $p \in (0,1)$ are real numbers. 
    Then 
    \begin{enumerate}[label=(\roman*)]
        \item\label{FACT:inequality-b-1} $(1-x)^p \ge 1-x^p$. 
        \item\label{FACT:inequality-b-2} $x(1-x)^{p} \le \frac{p^p}{(1+p)^{1+p}}$, and equality holds iff $x = \frac{1}{1+p}$. 
    \end{enumerate}
\end{fact}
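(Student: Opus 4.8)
The plan is to prove both parts by elementary arguments, keeping the case analysis minimal.

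For part~\ref{FACT:inequality-b-1}, the key point is that $t \mapsto t^p$ is \emph{subadditive} on $[0,\infty)$ when $p \in (0,1)$. Indeed, given $a,b \ge 0$ with $a+b > 0$, dividing by $(a+b)^p$ shows that the inequality $(a+b)^p \le a^p + b^p$ is equivalent to $\left(\tfrac{a}{a+b}\right)^p + \left(\tfrac{b}{a+b}\right)^p \ge 1$, and this holds because $s^p \ge s$ for every $s \in [0,1]$ (since $p \le 1$) while $\tfrac{a}{a+b} + \tfrac{b}{a+b} = 1$. Applying this with $a = 1-x$ and $b = x$ gives $1 = \bigl((1-x)+x\bigr)^p \le (1-x)^p + x^p$, which rearranges to $(1-x)^p \ge 1 - x^p$; the degenerate cases $x \in \{0,1\}$ are checked directly. (Alternatively, one can observe that $h(x) \coloneqq (1-x)^p + x^p$ is concave on $[0,1]$, so that $h(x) \ge \min\{h(0), h(1)\} = 1$ on the whole interval.)

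For part~\ref{FACT:inequality-b-2}, I would maximize $g(x) \coloneqq x(1-x)^p$ over $x \in [0,1]$ by a routine derivative computation. Since $g(0) = g(1) = 0$ and $g > 0$ on $(0,1)$, the maximum is attained in the interior, where $Dg(x) = (1-x)^{p-1}\bigl(1 - (1+p)x\bigr)$. On $(0,1)$ the factor $(1-x)^{p-1}$ is positive, so $Dg$ vanishes only at $x = \tfrac{1}{1+p}$, is positive for $x < \tfrac{1}{1+p}$, and is negative for $x > \tfrac{1}{1+p}$; hence $x = \tfrac{1}{1+p}$ is the unique maximizer. Substituting yields $g\!\left(\tfrac{1}{1+p}\right) = \tfrac{1}{1+p}\left(\tfrac{p}{1+p}\right)^p = \tfrac{p^p}{(1+p)^{1+p}}$, which gives both the stated bound and the equality characterization.

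Neither part presents a genuine obstacle; the only steps needing a line of care are the justification of subadditivity (equivalently, of $s^p \ge s$ on $[0,1]$) in part~\ref{FACT:inequality-b-1} and the verification of the sign of $Dg$ throughout $(0,1)$ in part~\ref{FACT:inequality-b-2}, including the observation that the boundary values $g(0)=g(1)=0$ are not the maximum.
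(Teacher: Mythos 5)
Your proof is correct. The paper states Fact~\ref{FACT:inequality-b} without proof (it is treated as a standard elementary inequality), so there is no argument in the paper to compare against; your subadditivity (or concavity) argument for part~\ref{FACT:inequality-b-1} and the routine sign analysis of $Dg(x) = (1-x)^{p-1}\bigl(1-(1+p)x\bigr)$ for part~\ref{FACT:inequality-b-2}, including the uniqueness of the maximizer at $x = \frac{1}{1+p}$, are exactly the expected justifications.
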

\section{General properties of $(t,p)$-norm Tur\'{a}n problems}\label{SEC:General-property}
In this section, we establish general properties of the $(t,p)$-norm of $r$-graphs that are necessary for applying the general theorems from~\cite{CL24} in the context of $(t,p)$-norm Tur\'{a}n problems. 
Specifically, in addition to the symmetrization-increasing property and smoothness already established in Proposition~\ref{PROP:star-polynomial-sym-increase} and Proposition~\ref{PROP:limit-exist}, we will show that the map $\Gamma \colon \mathfrak{G}^{t} \to \mathbb{R}$, defined by $\Gamma(\mathcal{H}) \coloneqq \norm{\mathcal{H}}_{t,p}$ for all $\mathcal{H} \in \mathfrak{G}^{r}$, is \textbf{uniform} (Lemma~\ref{LEMMA:Lp-uniform}), \textbf{locally Lipschitz} (Lemma~\ref{LEMMA:Lp-basic-property-local-Lipschitz}), \textbf{continuous} (Lemma~\ref{LEMMA:Lp-basic-property-global}), and \textbf{locally monotone} (Lemma~\ref{LEMMA:local-monotone}). 
Instead of including the lengthy definitions of the properties mentioned above, we will introduce each property as we prove them later. 
For detailed definitions, we refer the reader to~\cite{CL24}. 
The focus of this section will be the case $p \ge 1$, although some results also apply for $p \in (0,1)$. 

It should be noted that we will eventually apply the theorems from~\cite{CL24} to the map $\Gamma' \colon \mathfrak{G}^{t} \to \mathbb{R}$ (instead of $\Gamma$), defined by $\Gamma'(\mathcal{H}) \coloneqq \norm{\mathcal{H}}_{t,p}\cdot \mathbbm{1}_{\mathcal{F}}(\mathcal{H})$ for all $\mathcal{H} \in \mathfrak{G}^{r}$. 
However, deriving all the aforementioned properties (symmetrization-increasing, smooth, uniform, locally Lipschitz, continuous, and locally monotone) for $\Gamma'$ from those of $\Gamma$ is a straightforward task when $\mathcal{F}$ is nondegenerate and blowup-invariant (see Proposition~\ref{PROP:star-polynomial-sym-increase}). 

The first result of this section provides a handy expression for the $(t,p)$-degree of a vertex. 

\begin{lemma}\label{LEMMA:Lp-degree-expression}
    Let $r > t \ge 1 $ be integers and $p > 0$ be a real number. 
    For every $r$-graph $\mathcal{H}$ and for every $v\in V(\mathcal{H})$, 
    \begin{align*}
        d_{\mathcal{H},t,p}(v)
        & = \sum_{S \in \partial_{r-t}L_{\mathcal{H}}(v)} 
            d_{\mathcal{H}}^{p}(S\cup\{v\})
            + \sum_{T \in \partial_{r-t-1}L_{\mathcal{H}}(v)} 
            \left( d_{\mathcal{H}}^{p}(T) - \left(d_{\mathcal{H}}(T) - d_{\mathcal{H}}(T\cup \{v\})\right)^{p} \right). 
    \end{align*}
    In particular, if $p > 1$, then 
    \begin{align}\label{equ:LEMMA:Lp-degree-expression}
        d_{\mathcal{H}}(v)
        \ge \frac{d_{\mathcal{H},t,p}(v)}{p \binom{r}{t} n^{(p-1)(r-t)}}. 
    \end{align}
\end{lemma}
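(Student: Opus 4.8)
The plan is to prove the identity for $d_{\mathcal{H},t,p}(v)$ by carefully partitioning the $t$-sets $T$ according to their relationship with $v$, and then deduce the lower bound on $d_{\mathcal{H}}(v)$ from it.

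First I would recall that $d_{\mathcal{H},t,p}(v) = \norm{\mathcal{H}}_{t,p} - \norm{\mathcal{H}-v}_{t,p}$, and write both norms as sums over $t$-sets. The $t$-sets $T \in \binom{V(\mathcal{H})}{t}$ fall into three classes: those with $v \in T$, those with $v \notin T$ but $T \subset N_{\mathcal{H}}(v)$ in the relevant sense (i.e. $d_{\mathcal{H}}(T\cup\{v\}) > 0$, equivalently $T \in \partial_{r-t-1}L_{\mathcal{H}}(v)$ — here I should double check the index bookkeeping, since $L_{\mathcal{H}}(v)$ is an $(r-1)$-graph and its $(r-t-1)$-th shadow consists of $t$-sets), and those with $v \notin T$ and $d_{\mathcal{H}}(T\cup\{v\}) = 0$. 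For the third class, $d_{\mathcal{H}}(T) = d_{\mathcal{H}-v}(T)$, so these contribute nothing to the difference. For the first class, $T \ni v$ means $T = S \cup \{v\}$ with $S$ an $(t-1)$-set, and $d_{\mathcal{H}-v}(T)$ is undefined/zero since $T \not\subset V(\mathcal{H}-v)$; these $T$ contribute exactly $\sum_S d_{\mathcal{H}}^p(S\cup\{v\})$ where $S$ ranges over $(t-1)$-sets with $d_{\mathcal{H}}(S\cup\{v\})>0$, i.e. $S \in \partial_{r-t}L_{\mathcal{H}}(v)$ (again checking: $L_{\mathcal{H}}(v)$ is $(r-1)$-uniform, its $(r-t)$-th shadow is $(t-1)$-uniform — consistent). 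For the second class, $v \notin T$ but removing $v$ decreases the degree of $T$ by exactly $d_{\mathcal{H}}(T\cup\{v\})$, so the contribution is $d_{\mathcal{H}}^p(T) - (d_{\mathcal{H}}(T) - d_{\mathcal{H}}(T\cup\{v\}))^p$. Summing the three contributions gives the stated formula; I would note that the second sum's terms vanish when $d_{\mathcal{H}}(T\cup\{v\})=0$, which is why restricting to $T \in \partial_{r-t-1}L_{\mathcal{H}}(v)$ loses nothing.

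For the inequality~\eqref{equ:LEMMA:Lp-degree-expression} when $p > 1$, I would bound each term of the two sums from above. Every $d_{\mathcal{H}}(S\cup\{v\})$ and every $d_{\mathcal{H}}(T)$ is at most $\binom{n}{r-t} \le n^{r-t}$ (a crude bound on link sizes; one must be slightly careful that the intended bound in the paper is $n^{r-t}$ up to the constant $\binom{r}{t}$ absorbed elsewhere). Using~\eqref{equ:FACT:inequality-a-4} from Fact~\ref{FACT:inequality-a}, namely $x_1^p - (x_1-x_2)^p \le p\, x_1^{p-1} x_2$ with $x_1 = d_{\mathcal{H}}(T)$ and $x_2 = d_{\mathcal{H}}(T\cup\{v\})$, each term of the second sum is at most $p\, d_{\mathcal{H}}^{p-1}(T)\, d_{\mathcal{H}}(T\cup\{v\}) \le p\, n^{(p-1)(r-t)}\, d_{\mathcal{H}}(T\cup\{v\})$. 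Likewise each term $d_{\mathcal{H}}^p(S\cup\{v\})$ of the first sum is at most $n^{(p-1)(r-t)}\, d_{\mathcal{H}}(S\cup\{v\}) \le p\, n^{(p-1)(r-t)}\, d_{\mathcal{H}}(S\cup\{v\})$. Then I would observe that $\sum_{S \in \partial_{r-t}L_{\mathcal{H}}(v)} d_{\mathcal{H}}(S\cup\{v\}) + \sum_{T\in\partial_{r-t-1}L_{\mathcal{H}}(v)} d_{\mathcal{H}}(T\cup\{v\})$ counts, for each edge $E \in \mathcal{H}$ with $v \in E$, the number of $t$-subsets of $E$ — which is $\binom{r}{t}$ — so the whole sum equals $\binom{r}{t}\, d_{\mathcal{H}}(v)$. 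Combining: $d_{\mathcal{H},t,p}(v) \le p\, n^{(p-1)(r-t)} \binom{r}{t}\, d_{\mathcal{H}}(v)$, which rearranges to the claimed bound.

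The main obstacle I anticipate is getting the shadow indices exactly right and verifying the double-counting step $\sum (\cdots) = \binom{r}{t} d_{\mathcal{H}}(v)$ cleanly — in particular making sure that a $t$-subset of an edge $E \ni v$ containing $v$ is counted by the first sum (via $S = T\setminus\{v\}$, $|S| = t-1$) and one not containing $v$ is counted by the second sum (via $T$, $|T| = t$), with no overlap and no omission. A secondary subtlety is the boundary case $t = r-1$, where $\partial_{r-t-1}L_{\mathcal{H}}(v) = \partial_0 L_{\mathcal{H}}(v)$ needs the convention that the $0$-th shadow is the edge set itself, i.e. $t$-sets $T$ with $T\cup\{v\} \in \mathcal{H}$ — this should be checked but does not affect the argument structure. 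Everything else is routine estimation using the facts already established.
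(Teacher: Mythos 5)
Your proposal is correct and follows essentially the same route as the paper: the same split of $t$-sets according to whether they contain $v$, are covered together with $v$, or are untouched, followed by the bound $d_{\mathcal{H}}^{p}(T)-\left(d_{\mathcal{H}}(T)-d_{\mathcal{H}}(T\cup\{v\})\right)^{p}\le p\,d_{\mathcal{H}}^{p-1}(T)\,d_{\mathcal{H}}(T\cup\{v\})$ and the double count giving $\binom{r}{t}d_{\mathcal{H}}(v)$ (your single count of $t$-subsets of edges through $v$ is just the paper's $\binom{r-1}{t-1}+\binom{r-1}{t}=\binom{r}{t}$ stated at once). The index bookkeeping and the $t=r-1$ convention you flag are handled exactly as you describe, so there is nothing to fix.
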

\begin{proof}[Proof of Lemma~\ref{LEMMA:Lp-degree-expression}]
    It follows from the definition of $(t,p)$-degree that 
    \begin{align*}
        d_{\mathcal{H},t,p}(v)
        & = \sum_{T\in \partial_{r-t}\mathcal{H}} d_{\mathcal{H}}^{p}(T) - 
            \sum_{T\in \partial_{r-t}(\mathcal{H}-v)} d_{\mathcal{H}-v}^{p}(T)   \notag \\
        & = \sum_{\substack{T\in \partial_{r-t}\mathcal{H} \colon \\ v\in T}} 
            d_{\mathcal{H}}^{p}(T)
            + \sum_{\substack{T\in \partial_{r-t}\mathcal{H} \colon \\ v\not\in T}} 
            \left( d_{\mathcal{H}}^{p}(T) - d_{\mathcal{H}-v}^{p}(T) \right) \notag \\
        & = \sum_{\substack{T\in \partial_{r-t}\mathcal{H} \colon \\ v\in T}} 
            d_{\mathcal{H}}^{p}(T)
            + \sum_{\substack{T\in \partial_{r-t}\mathcal{H} \colon \\ v\not\in T}} 
            \left( d_{\mathcal{H}}^{p}(T) - \left(d_{\mathcal{H}}(T) - d_{\mathcal{H}}(T\cup \{v\})\right)^{p} \right). 
    \end{align*}
    Notice that $d_{\mathcal{H}}^{p}(T) - \left(d_{\mathcal{H}}(T) - d_{\mathcal{H}}(T\cup \{v\})\right)^{p} \neq 0$ iff $d_{\mathcal{H}}(T\cup \{v\}) \ge 1$. Therefore, equations 
    \begin{align*}
        \partial_{r-t}L_{\mathcal{H}}(v)
        & = \{T\setminus\{v\} \colon T \in \partial_{r-t}\mathcal{H} \text{ and }  v\in T\}
        \quad\text{and}\quad  \\
        \partial_{r-t-1}L_{\mathcal{H}}(v) 
        & = \left\{T \in \partial_{r-t}\mathcal{H} \colon v\not\in T \text{ and } d_{\mathcal{H}}(T\cup \{v\}) \ge 1 \right\} 
    \end{align*}
    imply the desired form of $d_{\mathcal{H},t,p}(v)$. 

    By~\eqref{equ:FACT:inequality-a-4}, we obtain 
    \begin{align*}
        d_{\mathcal{H},t,p}(v)
        & = \sum_{S \in \partial_{r-t}L_{\mathcal{H}}(v)} 
            d_{\mathcal{H}}^{p}(S\cup\{v\})
            + \sum_{T \in \partial_{r-t-1}L_{\mathcal{H}}(v)} 
            \left( d_{\mathcal{H}}^{p}(T) - \left(d_{\mathcal{H}}(T) - d_{\mathcal{H}}(T\cup \{v\})\right)^{p} \right) \\
        & \le \sum_{S \in \partial_{r-t}L_{\mathcal{H}}(v)} 
            d_{\mathcal{H}}^{p-1+1}(S\cup\{v\}) 
            + \sum_{T \in \partial_{r-t-1}L_{\mathcal{H}}(v)}  p \cdot d_{\mathcal{H}}^{p-1}(T) \cdot d_{\mathcal{H}}(T\cup \{v\}) \\
        & \le \sum_{S \in \partial_{r-t}L_{\mathcal{H}}(v)} 
            d_{\mathcal{H}}(S\cup\{v\}) \cdot n^{(p-1)(r-t)} 
            + \sum_{T \in \partial_{r-t-1}L_{\mathcal{H}}(v)}  p n^{(p-1)(r-t)} \cdot d_{\mathcal{H}}(T\cup \{v\}) \\
        & \le p n^{(p-1)(r-t)} \left(\sum_{S \in \partial_{r-t}L_{\mathcal{H}}(v)} 
            d_{\mathcal{H}}(S\cup\{v\}) 
            + \sum_{T \in \partial_{r-t-1}L_{\mathcal{H}}(v)}  d_{\mathcal{H}}(T\cup \{v\})\right) \\
        & = p n^{(p-1)(r-t)} \left( \binom{r-1}{t-1}d_{\mathcal{H}}(v) + \binom{r-1}{t} d_{\mathcal{H}}(v) \right)
        = p n^{(p-1)(r-t)} \binom{r}{t} d_{\mathcal{H}}(v), 
    \end{align*}
    which implies~\eqref{equ:LEMMA:Lp-degree-expression}.
\end{proof}

The following result is a straightforward corollary of Lemma~\ref{LEMMA:Lp-degree-expression}. 
Its proof can be found in 
Section~\ref{APPENDIX:proof:CORO:Lp-degree-expression-Lagrangian} of the Appendix. 

\begin{corollary}\label{CORO:Lp-degree-expression-Lagrangian}
    Let $r > t \ge 1$ be integers and $p \ge 1$ be a real number. 
    Suppose that $V_1 \cup \cdots \cup V_m = [n]$ is a partition and $\mathcal{H} = \mathcal{G}(V_1, \ldots, V_m)$ is a blowup of an $r$-graph $\mathcal{G}$ on $[m]$. Then for every $i \in [m]$ and $v\in V_i$, 
    \begin{align*}
        D_i L_{\mathcal{G},t,p}(x_1, \ldots, x_m) - o(1)
        \le \frac{d_{t,p,\mathcal{H}}(v)}{n^{t-1+p(r-t)}}
        \le D_i L_{\mathcal{G},t,p}(x_1, \ldots, x_m).  
    \end{align*}
    where $x_i \coloneqq |V_i|/n$ for $i\in [m]$. 
\end{corollary}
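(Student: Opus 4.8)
\textbf{Proof plan for Corollary~\ref{CORO:Lp-degree-expression-Lagrangian}.}

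The plan is to apply Lemma~\ref{LEMMA:Lp-degree-expression} directly to the blowup $\mathcal{H} = \mathcal{G}(V_1, \ldots, V_m)$ and then translate each sum appearing there into the corresponding term of the partial derivative $D_i L_{\mathcal{G},t,p}$. First I would fix $i \in [m]$ and $v \in V_i$, and write down the formula for $d_{\mathcal{H},t,p}(v)$ given by Lemma~\ref{LEMMA:Lp-degree-expression}, namely the sum over $S \in \partial_{r-t} L_{\mathcal{H}}(v)$ of $d_{\mathcal{H}}^p(S \cup \{v\})$ plus the sum over $T \in \partial_{r-t-1} L_{\mathcal{H}}(v)$ of $d_{\mathcal{H}}^p(T) - (d_{\mathcal{H}}(T) - d_{\mathcal{H}}(T\cup\{v\}))^p$. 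The key observation is that since $\mathcal{H}$ is a blowup, degrees of sets in $\mathcal{H}$ are essentially determined by which parts $V_j$ the vertices lie in: for a set $W$ hitting parts indexed by a multiset $J$, the degree $d_{\mathcal{H}}(W)$ equals (up to lower-order terms coming from the requirement that the $r-t$ extra vertices avoid the finitely many vertices of $W$) the quantity $n^{r-t}$ times the sum of $\prod_{\ell \in K} x_\ell$ over the appropriate $(r-t)$-multisets $K$ extending $J$ to an edge of $\mathcal{G}$; equivalently $d_{\mathcal{H}}(W)/n^{r-t} = (\sum_{I \in L_{\mathcal{G}}(\bar J)} x_I) + o(1)$ where $\bar J$ is the projection of $W$ to $[m]$.

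Next I would count the sets $S$ and $T$. The first sum ranges over $(r-t-1)$-subsets $S$ of $V(\mathcal{H}) \setminus \{v\}$ such that $S \cup \{v\}$ lies in an edge; grouping $S$ by its projection to $[m]$, the number of such $S$ projecting to a fixed $(r-t-1)$-multiset is $\sim n^{r-t-1}\prod x_\ell$, and $d_{\mathcal{H}}^p(S\cup\{v\}) \sim n^{p(r-t)}(\sum_{I \in L_{\mathcal{G}}(T')}x_I)^p$ where $T'$ is the relevant $t$-subset of $[m]$ containing $i$. Summing, this reproduces exactly the part of $D_i L_{\mathcal{G},t,p}$ coming from differentiating the factor $X_T$ in monomials where $i \in T$, scaled by $n^{t-1+p(r-t)}$. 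For the second sum, I would use inequality~\eqref{equ:FACT:inequality-a-4}, $d_{\mathcal{H}}^p(T) - (d_{\mathcal{H}}(T) - d_{\mathcal{H}}(T\cup\{v\}))^p \le p\, d_{\mathcal{H}}^{p-1}(T)\, d_{\mathcal{H}}(T\cup\{v\})$, for the upper bound, and the reverse one-sided estimate $d_{\mathcal{H}}^p(T) - (d_{\mathcal{H}}(T)-d_{\mathcal{H}}(T\cup\{v\}))^p \ge p (d_{\mathcal{H}}(T)-d_{\mathcal{H}}(T\cup\{v\}))^{p-1} d_{\mathcal{H}}(T\cup\{v\})$ (convexity of $x \mapsto x^p$ for $p \ge 1$), and note that $d_{\mathcal{H}}(T\cup\{v\}) = o(d_{\mathcal{H}}(T))$ unless it is comparable, so both bounds agree up to the claimed $o(1)$ factor; this reproduces the part of $D_i L_{\mathcal{G},t,p}$ coming from differentiating a factor $X_I$ inside $(\sum_{I} X_I)^p$, again scaled appropriately. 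Assembling the two contributions gives $D_i L_{\mathcal{G},t,p}(\vec{x}) - o(1) \le d_{\mathcal{H},t,p}(v)/n^{t-1+p(r-t)} \le D_i L_{\mathcal{G},t,p}(\vec{x})$.

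The main obstacle I expect is bookkeeping rather than conceptual: carefully matching the combinatorial sums over $S$ and $T$ (which run over subsets of the large vertex set $[n]$) with the formal partial-derivative expression (a sum of monomials in $x_1,\dots,x_m$), making sure the multiplicities $\binom{r-1}{t-1}$ versus $\binom{r-1}{t}$ and the product structure come out right, and controlling the $o(1)$ error terms uniformly — these arise both from the ``distinct vertices'' corrections (replacing $\binom{|V_j|}{a}$-type counts by $|V_j|^a/a!$) and from the cross terms in the second sum where $d_{\mathcal{H}}(T\cup\{v\})$ is not negligible compared to $d_{\mathcal{H}}(T)$. Since the precise estimates are routine but tedious, this is exactly why the proof is deferred to Section~\ref{APPENDIX:proof:CORO:Lp-degree-expression-Lagrangian} of the Appendix; the upper bound is immediate from the exact degree formula by dropping the negative terms and using $d_{\mathcal{H}}(W) \le n^{r-t} \cdot (\text{Lagrange coefficient})$, while the lower bound requires the convexity estimate above together with the observation that equality in all the relevant inequalities is approached as $n \to \infty$.
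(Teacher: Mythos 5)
Your plan follows essentially the same route as the paper's appendix proof: apply Lemma~\ref{LEMMA:Lp-degree-expression} to the blowup, identify each of the two sums exactly with the corresponding term of $D_i L_{\mathcal{G},t,p}(x_1,\ldots,x_m)$ via the projection of vertices to parts, and use~\eqref{equ:FACT:inequality-a-4} so that the only error is the one-sided, nonnegative $o(n^{t-1+p(r-t)})$ coming from linearizing $d_{\mathcal{H}}^{p}(T)-\left(d_{\mathcal{H}}(T)-d_{\mathcal{H}}(T\cup\{v\})\right)^{p}$. The bookkeeping you anticipate is even lighter than you fear, since any set lying in a shadow of a blowup meets each part at most once, so the counts and degrees are exact products of part sizes and no multiset or ``distinct vertices'' corrections arise.
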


In the following lemma, we show that the $(t,p)$-norm is locally monotone, as defined in~\cite{CL24}.

\begin{lemma}\label{LEMMA:local-monotone}
    Let $r > t \ge 1$ be integers and $p \ge 1$ be a real number. 
    Suppose that $\mathcal{H}$ is an $r$-graph and $\mathcal{H}' \subset \mathcal{H}$ is a subgraph. 
    Then for every $v\in V(\mathcal{H}')$, 
    \begin{align}\label{equ:LEMMA:local-monotone}
        d_{\mathcal{H},t,p}(v) - d_{\mathcal{H}',t,p}(v)
        \ge \sum_{T} 
            \left( d_{\mathcal{H}}^{p}(T) - \left(d_{\mathcal{H}}(T) - d_{\mathcal{H}}(T\cup \{v\})\right)^{p} \right), 
    \end{align}
    where the summation is taken over $\partial_{r-t-1}L_{\mathcal{H}}(v)\setminus \partial_{r-t-1}L_{\mathcal{H}'}(v)$. 
    In particular, 
    \begin{align*}
        d_{\mathcal{H},t,p}(v) \ge d_{\mathcal{H}',t,p}(v)
        \quad\text{for every } v\in V(\mathcal{H}'). 
    \end{align*}
\end{lemma}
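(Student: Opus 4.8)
The plan is to compare the $(t,p)$-degree formulas for $\mathcal{H}$ and $\mathcal{H}'$ term by term using the expression from Lemma~\ref{LEMMA:Lp-degree-expression}, and to show that passing from the larger graph to the smaller one can only decrease each term in the first sum and can only delete (nonnegative) terms from the second sum. First I would invoke Lemma~\ref{LEMMA:Lp-degree-expression} to write
\begin{align*}
    d_{\mathcal{H},t,p}(v)
    & = \sum_{S \in \partial_{r-t}L_{\mathcal{H}}(v)} d_{\mathcal{H}}^{p}(S\cup\{v\})
      + \sum_{T \in \partial_{r-t-1}L_{\mathcal{H}}(v)} \left( d_{\mathcal{H}}^{p}(T) - \left(d_{\mathcal{H}}(T) - d_{\mathcal{H}}(T\cup \{v\})\right)^{p} \right),
\end{align*}
and the analogous identity for $\mathcal{H}'$. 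Since $\mathcal{H}'\subset\mathcal{H}$, we have $\partial_{r-t}L_{\mathcal{H}'}(v)\subseteq\partial_{r-t}L_{\mathcal{H}}(v)$ and, for each $S$ in the smaller shadow, $d_{\mathcal{H}'}(S\cup\{v\})\le d_{\mathcal{H}}(S\cup\{v\})$; hence the first sum for $\mathcal{H}'$ is bounded above by the first sum for $\mathcal{H}$ (every term is nonnegative, and it is monotone in the underlying degree because $x\mapsto x^p$ is increasing on $[0,\infty)$). The heart of the argument is therefore controlling the second sum.

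For the second sums, the key observation is that each summand $g(T):= d^{p}_{\mathcal{G}}(T) - \big(d_{\mathcal{G}}(T)-d_{\mathcal{G}}(T\cup\{v\})\big)^{p}$ is nonnegative (since $p\ge 1$ and $x\mapsto x^p$ is increasing, with $d_{\mathcal{G}}(T)\ge d_{\mathcal{G}}(T)-d_{\mathcal{G}}(T\cup\{v\})\ge 0$), and that it is \emph{monotone} in $\mathcal{G}$ in the right direction: I would fix $T$, abbreviate $a:=d_{\mathcal{H}}(T\cup\{v\})$, $b:=d_{\mathcal{H}}(T)-d_{\mathcal{H}}(T\cup\{v\})$ and $a':=d_{\mathcal{H}'}(T\cup\{v\})$, $b':=d_{\mathcal{H}'}(T)-d_{\mathcal{H}'}(T\cup\{v\})$, so that $0\le a'\le a$ and $0\le b'\le b$, and show $(a'+b')^p-(b')^p\le (a+b)^p-b^p$. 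This follows because the function $(s,\tau)\mapsto(s+\tau)^p-\tau^p$ is nondecreasing in $s$ on $[0,\infty)$ for fixed $\tau\ge 0$ (obvious), and nondecreasing in $\tau$ on $[0,\infty)$ for fixed $s\ge 0$ when $p\ge 1$ (its $\tau$-derivative is $p((s+\tau)^{p-1}-\tau^{p-1})\ge 0$); chaining the two monotonicities from $(a',b')$ to $(a,b')$ to $(a,b)$ gives the claim. Consequently, for each $T$ lying in $\partial_{r-t-1}L_{\mathcal{H}'}(v)$ the corresponding summand only grows when we pass from $\mathcal{H}'$ to $\mathcal{H}$, and every $T$ in $\partial_{r-t-1}L_{\mathcal{H}}(v)\setminus\partial_{r-t-1}L_{\mathcal{H}'}(v)$ contributes an extra nonnegative term on the $\mathcal{H}$ side. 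Subtracting the $\mathcal{H}'$ identity from the $\mathcal{H}$ identity, all the matched terms give a nonnegative contribution and what remains as a \emph{lower} bound is exactly $\sum_{T\in \partial_{r-t-1}L_{\mathcal{H}}(v)\setminus \partial_{r-t-1}L_{\mathcal{H}'}(v)} g(T)$, which is~\eqref{equ:LEMMA:local-monotone}; the ``in particular'' statement is immediate since this sum is nonnegative.

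The main obstacle, and the only point requiring genuine care, is the bookkeeping in the subtraction: one must verify that $\partial_{r-t}L_{\mathcal{H}'}(v)\subseteq\partial_{r-t}L_{\mathcal{H}}(v)$ and $\partial_{r-t-1}L_{\mathcal{H}'}(v)\subseteq\partial_{r-t-1}L_{\mathcal{H}}(v)$ (both are clear from $\mathcal{H}'\subseteq\mathcal{H}$, noting that $T\in\partial_{r-t-1}L_{\mathcal{H}'}(v)$ forces $d_{\mathcal{H}'}(T\cup\{v\})\ge1$, hence $d_{\mathcal{H}}(T\cup\{v\})\ge1$), so that the index sets nest and no term is double-counted or lost. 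Everything else is the elementary convexity fact about $x\mapsto x^{p}$ for $p\ge1$, which is already the engine behind Proposition~\ref{PROP:star-polynomial-sym-increase}, together with~\eqref{equ:FACT:inequality-a-4} if one prefers to cite it directly for the $\tau$-monotonicity step.
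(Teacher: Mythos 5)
Your proposal is correct and follows essentially the same route as the paper: both start from the expression in Lemma~\ref{LEMMA:Lp-degree-expression}, handle the first sum by monotonicity of $x\mapsto x^p$, and reduce the second sum to the termwise inequality on $\partial_{r-t-1}L_{\mathcal{H}'}(v)$, with the leftover terms over $\partial_{r-t-1}L_{\mathcal{H}}(v)\setminus\partial_{r-t-1}L_{\mathcal{H}'}(v)$ giving~\eqref{equ:LEMMA:local-monotone}. The only (cosmetic) difference is how the elementary inequality is verified: you chain the two monotonicities of $(s,\tau)\mapsto(s+\tau)^p-\tau^p$, while the paper splits degrees via $\mathcal{G}=\mathcal{H}\setminus\mathcal{H}'$ and invokes the monotonicity of $g(X)=(A+X)^p-(B+X)^p$ — the same fact in different clothing.
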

\begin{proof}[Proof of Lemma~\ref{LEMMA:local-monotone}]
    Fix $v\in V(\mathcal{H}')$. 
    It is clear that $d_{\mathcal{H}}(S\cup\{v\}) \ge d_{\mathcal{H}'}(S\cup\{v\})$ for every $S \in \partial_{r-t}L_{\mathcal{H}'}(v)$. 
    So by Lemma~\ref{LEMMA:Lp-degree-expression}, to prove~\eqref{equ:LEMMA:local-monotone}, it suffices to show that for every $T \in \partial_{r-t-1}L_{\mathcal{H}'}(v)$, 
    \begin{align*}
        d_{\mathcal{H}}^{p}(T) - \left(d_{\mathcal{H}}(T) - d_{\mathcal{H}}(T\cup\{v\})\right)^{p}
        \ge d_{\mathcal{H}'}^{p}(T) - \left(d_{\mathcal{H}'}(T) - d_{\mathcal{H}'}(T\cup\{v\})\right)^{p}. 
    \end{align*}
    Let $\mathcal{G} \coloneqq \mathcal{H}\setminus \mathcal{H}'$. 
    Fix $T \in \partial_{r-t-1}L_{\mathcal{H}'}(v)$. 
    Notice that 
    \begin{align*}
        & \quad d_{\mathcal{H}}^{p}(T) - \left(d_{\mathcal{H}}(T) - d_{\mathcal{H}}(T\cup\{v\})\right)^{p} \\
        & = \left(d_{\mathcal{H}'}(T) + d_{\mathcal{G}}(T)\right)^{p}
        - \left(\left(d_{\mathcal{H}'}(T) + d_{\mathcal{G}}(T)\right) - \left(d_{\mathcal{H}'}(T\cup\{v\}) + d_{\mathcal{G}}(T\cup \{v\})\right)\right)^{p} \\
        & = \left(d_{\mathcal{H}'}(T) + d_{\mathcal{G}}(T)\right)^{p}
        - \left(d_{\mathcal{H}'}(T) - d_{\mathcal{H}'}(T\cup\{v\}) + d_{\mathcal{G}}(T) - d_{\mathcal{G}}(T\cup \{v\})\right)^{p} \\
        & \ge \left(d_{\mathcal{H}'}(T) + d_{\mathcal{G}}(T)\right)^{p}
        - \left(d_{\mathcal{H}'}(T) - d_{\mathcal{H}'}(T\cup\{v\}) + d_{\mathcal{G}}(T) \right)^{p}. 
    \end{align*}
    Consider the polynomial 
    \begin{align*}
        g(X) 
        \coloneqq \left(A + X\right)^{p} - (B + X)^{p}, 
    \end{align*}
    where $A \coloneqq d_{\mathcal{H}'}(T)$ and $B \coloneqq d_{\mathcal{H}'}(T) - d_{\mathcal{H}'}(T\cup\{v\})$. 
    Since $A \ge B \ge 0$ and $p \ge 1$, we have 
    \begin{align*}
        D g(x)
        = p \left((A+x)^{p-1} - (B+x)^{p-1}\right)
        \ge 0
        \quad\text{for}\quad x \ge 0. 
    \end{align*}
    Therefore, by the monotonicity of $g(x)$, 
    \begin{align*}
        & \quad d_{\mathcal{H}}^{p}(T) - \left(d_{\mathcal{H}}(T) - d_{\mathcal{H}}(T\cup\{v\})\right)^{p} \\
        & \ge \left(d_{\mathcal{H}'}(T) + d_{\mathcal{G}}(T)\right)^{p}
        - \left(\left(d_{\mathcal{H}'}(T) - d_{\mathcal{H}'}(T\cup\{v\})\right) + d_{\mathcal{G}}(T) \right)^{p} \\
        & \ge d_{\mathcal{H}'}^{p}(T) 
        - \left(\left(d_{\mathcal{H}'}(T) - d_{\mathcal{H}'}(T\cup\{v\})\right)\right)^{p}, 
    \end{align*}
    proving Lemma~\ref{LEMMA:local-monotone}. 
\end{proof}

In the following lemma, we establish several combinatorial equalities that will be useful for subsequent estimations.

\begin{lemma}\label{LEMMA:tail-estimate-Lp-degree-sum}
    Let $r > t \ge 1 $ be integers and $p > 0$ be a real number. 
    For every $n$-vertex $r$-graph $\mathcal{H}$ and for every $v\in V(\mathcal{H})$,
    \begin{align}\label{equ:double-count-0}
            \sum_{v\in V(\mathcal{H})} \sum_{S \in \partial_{r-t}L_{\mathcal{H}}(v)} 
            d_{\mathcal{H}}^{p}(S\cup\{v\})
            = t \cdot \norm{\mathcal{H}}_{t,p}, 
    \end{align}    
    \begin{align}\label{equ:double-count-1}
        \sum_{v\in V(\mathcal{H})} \sum_{T \in \partial_{r-t-1}L_{\mathcal{H}}(v)} d_{\mathcal{H}}(T\cup \{v\}) \cdot d_{\mathcal{H}}^{p-1}(T)
        = (r-t) \cdot \norm{\mathcal{H}}_{t,p},
    \end{align}
    and 
    \begin{align}\label{equ:double-count-2}
        \sum_{v\in V(\mathcal{H})} \sum_{T \in \partial_{r-t-1}L_{\mathcal{H}}(v)} 
        \left(\frac{ d_{\mathcal{H}}(T\cup \{v\})}{d_{\mathcal{H}}(T)} \right)^2 d_{\mathcal{H}}^{p}(T) 
        = O(n^{t+p(r-t)-\delta_p}), 
    \end{align}
    where 
    \begin{align*}
        \delta_p
        \coloneqq 
        \begin{cases}
            \frac{p}{4}, & \quad\text{if}\quad p \in (0,1], \\
            \frac{p-1}{2p}, & \quad\text{if}\quad p >1. 
        \end{cases}
    \end{align*}
\end{lemma}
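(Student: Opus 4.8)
The three statements fall into two categories: \eqref{equ:double-count-0} and \eqref{equ:double-count-1} are exact double-counting identities, while \eqref{equ:double-count-2} is a crude asymptotic estimate. The plan is to prove each by re-indexing the double sum according to its ``outer'' set and then, for \eqref{equ:double-count-2}, reducing to a uniform per-term bound.

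For \eqref{equ:double-count-0}: since $L_{\mathcal{H}}(v)$ is an $(r-1)$-graph, $\partial_{r-t}L_{\mathcal{H}}(v)$ consists of $(t-1)$-sets $S$, and $\{S\cup\{v\}\colon S\in\partial_{r-t}L_{\mathcal{H}}(v)\}$ is precisely $\{T\in\partial_{r-t}\mathcal{H}\colon v\in T\}$. Swapping the order of summation, the left-hand side becomes $\sum_{T\in\partial_{r-t}\mathcal{H}}|T|\cdot d_{\mathcal{H}}^{p}(T)=t\,\norm{\mathcal{H}}_{t,p}$. For \eqref{equ:double-count-1}: analogously, $\partial_{r-t-1}L_{\mathcal{H}}(v)$ consists of the $t$-sets $T$ with $v\notin T$ and $d_{\mathcal{H}}(T\cup\{v\})\ge1$; swapping summation order turns the left-hand side into $\sum_{T}d_{\mathcal{H}}^{p-1}(T)\sum_{v\notin T}d_{\mathcal{H}}(T\cup\{v\})$, where $T$ ranges over $t$-sets of positive degree (so $d_{\mathcal{H}}^{p-1}(T)$ is well defined). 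Counting pairs $(v,E)$ with $T\subset E\in\mathcal{H}$ and $v\in E\setminus T$ gives $\sum_{v\notin T}d_{\mathcal{H}}(T\cup\{v\})=(r-t)\,d_{\mathcal{H}}(T)$, hence the claimed value $(r-t)\,\norm{\mathcal{H}}_{t,p}$.

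For \eqref{equ:double-count-2}, the idea is to bound the summand so that what remains is a quantity bounded uniformly by $O(n^{p(r-t-1)})$, summed over the $\le\binom{n}{t}=O(n^{t})$ relevant $t$-sets. Since $d_{\mathcal{H}}(T\cup\{v\})\le d_{\mathcal{H}}(T)$, write the summand as $d_{\mathcal{H}}(T\cup\{v\})^{2}/d_{\mathcal{H}}(T)^{2-p}$; summing over $v\notin T$ and using $\sum_{v}d_{\mathcal{H}}(T\cup\{v\})^{2}\le\bigl(\max_{v}d_{\mathcal{H}}(T\cup\{v\})\bigr)\sum_{v}d_{\mathcal{H}}(T\cup\{v\})=\bigl(\max_{v}d_{\mathcal{H}}(T\cup\{v\})\bigr)(r-t)\,d_{\mathcal{H}}(T)$ together with $\max_{v}d_{\mathcal{H}}(T\cup\{v\})\le\min\{d_{\mathcal{H}}(T),\,O(n^{r-t-1})\}$, the left-hand side is at most $(r-t)\sum_{T}\min\{d_{\mathcal{H}}(T),\,O(n^{r-t-1})\}\,d_{\mathcal{H}}(T)^{p-1}$. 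When $p\ge1$ one bounds $\min\{\cdots\}\le O(n^{r-t-1})$ and $d_{\mathcal{H}}(T)^{p-1}\le O(n^{(p-1)(r-t)})$, obtaining $O(n^{t+p(r-t)-1})$; when $p\in(0,1)$ one splits on whether $d_{\mathcal{H}}(T)\le O(n^{r-t-1})$ and checks that in both cases the per-term contribution is $O(n^{p(r-t-1)})$ (using $d_{\mathcal{H}}(T)^{p-1}\le1$ for the large-degree $T$, valid since every relevant $T$ has $d_{\mathcal{H}}(T)\ge1$), obtaining $O(n^{t+p(r-t)-p})$. Both bounds are stronger than the asserted $O(n^{t+p(r-t)-\delta_p})$, so the statement follows a fortiori.

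The identities \eqref{equ:double-count-0} and \eqref{equ:double-count-1} present no real obstacle beyond carefully matching shadows of links with shadows of $\mathcal{H}$ and keeping track of degenerate (zero-degree) sets. The only point requiring genuine care is \eqref{equ:double-count-2} for $p\in(0,1)$: discarding the factor $d_{\mathcal{H}}(T)^{p-1}$ and using $\min\{d_{\mathcal{H}}(T),O(n^{r-t-1})\}\le O(n^{r-t-1})$ alone leaves exponent $t+(r-t-1)$, which exceeds $t+p(r-t)-\delta_p$ once $r-t$ is large, so one must exploit that the relevant $T$ all have degree at least one to absorb $d_{\mathcal{H}}(T)^{p-1}\le1$ (equivalently, use the interpolation $\min\{a,b\}\le a^{\theta}b^{1-\theta}$ to split the factor between the two bounds). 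Apart from this, everything is elementary double counting and monotonicity of $x\mapsto x^{p-1}$.
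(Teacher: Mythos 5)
Your proofs of \eqref{equ:double-count-0} and \eqref{equ:double-count-1} are exactly the paper's: re-index the double sum over $T\in\partial_{r-t}\mathcal{H}$ and use $\sum_{v\notin T}d_{\mathcal{H}}(T\cup\{v\})=(r-t)\,d_{\mathcal{H}}(T)$. For \eqref{equ:double-count-2} you take a genuinely different route. The paper partitions the relevant $t$-sets by a degree threshold $n^{r-t-1+\delta}$, handles the high-degree sets via $d_{\mathcal{H}}(T\cup\{v\})/d_{\mathcal{H}}(T)\le n^{-\delta}$ combined with \eqref{equ:double-count-1}, and for $p\in(0,1]$ treats the low-degree sets with an exponent split $\alpha=1-p/2$ and Jensen's inequality applied to $\sum_{v}d_{\mathcal{H}}^{\alpha}(T\cup\{v\})$. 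You instead sum over $v$ first for each fixed $T$, using $\sum_{v}d_{\mathcal{H}}(T\cup\{v\})^{2}\le\bigl(\max_{v}d_{\mathcal{H}}(T\cup\{v\})\bigr)(r-t)\,d_{\mathcal{H}}(T)$ and $\max_{v}d_{\mathcal{H}}(T\cup\{v\})\le\min\{d_{\mathcal{H}}(T),O(n^{r-t-1})\}$, so everything reduces to the per-$T$ quantity $\min\{d_{\mathcal{H}}(T),O(n^{r-t-1})\}\,d_{\mathcal{H}}(T)^{p-1}$, which is $O(n^{p(r-t)-1})$ for $p\ge1$ and $O(n^{p(r-t)-p})$ for $p\in(0,1)$. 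This is simpler (no threshold split, no Jensen) and yields the strictly stronger exponent $t+p(r-t)-\min\{p,1\}$, which implies the lemma since $\delta_p\le\min\{p,1\}$ in both regimes.

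One stated justification is off, although the correct idea appears elsewhere in your text: for large-degree $T$ (say $d_{\mathcal{H}}(T)>Cn^{r-t-1}$), the bound $d_{\mathcal{H}}(T)^{p-1}\le1$ only gives a per-term contribution $O(n^{r-t-1})$, which is not $O(n^{p(r-t-1)})$ once $r-t\ge2$ and $p<1$ --- exactly the loss you warn against in your final paragraph. The correct one-line fix is monotonicity: since $x\mapsto x^{p-1}$ is decreasing, $d_{\mathcal{H}}(T)^{p-1}\le(Cn^{r-t-1})^{p-1}$ in this case; alternatively, the interpolation you mention, $\min\{a,b\}\le a^{1-p}b^{p}$ with $a=d_{\mathcal{H}}(T)$ and $b=Cn^{r-t-1}$, gives the per-term bound $(Cn^{r-t-1})^{p}$ uniformly and removes the case split altogether. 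With that repair the argument is complete and correct.
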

\begin{proof}[Proof of Lemma~\ref{LEMMA:tail-estimate-Lp-degree-sum}]
    Equation~\eqref{equ:double-count-0} follows from the following simple double counting$\colon$
    \begin{align*}
        \sum_{v\in V(\mathcal{H})} \sum_{S \in \partial_{r-t}L_{\mathcal{H}}(v)} 
            d_{\mathcal{H}}^{p}(S\cup\{v\})
        = \sum_{T \in \partial_{r-t}\mathcal{H}} 
            |T| \cdot d_{\mathcal{H}}^{p}(T)
        = t \sum_{T \in \partial_{r-t}\mathcal{H}} 
             d_{\mathcal{H}}^{p}(T). 
    \end{align*}
    Notice that for every $T \in \partial_{r-t}\mathcal{H}$, 
    \begin{align*}
         \sum_{v\in V(\mathcal{H})\setminus T} d_{\mathcal{H}}(T\cup \{v\}) 
         = (r-t) \cdot |L_{\mathcal{H}}(T)|
         = (r-t) \cdot d_{\mathcal{H}}(T). 
    \end{align*}
    Therefore, 
    \begin{align*}
        \sum_{v\in V(\mathcal{H})} \sum_{T \in \partial_{r-t-1}L_{\mathcal{H}}(v)} d_{\mathcal{H}}(T\cup \{v\}) \cdot d_{\mathcal{H}}^{p-1}(T)
        & =  \sum_{T \in \partial_{r-t} \mathcal{H}} \sum_{v\in V(\mathcal{H})\setminus T} d_{\mathcal{H}}(T\cup \{v\}) \cdot d_{\mathcal{H}}^{p-1}(T) \\
        & = \sum_{T \in \partial_{r-t} \mathcal{H}} (r-t) \cdot d_{\mathcal{H}}(T) \cdot d_{\mathcal{H}}^{p-1}(T) \\
        & = (r-t) \cdot \norm{\mathcal{H}}_{t,p}, 
    \end{align*}
    proving~\eqref{equ:double-count-1}. 
    So it suffices to prove~\eqref{equ:double-count-2}. 
    Let 
    \begin{align*}
        \delta
        \coloneqq 
        \begin{cases}
            \frac{1}{4}, & \quad\text{if}\quad p \in (0,1], \\
            \frac{p-1}{2p}, & \quad\text{if}\quad p >1. 
        \end{cases}
    \end{align*}
    Let us partition $\partial_{r-t-1}L_{\mathcal{H}}(v)$ into two sets$\colon$
    \begin{align*}
        \mathcal{T}_{1}
        \coloneqq \left\{T\in \partial_{r-t-1}L_{\mathcal{H}}(v) \colon d_{\mathcal{H}}(T) \ge n^{r-t-1+\delta} \right\}
        \quad\text{and}\quad 
        \mathcal{T}_{2} 
        \coloneqq \partial_{r-t-1}L_{\mathcal{H}}(v) \setminus \mathcal{T}_{1}.
    \end{align*}
    Observe that $\frac{ d_{\mathcal{H}}(T\cup \{v\})}{d_{\mathcal{H}}(T)} \le \frac{n^{r-t-1}}{n^{r-t-1+\delta}} = \frac{1}{n^{\delta}}$ for every $T \in \mathcal{T}_1$. So it follows from~\eqref{equ:double-count-1} that 
    \begin{align}\label{equ:LEMMA-tail-estimate-Lp-degree-sum-1}
        & \quad \sum_{v\in V(\mathcal{H})} \sum_{T \in \partial_{r-t-1}L_{\mathcal{H}}(v)} 
        \left(\frac{ d_{\mathcal{H}}(T\cup \{v\})}{d_{\mathcal{H}}(T)} \right)^2 d_{\mathcal{H}}^{p}(T) \notag \\
        & = \sum_{v\in V(\mathcal{H})} \sum_{T \in \mathcal{T}_1} 
        \left(\frac{ d_{\mathcal{H}}(T\cup \{v\})}{d_{\mathcal{H}}(T)} \right)^2 d_{\mathcal{H}}^{p}(T)
        + \sum_{v\in V(\mathcal{H})} \sum_{T \in \mathcal{T}_2} 
        \left(\frac{ d_{\mathcal{H}}(T\cup \{v\})}{d_{\mathcal{H}}(T)} \right)^2 d_{\mathcal{H}}^{p}(T) \notag \\
        & = \sum_{v\in V(\mathcal{H})} \sum_{T \in \mathcal{T}_1} \frac{ d_{\mathcal{H}}(T\cup \{v\})}{d_{\mathcal{H}}(T)} \cdot 
        d_{\mathcal{H}}(T\cup \{v\}) \cdot d_{\mathcal{H}}^{p-1}(T)  
        + \sum_{v\in V(\mathcal{H})} \sum_{T \in \mathcal{T}_2} 
        \left(\frac{ d_{\mathcal{H}}(T\cup \{v\})}{d_{\mathcal{H}}(T)} \right)^2 d_{\mathcal{H}}^{p}(T) \notag \\
        & \le \frac{1}{n^{\delta}}\sum_{v\in V(\mathcal{H})} \sum_{T \in \mathcal{T}_1} 
        d_{\mathcal{H}}(T\cup \{v\}) \cdot d_{\mathcal{H}}^{p-1}(T)  
        + \sum_{v\in V(\mathcal{H})} \sum_{T \in \mathcal{T}_2} 
        \left(\frac{ d_{\mathcal{H}}(T\cup \{v\})}{d_{\mathcal{H}}(T)} \right)^2 d_{\mathcal{H}}^{p}(T) \notag \\
        & \le \frac{(r-t) \norm{\mathcal{H}}_{t,p}}{n^{\delta}} 
         + \sum_{v\in V(\mathcal{H})} \sum_{T \in \mathcal{T}_2} 
        \left(\frac{ d_{\mathcal{H}}(T\cup \{v\})}{d_{\mathcal{H}}(T)} \right)^2 d_{\mathcal{H}}^{p}(T). 
    \end{align}

    \textbf{Case 1}$\colon$ $p > 1$. 

        Inequality~\eqref{equ:LEMMA-tail-estimate-Lp-degree-sum-1} continues as 
        \begin{align*}
            &\quad \sum_{v\in V(\mathcal{H})} \sum_{T \in \partial_{r-t-1}L_{\mathcal{H}}(v)} 
            \left(\frac{ d_{\mathcal{H}}(T\cup \{v\})}{d_{\mathcal{H}}(T)} \right)^2 d_{\mathcal{H}}^{p}(T) \\
            & \le \frac{(r-t) \norm{\mathcal{H}}_{t,p}}{n^{\delta}}  + \sum_{v\in V(\mathcal{H})} \sum_{T \in \mathcal{T}_2}  d_{\mathcal{H}}^{p}(T) \\
            & \le \frac{(r-t) \norm{\mathcal{H}}_{t,p}}{n^{\delta}}
            + n^{t+1} \left(n^{r-t-1+\delta}\right)^{p}  \\
            & \le n^{t+p(r-t)-\delta} + n^{t+p(r-t)+p\delta - (p-1)} 
            \le 2 n^{t+p(r-t)-\frac{p-1}{2p}}. 
        \end{align*}
        
    \medskip

    \textbf{Case 2}$\colon$ $p \in (0,1]$.
    
    Let $\alpha \coloneqq 1 - \frac{p}{2} \in (0,1)$. Inequality~\eqref{equ:LEMMA-tail-estimate-Lp-degree-sum-1} continues as 
    \begin{align}\label{equ:LEMMA-tail-estimate-Lp-degree-sum-2}
        & \quad \sum_{v\in V(\mathcal{H})} \sum_{T \in \partial_{r-t-1}L_{\mathcal{H}}(v)} 
        \left(\frac{ d_{\mathcal{H}}(T\cup \{v\})}{d_{\mathcal{H}}(T)} \right)^2 d_{\mathcal{H}}^{p}(T) \notag \\
        & \le \frac{(r-t) \norm{\mathcal{H}}_{t,p}}{n^{\delta}} 
         + \sum_{v\in V(\mathcal{H})} \sum_{T\in \mathcal{T}_2}  \left(\frac{ d_{\mathcal{H}}(T\cup \{v\})}{d_{\mathcal{H}}(T)} \right)^{2-\alpha} d_{\mathcal{H}}^{\alpha}(T\cup\{v\}) \cdot d_{\mathcal{H}}^{p-\alpha}(T)  \notag \\
        & \le \frac{n^{t+p(r-t)}}{n^{\delta}} 
         + \sum_{v\in V(\mathcal{H})} \sum_{T\in \mathcal{T}_2}  d_{\mathcal{H}}^{\alpha}(T\cup\{v\}) \cdot d_{\mathcal{H}}^{p-\alpha}(T)  \notag \\
        & = n^{t+p(r-t)-\frac{1}{4}} 
        + \sum_{T\in \mathcal{T}_2}  d_{\mathcal{H}}^{p-\alpha}(T) \sum_{v \in V(\mathcal{H})\setminus T} 
        d_{\mathcal{H}}^{\alpha}(T\cup \{v\}). 
    \end{align}
    Fix $T \in \mathcal{T}_2$ and let $\mathcal{G} \coloneqq L_{\mathcal{H}}(T)$. Notice that $d_{\mathcal{G}}(v) = d_{\mathcal{H}}(T\cup \{v\})$, and hence, by Jensen's inequality, 
    \begin{align*}
        \sum_{v \in V(\mathcal{H})\setminus T} d_{\mathcal{H}}^{\alpha}(T\cup \{v\})
        \le (n-t)\left(\frac{\sum_{v \in V(\mathcal{H})\setminus T} d_{\mathcal{H}}(T\cup \{v\})}{n-t}\right)^{\alpha}
        \le (r-t)^{\alpha} \cdot d_{\mathcal{H}}^{\alpha}(T) \cdot n^{1-\alpha}. 
    \end{align*}
    Therefore,~\eqref{equ:LEMMA-tail-estimate-Lp-degree-sum-2} continues as 
    \begin{align*}
        \sum_{T\in \mathcal{T}_2}  d_{\mathcal{H}}^{p-\alpha}(T) \sum_{v \in V(\mathcal{H})\setminus T} 
        d_{\mathcal{H}}^{\alpha}(T\cup \{v\})
        & \le (r-t)^{\alpha} n^{1-\alpha} \cdot \sum_{T\in \mathcal{T}_2}  d_{\mathcal{H}}^{p}(T) \\
        & \le r n^{1-\alpha} \cdot n^{p(r-t-1+\delta)} \cdot n^t \\
        & = r n^{t+p(r-t) + 1-\alpha - p + p\delta} 
         = r n^{t+p(r-t) - \frac{p}{4}}. 
    \end{align*}
    Therefore, 
    \begin{align*}
        \sum_{v\in V(\mathcal{H})} \sum_{\substack{T\in \partial_{r-t}\mathcal{H} \colon \\ v\not\in T}} 
        \left(\frac{ d_{\mathcal{H}}(T\cup \{v\})}{d_{\mathcal{H}}(T)} \right)^2 d_{\mathcal{H}}^{p}(T)
         \le n^{t+p(r-t)-\frac{1}{4}} 
            + r n^{t+p(r-t) - \frac{p}{4}} 
         \le 2r n^{t+p(r-t) - \frac{p}{4}}. 
    \end{align*}
    This completes the proof of~\eqref{equ:double-count-2}. 
\end{proof}

In the following lemma, we show that the map $\Gamma$ (defined at the beginning of this section) is $(t+p(r-t))$-uniform, as defined in~\cite{CL24}.

\begin{lemma}\label{LEMMA:Lp-uniform}
    Let $r > t \ge 1 $ be integers and $p > 0$ be a real number. 
    For every $n$-vertex $r$-graph $\mathcal{H}$, 
    \begin{align*}
        \left|\sum_{v\in V(\mathcal{H})}d_{\mathcal{H},t,p}(v) - \left(t+p(r-t)\right) \cdot \norm{\mathcal{H}}_{t,p} \right|
        = O(n^{t+p(r-t)-\delta_p}), 
    \end{align*}
    where $\delta_p>0$ is the same constant as defined in Lemma~\ref{LEMMA:tail-estimate-Lp-degree-sum}. 
\end{lemma}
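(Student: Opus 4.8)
\textbf{Proof plan for Lemma~\ref{LEMMA:Lp-uniform}.}
The plan is to start from the expression for the $(t,p)$-degree given in Lemma~\ref{LEMMA:Lp-degree-expression}, sum it over all $v \in V(\mathcal{H})$, and show that the sum differs from $(t+p(r-t)) \cdot \norm{\mathcal{H}}_{t,p}$ only by the error term controlled in~\eqref{equ:double-count-2}. Concretely, summing the identity in Lemma~\ref{LEMMA:Lp-degree-expression} over $v$ gives
\begin{align*}
    \sum_{v\in V(\mathcal{H})} d_{\mathcal{H},t,p}(v)
    = \underbrace{\sum_{v} \sum_{S\in \partial_{r-t}L_{\mathcal{H}}(v)} d_{\mathcal{H}}^{p}(S\cup\{v\})}_{=\, t\cdot\norm{\mathcal{H}}_{t,p}\ \text{by}~\eqref{equ:double-count-0}}
    + \sum_{v} \sum_{T\in \partial_{r-t-1}L_{\mathcal{H}}(v)} \Big( d_{\mathcal{H}}^{p}(T) - \big(d_{\mathcal{H}}(T)-d_{\mathcal{H}}(T\cup\{v\})\big)^{p}\Big).
\end{align*}
So the first summand is handled exactly by~\eqref{equ:double-count-0}, and the whole task reduces to showing that the second double sum equals $p(r-t)\cdot\norm{\mathcal{H}}_{t,p} + O(n^{t+p(r-t)-\delta_p})$.

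For the second double sum I would fix $T$ and $v$, write $d_{\mathcal{H}}(T) = d_{\mathcal{H}}^{p}(T)\cdot d_{\mathcal{H}}^{1-p}(T)$ style factorizations, or more cleanly set $x \coloneqq d_{\mathcal{H}}(T\cup\{v\})/d_{\mathcal{H}}(T) \in [0,1]$ so that
\begin{align*}
    d_{\mathcal{H}}^{p}(T) - \big(d_{\mathcal{H}}(T)-d_{\mathcal{H}}(T\cup\{v\})\big)^{p}
    = d_{\mathcal{H}}^{p}(T)\Big(1 - (1-x)^{p}\Big).
\end{align*}
By~\eqref{equ:FACT:inequality-a-3} we have $|(1-x)^p - (1-px)| \le (p^2+1)x^2$, hence $\big|1 - (1-x)^p - px\big| \le (p^2+1)x^2$, which gives
\begin{align*}
    \Big| d_{\mathcal{H}}^{p}(T)\big(1-(1-x)^{p}\big) - p\, x\, d_{\mathcal{H}}^{p}(T) \Big|
    \le (p^2+1)\, x^2\, d_{\mathcal{H}}^{p}(T)
    = (p^2+1)\left(\frac{d_{\mathcal{H}}(T\cup\{v\})}{d_{\mathcal{H}}(T)}\right)^{2} d_{\mathcal{H}}^{p}(T).
\end{align*}
Summing this over all $v$ and all $T\in\partial_{r-t-1}L_{\mathcal{H}}(v)$: the main term $\sum_{v}\sum_{T} p\, x\, d_{\mathcal{H}}^{p}(T) = p\sum_{v}\sum_{T} d_{\mathcal{H}}(T\cup\{v\})\cdot d_{\mathcal{H}}^{p-1}(T)$ equals $p(r-t)\cdot\norm{\mathcal{H}}_{t,p}$ by~\eqref{equ:double-count-1}, while the error is at most $(p^2+1)$ times the sum in~\eqref{equ:double-count-2}, which is $O(n^{t+p(r-t)-\delta_p})$. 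Combining, $\sum_v d_{\mathcal{H},t,p}(v) = t\cdot\norm{\mathcal{H}}_{t,p} + p(r-t)\cdot\norm{\mathcal{H}}_{t,p} + O(n^{t+p(r-t)-\delta_p})$, which is the claim.

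One technical point to be careful about: when $p < 1$ the quantity $d_{\mathcal{H}}^{p-1}(T)$ appearing in~\eqref{equ:double-count-1} is a negative power, so one must note that the sum ranges only over $T \in \partial_{r-t-1}L_{\mathcal{H}}(v) \subseteq \partial_{r-t}\mathcal{H}$, where $d_{\mathcal{H}}(T) \ge 1$, so all these expressions are well-defined; likewise $d_{\mathcal{H}}^{p}(T)$ for such $T$ is fine. I do not anticipate a genuine obstacle here — the proof is essentially an algebraic manipulation bookkeeping exercise, with all the real analytic content already packaged into Fact~\ref{FACT:inequality-a} and the three counting identities of Lemma~\ref{LEMMA:tail-estimate-Lp-degree-sum}. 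The only mildly delicate step is making sure the pointwise inequality is applied with the correct index sets so that~\eqref{equ:double-count-1} and~\eqref{equ:double-count-2} apply verbatim, and tracking that the $O(\cdot)$ constant depends only on $r,t,p$ (not on $n$ or $\mathcal{H}$).
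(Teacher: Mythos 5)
Your proposal is correct and follows essentially the same route as the paper's proof: both expand $d_{\mathcal{H},t,p}(v)$ via Lemma~\ref{LEMMA:Lp-degree-expression}, approximate $d_{\mathcal{H}}^{p}(T)-\left(d_{\mathcal{H}}(T)-d_{\mathcal{H}}(T\cup\{v\})\right)^{p}$ by $p\,d_{\mathcal{H}}(T\cup\{v\})\,d_{\mathcal{H}}^{p-1}(T)$ using~\eqref{equ:FACT:inequality-a-3}, and then invoke~\eqref{equ:double-count-0},~\eqref{equ:double-count-1}, and~\eqref{equ:double-count-2} to identify the main term and control the error. No gaps; the bookkeeping and index-set remarks you flag are exactly what the paper does.
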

\begin{proof}[Proof of Lemma~\ref{LEMMA:Lp-uniform}]
    Let $v\in V(\mathcal{H})$ be a vertex and $T \in \partial_{r-t-1}L_{\mathcal{H}}(v)$ be a $t$-set. 
    Since $\frac{d_{\mathcal{H}}(T\cup \{v\})}{d_{\mathcal{H}}(T)} \in (0,1]$, it follows from~\eqref{equ:FACT:inequality-a-3} that 
        \begin{align*}
            & \quad \left| d_{\mathcal{H}}^{p}(T) - \left(d_{\mathcal{H}}(T) - d_{\mathcal{H}}(T\cup \{v\})\right)^{p} - p \cdot d_{\mathcal{H}}(T\cup \{v\}) \cdot d_{\mathcal{H}}^{p-1}(T) \right| \\
            & = d_{\mathcal{H}}^{p}(T) \cdot \left| 1- \left(1 - \frac{d_{\mathcal{H}}(T\cup \{v\})}{d_{\mathcal{H}}(T)}\right)^{p} - p \cdot \frac{d_{\mathcal{H}}(T\cup \{v\})}{d_{\mathcal{H}}(T)}\right| \\ 
            & \le d_{\mathcal{H}}^{p}(T) \cdot (p^2+1) \cdot \left(\frac{d_{\mathcal{H}}(T\cup \{v\})}{d_{\mathcal{H}}(T)}\right)^2. 
        \end{align*}
    Combining with Lemma~\ref{LEMMA:Lp-degree-expression},~\eqref{equ:double-count-0},~\eqref{equ:double-count-1}, and~\eqref{equ:double-count-2}, we obtain 
        \begin{align*}
            & \quad \left| \sum_{v\in V(\mathcal{H})} d_{\mathcal{H},t,p}(v) - 
            (t+p(r-t)) \cdot \norm{\mathcal{H}}_{t,p} \right| \\
            & \le  \sum_{v\in V(\mathcal{H})} \sum_{T \in \partial_{r-t-1}L_{\mathcal{H}}(v)} 
            \left| \left( d_{\mathcal{H}}^{p}(T) - \left(d_{\mathcal{H}}(T) - d_{\mathcal{H}}(T\cup \{v\})\right)^{p} \right) 
            - p \cdot d_{\mathcal{H}}(T\cup \{v\}) \cdot d_{\mathcal{H}}^{p-1}(T) \right| \\
            & \le (p^2+1) \cdot \sum_{v\in V(\mathcal{H})} \sum_{T \in \partial_{r-t-1}L_{\mathcal{H}}(v)}  
            \left(\frac{d_{\mathcal{H}}(T\cup \{v\})}{d_{\mathcal{H}}(T)}\right)^2 
            \cdot d_{\mathcal{H}}^{p}(T)
            = O(n^{t+p(r-t)-\delta_p}),  
        \end{align*}
    completing the proof of Lemma~\ref{LEMMA:Lp-uniform}. 
\end{proof}

Next, 
we show that the map $\Gamma$ is continuous, as defined in~\cite{CL24}.
We will need the following simple inequality in the proof. 

\begin{proposition}\label{PROP:tp-norm-Jensen-small}
    Suppose that $r > t \ge 1$ are integers and $p_2 > p_1 > 0$ are real numbers. 
    Then for every $r$-graph $\mathcal{H}$, 
    \begin{align*}
        \norm{\mathcal{H}}_{t,p_1}
        \le \norm{\mathcal{H}}_{t,p_2}^{\frac{p_1}{p_2}} \cdot |\partial_{r-t}\mathcal{H}|^{1-\frac{p_1}{p_2}}. 
    \end{align*}
    In particular, for every $p \in (0,1)$, 
    \begin{align}\label{equ:PROP:tp-norm-Jensen-small}
        \norm{\mathcal{H}}_{t,p}
        \le \left(\binom{r}{t} \cdot |\mathcal{H}|\right)^{p} \cdot |\partial_{r-t} \mathcal{H}|^{1-p}
        \le \left(\binom{r}{t} \cdot |\mathcal{H}|\right)^{p} \cdot n^{t(1-p)}. 
    \end{align}
\end{proposition}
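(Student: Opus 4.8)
The plan is to prove the general inequality $\norm{\mathcal{H}}_{t,p_1} \le \norm{\mathcal{H}}_{t,p_2}^{p_1/p_2} \cdot |\partial_{r-t}\mathcal{H}|^{1-p_1/p_2}$ directly via H\"{o}lder's inequality (Fact~\ref{FACT:Holder-Inequality}), treating the sum $\norm{\mathcal{H}}_{t,p_1} = \sum_{T\in\partial_{r-t}\mathcal{H}} d_{\mathcal{H}}^{p_1}(T)$ as a sum of products $a_T \cdot b_T$ with $a_T = d_{\mathcal{H}}^{p_1}(T)$ and $b_T = 1$. Applying H\"{o}lder with exponents $s = p_2/p_1 > 1$ and its conjugate $s' = p_2/(p_2-p_1)$ gives
\begin{align*}
    \sum_{T} d_{\mathcal{H}}^{p_1}(T)
    \le \left(\sum_{T} d_{\mathcal{H}}^{p_1 s}(T)\right)^{1/s}\left(\sum_{T} 1\right)^{1/s'}
    = \norm{\mathcal{H}}_{t,p_2}^{p_1/p_2} \cdot |\partial_{r-t}\mathcal{H}|^{1-p_1/p_2},
\end{align*}
where the sums range over $T\in\partial_{r-t}\mathcal{H}$ and we use that $p_1 s = p_2$ and $1/s' = 1 - p_1/p_2$. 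This is essentially the statement that the $\ell^{p_1}$-norm dominates the (suitably normalized) $\ell^{p_2}$-norm on a finite support; equivalently it is Jensen's inequality for the concave function $x \mapsto x^{p_1/p_2}$ applied to the uniform measure on $\partial_{r-t}\mathcal{H}$.

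For the "in particular" clause, I would specialize to $p_1 = p \in (0,1)$ and $p_2 = 1$. Then $\norm{\mathcal{H}}_{t,1} = \binom{r}{t}\cdot|\mathcal{H}|$ by the identity noted right after the definition of the $(t,p)$-norm, so the general inequality yields $\norm{\mathcal{H}}_{t,p} \le (\binom{r}{t}|\mathcal{H}|)^p \cdot |\partial_{r-t}\mathcal{H}|^{1-p}$. The final bound $|\partial_{r-t}\mathcal{H}|^{1-p} \le n^{t(1-p)}$ follows since $\partial_{r-t}\mathcal{H} \subseteq \binom{V(\mathcal{H})}{t}$, so $|\partial_{r-t}\mathcal{H}| \le \binom{n}{t} \le n^t$, and $1-p > 0$ so the map $x \mapsto x^{1-p}$ is increasing.

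There is essentially no obstacle here; the only point requiring minor care is bookkeeping the exponents so that $p_1 s = p_2$ and $1/s + 1/s' = 1$ hold simultaneously, and handling the trivial degenerate case where $\partial_{r-t}\mathcal{H} = \emptyset$ (both sides are $0$) or where some degrees vanish (terms with $d_{\mathcal{H}}(T) = 0$ contribute nothing and can be dropped, matching the convention that $\partial_{r-t}\mathcal{H}$ collects exactly the $t$-sets of positive degree). I would write the proof in two or three lines invoking Fact~\ref{FACT:Holder-Inequality} and the identity $\norm{\mathcal{H}}_{t,1} = \binom{r}{t}|\mathcal{H}|$.
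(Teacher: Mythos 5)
Your proof is correct and essentially the same as the paper's: the paper applies Jensen's inequality for the concave map $x \mapsto x^{p_1/p_2}$ to the uniform average over $\partial_{r-t}\mathcal{H}$, which is exactly the power-mean inequality you obtain from H\"{o}lder with $b_T \equiv 1$ and exponents $s = p_2/p_1$, $s' = p_2/(p_2-p_1)$ (an equivalence you yourself point out). The specialization $p_1 = p$, $p_2 = 1$ with $\norm{\mathcal{H}}_{t,1} = \binom{r}{t}|\mathcal{H}|$ and $|\partial_{r-t}\mathcal{H}| \le n^t$ matches the paper's intent as well.
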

\begin{proof}[Proof of Proposition~\ref{PROP:tp-norm-Jensen-small}]
    Since $p_1/p_2 \in (0,1)$, it follows from Jensen's inequality that 
    \begin{align*}
        \norm{\mathcal{H}}_{t,p_1}
        & = \sum_{T \in \partial_{r-t}\mathcal{H}} \left(d_{\mathcal{H}}^{p_2}(T)\right)^{\frac{p_1}{p_2}} \\
        & \le \left(\frac{\sum_{T \in \partial_{r-t}\mathcal{H}} d_{\mathcal{H}}^{p_2}(T)}{|\partial_{r-t}\mathcal{H}|}\right)^{\frac{p_1}{p_2}} \cdot |\partial_{r-t}\mathcal{H}|
         = \norm{\mathcal{H}}_{t,p_2}^{\frac{p_1}{p_2}} \cdot |\partial_{r-t}\mathcal{H}|^{1-\frac{p_1}{p_2}}, 
    \end{align*}
    proving Proposition~\ref{PROP:tp-norm-Jensen-small}. 
\end{proof}

\begin{lemma}\label{LEMMA:Lp-basic-property-global}
    Let $r > t \ge 1 $ be integers, $\mathcal{H}$ be an $n$-vertex $r$-graph, and $\mathcal{H}' \subset \mathcal{H}$ be a subgraph.
    \begin{enumerate}[label=(\roman*)]
        \item\label{LEMMA:Lp-basic-property-global-1} If $p \ge 1$, then 
            \begin{align*}
                \norm{\mathcal{H}}_{t,p} - \norm{\mathcal{H}'}_{t,p}
                \le p  \binom{r}{t} \cdot |\mathcal{H} \setminus \mathcal{H}'| \cdot n^{(p-1)(r-t)}. 
            \end{align*}
        \item\label{LEMMA:Lp-basic-property-global-2} If $p \in (0,1)$, then 
            \begin{align*}
                \norm{\mathcal{H}}_{t,p} - \norm{\mathcal{H}'}_{t,p}
                \le \norm{\mathcal{H}\setminus \mathcal{H}'}_{t,p}
                \le \left(\binom{r}{t} \cdot |\mathcal{H}\setminus \mathcal{H}'|\right)^{p} \cdot n^{t(1-p)}.
            \end{align*}
    \end{enumerate}
    In particular, if $|\mathcal{H} \setminus \mathcal{H}'| = o(n^r)$, then $\norm{\mathcal{H}}_{t,p} -\norm{\mathcal{H}'}_{t,p} = o(n^{t+p(r-t)})$ for every $p > 0$. 
\end{lemma}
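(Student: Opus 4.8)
\textbf{Proof proposal for Lemma~\ref{LEMMA:Lp-basic-property-global}.}

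The plan is to handle the two ranges of $p$ separately, establishing the stated bounds on $\norm{\mathcal{H}}_{t,p} - \norm{\mathcal{H}'}_{t,p}$ by comparing degrees $t$-set by $t$-set. For both parts, fix $\mathcal{G} \coloneqq \mathcal{H}\setminus\mathcal{H}'$ and note that for every $T \in \partial_{r-t}\mathcal{H}$ we have $d_{\mathcal{H}}(T) = d_{\mathcal{H}'}(T) + d_{\mathcal{G}}(T)$ (with $d_{\mathcal{H}'}(T) = 0$ if $T\notin\partial_{r-t}\mathcal{H}'$), and that $d_{\mathcal{H}}(T) \le \binom{n}{r-t} \le n^{r-t}$.

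For part~\ref{LEMMA:Lp-basic-property-global-1}, where $p\ge 1$, I would write
\begin{align*}
    \norm{\mathcal{H}}_{t,p} - \norm{\mathcal{H}'}_{t,p}
    = \sum_{T\in\partial_{r-t}\mathcal{H}} \left(d_{\mathcal{H}}^{p}(T) - d_{\mathcal{H}'}^{p}(T)\right),
\end{align*}
and apply inequality~\eqref{equ:FACT:inequality-a-4} from Fact~\ref{FACT:inequality-a} with $x_1 = d_{\mathcal{H}}(T)$ and $x_2 = d_{\mathcal{G}}(T) = d_{\mathcal{H}}(T) - d_{\mathcal{H}'}(T)$ (valid since $x_1 \ge x_2 \ge 0$ and $p\ge 1$), giving $d_{\mathcal{H}}^{p}(T) - d_{\mathcal{H}'}^{p}(T) \le p\, d_{\mathcal{H}}^{p-1}(T)\, d_{\mathcal{G}}(T) \le p\, n^{(p-1)(r-t)}\, d_{\mathcal{G}}(T)$. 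Summing over $T$ and using $\sum_{T\in\binom{V}{t}} d_{\mathcal{G}}(T) = \binom{r}{t}|\mathcal{G}|$ yields the claimed bound.

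For part~\ref{LEMMA:Lp-basic-property-global-2}, where $p\in(0,1)$, the first inequality $\norm{\mathcal{H}}_{t,p} - \norm{\mathcal{H}'}_{t,p} \le \norm{\mathcal{G}}_{t,p}$ follows from the elementary subadditivity $(a+b)^p \le a^p + b^p$ for $a,b\ge 0$ and $p\in(0,1)$, applied with $a = d_{\mathcal{H}'}(T)$, $b = d_{\mathcal{G}}(T)$ and summed over $T$; the second inequality is exactly~\eqref{equ:PROP:tp-norm-Jensen-small} of Proposition~\ref{PROP:tp-norm-Jensen-small} applied to $\mathcal{G}$, together with $|\partial_{r-t}\mathcal{G}| \le \binom{n}{t} \le n^t$. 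Finally, the ``in particular'' statement is immediate: if $|\mathcal{G}| = o(n^r)$ then in the case $p\ge 1$ the bound is $O(|\mathcal{G}|\, n^{(p-1)(r-t)}) = o(n^{r+(p-1)(r-t)}) = o(n^{t+p(r-t)})$, and in the case $p\in(0,1)$ it is $O(|\mathcal{G}|^p n^{t(1-p)}) = o(n^{rp} n^{t(1-p)}) = o(n^{t+p(r-t)})$. I do not anticipate a genuine obstacle here; the only mild subtlety is bookkeeping the set over which the sums range (all $t$-sets versus shadows), which is harmless since missing $t$-sets contribute zero degree, and making sure the elementary inequalities invoked ((\ref{equ:FACT:inequality-a-4}) and subadditivity of $x\mapsto x^p$) are applied with the hypotheses in the right direction.
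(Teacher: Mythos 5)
Your proposal is correct and follows essentially the same route as the paper: a $t$-set-by-$t$-set comparison using the writing $d_{\mathcal{H}}(T)=d_{\mathcal{H}'}(T)+d_{\mathcal{H}\setminus\mathcal{H}'}(T)$, with~\eqref{equ:FACT:inequality-a-4} for $p\ge 1$ (the paper uses the equivalent normalized form $1-(1-x)^p\le px$ from Fact~\ref{FACT:inequality-a}) and subadditivity of $x\mapsto x^p$ for $p\in(0,1)$ (equivalent to Fact~\ref{FACT:inequality-b}~\ref{FACT:inequality-b-1}), followed by~\eqref{equ:PROP:tp-norm-Jensen-small}. The bookkeeping of summation ranges and the ``in particular'' exponent check are handled correctly.
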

\begin{proof}[Proof of Lemma~\ref{LEMMA:Lp-basic-property-global}]
    Notice from the definition that 
    \begin{align}\label{equ:LEMMA-Lp-basic-property-1}
        \norm{\mathcal{H}}_{t,p} - \norm{\mathcal{H}'}_{t,p}
        & = \sum_{T \in \partial_{r-t}\mathcal{H}} \left(d_{\mathcal{H}}^{p}(T) - d_{\mathcal{H}'}^{p}(T)\right) \notag \\
        & = \sum_{T \in \partial_{r-t}\mathcal{H}} \left(d_{\mathcal{H}}^{p}(T) - \left(d_{\mathcal{H}}(T) - d_{\mathcal{H}\setminus \mathcal{H}'}(T) \right)^{p}\right) \notag \\
        & = \sum_{T \in \partial_{r-t}\mathcal{H}} \left(1 - \left(1 - \frac{d_{\mathcal{H}\setminus \mathcal{H}'}(T)}{d_{\mathcal{H}}(T)} \right)^{p}\right) \cdot d_{\mathcal{H}}^{p}(T). 
    \end{align}

    \textbf{Case 1}$\colon$ $p > 1$. 
    
    By Fact~\ref{FACT:inequality-a}~\ref{FACT:inequality-a-2}, Inequality~\eqref{equ:LEMMA-Lp-basic-property-1} continues as 
    \begin{align*}
        \norm{\mathcal{H}}_{t,p} - \norm{\mathcal{H}'}_{t,p}
        & \le \sum_{T \in \partial_{r-t}\mathcal{H}} p \cdot \frac{d_{\mathcal{H}\setminus \mathcal{H}'}(T)}{d_{\mathcal{H}}(T)} \cdot d_{\mathcal{H}}^{p}(T) \\
        & = \sum_{T \in \partial_{r-t}\mathcal{H}} p \cdot d_{\mathcal{H}\setminus \mathcal{H}'}(T) \cdot d_{\mathcal{H}}^{p-1}(T) \\
        & \le \sum_{T \in \partial_{r-t}\mathcal{H}} p \cdot d_{\mathcal{H}\setminus \mathcal{H}'}(T) \cdot n^{(p-1)(r-t)}
        = p n^{(p-1)(r-t)} \binom{r}{t} \cdot |\mathcal{H} \setminus \mathcal{H}'|. 
    \end{align*}

    \medskip

    \textbf{Case 2}$\colon$ $p \in (0,1)$.

    Inequality~\eqref{equ:LEMMA-Lp-basic-property-1} continues as
    \begin{align*}
        \norm{\mathcal{H}}_{t,p} - \norm{\mathcal{H}'}_{t,p}
        & \le \sum_{T \in \partial_{r-t}\mathcal{H}} \left(1 - \left(1 - \left(\frac{d_{\mathcal{H}\setminus \mathcal{H}'}(T)}{d_{\mathcal{H}}(T)} \right)^{p}\right) \right) \cdot d_{\mathcal{H}}^{p}(T) \\
        & = \sum_{T \in \partial_{r-t}\mathcal{H}}  d_{\mathcal{H}\setminus \mathcal{H}'}^{p}(T)
        = \norm{\mathcal{H}\setminus\mathcal{H}'}_{t,p}
        \le \left(\binom{r}{t} \cdot |\mathcal{H}\setminus \mathcal{H}'|\right)^{p} \cdot n^{t(1-p)}, 
    \end{align*}
    where the first inequality follows from Fact~\ref{FACT:inequality-b}~\ref{FACT:inequality-b-1} and the last inequality follows from Inequality~\eqref{equ:PROP:tp-norm-Jensen-small}. 
\end{proof}

In the following lemma, we show that the map $\Gamma$ is locally Lipschitz, as defined in~\cite{CL24}.

\begin{lemma}\label{LEMMA:Lp-basic-property-local-Lipschitz}
    Let $r > t \ge 1 $ be integers, $\mathcal{H}$ be an $n$-vertex $r$-graph, and $B \subset V(\mathcal{H})$ be a vertex set. 
    \begin{enumerate}[label=(\roman*)]
        \item\label{LEMMA:Lp-basic-property-local-Lipschitz-1} If $p \ge 2$, then for every $v \in V(\mathcal{H}) \setminus B$, 
        \begin{align*}
            d_{\mathcal{H}, t, p}(v) - d_{\mathcal{H}-B, t, p}(v)
            \le 2^{r+1} p^2 |B| n^{t-2+p(r-t)}. 
        \end{align*}
        \item\label{LEMMA:Lp-basic-property-local-Lipschitz-2} If $p \in (1,2)$, then for every $v \in V(\mathcal{H}) \setminus B$, 
        \begin{align*}
            d_{\mathcal{H}, t, p}(v) - d_{\mathcal{H}-B, t, p}(v)
            \le 10 p \left(\frac{|B|}{n}\right)^{p-1} n^{t-1+p(r-t)} + o(n^{t-1+p(r-t)}). 
        \end{align*}
    \end{enumerate}
\end{lemma}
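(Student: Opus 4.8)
The plan is to compute directly from the closed-form expression for the $(t,p)$-degree in Lemma~\ref{LEMMA:Lp-degree-expression}. Write $\mathcal{H}':=\mathcal{H}-B$; since $v\notin B$, subtracting the formula for $d_{\mathcal{H}',t,p}(v)$ from that for $d_{\mathcal{H},t,p}(v)$ (extending the $\mathcal{H}'$-sums by zero over the missing index sets) writes the difference as $\Sigma_1+\Sigma_2$, where $\Sigma_1:=\sum_{S}\bigl(d_{\mathcal{H}}^{p}(S\cup\{v\})-d_{\mathcal{H}'}^{p}(S\cup\{v\})\bigr)$ runs over $(t{-}1)$-sets $S\not\ni v$, and $\Sigma_2:=\sum_{T}\bigl(g_{\mathcal{H}}(T)-g_{\mathcal{H}'}(T)\bigr)$ runs over $t$-sets $T\not\ni v$, with $g_{\mathcal{K}}(T):=d_{\mathcal{K}}^{p}(T)-\bigl(d_{\mathcal{K}}(T)-d_{\mathcal{K}}(T\cup\{v\})\bigr)^{p}$. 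Both sums are nonnegative, and we only need upper bounds.

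For $\Sigma_1$, let $\delta_S$ be the number of edges of $\mathcal{H}$ through $S\cup\{v\}$ meeting $B$, so that $d_{\mathcal{H}'}(S\cup\{v\})=d_{\mathcal{H}}(S\cup\{v\})-\delta_S$; inequality~\eqref{equ:FACT:inequality-a-4} gives $d_{\mathcal{H}}^{p}(S\cup\{v\})-d_{\mathcal{H}'}^{p}(S\cup\{v\})\le p\,d_{\mathcal{H}}^{p-1}(S\cup\{v\})\,\delta_S\le p\binom{n-t}{r-t}^{p-1}\delta_S$, and the double count $\sum_{S}\delta_S\le\binom{r-1}{t-1}\bigl|\{e\in\mathcal{H}:v\in e,\,e\cap B\neq\emptyset\}\bigr|\le\binom{r-1}{t-1}|B|\binom{n-2}{r-2}$ makes $\Sigma_1=O(|B|\,n^{t-2+p(r-t)})$ with an explicit constant. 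For $\Sigma_2$, fix $T\not\ni v$ and set $A:=d_{\mathcal{H}}(T)$, $a:=d_{\mathcal{H}}(T\cup\{v\})$, $A':=d_{\mathcal{H}'}(T)$, $a':=d_{\mathcal{H}'}(T\cup\{v\})$, $\delta_A:=A-A'$ (edges through $T$ meeting $B$), $\delta_a:=a-a'$ (edges through $T\cup\{v\}$ meeting $B$), so $A\ge a\ge0$, $A'\ge a'\ge0$, $\delta_a\le\delta_A$ and $A'-a'\le A'$. Writing $A^{p}-(A')^{p}$ and $(A-a)^{p}-(A'-a')^{p}$ as integrals of $p x^{p-1}$ gives the exact identity
\[
g_{\mathcal{H}}(T)-g_{\mathcal{H}'}(T)
=\int_{0}^{\delta_A-\delta_a}\! p\bigl[(A'+s)^{p-1}-(A'-a'+s)^{p-1}\bigr]\,ds
\;+\;\int_{\delta_A-\delta_a}^{\delta_A}\! p\,(A'+s)^{p-1}\,ds ,
\]
whose second integral is at most $p\,A^{p-1}\delta_a$.

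The first integral is where the two cases diverge. If $p\ge2$, inequality~\eqref{equ:FACT:inequality-a-4} applied with exponent $p-1\ge1$ gives $(A'+s)^{p-1}-(A'-a'+s)^{p-1}\le(p-1)(A'+s)^{p-2}a'\le(p-1)A^{p-2}a'$, hence the first integral is $\le p(p-1)A^{p-2}a\,\delta_A$; summing over $T$ via $\sum_{T}\delta_A\le\binom{r}{t}|B|\binom{n-1}{r-1}$ and $\sum_{T}\delta_a\le\binom{r-1}{t}|B|\binom{n-2}{r-2}$ makes $\Sigma_2=O(|B|n^{t-2+p(r-t)})$, and combining with $\Sigma_1$ and using $\binom{r-1}{t-1}+\binom{r-1}{t}=\binom{r}{t}$ yields $d_{\mathcal{H},t,p}(v)-d_{\mathcal{H}-B,t,p}(v)\le p^{2}\binom{r}{t}|B|n^{t-2+p(r-t)}\le2^{r+1}p^{2}|B|n^{t-2+p(r-t)}$, which is part~(i). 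If $p\in(1,2)$, concavity of $x\mapsto x^{p-1}$ gives $(A'+s)^{p-1}-(A'-a'+s)^{p-1}\le(p-1)(A'-a'+s)^{p-2}a'$, which after integration becomes $p\,a'\bigl[(A-a)^{p-1}-(A'-a')^{p-1}\bigr]$, and then the sub-additivity inequality $x^{p-1}-y^{p-1}\le(x-y)^{p-1}$ of Fact~\ref{FACT:inequality-b}~\ref{FACT:inequality-b-1} bounds the first integral by $p\,a'(\delta_A-\delta_a)^{p-1}\le p\,a\,\delta_A^{p-1}$; it is precisely this exponent $p-1$ on $\delta_A$ that, combined with the bound $\delta_A\le|B|\binom{n-t-1}{r-t-1}$ valid whenever $T\cap B=\emptyset$, produces the factor $(|B|/n)^{p-1}$ (summing $p\,a\,\delta_A^{p-1}$ over such $T$ via the double count $\sum_{T}a=\binom{r-1}{t}d_{\mathcal{H}}(v)$). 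The remaining contributions --- the term $p\,A^{p-1}\delta_a$, the $t$-sets with $T\cap B\neq\emptyset$ (at most $|B|\binom{n-1}{t-1}$ of them, handled via the trivial bound $\delta_A\le A\le\binom{n-t}{r-t}$), and $\Sigma_1$ --- are each $O(|B|n^{t-2+p(r-t)})$, hence also $O\bigl((|B|/n)^{p-1}n^{t-1+p(r-t)}\bigr)$ since $|B|/n\le1$ and $p-1\le1$. Replacing every $\binom{n-k}{\ell}$ by its leading term $n^{\ell}/\ell!$ so the factorial denominators cancel the binomial numerators, the total coefficient of $(|B|/n)^{p-1}n^{t-1+p(r-t)}$ comes out below $10p$ uniformly in $r,t$, the lower-order remainders being $o(n^{t-1+p(r-t)})$; this is part~(ii).

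The main obstacle is the case $p\in(1,2)$: one must bound the first integral in just the right way so that $\delta_A$ enters with exponent $p-1$ rather than $1$ --- this is the sole source of the $(|B|/n)^{p-1}$ factor, and it forces the use of concavity plus Fact~\ref{FACT:inequality-b}~\ref{FACT:inequality-b-1} instead of a plain mean-value estimate --- and one must then keep the factorials in $\binom{n-k}{\ell}\le n^{\ell}/\ell!$ when collecting constants, since a crude count would yield only a constant depending on $r$ in place of the clean $10p$.
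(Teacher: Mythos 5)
Your argument is correct, and it is worth comparing with the paper's proof because the two diverge in the case $p\in(1,2)$. For part~\ref{LEMMA:Lp-basic-property-local-Lipschitz-1} you are essentially doing what the paper does: both start from Lemma~\ref{LEMMA:Lp-degree-expression}, peel off the contribution of the $(t-1)$-sets and of the $t$-sets meeting $B$, and estimate the remaining difference by a derivative-type bound --- the paper via the two-variable Mean Value Theorem for $f(X_1,X_2)=X_1^p-(X_1-X_2)^p$, you via the exact integral identity plus~\eqref{equ:FACT:inequality-a-4}; your bookkeeping $p\binom{r-1}{t-1}+p(p-1)\binom{r}{t}+p\binom{r-1}{t}=p^2\binom{r}{t}\le 2^{r+1}p^2$ checks out. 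For part~\ref{LEMMA:Lp-basic-property-local-Lipschitz-2}, however, your mechanism for producing the factor $(|B|/n)^{p-1}$ is genuinely different. The paper sets $\delta=|B|/n$, splits the $t$-sets by the threshold $d_{\mathcal{H}}(T)\le 2\delta n^{r-t}$, bounds the small-degree part directly (there $d_{\mathcal{H}}^{p-1}(T)$ is small) and the large-degree part by the MVT together with the monotonicity of $X_1^{p-1}-(X_1-X_2)^{p-1}$ in $X_1$, evaluated near the threshold; this also forces the side remark distinguishing $|B|\gg 1$ from $|B|=O(1)$. You instead prove the per-$T$ bound (first integral) $\le p\,a'(\delta_A-\delta_a)^{p-1}\le p\,d_{\mathcal{H}}(T\cup\{v\})\,\delta_A^{p-1}$ via the tangent-line (concavity) inequality followed by the subadditivity $x^{p-1}-y^{p-1}\le(x-y)^{p-1}$ from Fact~\ref{FACT:inequality-b}~\ref{FACT:inequality-b-1}, so the degree drop $\delta_A\le|B|\binom{n-t-1}{r-t-1}$ itself carries the exponent $p-1$, and you finish with the double count $\sum_{T\not\ni v}d_{\mathcal{H}}(T\cup\{v\})=\binom{r-1}{t}d_{\mathcal{H}}(v)$. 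This avoids the threshold dichotomy and the small-$|B|$ caveat altogether, and (retaining the factorials in the binomial bounds, as you indicate) the constants come out to roughly $6p$, comfortably inside the stated $10p$; the only soft spot is that you assert rather than display this final tally, so if you write it up, spell out the four coefficients $\tfrac{p}{t!((r-t-1)!)^{p}}$, $\tfrac{p(r-1)}{t!(r-t-1)!((r-t)!)^{p-1}}$, $\tfrac{p}{(t-1)!((r-t)!)^{p-1}(r-t-1)!}$, and $\tfrac{p(r-1)}{(t-1)!(r-t)!((r-t)!)^{p-1}}$ and note each is at most $2p$, using $|B|/n\le(|B|/n)^{p-1}$ to absorb the lower-order terms.
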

\begin{proof}[Proof of Lemma~\ref{LEMMA:Lp-basic-property-local-Lipschitz}]
    Let $V \coloneqq V(\mathcal{H})$, $U \coloneqq V\setminus B$, $\mathcal{G} \coloneqq \mathcal{H}[U]$, and fix a vertex $v \in U$. 
    Let $f(X_1, X_2) \coloneqq X_1^p - (X_1 - X_2)^{p}$.  Simple calculations show that 
    \begin{align*}
        D_{1}f(X_1, X_2)
         = p \left(X_{1}^{p-1} - (X_1-X_2)^{p-1}\right) \quad\text{and}\quad 
        D_{2}f(X_1, X_2)
         = p (X_1 - X_2)^{p-1}.
    \end{align*}
    Let $\mathcal{B} \coloneqq \left\{T\in \partial_{r-t-1}L_{\mathcal{H}}(v) \colon T\cap B \neq \emptyset \right\}$. 
    Notice that 
    \begin{align*}
        |\mathcal{B}| \le |B| n^{t-1}, 
        \quad\text{and}\quad 
        \partial_{r-t-1}L_{\mathcal{H}}(v)
        =  \mathcal{B}
         \cup \partial_{r-t-1}L_{\mathcal{G}}(v).   
    \end{align*}
    By Lemma~\ref{LEMMA:Lp-degree-expression}, we obtain  
    \begin{align*}
        d_{\mathcal{H},t,p}(v)
        & = \sum_{S \in \partial_{r-t}L_{\mathcal{H}}(v)} 
            d_{\mathcal{H}}^{p}(S\cup\{v\})
            + \sum_{T \in \partial_{r-t-1}L_{\mathcal{H}}(v)} 
            f\left(d_{\mathcal{H}}(T), d_{\mathcal{H}}(T\cup \{v\})\right) \\
        & = \norm{L_{\mathcal{H}}(v)}_{t-1,p} 
            + \left(\sum_{T \in \mathcal{B}} + \sum_{T \in \partial_{r-t-1}L_{\mathcal{G}}(v)} \right) f\left(d_{\mathcal{H}}(T), d_{\mathcal{H}}(T\cup \{v\})\right). 
    \end{align*}
    First, it follows from Lemma~\ref{LEMMA:Lp-basic-property-global}~\ref{LEMMA:Lp-basic-property-global-1} that 
    \begin{align*}
        \norm{L_{\mathcal{H}}(v)}_{t-1,p} - \norm{L_{\mathcal{G}}(v)}_{t-1,p}
        & \le p \binom{r-1}{t-1} \cdot \left(|L_{\mathcal{H}}(v)| - |L_{\mathcal{G}}(v)|\right) \cdot n^{(p-1)((r-1)-(t-1))} \\
        & \le p 2^r |B| n^{r-2} \cdot n^{(p-1)(r-t)}
        = p 2^r |B| n^{t-2+p(r-t)}. 
    \end{align*}
    Second, since (by~\eqref{equ:FACT:inequality-a-4})
    \begin{align*}
        f\left(d_{\mathcal{H}}(T), d_{\mathcal{H}}(T\cup \{v\})\right) 
        \le p \cdot d_{\mathcal{H}}^{p-1}(T) \cdot d_{\mathcal{H}}(T\cup \{v\})
        \le p n^{(r-t)(p-1)+(r-t-1)}
        = p n^{p(r-t)-1}, 
    \end{align*}
    we obtain 
    \begin{align*}
        \sum_{T \in \mathcal{B}} 
        f\left(d_{\mathcal{H}}(T), d_{\mathcal{H}}(T\cup \{v\})\right) 
        & \le |B| n^{t-1} \cdot p n^{p(r-t)-1} 
        = p|B|n^{t-2+p(r-t)}. 
    \end{align*}
    Therefore, to prove Lemma~\ref{LEMMA:Lp-basic-property-local-Lipschitz}, we have to upper bound 
    \begin{align*}
        \Delta
        \coloneqq \sum_{T \in \partial_{r-t-1}L_{\mathcal{G}}(v)} 
        \left( f\left(d_{\mathcal{H}}(T), d_{\mathcal{H}}(T\cup \{v\})\right) 
        - f\left(d_{\mathcal{G}}(T), d_{\mathcal{G}}(T\cup \{v\})\right) \right). 
    \end{align*}
    First, we consider the case $p \ge 2$ and prove Lemma~\ref{LEMMA:Lp-basic-property-local-Lipschitz}~\ref{LEMMA:Lp-basic-property-local-Lipschitz-1}.  

    \textbf{Case 1.1$\colon$} $t = r-1$. 

    Notice that in this case, $\partial_{r-t-1}L_{\mathcal{G}}(v) = L_{\mathcal{G}}(v)$ and $d_{\mathcal{H}}(T\cup \{v\}) = d_{\mathcal{G}}(T\cup \{v\}) = 1$ for every $T \in L_{\mathcal{G}}(v)$. 
    By the Mean Value Theorem, for every $T \in L_{\mathcal{G}}(v)$ there exists $z_T$ with $d_{\mathcal{G}}(T) \le z_{T} \le d_{\mathcal{H}}(T) \le n$ such that 
    \begin{align*}
        \Delta 
        & = \sum_{T \in L_{\mathcal{G}}(v)} 
            \left( d_{\mathcal{H}}^{p}(T) - \left(d_{\mathcal{H}}(T) - 1\right)^{p} \right) - \left( d_{\mathcal{G}}^{p}(T) - \left(d_{\mathcal{G}}(T) - 1\right)^{p} \right) \\
        & \le \sum_{T \in L_{\mathcal{G}}(v)}  p  \left(z_T^{p-1} - \left(z_T - 1\right)^{p-1}\right)  \left(d_{\mathcal{H}}(T) - d_{\mathcal{G}}(T)\right). 
    \end{align*}
    Since $p-1\ge 1$, it follows from~\eqref{equ:FACT:inequality-a-4} that 
    \begin{align*}
        z_T^{p-1} - \left(z_T - 1\right)^{p-1}
        \le (p-1) z_{T}^{p-2}
        \le p n^{p-2}. 
    \end{align*}
    Therefore, the above inequality on $\Delta$ continues as 
    \begin{align*}
        \Delta 
         \le |L_{\mathcal{G}}(v)| \cdot p \cdot p n^{p-2} \left( \max_{T}  \left(d_{\mathcal{H}}(T) - d_{\mathcal{G}}(T)\right) \right) 
         \le  n^{r-1} p^2 n^{p-2} |B| 
        = p^2 |B| n^{t-2+p(r-t)}. 
    \end{align*}

    \medskip 

    \textbf{Case 1.2$\colon$} $t \le r-2$.

    Similarly, by the Mean Value Theorem, for every $T \in \partial_{r-t-1}L_{\mathcal{G}}(v)$, there exists $(z_{1,T}, z_{2,T})$ with $d_{\mathcal{G}}(T) \le z_{1,T} \le d_{\mathcal{H}}(T) \le n^{r-t}$
     and $d_{\mathcal{G}}(T\cup \{v\}) \le z_{2,T} \le d_{\mathcal{H}}(T\cup \{v\}) \le n^{r-t-1}$ such that 
    \begin{align*}
        \Delta 
        & \le \sum_{T \in \partial_{r-t-1}L_{\mathcal{G}}(v)} \Big( D_1 f(z_{1,T}, z_{2,T}) \cdot \left(d_{\mathcal{H}}(T)- d_{\mathcal{G}}(T)\right) \\
        & \quad + D_2 f(z_{1,T}, z_{2,T}) \cdot \left(d_{\mathcal{H}}(T\cup \{v\})- d_{\mathcal{G}}(T\cup \{v\})\right) \Big) \\
        & \le \sum_{T \in \partial_{r-t-1}L_{\mathcal{G}}(v)}  \left( D_1 f(z_{1,T}, z_{2,T}) \cdot |B|n^{r-t-1}
        + D_2 f(z_{1,T}, z_{2,T}) \cdot |B|n^{r-t-2} \right).  
    \end{align*}
    Since $p-1 \ge 1$, it follows from~\eqref{equ:FACT:inequality-a-4} that 
    \begin{align*}
        D_1 f(z_{1,T}, z_{2,T})
        & = p \left(z_{1,T}^{p-1} - (z_{1,T} - z_{2,T})^{p-1}\right) \\
        & \le p(p-1) z_{1,T}^{p-2} z_{2,T}
        \le p^2 n^{(p-2)(r-t)+(r-t-1)}
        = p^2 n^{(p-1)(r-t)-1}. 
    \end{align*}
    In addition, 
    \begin{align*}
        D_2 f(z_{1,T}, z_{2,T})
        = p (z_{1,T} - z_{2,T})^{p-1}
        \le p n^{(p-1)(r-t)}. 
    \end{align*}
    Therefore, the above inequality on $\Delta$ continues as 
    \begin{align*}
        \Delta 
        & \le |\partial_{r-t-1}L_{\mathcal{G}}(v)| |B| \left(p^2 n^{(p-1)(r-t)-1 + (r-t-1)} + p n^{(p-1)(r-t)+(r-t-2)}\right) \\
        & \le |B| n^{t} \left(p^2 n^{p(r-t)-2} + p n^{p(r-t)-2}\right)  
        \le 2p^2 |B| n^{t-2+p(r-t)}. 
    \end{align*}
    This completes the proof for Lemma~\ref{LEMMA:Lp-basic-property-local-Lipschitz}~\ref{LEMMA:Lp-basic-property-local-Lipschitz-1}. 

    Next, we consider the case $p \in (1,2)$ and prove Lemma~\ref{LEMMA:Lp-basic-property-local-Lipschitz}~\ref{LEMMA:Lp-basic-property-local-Lipschitz-2}. 
    Let 
    \begin{align*}
        \delta \coloneqq \frac{|B|}{n},
        \quad 
        \mathcal{T}_1 
         \coloneqq \left\{T \in \partial_{r-t-1}L_{\mathcal{G}}(v) \colon d_{\mathcal{H}}(T) \le 2 \delta n^{r-t} \right\},  
        \quad\text{and}\quad 
        \mathcal{T}_2 & \coloneqq \partial_{r-t-1}L_{\mathcal{G}}(v) \setminus \mathcal{T}_1. 
    \end{align*}
    For simplicity, let us assume that $|B| \gg 1$. The case $|B| = O(1)$ can be handled similarly by setting $\delta = 100/n$.
    
    Since $p > 1$, it follows from~\eqref{equ:FACT:inequality-a-4} that for every $T \in \mathcal{T}_1$, 
    \begin{align*}
        d_{\mathcal{H}}^{p}(T) - \left(d_{\mathcal{H}}(T) - d_{\mathcal{H}}(T\cup \{v\})\right)^{p}
        & \le p \cdot d_{\mathcal{H}}^{p-1}(T) \cdot d_{\mathcal{H}}(T\cup \{v\}) \\
        & \le p (2 \delta n^{r-t})^{p-1} n^{r-t-1}
        \le  2 p \delta^{p-1} n^{p(r-t)-1}. 
    \end{align*}
    Consequently, 
    \begin{align*}
        \sum_{T \in \mathcal{T}_1} \left( d_{\mathcal{H}}^{p}(T) - \left(d_{\mathcal{H}}(T) - d_{\mathcal{H}}(T\cup \{v\})\right)^{p} \right)
         \le |\mathcal{T}_1| \cdot 2 p \delta^{p-1} n^{p(r-t)-1}
         \le 2 p \delta^{p-1} n^{t-1+p(r-t)}. 
    \end{align*}
    Therefore, it suffices to consider the upper bound for 
    \begin{align*}
        \Delta' 
        \coloneqq \sum_{T \in \mathcal{T}_2} \Big( \left( d_{\mathcal{H}}^{p}(T) - \left(d_{\mathcal{H}}(T) - d_{\mathcal{H}}(T\cup \{v\})\right)^{p} \right)
        - \left( d_{\mathcal{G}}^{p}(T) - \left(d_{\mathcal{G}}(T) - d_{\mathcal{G}}(T\cup \{v\})\right)^{p} \right) \Big). 
    \end{align*}
    
    \textbf{Case 2.1$\colon$} $t = r-1$.  
    
    By the Mean Value Theorem, for every $T \in \mathcal{T}_2 \subset \partial_{r-t-1}L_{\mathcal{G}}(v) = L_{\mathcal{G}}(v)$ there exists $z_T$ with $d_{\mathcal{H}}(T) \ge z_{T} \ge d_{\mathcal{G}}(T) \ge d_{\mathcal{H}}(T) - |B| n^{r-t-1} \ge \delta n^{r-t}$ such that 
    \begin{align*}
        \Delta'
        & = \sum_{T \in \mathcal{T}_2} \Big( \left( d_{\mathcal{H}}^{p}(T) - \left(d_{\mathcal{H}}(T) - 1\right)^{p} \right) - \left( d_{\mathcal{G}}^{p}(T) - \left(d_{\mathcal{G}}(T) - 1\right)^{p} \right) \Big)\\
        & \le \sum_{T \in \mathcal{T}_2}  p  \left(z_T^{p-1} - \left(z_T - 1\right)^{p-1}\right)  \left(d_{\mathcal{H}}(T) - d_{\mathcal{G}}(T)\right) 
        \le |B| \sum_{T \in \mathcal{T}_2}  p  \left(z_T^{p-1} - \left(z_T - 1\right)^{p-1}\right). 
    \end{align*}
    Since $p-1 \in (0,1)$, the function $X^{p-1} - (X-1)^{p-1}$ is decreasing in $X$. So 
    \begin{align*}
        z_T^{p-1} - \left(z_T - 1\right)^{p-1} 
        \le (\delta n^{r-t})^{p-1} - (\delta n^{r-t} - 1)^{p-1}
        \le 2 (\delta n^{r-t})^{p-2}
        = 2 \delta^{p-2} n^{p-2}. 
    \end{align*}
    Therefore, the inequality on $\Delta'$ continues as 
    \begin{align*}
        \Delta'
         \le |\mathcal{T}_2| |B| \cdot 2 \delta^{p-2} n^{p-2} 
         \le n^{t} \cdot \delta n \cdot 2 \delta^{p-2} n^{p-2} 
         = 2 \delta^{p-1} n^{t-1+p}.
    \end{align*}

    \medskip 

    \textbf{Case 2.2$\colon$} $t \le r-2$.  

    Similarly, by the Mean Value Theorem, for every $T \in \mathcal{T}_2$, there exists $(z_{1,T}, z_{2,T})$ with $\delta n^{r-t} \le d_{\mathcal{G}}(T) \le z_{1,T} \le d_{\mathcal{H}}(T) \le n^{r-t}$
     and $d_{\mathcal{G}}(T\cup \{v\}) \le z_{2,T} \le d_{\mathcal{H}}(T\cup \{v\}) \le n^{r-t-1}$ such that 
    \begin{align*}
        \Delta' 
        & = \sum_{T \in \mathcal{T}_2} \Big( \left( d_{\mathcal{H}}^{p}(T) - \left(d_{\mathcal{H}}(T) - d_{\mathcal{H}}(T\cup \{v\})\right)^{p} \right)
        - \left( d_{\mathcal{G}}^{p}(T) - \left(d_{\mathcal{G}}(T) - d_{\mathcal{G}}(T\cup \{v\})\right)^{p} \right) \Big) \\
        & \le \sum_{T \in \mathcal{T}_2} 
        \Big( D_1 f(z_{1,T}, z_{2,T}) \cdot \left(d_{\mathcal{H}}(T)- d_{\mathcal{G}}(T)\right)
        + D_2 f(z_{1,T}, z_{2,T}) \cdot \left(d_{\mathcal{H}}(T\cup \{v\})- d_{\mathcal{G}}(T\cup \{v\})\right) \Big)\\
        & \le \sum_{T \in \mathcal{T}_2} 
        \Big( D_1 f(z_{1,T}, z_{2,T}) \cdot |B|n^{r-t-1}
        + D_2 f(z_{1,T}, z_{2,T}) \cdot |B|n^{r-t-2} \Big). 
    \end{align*}
    Since $p-1 \in (0,1)$, the function $X_{1}^{p-1} - (X_1-X_2)^{p-1}$ is decreasing in $X_1$. 
    Hence, for every $T \in \mathcal{T}_2$, 
    \begin{align*}
        D_1 f(z_{1,T}, z_{2,T})
         = p \left(z_{1,T}^{p-1} - (z_{1,T}-z_{2,T})^{p-1}\right) 
        & \le p \left((\delta n^{r-t})^{p-1} - (\delta n^{r-t}-n^{r-t-1})^{p-1}\right) \\
        & \le 2p (\delta n^{r-t})^{p-2} n^{r-t-1} 
        = 2p \delta^{p-2} n^{(p-1)(r-t)-1}. 
    \end{align*}
    In addition, 
    \begin{align*}
        D_2 f(z_{1,T}, z_{2,T})
        = p\left(z_{1,T}-z_{2,T}\right)^{p-1} 
        \le p n^{(p-1)(r-t)}. 
    \end{align*}
    Therefore, the inequality on $\Delta'$ continues as 
    \begin{align*}
        \Delta' 
        & \le |\mathcal{T}_2||B|\left(2p \delta^{p-2} n^{(p-1)(r-t)-1} \cdot n^{r-t-1} + p n^{(p-1)(r-t)} \cdot n^{r-t-2}\right) \\
        & \le n^t \cdot \delta n  \left(2p \delta^{p-2} n^{p(r-t)-2}  + p n^{p(r-t)-2}\right) 
        \le 3p \delta^{p-1} n^{t-1+p(r-t)}. 
    \end{align*}
    This completes the proof of Lemma~\ref{LEMMA:Lp-basic-property-local-Lipschitz}. 
\end{proof}

With all the necessary general properties established for $\Gamma$ (as defined at the beginning of this section), we are now ready to state the consequence of the general theorems from~\cite{CL24} in the context of $(t,p)$-norm Tur\'{a}n problems. 

Let $r > t \ge 1$ be integers and $p > 0$ be a real number. 
Let $\mathcal{F}$ be a family of $r$-graphs and $\mathfrak{H}$ be a hereditary family of $\mathcal{F}$-free $r$-graphs. 
\begin{enumerate}[label=(\roman*)]
    \item We say $\mathcal{F}$ is \textbf{symmetrized-stable} with respect to $\mathfrak{H}$ if every symmetrized $\mathcal{F}$-free $r$-graph is contained in $\mathfrak{H}$. 
    \item We say $\mathcal{F}$ is \textbf{$(t,p)$-edge-stable} with respect to $\mathfrak{H}$ if for every $\delta > 0$ there exist $\varepsilon > 0$ and $N_0$ such that every $\mathcal{F}$-free $r$-graph $\mathcal{H}$ on $n \ge N_0$ vertices with $\norm{\mathcal{H}}_{t,p} \ge (1-\varepsilon) \cdot \mathrm{ex}_{t,p}(n,\mathcal{F})$ is contained in $\mathfrak{H}$ after removing at most $\delta n^r$ edges. 
    \item We say $\mathcal{F}$ is \textbf{$(t,p)$-degree-stable} with respect to $\mathfrak{H}$ if there exist $\varepsilon>0$ and $N_0$ such that every $\mathcal{F}$-free $r$-graph $\mathcal{H}$ on $n \ge N_0$ vertices with $\delta_{t,p}(\mathcal{H}) \ge (1-\varepsilon) \cdot \mathrm{exdeg}_{t,p}(n,\mathcal{F})$ is contained in $\mathfrak{H}$. 
    \item We say $\mathcal{F}$ is \textbf{$(t,p)$-vertex-extendable} with respect to $\mathfrak{H}$ if there exist $\varepsilon>0$ and $N_0$ such that the following holds for every $\mathcal{F}$-free $r$-graph $\mathcal{H}$ on $n \ge N_0$ vertices with $\delta_{t,p}(\mathcal{H}) \ge (1-\varepsilon) \cdot \mathrm{exdeg}_{t,p}(n,\mathcal{F}) \colon$ 
    if $\mathcal{H}-v \in \mathfrak{H}$ for some $v \in V(\mathcal{H})$, then $\mathcal{H} \in \mathfrak{H}$. 
\end{enumerate}

The following theorem, which extends~{\cite[Theorem~1.7]{LMR23unif}}, follows as a consequence of~{\cite[Theorem~1.8]{CL24}} (see also~{\cite[Theorem~4.9]{CL24}}).

\begin{theorem}\label{THM:Lp-general-a}
    Let $r > t \ge 1$ be integers and $p \ge 1$ be a real number. Let $\mathcal{F}, \hat{\mathcal{F}}$ be two nondegenerate families of $r$-graphs such that $\hat{\mathcal{F}} \le_{\mathrm{hom}} \mathcal{F}$.  
    Let $\mathfrak{H}$ be a hereditary family of  $\hat{\mathcal{F}}$-free $r$-graphs. 
    Suppose that 
    \begin{enumerate}[label=(\roman*)]
        \item\label{THM:Lp-general-a-1} $\hat{\mathcal{F}}$ is blowup-invariant, and 
        \item\label{THM:Lp-general-a-2} $\hat{\mathcal{F}}$ is symmetrized-stable with respect to $\mathfrak{H}$.
    \end{enumerate}
    Then for every integer $n \ge 1$, 
    \begin{align}\label{equ:THM:Lp-general-a}
        \mathrm{ex}_{t,p}(n, \hat{\mathcal{F}}) 
        = \max\left\{\norm{\mathcal{G}}_{t,p} \colon \text{$\mathcal{G} \in \mathfrak{H}$ and $v(\mathcal{G}) = n$}\right\}. 
    \end{align}
    If, in addition,
    \begin{enumerate}[label=(\roman*)]
        \setcounter{enumi}{2}
        \item $\mathfrak{H}$ is $\mathcal{F}$-free, and
        \item both $\hat{\mathcal{F}}$ and $\mathcal{F}$ are $(t,p)$-vertex-extendable with respect to $\mathfrak{H}$.
    \end{enumerate}
    Then $\mathcal{F}$ is $(t,p)$-degree-stable with respect to $\mathfrak{H}$. 
\end{theorem}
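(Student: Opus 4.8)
\textbf{Proof proposal for Theorem~\ref{THM:Lp-general-a}.}
The plan is to deduce Theorem~\ref{THM:Lp-general-a} from the general framework of~\cite{CL24} by checking that the map $\Gamma' \colon \mathfrak{G}^{r} \to \mathbb{R}$, $\Gamma'(\mathcal{H}) \coloneqq \norm{\mathcal{H}}_{t,p} \cdot \mathbbm{1}_{\hat{\mathcal{F}}}(\mathcal{H})$, satisfies all the hypotheses required by~{\cite[Theorem~1.8]{CL24}} (and~{\cite[Theorem~4.9]{CL24}}). The bulk of the verification has already been done in Section~\ref{SEC:General-property}: Proposition~\ref{PROP:star-polynomial-sym-increase} gives that $\Gamma$ is symmetrization-increasing (and hence so is $\Gamma'$ once $\hat{\mathcal{F}}$ is blowup-invariant, by hypothesis~\ref{THM:Lp-general-a-1}); Proposition~\ref{PROP:limit-exist} gives smoothness; Lemma~\ref{LEMMA:Lp-uniform} gives $(t+p(r-t))$-uniformity; Lemmas~\ref{LEMMA:Lp-basic-property-global} and~\ref{LEMMA:Lp-basic-property-local-Lipschitz} give continuity and the locally Lipschitz property; and Lemma~\ref{LEMMA:local-monotone} gives local monotonicity. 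First I would record that, since $\hat{\mathcal{F}}$ is nondegenerate and blowup-invariant, passing from the properties of $\Gamma$ to those of $\Gamma'$ is routine (multiplying by $\mathbbm{1}_{\hat{\mathcal{F}}}$ preserves each property because on any $n$-vertex $\hat{\mathcal{F}}$-free $r$-graph $\Gamma' = \Gamma \ge c n^{t+p(r-t)}$ for a positive constant $c$ coming from the nondegeneracy, and symmetrizing an $\hat{\mathcal{F}}$-free graph keeps it $\hat{\mathcal{F}}$-free).

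Next I would invoke~{\cite[Theorem~1.8]{CL24}} directly. That theorem, applied to $\Gamma'$ together with the hereditary family $\mathfrak{H}$, states that if $\Gamma'$ has the listed analytic properties, if $\hat{\mathcal{F}}$ is symmetrized-stable with respect to $\mathfrak{H}$ (our hypothesis~\ref{THM:Lp-general-a-2}), then the extremal value $\max\{\Gamma'(\mathcal{H}) \colon v(\mathcal{H}) = n\} = \mathrm{ex}_{t,p}(n,\hat{\mathcal{F}})$ is attained within $\mathfrak{H}$, which is exactly~\eqref{equ:THM:Lp-general-a}. The point here is that a symmetrization argument shows any $\hat{\mathcal{F}}$-free extremizer can be transformed, without decreasing $\Gamma'$, into a symmetrized $\hat{\mathcal{F}}$-free $r$-graph, which by symmetrized-stability lies in $\mathfrak{H}$; the blowup-invariance ensures the intermediate graphs stay $\hat{\mathcal{F}}$-free, and uniformity/smoothness guarantee the symmetrization process terminates with a graph on the same vertex set achieving the maximum.

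For the second half, I would apply the degree-stability part of the~\cite{CL24} framework (the relevant statement is~{\cite[Theorem~4.9]{CL24}}, which upgrades edge-stability-type input to degree-stability under a vertex-extendability hypothesis). The additional hypotheses (iii) that $\mathfrak{H}$ is $\mathcal{F}$-free and (iv) that both $\hat{\mathcal{F}}$ and $\mathcal{F}$ are $(t,p)$-vertex-extendable with respect to $\mathfrak{H}$ are precisely what that theorem needs. Concretely, given an $\mathcal{F}$-free $r$-graph $\mathcal{H}$ on $n$ vertices with $\delta_{t,p}(\mathcal{H}) \ge (1-\varepsilon)\,\mathrm{exdeg}_{t,p}(n,\mathcal{F})$, one first passes to $\hat{\mathcal{F}}$: since $\hat{\mathcal{F}} \le_{\mathrm{hom}} \mathcal{F}$ and $\mathcal{H}$ is $\mathcal{F}$-free, Proposition~\ref{PROP:tp-density-blowup-lemma} (via the Hypergraph Removal Lemma) and the locally Lipschitz / locally monotone properties show $\mathcal{H}$ is close to $\hat{\mathcal{F}}$-free in the appropriate sense; then symmetrized-stability plus vertex-extendability let one peel off and re-add vertices one at a time to conclude $\mathcal{H} \in \mathfrak{H}$. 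The high minimum degree is what makes vertex-extendability applicable at each step, and the fact that $\mathfrak{H}$ itself is $\mathcal{F}$-free closes the loop.

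The main obstacle, and the only genuinely non-mechanical step, is justifying that the hypotheses of the abstract theorems from~\cite{CL24} match our setup verbatim — in particular, confirming that the normalisation conventions (the exponent $t+p(r-t)$, the scaling $\mathrm{exdeg}_{t,p}$, and the smoothness inequality~\eqref{equ:PROP:limit-exist}) are exactly those used there, and that the passage from $\Gamma$ to $\Gamma' = \Gamma \cdot \mathbbm{1}_{\hat{\mathcal{F}}}$ really does inherit every property (the delicate point being uniformity and local Lipschitzness, where the indicator can only drop the value to $0$, but nondegeneracy prevents this from happening near the extremal regime). Once these bookkeeping matches are in place, Theorem~\ref{THM:Lp-general-a} is an immediate corollary; no new combinatorial idea is required beyond what Section~\ref{SEC:General-property} already supplies.
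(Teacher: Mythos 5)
Your proposal takes essentially the same route as the paper: the paper's own proof is just the remark following the theorem, namely that~\eqref{equ:THM:Lp-general-a} is the easy part (via the symmetrization argument you describe, cf.~{\cite[Fact~1.5]{CL24}}) and that the degree-stability part follows by applying~{\cite[Theorem~1.8]{CL24}} to the two maps $\norm{\cdot}_{t,p}\cdot \mathbbm{1}_{\mathcal{F}}$ and $\norm{\cdot}_{t,p}\cdot \mathbbm{1}_{\hat{\mathcal{F}}}$, whose required properties (symmetrization-increasing, smooth, uniform, continuous, locally Lipschitz, locally monotone) are exactly those verified in Section~\ref{SEC:General-property}. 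One small inaccuracy in your justification: it is not true that $\Gamma' \ge c\, n^{t+p(r-t)}$ on every $\hat{\mathcal{F}}$-free $r$-graph (nondegeneracy only controls the extremal value), but this does not affect the argument, since the properties are transferred to $\Gamma'$ on $\hat{\mathcal{F}}$-free graphs where $\Gamma'=\Gamma$, with blowup-invariance ensuring symmetrization preserves $\hat{\mathcal{F}}$-freeness.
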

\textbf{Remark.}
It should be noted that~\eqref{equ:THM:Lp-general-a} is trivial (see e.g.~{\cite[Fact~1.5]{CL24}}). 
The nontrivial part, namely the stability part, of Theorem~\ref{THM:Lp-general-a} follows by applying~{\cite[Theorem~1.8]{CL24}} to maps $\Gamma, \hat{\Gamma} \coloneqq \mathfrak{G}^{r} \to \mathbb{R}$ defined by 
\begin{align*}
    \Gamma(\mathcal{H})
    \coloneqq \norm{\mathcal{H}}_{t,p}\cdot \mathbbm{1}_{\mathcal{F}}(\mathcal{H})
    \quad\text{and}\quad 
    \hat{\Gamma}(\mathcal{H})
    \coloneqq \norm{\mathcal{H}}_{t,p}\cdot \mathbbm{1}_{\hat{\mathcal{F}}}(\mathcal{H}) 
    \quad\text{for all}\quad 
    \mathcal{H} \in \mathfrak{G}^{r}. 
\end{align*}

The following theorem, which extends~{\cite[Theorem~1.1]{HLZ24}}, is a consequence of~{\cite[Theorem~1.7]{CL24}} (see also~{\cite[Theorem~4.10]{CL24}}). 

\begin{theorem}\label{THM:Lp-general-b}
    Let $r > t \ge 1$ be integers and $p \ge 1$ be a real number. Let $\mathcal{F}$ be a nondegenerate family of $r$-graphs and $\mathfrak{H}$ be a hereditary family of $\mathcal{F}$-free $r$-graphs. 
    Suppose that $\mathcal{F}$ is $(t,p)$-edge-stable and $(t,p)$-vertex-extendable with respect to $\mathfrak{H}$. 
    Then $\mathcal{F}$ is $(t,p)$-degree-stable with respect to $\mathfrak{H}$. 
\end{theorem}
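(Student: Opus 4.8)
The plan is to deduce Theorem~\ref{THM:Lp-general-b} from the abstract stability-transfer result~{\cite[Theorem~1.7]{CL24}} (equivalently~{\cite[Theorem~4.10]{CL24}}), applied to the map $\Gamma \colon \mathfrak{G}^{r} \to \mathbb{R}$ defined by $\Gamma(\mathcal{H}) \coloneqq \norm{\mathcal{H}}_{t,p}\cdot \mathbbm{1}_{\mathcal{F}}(\mathcal{H})$. That abstract theorem asserts, for a map $\Gamma$ satisfying a list of regularity conditions, a nondegenerate family $\mathcal{F}$, and a hereditary family $\mathfrak{H}$ of $\mathcal{F}$-free $r$-graphs, that $\Gamma$-edge-stability together with $\Gamma$-vertex-extendability with respect to $\mathfrak{H}$ implies $\Gamma$-degree-stability with respect to $\mathfrak{H}$. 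So the proof reduces to two bookkeeping tasks: (a) verifying that our $\Gamma$ meets the regularity hypotheses, and (b) observing that the three stability notions appearing in the statement are exactly the specializations of the abstract ones to this $\Gamma$.

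For (a), I would use that the ``pure'' map $\norm{\cdot}_{t,p}$ has already been shown in this section and the previous one to be smooth (Proposition~\ref{PROP:limit-exist}), $(t+p(r-t))$-uniform (Lemma~\ref{LEMMA:Lp-uniform}), continuous (Lemma~\ref{LEMMA:Lp-basic-property-global}), locally Lipschitz (Lemma~\ref{LEMMA:Lp-basic-property-local-Lipschitz}), and locally monotone (Lemma~\ref{LEMMA:local-monotone}). To pass from $\norm{\cdot}_{t,p}$ to $\Gamma = \norm{\cdot}_{t,p}\cdot \mathbbm{1}_{\mathcal{F}}$ one invokes the elementary fact that the class of $\mathcal{F}$-free $r$-graphs is closed under taking subgraphs and under vertex deletions: hence for every $\mathcal{F}$-free $\mathcal{H}$ we have $\Gamma(\mathcal{H}) = \norm{\mathcal{H}}_{t,p}$, $d_{\Gamma}(v) = d_{\mathcal{H},t,p}(v)$ for all $v \in V(\mathcal{H})$, and $\max\{\Gamma(\mathcal{G}) \colon v(\mathcal{G}) = n\} = \mathrm{ex}_{t,p}(n,\mathcal{F})$. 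Combined with the nondegeneracy of $\mathcal{F}$ --- which forces $\mathrm{ex}_{t,p}(n,\mathcal{F}) = \Theta(n^{t+p(r-t)})$, so that $o(n^{r})$-edge modifications and the various $o(n^{t+p(r-t)})$ error terms appearing in the above lemmas are negligible against the extremal value --- each regularity property for $\Gamma$ follows from its counterpart for $\norm{\cdot}_{t,p}$. This is precisely the routine transfer already flagged in the remark following Theorem~\ref{THM:Lp-general-a}.

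For (b), since $\max\{\Gamma(\mathcal{G}) \colon v(\mathcal{G}) = n\} = \mathrm{ex}_{t,p}(n,\mathcal{F})$ and, using uniformity, the associated extremal-degree parameter equals $\mathrm{exdeg}_{t,p}(n,\mathcal{F})$, the notions of $\Gamma$-edge-stability, $\Gamma$-vertex-extendability, and $\Gamma$-degree-stability with respect to $\mathfrak{H}$ in the sense of~\cite{CL24} coincide verbatim with the $(t,p)$-edge-stability, $(t,p)$-vertex-extendability, and $(t,p)$-degree-stability defined earlier in this section. By hypothesis $\mathcal{F}$ is $(t,p)$-edge-stable and $(t,p)$-vertex-extendable with respect to $\mathfrak{H}$; feeding these into~{\cite[Theorem~1.7]{CL24}} yields that $\mathcal{F}$ is $(t,p)$-degree-stable with respect to $\mathfrak{H}$, as desired. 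I expect the only genuine point of care to be the bookkeeping in step (a); but because $\mathcal{F}$-freeness is hereditary, multiplication by $\mathbbm{1}_{\mathcal{F}}$ only annihilates $r$-graphs that play no role in any of the stability statements (each of which quantifies solely over $\mathcal{F}$-free $\mathcal{H}$), so no real difficulty arises, and all the substantive work lies in~\cite{CL24} and in the lemmas of Section~\ref{SEC:General-property}.
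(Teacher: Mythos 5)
Your proposal matches the paper's treatment: the paper gives no independent argument for this theorem, deriving it directly from~{\cite[Theorem~1.7]{CL24}} applied to $\Gamma(\mathcal{H}) = \norm{\mathcal{H}}_{t,p}\cdot \mathbbm{1}_{\mathcal{F}}(\mathcal{H})$, with the required regularity hypotheses supplied by the lemmas of Section~\ref{SEC:General-property} and the routine transfer from $\norm{\cdot}_{t,p}$ to $\Gamma$ for nondegenerate $\mathcal{F}$, exactly as you describe. Your verification that the abstract stability notions specialize to the $(t,p)$-versions is the same bookkeeping the paper leaves implicit, so the two arguments coincide.
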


The following result shows that to prove Theorems~\ref{THM:Lp-F5-p-large} and~\ref{THM:Lp-clique-expansion-p-large}, one only needs to focus on the degree-stability part. Its proof is essentially the same as that of~{\cite[Proposition~4.11]{CL24}}, so we omit it here.
Recall that $\mathfrak{K}_{\ell}^{r}$ is the collection of all $K_{\ell}^{r}$-colorable $r$-graphs. 

\begin{proposition}\label{PROP:Lp-min-degree-extremal}
    Let $r > t \ge 1$ be integers and $p > 0$ be a real number. Let $\mathcal{F}$ be a nondegenerate family of $r$-graphs that is $(t,p)$-degree-stable with respect to $\mathfrak{K}_{\ell}^{r}$ for some $\ell \ge r$. 
    Then for large $n$, 
    \begin{align*}
        \mathrm{ex}_{t,p}(n,\mathcal{F})
        = \max\left\{\norm{\mathcal{G}}_{t,p} \colon \mathcal{G} \in \mathfrak{K}_{\ell}^{r} \text{ and } v(\mathcal{G}) = n\right\}. 
    \end{align*}
\end{proposition}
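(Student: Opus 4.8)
The plan is to show that once $\mathcal{F}$ is $(t,p)$-degree-stable with respect to $\mathfrak{K}_{\ell}^{r}$, any $n$-vertex $\mathcal{F}$-free $r$-graph achieving the maximum of $\norm{\cdot}_{t,p}$ must in fact lie in $\mathfrak{K}_{\ell}^{r}$ for large $n$, which immediately yields the claimed equality. The reverse inequality $\mathrm{ex}_{t,p}(n,\mathcal{F}) \ge \max\{\norm{\mathcal{G}}_{t,p} \colon \mathcal{G} \in \mathfrak{K}_{\ell}^{r},\ v(\mathcal{G})=n\}$ is immediate because every $K_{\ell}^{r}$-colorable $r$-graph is $\mathcal{F}$-free (as $\mathfrak{K}_{\ell}^{r}$ consists of $\mathcal{F}$-free $r$-graphs by the hypothesis of degree-stability). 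So the whole content is the upper bound.

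First I would take an extremal $\mathcal{F}$-free $r$-graph $\mathcal{H}$ on $n$ vertices with $\norm{\mathcal{H}}_{t,p} = \mathrm{ex}_{t,p}(n,\mathcal{F})$. The key point is that an extremal $\mathcal{H}$ must have large minimum $(t,p)$-degree: if some vertex $v$ had $d_{\mathcal{H},t,p}(v)$ much smaller than the average $(t,p)$-degree $d_{t,p}(\mathcal{H}) = \frac{1}{n}\sum_{u} d_{\mathcal{H},t,p}(u)$, then deleting $v$ and adding a clone of a high-$(t,p)$-degree vertex (or, more robustly, deleting $v$ and applying the lower bound in Proposition~\ref{PROP:limit-exist} on how $\mathrm{ex}_{t,p}$ decreases) would contradict extremality. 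Concretely, by Lemma~\ref{LEMMA:Lp-uniform} the sum $\sum_{v} d_{\mathcal{H},t,p}(v)$ is within $O(n^{t+p(r-t)-\delta_p})$ of $(t+p(r-t))\cdot\norm{\mathcal{H}}_{t,p}$, and $\norm{\mathcal{H}}_{t,p} = \mathrm{ex}_{t,p}(n,\mathcal{F}) = \Theta(n^{t+p(r-t)})$ since $\mathcal{F}$ is nondegenerate (hence $(t,p)$-nondegenerate); so the average $(t,p)$-degree is $(1+o(1))\cdot\mathrm{exdeg}_{t,p}(n,\mathcal{F})$. Then a standard deletion argument: if $\delta_{t,p}(\mathcal{H}) < (1-\varepsilon)\cdot\mathrm{exdeg}_{t,p}(n,\mathcal{F})$ for the $\varepsilon$ coming from degree-stability, delete a vertex of minimum $(t,p)$-degree and replace it using a near-extremal $(n-1)$-vertex $\mathcal{F}$-free graph's optimal vertex, contradicting $\norm{\mathcal{H}}_{t,p} = \mathrm{ex}_{t,p}(n,\mathcal{F})$ via the smoothness estimate~\eqref{equ:PROP:limit-exist}. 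This forces $\delta_{t,p}(\mathcal{H}) \ge (1-\varepsilon)\cdot\mathrm{exdeg}_{t,p}(n,\mathcal{F})$.

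With $\mathcal{H}$ now satisfying the minimum-degree hypothesis of $(t,p)$-degree-stability with respect to $\mathfrak{K}_{\ell}^{r}$, I directly conclude $\mathcal{H} \in \mathfrak{K}_{\ell}^{r}$, hence $\norm{\mathcal{H}}_{t,p} \le \max\{\norm{\mathcal{G}}_{t,p} \colon \mathcal{G} \in \mathfrak{K}_{\ell}^{r},\ v(\mathcal{G})=n\}$, completing the proof. The main obstacle is the deletion step: one must be careful that removing a low-$(t,p)$-degree vertex and re-inserting a "good" vertex stays $\mathcal{F}$-free and genuinely increases $\norm{\cdot}_{t,p}$; the cleanest route is not to re-insert a vertex at all but to compare $\mathrm{ex}_{t,p}(n,\mathcal{F}) = \norm{\mathcal{H}}_{t,p} = d_{\mathcal{H},t,p}(v) + \norm{\mathcal{H}-v}_{t,p} \le d_{\mathcal{H},t,p}(v) + \mathrm{ex}_{t,p}(n-1,\mathcal{F})$, and invoke the smoothness inequality~\eqref{equ:PROP:limit-exist} with $\delta = 1/n$ to get $\mathrm{ex}_{t,p}(n-1,\mathcal{F}) \le \mathrm{ex}_{t,p}(n,\mathcal{F}) - (t+p(r-t))\cdot\mathrm{ex}_{t,p}(n,\mathcal{F})/n + o(n^{t-1+p(r-t)})$, which forces $d_{\mathcal{H},t,p}(v) \ge (1-o(1))\cdot\mathrm{exdeg}_{t,p}(n,\mathcal{F})$ for the minimum-$(t,p)$-degree vertex $v$ — exactly what is needed. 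This is precisely the argument of~{\cite[Proposition~4.11]{CL24}}, so I would simply adapt it verbatim.
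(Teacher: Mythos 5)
Your overall plan is the right one, and it is the plan the paper delegates to~{\cite[Proposition~4.11]{CL24}}: the lower bound is immediate from the convention that $\mathfrak{K}_{\ell}^{r}$ is a hereditary family of $\mathcal{F}$-free $r$-graphs, and for the upper bound one wants to show an extremal graph satisfies the minimum-degree hypothesis of $(t,p)$-degree-stability. The gap is in the mechanism you give for that key step. From $\norm{\mathcal{H}-v}_{t,p}\le \mathrm{ex}_{t,p}(n-1,\mathcal{F})$ you only get $d_{\mathcal{H},t,p}(v)\ge \mathrm{ex}_{t,p}(n,\mathcal{F})-\mathrm{ex}_{t,p}(n-1,\mathcal{F})$, and you then claim this difference is $(1-o(1))\cdot\mathrm{exdeg}_{t,p}(n,\mathcal{F})$ by applying~\eqref{equ:PROP:limit-exist} with $\delta=1/n$. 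That application is vacuous: the error term in~\eqref{equ:PROP:limit-exist} is $o(n^{t+p(r-t)})$, whereas at $\delta=1/n$ the first-order term $\delta\left(t+p(r-t)\right)\mathrm{ex}_{t,p}(n,\mathcal{F})$ is only $\Theta(n^{t+p(r-t)-1})$, so the inequality gives no information at single-vertex scale. Nor is the pointwise bound $\mathrm{ex}_{t,p}(n,\mathcal{F})-\mathrm{ex}_{t,p}(n-1,\mathcal{F})\ge (1-o(1))\mathrm{exdeg}_{t,p}(n,\mathcal{F})$ available from Katona--Nemetz--Simonovits-type averaging: that argument controls one-step differences only in the opposite direction (and their average over long ranges), so "every extremal graph has near-extremal minimum $(t,p)$-degree" is left unproved. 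The variant you mention of deleting $v$ and inserting a clone of a high-degree vertex needs the clone to preserve $\mathcal{F}$-freeness, i.e.\ blowup-invariance of $\mathcal{F}$, which is not assumed (and fails, e.g., for $\{F_5\}$ itself).

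What the smoothness inequality is actually designed for, and what the cited argument does, is an iterated deletion at constant scale: starting from an extremal $\mathcal{H}$, repeatedly delete a vertex of minimum $(t,p)$-degree as long as $\delta_{t,p}$ falls below $(1-\varepsilon)\mathrm{exdeg}_{t,p}(\cdot,\mathcal{F})$. If this ran for $\delta n$ steps with a constant $\delta\ll\varepsilon$, comparing the surviving $\mathcal{F}$-free graph with $\mathrm{ex}_{t,p}(n-\delta n,\mathcal{F})$ via~\eqref{equ:PROP:limit-exist} accumulates a saving of order $\varepsilon\delta\, n^{t+p(r-t)}$ that beats both the $\delta^{2}$ term and the $o(n^{t+p(r-t)})$ error (nondegeneracy is used here to know $\mathrm{ex}_{t,p}(n,\mathcal{F})=\Theta(n^{t+p(r-t)})$), a contradiction; hence after fewer than $\delta n$ deletions one lands in $\mathfrak{K}_{\ell}^{r}$, and one then recovers the equality at $n$ by, e.g., adding back cloned vertices inside the $\ell$-partition (which stays in $\mathfrak{K}_{\ell}^{r}$) and comparing norms. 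So the skeleton of your proposal matches the intended argument, but as written the crucial minimum-degree step is a genuine gap: you need the constant-$\delta$ iterated comparison (or some other input), not a single application of~\eqref{equ:PROP:limit-exist} at $\delta=1/n$, and your claim that this is "precisely" the argument of~{\cite[Proposition~4.11]{CL24}} does not hold for the version you spelled out.
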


\section{Proof of Theorem~\ref{THM:Lp-F5-p-large}}\label{SEC:Proof-Lp-F5-large}
In this section, we prove Theorem~\ref{THM:Lp-F5-p-large}. Our proof is to apply Theorem~\ref{THM:Lp-general-a} with $\mathcal{F} = \{F_5\}$, $\hat{\mathcal{F}} = \mathcal{T}_{3} \coloneqq \{K_{4}^{3-}, F_5\}$, and $\mathfrak{H} = \mathfrak{S}$, where 
\begin{align*}
    \mathfrak{S}
    \coloneqq \left\{\mathcal{G} \in \mathfrak{G}^{3} \colon \text{$\mathcal{G}$ is $\mathcal{S}$-colorable for some STS $\mathcal{S}$}\right\}. 
\end{align*}
Note that Conditions~\ref{THM:Lp-general-a-1} and~\ref{THM:Lp-general-a-2} in Theorem~\ref{THM:Lp-general-a} are straightforward to verify in this case (see ~{\cite[Lemma~4.2]{LMR23unif}} and the comment before~{\cite[Lemma~4.3]{LMR23unif}}). 
Therefore, we only need to show that $F_5$ is $(2,p)$-vertex-extendable with respect to $\mathfrak{S}$ for every $p \ge 1$. 

Since $\mathfrak{S}$ is a rather large family to handle, instead of addressing all STSs, we will show in the next section that it is sufficient to prove that $F_5$ is $(2,p)$-vertex-extendable with respect to a very simple subfamily of $\mathfrak{S}$, namely, $\mathfrak{K}_{3}^{3}$, the collection of all $3$-partite $3$-graphs.
To achieve this reduction, we will use results of Brown--Sidorenko~{\cite[Proposition~2]{BS94}} and Liu--Mubayi--Reiher~{\cite[Lemma~5.1]{LMR23induced}} on graph inducibility problems along with some very technical and nontrivial calculations, to show that if a member in $\mathfrak{S}$ has near-extremal minimum $(2,p)$-degree, then it must be $3$-partite (Proposition~\ref{PORP:min-deg-STS-color}). 

\subsection{Excluding nontrivial Steiner triple systems}\label{SUBSEC:F5-large-STS}
For every real number $p \ge 0$ define 
    \begin{align*}
        g_{p}(x_1, x_2) 
         \coloneqq x_{1}^{p}x_2+ x_{1}x_{2}^{p},  \quad\text{and}\quad
        h_{p}(x_1, x_2, x_3) 
         \coloneqq x_1x_2x_3^{p}+x_1x_2^{p}x_3+x_1^{p}x_2x_3. 
    \end{align*}
For convenience, define 
\begin{align*}
    g_{p}^{\ast} 
     \coloneqq \max_{(x_1, x_2) \in \Delta^{1}} g_{p}(x_1, x_2), \quad\text{and}\quad
    h_{p}^{\ast} 
     \coloneqq \max_{(x_1, x_2, x_3) \in \Delta^{2}} h_{p}(x_1, x_2, x_3). 
\end{align*}
Notice that $h_{p}(x_1, x_2, x_3) = L_{K_{3}^{3},2,p}(x_1, x_2, x_3)$ and $h_{p}^{\ast} = \lambda_{2,p}(K_{3}^{3})$.
The main task of this subsection is to establish the following result, which extends~{\cite[Lemma~4.3]{LMR23unif}}. 
\begin{proposition}\label{PORP:min-deg-STS-color}
    For every real number $p \ge 1$, there exists $N_0$ such that the following holds for all $n \ge N_0$. 
    Suppose that $\mathcal{H} \in \mathfrak{S}$ is an $n$-vertex $3$-graph satisfying  
    \begin{align*}
        \delta_{2,p}(\mathcal{H}) \ge (1-10^{-3})(p+2)\cdot h^{\ast}_{p} \cdot n^{1+p}. 
    \end{align*}
    Then $\mathcal{H}$ is $3$-partite.    
\end{proposition}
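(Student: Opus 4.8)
The plan is to use that every member of $\mathfrak{S}$ lies inside a blowup of a partial Steiner system, and to convert the near-extremal $(2,p)$-degree condition, through the Lagrange polynomial $L_{\mathcal{G},2,p}$, into a system of inequalities on the part ratios that forces the underlying partial Steiner system to consist of a single edge.

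\emph{Reduction.} Since $\mathcal{H}\in\mathfrak{S}$, there are an STS $\mathcal{S}$ on $[k]$ and a partition $V(\mathcal{H})=V_1\cup\dots\cup V_k$ with $\mathcal{H}\subseteq\mathcal{S}(V_1,\dots,V_k)$. Adding the missing edges of $\mathcal{S}(V_1,\dots,V_k)$ keeps membership in $\mathfrak{S}$ and, by Lemma~\ref{LEMMA:local-monotone}, does not decrease any $(2,p)$-degree; also a $3$-partite blowup would force $\mathcal{H}$ itself to be $3$-partite. So I may assume $\mathcal{H}=\mathcal{S}(V_1,\dots,V_k)$, delete the empty classes, and let $\mathcal{G}$ be the subhypergraph of $\mathcal{S}$ induced on the $m$ nonempty classes; then $\mathcal{G}$ is a \emph{partial Steiner system} (every pair lies in at most one edge), $\mathcal{H}=\mathcal{G}(V_1,\dots,V_m)$ is its full blowup, and every vertex of $\mathcal{G}$ lies in an edge, since otherwise some vertex of $\mathcal{H}$ would have $(2,p)$-degree $0$ whereas $\delta_{2,p}(\mathcal{H})\ge(1-10^{-3})(p+2)h_p^{\ast}n^{1+p}>0$. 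Writing $x_i:=|V_i|/n>0$ for $i\in V(\mathcal{G})$, Corollary~\ref{CORO:Lp-degree-expression-Lagrangian} yields
\[
    D_iL_{\mathcal{G},2,p}(\vec x)\ \ge\ \frac{d_{\mathcal{H},2,p}(v)}{n^{1+p}}\ \ge\ (1-10^{-3})(p+2)\,h_p^{\ast}\qquad\text{for all }i\in V(\mathcal{G})\text{ and }v\in V_i,
\]
and, because $\mathcal{G}$ is a partial Steiner system, a direct computation gives $L_{\mathcal{G},2,p}(\vec x)=\sum_{e\in\mathcal{G}}h_p(x_e)$ and $D_iL_{\mathcal{G},2,p}(\vec x)=A_i+p\,x_i^{p-1}B_i$, where $A_i=\sum_{\{i,j,j'\}\in\mathcal{G}}g_p(x_j,x_{j'})$, $B_i=\sum_{\{i,j,j'\}\in\mathcal{G}}x_jx_{j'}$, and $h_p(x_e)$ means $h_p$ evaluated on the three coordinates of $e$.

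\emph{The estimate.} Suppose for contradiction that $\mathcal{G}$ has at least two edges; since distinct edges of a partial Steiner system meet in at most one vertex, $m\ge5$. The goal is to find a vertex $i$ with $D_iL_{\mathcal{G},2,p}(\vec x)<(1-10^{-3})(p+2)h_p^{\ast}$. Two structural facts drive this. First, the partial Steiner identity
\[
    \sum_{e\in\mathcal{G}}\ \sum_{\{a,b\}\subseteq e}x_ax_b\ \le\ \sum_{\{a,b\}\subseteq V(\mathcal{G})}x_ax_b\ =\ \tfrac12\Big(1-\sum_{i}x_i^2\Big)\ \le\ \tfrac12,
\]
which, together with the one-variable bounds of Fact~\ref{FACT:inequality-b}~\ref{FACT:inequality-b-2}, controls $\sum_{e\in\mathcal{G}}h_p(x_e)$ and $\sum_i A_i=\sum_{\{a,b\}\in\partial_1\mathcal{G}}g_p(x_a,x_b)$. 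Second, the link of each vertex of $\mathcal{G}$ is a matching, so each $A_i$ is precisely a $g_p$-weighted induced-matching density, and the sharp upper bounds of Brown--Sidorenko~{\cite[Proposition~2]{BS94}} and Liu--Mubayi--Reiher~{\cite[Lemma~5.1]{LMR23induced}} apply to it. Combining these with the elementary estimates $B_i\le\tfrac14(1-x_i)^2$ and $\sum_{e}h_p(x_e)\le h_p^{\ast}\sum_{e}\big(\sum_{v\in e}x_v\big)^{p+2}$, together with a case analysis according to whether the weight $\vec x$ is concentrated on the vertices of a single edge or spread out — and, in the concentrated case, passing to a low-weight covered vertex, which necessarily has small $(2,p)$-degree — one shows that for every partial Steiner system $\mathcal{G}$ with at least two edges some $D_iL_{\mathcal{G},2,p}(\vec x)$ falls below $(1-10^{-3})(p+2)h_p^{\ast}$. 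This contradicts the displayed bound, so $\mathcal{G}$ has exactly one edge; since every vertex of $\mathcal{G}$ is covered, $\mathcal{G}=K_3^3$, hence $\mathcal{H}=\mathcal{G}(V_1,V_2,V_3)$ is complete $3$-partite and the original $3$-graph is $3$-partite.

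\emph{Main obstacle.} The delicate part is the quantitative estimate above for general $p$. For large $p$ the quantities $g_p^{\ast}$, $(p+2)h_p^{\ast}$ and the relevant values in Fact~\ref{FACT:inequality-b}~\ref{FACT:inequality-b-2} are all of order $\Theta(1/p)$, so crude estimates only barely fail to give the contradiction; to push $\min_i D_iL_{\mathcal{G},2,p}(\vec x)$ strictly below the threshold one must use the precise inducibility bounds, keep track of the optimal (generally unbalanced) weight split inside each edge, and carry out a fair amount of calculation — these are the ``technical and nontrivial calculations'' alluded to in the introduction. The reduction step and the overall structure of the case analysis are routine given the lemmas already established.
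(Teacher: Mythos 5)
Your reduction and overall skeleton are sound and close to the paper's: translate the minimum $(2,p)$-degree condition into lower bounds on the partial derivatives $D_iL_{\cdot,2,p}$ via Corollary~\ref{CORO:Lp-degree-expression-Lagrangian}, then argue that the underlying Steiner-type structure must be a single triple. But the pivotal analytic step is missing. Where you write that ``a case analysis \dots one shows that for every partial Steiner system $\mathcal{G}$ with at least two edges some $D_iL_{\mathcal{G},2,p}(\vec x)$ falls below $(1-10^{-3})(p+2)h_p^{\ast}$,'' you are asserting exactly the content of the proposition rather than proving it. In the paper this is carried by Lemma~\ref{LEMMA:tp-Lagrangian-STS}, whose proof is a delicate induction on $p$: one sums the derivative bounds over all vertices, uses the exactly-once covering property of a (full) STS to rewrite the sum as $p\,L_{\mathcal{S},2,p-1}(\vec x)+L_{K_m,1,p}(\vec x)$, and then bounds these via H\"older-type interpolation (Proposition~\ref{PROP:star-poly-Holder-inequ}), the star-inducibility bound (Lemma~\ref{LEMMA:inducibility-star}), and the numerical inequality of Lemma~\ref{LEMMA:T3-induction-number-of-parts} (itself resting on Lemmas~\ref{LEMMA:max-star-graph}--\ref{LEMMA:T3-inequalities} and computer-verified estimates). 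None of this is replaced by anything concrete in your sketch; you even flag these calculations as the ``delicate part'' and defer them.

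Moreover, your route requires a \emph{strictly stronger} estimate than the one the paper proves, so the gap cannot be closed merely by citing the paper's technical lemmas. The paper works with a genuine STS and a surjective homomorphism, so the averaging argument only needs to force $m<7$: since STS orders lie in $6\mathbb{N}+\{1,3\}$, this already yields $m=3$, and indeed for $p\ge 2$ the paper's bound is only $6.88<7$. By passing to the induced \emph{partial} Steiner system on the nonempty classes, you give up the congruence on the number of vertices: you must now exclude, for every $p\ge 1$ and every positive weight vector, the two-edge configurations on $5$ and $6$ vertices (two triples sharing a vertex, two disjoint triples), which the threshold ``$<6.88$'' does not touch. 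These cases do appear to be true (e.g.\ for two disjoint triples the lighter triple has all derivatives at most $(1/2)^{p+1}(p+2)h_p^{\ast}$, and an asymptotic analysis of the shared-vertex case for large $p$ leaves a factor of roughly $2$ of slack), but establishing them rigorously for all $p\ge1$ is new quantitative work beyond both your write-up and the paper's lemmas. As it stands, the proposal identifies the correct framework but leaves the core inequality unproved, so it does not constitute a proof of the proposition.
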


The key inequality for proving Proposition~\ref{PORP:min-deg-STS-color} is as follows. 
To avoid being distracted too much from the proof of the main result (Theorem~\ref{THM:Lp-F5-p-large}), we postpone the proof for the key inequality to Section~\ref{SUBSEC:proof-F5-inequalities}. 

\begin{lemma}\label{LEMMA:T3-induction-number-of-parts}
    For every real number $p \ge 2$,  
    \begin{align*}
        \frac{p \cdot h_{p-1}^{\ast}  +  g_{p}^{\ast} }{(p+2)\cdot h_{p}^{\ast} } < 6.88. 
    \end{align*}
\end{lemma}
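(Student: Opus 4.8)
The plan is to bound the ratio by replacing the three quantities $g_p^{\ast}$, $h_{p-1}^{\ast}$, $h_p^{\ast}$ with closed‑form expressions obtained from elementary one‑variable calculus, and then to split the range into $p\in[2,3]$ and $p\ge 3$. Throughout I would use the factorizations $g_q(x_1,x_2)=x_1x_2\bigl(x_1^{q-1}+x_2^{q-1}\bigr)$ and $h_q(x_1,x_2,x_3)=x_1x_2x_3\bigl(x_1^{q-1}+x_2^{q-1}+x_3^{q-1}\bigr)$.

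First I would collect the needed estimates. If $q\ge 2$ and (WLOG) $x_1=\max_i x_i$, then $\sum_i x_i^{q-1}\le x_1^{q-2}\sum_i x_i=x_1^{q-2}$, so $h_q(x)\le x_1^{q-1}x_2x_3\le\tfrac14 x_1^{q-1}(1-x_1)^2$, and maximizing the last quantity over $x_1\in[0,1]$ gives $h_q^{\ast}\le\tfrac{(q-1)^{q-1}}{(q+1)^{q+1}}$; the same manipulation yields $g_q^{\ast}\le\tfrac{(q-1)^{q-1}}{q^{q}}$ for $q\ge 2$. For $q\in[1,2]$ the map $t\mapsto t^{q-1}$ is concave, so $\sum_i x_i^{q-1}\le 3^{2-q}$ and $x_1x_2x_3\le 3^{-3}$ are both attained at the barycenter, which gives the exact value $h_q^{\ast}=3^{-1-q}$. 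For the denominator I would use two lower bounds for $h_p^{\ast}$: evaluating $h_p$ at the barycenter gives $h_p^{\ast}\ge 3^{-1-p}$, and evaluating it at $\bigl(\tfrac1{p+2},\tfrac1{p+2},\tfrac{p}{p+2}\bigr)$ gives $h_p^{\ast}\ge\tfrac{p^p}{(p+2)^{p+2}}$.

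For $p\in[2,3]$ I would use the exact value $h_{p-1}^{\ast}=3^{-p}$ (valid since $p-1\in[1,2]$) together with $g_p^{\ast}\le\tfrac{(p-1)^{p-1}}{p^{p}}$ and $h_p^{\ast}\ge 3^{-1-p}$; this bounds the ratio by $\tfrac{3p}{p+2}+\tfrac{\phi(p)}{p+2}$ with $\phi(p):=3^{\,1+p}(p-1)^{p-1}/p^{p}$, and since $\tfrac{3p}{p+2}<2$ while $\phi$ is increasing on $[2,3]$ (its logarithmic derivative is $\log\tfrac{3(p-1)}{p}>0$) with $\phi(3)=12$, the ratio is at most $2+\tfrac{12}{4}=5$. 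For $p\ge 3$ I would instead use $h_{p-1}^{\ast}\le\tfrac{(p-2)^{p-2}}{p^{p}}$, $g_p^{\ast}\le\tfrac{(p-1)^{p-1}}{p^{p}}$, $h_p^{\ast}\ge\tfrac{p^p}{(p+2)^{p+2}}$, together with the elementary inequality $p(p-2)^{p-2}\le(p-1)^{p-1}$ (which follows from $\bigl(1+\tfrac1{p-2}\bigr)^{p-2}\ge 2$ and $\tfrac{p-1}{p}\ge\tfrac23$). These reduce the ratio to $G(p):=\tfrac{2(p-1)^{p-1}(p+2)^{p+1}}{p^{2p}}$, and a short computation gives
\[
\frac{d}{dp}\log G(p)=\log\!\Bigl(1+\frac{p-2}{p^2}\Bigr)-\frac{1}{p+2}\ \le\ \frac{p-2}{p^2}-\frac{1}{p+2}\ =\ -\frac{4}{p^2(p+2)}\ <\ 0,
\]
so $G$ is decreasing on $[3,\infty)$ and hence $G(p)\le G(3)=\tfrac{5000}{729}<6.88$.

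The place where care is required is that the constant $6.88$ leaves essentially no room on the $p\ge 3$ branch: the bound $G(3)=5000/729\approx 6.8587$ already consumes almost all of the margin, so each intermediate estimate (the closed forms above, and especially the step $p(p-2)^{p-2}\le(p-1)^{p-1}$, which inflates the numerator) has to be close to optimal, and one must keep track of how much slack is accumulated at each step. Everything else is routine; the $p\in[2,3]$ branch, by contrast, has a large safety margin.
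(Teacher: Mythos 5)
Your proof is correct, and it takes a genuinely different route from the paper. The paper first develops a toolkit of estimates (an inducibility-type bound for $g_p^{\ast}$ via Lemma~\ref{LEMMA:max-star-graph}, the bounds of Lemmas~\ref{LEMMA:g-p-ast-small} and~\ref{LEMMA:T3-inequalities}, in particular $h_p^{\ast} < \frac{3g_p^{\ast}}{2e(p+1)}$ and the comparison of $g_p^{\ast}$ with $g_p\bigl(\frac{1}{p-1},\frac{p-2}{p-1}\bigr)$), splits at $p=8$, and for $2\le p\le 8$ verifies the resulting one-variable inequality with Mathematica. You instead exploit the factorizations $g_q=x_1x_2(x_1^{q-1}+x_2^{q-1})$, $h_q=x_1x_2x_3\sum_i x_i^{q-1}$ to get closed-form bounds $h_q^{\ast}\le\frac{(q-1)^{q-1}}{(q+1)^{q+1}}$, $g_q^{\ast}\le\frac{(q-1)^{q-1}}{q^q}$ for $q\ge2$ and the exact value $h_q^{\ast}=3^{-1-q}$ for $q\in[1,2]$, split at $p=3$, and close the $p\ge3$ branch by showing $G(p)=\frac{2(p-1)^{p-1}(p+2)^{p+1}}{p^{2p}}$ is decreasing via $\log(1+\frac{p-2}{p^2})-\frac{1}{p+2}\le-\frac{4}{p^2(p+2)}<0$. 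I checked the individual steps: the simplex bounds, the inequality $p(p-2)^{p-2}\le(p-1)^{p-1}$ for $p\ge3$, the monotonicity of $\phi$ on $[2,3]$ with $\phi(3)=12$, and $G(3)=\frac{5000}{729}\approx6.859<6.88$ (as you note, the margin here is only about $0.02$, but the comparison is exact and $G$ is strictly decreasing, so nothing breaks). What your argument buys is a fully elementary, self-contained, computer-free proof with a single clean worst case at $p=3$; what the paper's route buys is a set of sharper intermediate estimates for $g_p^{\ast}$ and $h_p^{\ast}$ (Hölder interpolation, the asymmetric evaluation points) that are stated as separate lemmas and reflect more precisely where the extrema of $g_p$ and $h_p$ actually sit, at the cost of a Mathematica verification on $[2,8]$ and a more involved $p>8$ analysis.
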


A crucial step in establishing Proposition~\ref{PORP:min-deg-STS-color} is the following lemma concerning the $(2,p)$-Lagrangian of STSs. 

\begin{lemma}\label{LEMMA:tp-Lagrangian-STS}
    Suppose that $m \in 6\mathbb{N}+\{1,3\}$,  $\mathcal{S}$ is a STS on $[m]$, and $p \ge 1$ is a real number. 
    If there exists a vector $\vec{x} \coloneqq (x_1, \ldots, x_m) \in \Delta^{m-1}$ satisfying 
    \begin{align*}
        \min\{x_i\colon i\in [m]\} > 0
        \quad\text{and}\quad 
        \min\left\{D_{i} L_{\mathcal{S}, 2, p}(\vec{x}) \colon i\in [m]\right\}\ge (1-10^{-3}) (p+2) \cdot h_{p}^{\ast}, 
    \end{align*}
    then $m=3$. 
    In particular, 
    \begin{align}\label{equ:LEMMA:tp-Lagrangian-STS-1}
        \lambda_{2, p}(\mathcal{S})
        = \lambda_{2, p}(K_{3}^{3})
        = h_{p}^{\ast}. 
    \end{align}
\end{lemma}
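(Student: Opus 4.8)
The plan is to argue by contradiction: assume $m \ge 7$ (since $m \in 6\mathbb{N}+\{1,3\}$ and $m=1$ is vacuous, the smallest remaining case beyond $m=3$ is $m=7$), and derive a violation of the minimum-partial-derivative hypothesis using a counting identity together with the inducibility bounds cited in the text. First I would record the Euler-type identity for the homogeneous polynomial $L_{\mathcal{S},2,p}$: since $L_{\mathcal{S},2,p}$ is homogeneous of degree $p+2$, we have $\sum_{i=1}^m x_i\, D_i L_{\mathcal{S},2,p}(\vec{x}) = (p+2)\, L_{\mathcal{S},2,p}(\vec{x}) \le (p+2)\,\lambda_{2,p}(\mathcal{S})$. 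Combined with $\sum_i x_i = 1$ and the lower bound $D_i L_{\mathcal{S},2,p}(\vec{x}) \ge (1-10^{-3})(p+2) h_p^\ast$ for every $i$, this forces $\lambda_{2,p}(\mathcal{S}) \ge (1-10^{-3}) h_p^\ast$, i.e. the Lagrangian of $\mathcal{S}$ is essentially as large as that of a single triangle $K_3^3$. So the heart of the matter is to show that an STS on $m \ge 7$ vertices has $(2,p)$-Lagrangian strictly bounded away from $h_p^\ast$ by more than a $10^{-3}$ factor — then~\eqref{equ:LEMMA:tp-Lagrangian-STS-1} also follows, since $\mathcal{S} \supseteq K_3^3$ up to relabelling gives $\lambda_{2,p}(\mathcal{S}) \ge h_p^\ast$ trivially, and $m=3$ gives equality.

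To bound $\lambda_{2,p}(\mathcal{S})$ from above I would fix an optimal $\vec{x} \in \Delta^{m-1}$ and expand $L_{\mathcal{S},2,p}(\vec{x}) = \sum_{T \in \partial_1 \mathcal{S}} x_T (\sum_{I \in L_{\mathcal{S}}(T)} x_I)^p$; since $\mathcal{S}$ is an STS, every pair $\{j,k\} \subseteq [m]$ lies in exactly one triple, so $\partial_1 \mathcal{S} = \binom{[m]}{2}$ and each link $L_{\mathcal{S}}(\{j,k\})$ is a single vertex. The key step is to relate this sum to the quantities $g_p^\ast$, $h_{p-1}^\ast$ and $h_p^\ast$ appearing in Lemma~\ref{LEMMA:T3-induction-number-of-parts}: I expect that a careful accounting of the contribution of each vertex $i$ — splitting the edges of $\mathcal{S}$ into those containing $i$ and those not — yields, after using convexity/Jensen on the $p$-th powers, an inequality of the shape $D_i L_{\mathcal{S},2,p}(\vec{x}) \le p\, h_{p-1}^\ast \cdot (\text{mass near }i) + g_p^\ast \cdot(\cdots) + \ldots$, mirroring the structure of the ratio in Lemma~\ref{LEMMA:T3-induction-number-of-parts}. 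Summing or averaging this over $i$, and invoking the inducibility results of Brown--Sidorenko and Liu--Mubayi--Reiher to control how concentrated the STS structure can be, should show that if $m \ge 7$ then $\min_i D_i L_{\mathcal{S},2,p}(\vec{x}) < (1-10^{-3})(p+2) h_p^\ast$, contradicting the hypothesis. The constant $6.88$ in Lemma~\ref{LEMMA:T3-induction-number-of-parts} is presumably calibrated precisely so that this comparison closes with room to spare against the $1-10^{-3}$ slack.

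The main obstacle will be the combinatorial/analytic step translating "$\mathcal{S}$ is a genuine STS on $m \ge 7$ vertices" into a quantitative loss in the Lagrangian. A single triangle concentrates all mass on three mutually-covered vertices; an STS on $7$ (or more) vertices is $2$-covered but its triples overlap in a constrained pattern, and one must show no redistribution of the weights $x_i$ can mimic the triangle efficiently. This is exactly where the inducibility bounds enter — they cap the density of triangles through a fixed vertex — and where Lemma~\ref{LEMMA:T3-induction-number-of-parts} is consumed. I would also need Proposition~\ref{PROP:tp-Lagrangian-2-covered} to reduce to an optimal $\vec{x}$ supported on a $2$-covered induced subgraph with all partial derivatives equal to $(p+2)\lambda_{2,p}$, which makes the per-vertex bookkeeping uniform across the support; the case $p \in [1,2)$ may need a slightly different treatment than $p \ge 2$ since Lemma~\ref{LEMMA:T3-induction-number-of-parts} is only stated for $p \ge 2$, but for $p \in [1,2)$ one can likely use Proposition~\ref{PROP:star-poly-Holder-inequ} to interpolate $\lambda_{2,p}(\mathcal{S})$ between $\lambda_{2,1}(\mathcal{S})$ and $\lambda_{2,2}(\mathcal{S})$ and reduce to the endpoint estimates.
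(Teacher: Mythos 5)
Your central reduction is to a false statement, so the proposed contradiction cannot close. After the Euler identity you conclude that $\lambda_{2,p}(\mathcal{S})\ge(1-10^{-3})h_p^{\ast}$ and then declare that ``the heart of the matter'' is to show an STS on $m\ge 7$ vertices has $(2,p)$-Lagrangian bounded away from $h_p^{\ast}$. But every edge of an STS is a copy of $K_3^{3}$, so $\lambda_{2,p}(\mathcal{S})\ge h_p^{\ast}$ for \emph{every} STS, and indeed the ``In particular'' part~\eqref{equ:LEMMA:tp-Lagrangian-STS-1} of the very lemma you are proving asserts $\lambda_{2,p}(\mathcal{S})=h_p^{\ast}$ for all $\mathcal{S}$; the maximum is attained by vectors supported on a single triple. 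Thus no Lagrangian gap exists to exploit. The real content of the hypothesis is not that $L_{\mathcal{S},2,p}(\vec{x})$ is large but that the vector has \emph{full support} ($\min_i x_i>0$) and \emph{all $m$} partial derivatives are nearly $(p+2)h_p^{\ast}$. Your Euler step, which weights $D_iL$ by $x_i$ (summing to $1$), throws away exactly the factor of $m$ that drives the argument.

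The paper's proof instead sums the \emph{unweighted} derivative bounds, giving $m(1-10^{-3})(p+2)h_p^{\ast}\le\sum_{i\in[m]}D_iL_{\mathcal{S},2,p}(\vec{x})$, and then evaluates the right-hand side exactly via the STS property (every pair lies in exactly one triple): $\sum_i D_iL_{\mathcal{S},2,p}(\vec{x})=p\,L_{\mathcal{S},2,p-1}(\vec{x})+L_{K_m,1,p}(\vec{x})$. The right-hand side is then bounded by a quantity \emph{independent of $m$}: for $p\in(1,2]$ via the H\"{o}lder interpolation of Proposition~\ref{PROP:star-poly-Holder-inequ} (with the $p=1$ Lagrangian bound for STSs and $g_2^{\ast}=1/4$ from Lemma~\ref{LEMMA:inducibility-star}), and for $p>2$ by induction on $p$, using the inductive ``In particular'' statement $L_{\mathcal{S},2,p-1}(\vec{x})\le\lambda_{2,p-1}(\mathcal{S})=h_{p-1}^{\ast}$ together with $L_{K_m,1,p}(\vec{x})\le g_p^{\ast}$ and the numerical bound of Lemma~\ref{LEMMA:T3-induction-number-of-parts}; this forces $m<7$, hence $m=3$. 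Your second paragraph gestures at a per-vertex bound ``mirroring'' Lemma~\ref{LEMMA:T3-induction-number-of-parts}, which is in the right spirit, but it is aimed at the wrong target (a nonexistent Lagrangian gap), omits the induction on $p$ that makes $h_{p-1}^{\ast}$ usable, and your proposed interpolation of $\lambda_{2,p}(\mathcal{S})$ between $\lambda_{2,1}$ and $\lambda_{2,2}$ is again vacuous since all of these equal the corresponding $h^{\ast}$ values. (Also note the ``In particular'' part is deduced \emph{from} the first assertion via Proposition~\ref{PROP:tp-Lagrangian-2-covered} and the fact that $2$-covered subgraphs of STSs are STSs, not proved independently as your outline assumes.)
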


The following inequality will be useful for proving Lemma~\ref{LEMMA:tp-Lagrangian-STS}. 
The integral case corresponds to an old result of Brown--Sidorenko~{\cite[Proposition~2]{BS94}} concerning the inducibility problem of stars in a graph. 
The general case can be derived with a slight modification of their argument. For completeness, we include its proof in Section~\ref{SUBSEC:proof-F5-inequalities}. 

\begin{lemma}\label{LEMMA:inducibility-star}
    Let $p \ge 2$ be a real number. 
    For every integer $n \ge 2$ and for every $(x_1, \ldots, x_{n}) \in \Delta^{n-1}$,  
    \begin{align*}
        L_{K_n, 1, p}(x_1, \ldots, x_n)
        = \sum_{1\le i < j \le n} \left(x_i x_j^p + x_i^p x_j \right) 
        \le g_{p}^{\ast}. 
    \end{align*}
\end{lemma}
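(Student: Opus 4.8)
The plan is to follow the Brown--Sidorenko argument, which maximizes the symmetric function $\sum_{i<j}(x_ix_j^p+x_i^px_j)$ over the simplex by a careful analysis of an optimal point. First I would fix an optimizer $\vec{x}\in\Delta^{n-1}$ and, by discarding zero coordinates, assume all $x_i>0$; relabel so that $x_1\ge x_2\ge\cdots\ge x_n>0$. The key structural claim is that at an optimum there can be at most two distinct values among the $x_i$, and in fact the optimum is attained with $n=2$ (equivalently, with all but two coordinates equal to one common value, or with the support of size $2$). To see this, I would use Lagrange multipliers: writing $f(\vec x)=\sum_{i<j}(x_ix_j^p+x_i^px_j)$, at an interior optimum of the face spanned by the positive coordinates there is $\mu$ with $D_if(\vec x)=\mu$ for all $i$ in the support, where
\begin{align*}
    D_i f(\vec x) = \sum_{j\ne i}\bigl(x_j^p + p\,x_i^{p-1}x_j\bigr)
    = \Bigl(\sum_{j}x_j^p - x_i^p\Bigr) + p\,x_i^{p-1}\bigl(1-x_i\bigr).
\end{align*}
Setting $S_p\coloneqq\sum_j x_j^p$, this says that the one-variable function $\phi(t)\coloneqq -t^p + p\,t^{p-1}(1-t)$ takes the same value $\mu-S_p$ at every $t=x_i$ in the support.

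The main obstacle is precisely to control how many solutions the equation $\phi(t)=c$ can have for $t\in(0,1)$. Since $p\ge 2$, $\phi$ is smooth on $(0,1)$ and I would show by differentiating that $\phi'$ changes sign at most twice on $(0,1)$ (indeed $\phi'(t)=p(p-1)t^{p-2}(1-t) - p t^{p-1} - p t^{p-1} = p\,t^{p-2}\bigl((p-1)(1-t)-2t\bigr)$, which vanishes only at $t=\tfrac{p-1}{p+1}$, so $\phi'$ has exactly one sign change: $\phi$ is unimodal). Hence $\phi(t)=c$ has at most two roots, so the support of $\vec x$ takes at most two distinct values, say $a$ appearing $k$ times and $b$ appearing $n-k$ times. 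At this point I would reduce to showing that among all such two-value configurations the maximum of $f$ equals $g_p^\ast=\max_{(y_1,y_2)\in\Delta^1}(y_1y_2^p+y_1^py_2)$, attained at $k=1$ or $n-k=1$; this is a genuinely finite/low-dimensional optimization. Writing $a$ with multiplicity $k$ and $b=\tfrac{1-ka}{n-k}$, one computes $f$ as a function of $(a,k)$ and shows it is maximized by merging all the $a$-vertices into one (or all the $b$-vertices into one), which is a convexity/monotonicity check in the spirit of Proposition~\ref{PROP:tp-Lagrangian-2-covered}; alternatively one can invoke a direct smoothing: replacing two equal coordinates $x_i=x_j=a$ by $(2a,0)$ does not decrease $f$ when $p\ge 2$, by the same Jensen-type inequality $\tfrac12\bigl((2a)^p+0\bigr)\ge a^p$ applied coordinatewise to the link sums, exactly as in the proof of Proposition~\ref{PROP:star-polynomial-sym-increase}.

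Finally, once the optimum is pushed to a point supported on two coordinates $(y_1,y_2)$ with $y_1+y_2=1$, we have $f(\vec x)\le y_1y_2^p+y_1^py_2\le g_p^\ast$ by definition of $g_p^\ast$, which is the claimed bound. I expect the delicate point to be the smoothing/merging step for $p\ge 2$: one must verify that collapsing repeated values never decreases $f$, handling the cross terms between the collapsed block and the rest of the support; the inequality $(2a)^p\ge 2a^p$ (valid since $p\ge 1$) and convexity of $t\mapsto t^p$ do the work, but the bookkeeping of the $x_i x_j^p$ terms versus $x_i^p x_j$ terms needs care. The remaining reduction to $g_p^\ast$ and the Lagrange-multiplier unimodality argument are routine calculus.
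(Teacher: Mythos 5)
Your first step (Lagrange multipliers on the supporting face, the formula $D_i f(\vec x) = (S_p - x_i^p) + p\,x_i^{p-1}(1-x_i)$, and the unimodality of $\phi(t) = -t^p + p\,t^{p-1}(1-t)$, whose derivative $p\,t^{p-2}\bigl((p-1)-(p+1)t\bigr)$ has a single sign change) is correct and does show that a maximizer takes at most two distinct values on its support. The genuine gap is the reduction from ``at most two distinct values'' to support of size two. The smoothing you invoke --- replacing two equal coordinates $x_i=x_j=a$ by $(2a,0)$ --- does \emph{not} preserve or increase $f$ in general. Writing $R_1=\sum_{k\ne i,j}x_k$ and $R_p=\sum_{k\ne i,j}x_k^p$, the change in $f$ under this operation is exactly $(2^p-2)a^pR_1-2a^{p+1}$, which is negative whenever $R_1$ is small compared to $a$. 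Concretely, for $p=2$ and $(x_1,x_2,x_3)=(0.4,0.4,0.2)$ one has $f=0.224$ before and $f(0.8,0.2)=0.16$ after, so the operation strictly decreases the objective; the problematic case is precisely a two-value configuration $(a,a,b)$ with $a>b$, which your Lagrange step does not exclude. The alternative you mention (writing $f$ as a function of the common value and its multiplicity and doing a ``convexity/monotonicity check'') is exactly the nontrivial content of the lemma and is not carried out, so the proof is not closed.

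For comparison, the paper avoids Lagrange multipliers altogether: it takes a minimal counterexample with $x_1\ge\cdots\ge x_n>0$, $n\ge 3$, and merges the two \emph{smallest} coordinates (not two equal ones), replacing $(x_{n-1},x_n)$ by $x_{n-1}+x_n$. The gain is
\begin{align*}
\Bigl((x_{n-1}+x_n)^p-(x_{n-1}^p+x_n^p)\Bigr)\sum_{i\le n-2}x_i \;-\;\bigl(x_{n-1}x_n^p+x_{n-1}^px_n\bigr),
\end{align*}
which is nonnegative because $\sum_{i\le n-2}x_i\ge x_{n-1}\ge x_n$ and, for $u=x_n/x_{n-1}\in(0,1]$ and $p\ge2$, one has $(1+u)^p-2u^p-1-u\ge(1+u)^2-2u^2-1-u=u(1-u)\ge 0$. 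The key point you are missing is that the coordinate being absorbed must be dominated by the remaining mass (which is automatic for the two smallest coordinates when $n\ge3$); your version merges possibly large coordinates, where the cross terms $x_i^p x_j$ you lose are not compensated. If you replace your smoothing step by this merge-the-two-smallest argument (or otherwise complete the two-value optimization), the proof goes through.
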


\begin{proof}[Proof of Lemma~\ref{LEMMA:tp-Lagrangian-STS}]
    We prove this lemma by induction on $p$. 
    Notice that the "In particular" part follows easily from  
    Proposition~\ref{PROP:star-polynomial-sym-increase} and the fact that every $2$-covered subgraph of an STS is also an STS. 
    So it suffices to prove the first statement in Lemma~\ref{LEMMA:tp-Lagrangian-STS}.
    Additionally, since the number of vertices in every STS is in $6\mathbb{N}+\{1,3\}$, we just need to show that $m < 7$. 

    \textbf{Base case:} $p\in [1,2]$. 

    The case $p=1$ corresponds to~{\cite[Lemma~4.3]{LMR23unif}}, so we may assume that $p > 1$.  
    It follows from the assumption $\min\left\{D_{i} L_{\mathcal{S}, 2, p}(\vec{x}) \colon i\in [m]\right\}\ge (1-10^{-3}) (p+2) \cdot h_{p}^{\ast}$ that 
    \begin{align*}
        m\cdot (1-10^{-3})(p+2) \cdot h^{\ast}_{p}
        & \le \sum_{i\in [m]} D_{i}L_{\mathcal{S}, 2, p}(x_1, \ldots, x_{m}) \\
        & = \sum_{i\in [m]}\sum_{\{j,k\} \in L_{\mathcal{S}}(i)} \left(x_jx_k^p + x_j^px_k+px_i^{p-1}x_jx_k\right) \\
        & = p \cdot \sum_{i\in [m]}\sum_{\{j,k\} \in L_{\mathcal{S}}(i)}  x_i^{p-1}x_jx_k 
        + \sum_{\{j,k\} \in \binom{[m]}{2}} \left(x_jx_k^p + x_j^px_k\right) \\
        & = p \cdot L_{\mathcal{S},2,p-1}(\vec{x}) + L_{K_{m},1,p}(\vec{x}),   
    \end{align*}
    where, in the second to last equality, we used the property of STS that every pair of vertices in $\mathcal{S}$ is contained in exactly one edge. 
    The inequality above can be rewritten as 
    \begin{align}\label{equ:STS-a}
        m
         \le \frac{1}{1-10^{-3}} \cdot \frac{p \cdot L_{\mathcal{S},2,p-1}(\vec{x}) + L_{K_{m},1,p}(\vec{x})}{(p+2) \cdot h^{\ast}_{p}},
    \end{align}
    It follows from Proposition~\ref{PROP:star-poly-Holder-inequ} (with $(p_1,p_2) = (1,2)$) that 
    \begin{align*}
        L_{K_{m},1,p}(\vec{x}) 
        \le \left(L_{K_{m},1,1}(\vec{x})\right)^{2-p} \left(L_{K_{m},1,2}(\vec{x})\right)^{p-1}
        \le 1^{2-p} \left(\frac{1}{4}\right)^{p-1}
        = \frac{1}{4^{p-1}}. 
    \end{align*}
    Here, we used the inequalities 
    \begin{align*}
        L_{K_{m},1,1}(\vec{x})
        & = \sum_{\{i,j\}\in \binom{[m]}{2}}\left(x_ix_j + x_ix_j\right)
        = 2 \sum_{\{i,j\}\in \binom{[m]}{2}} x_ix_j 
        \le 1, \quad\text{and}\quad \\
        L_{K_{m},1,2}(\vec{x})
        & = \sum_{\{i,j\}\in \binom{[m]}{2}}\left(x_ix_j^{2} + x_i^{2}x_j\right)
        \le g_{2}^{\ast}
        = \frac{1}{4}, 
    \end{align*}
    where the second inequality follows from Lemma~\ref{LEMMA:inducibility-star} and some simple calculations. 

    In addition, it follows from Proposition~\ref{PROP:star-poly-Holder-inequ} (with $(p_1,p_2) = (0,1)$) that 
    \begin{align*}
        L_{\mathcal{S},2,p-1}(x_1, \ldots, x_{m})
        & \le \left(L_{\mathcal{S},2,0}(x_1, \ldots, x_{m})\right)^{1-(p-1)} \left(L_{\mathcal{S},2,1}(x_1, \ldots, x_{m})\right)^{p-1-0}  \\
        & = \left(\sum_{\{i,j,k\} \in \mathcal{S}}\left(x_ix_j +x_jx_k+x_kx_i\right) \right)^{2-p} \left(\sum_{\{i,j,k\} \in \mathcal{S}} 3 x_ix_jx_k\right)^{p-1}  \\
        & =  \left(\sum_{\{i,j\}\in \binom{[m]}{2}}x_ix_j\right)^{2-p} \left(3\sum_{\{i,j,k\} \in \mathcal{S}}x_ix_jx_k\right)^{p-1} 
         \le \left(\frac{1}{2}\right)^{2-p}\left(\frac{1}{9}\right)^{p-1},  
    \end{align*}
    where $3\sum_{\{i,j,k\} \in \mathcal{S}}x_ix_jx_k \le 1/9$ follows from~\eqref{equ:LEMMA:tp-Lagrangian-STS-1} with $p=1$. 
    Therefore, Inequality~\eqref{equ:STS-a} continues as  
    \begin{align*}
        m
         \le \frac{1}{1-10^{-3}} \cdot  \frac{p \cdot L_{\mathcal{S},2,p-1}(\vec{x}) + L_{K_{m},1,p}(\vec{x})}{(p+2) \cdot h^{\ast}_{p}} 
         \le \frac{1}{1-10^{-3}} \cdot  \frac{p\cdot \left(\frac{1}{2}\right)^{2-p}\left(\frac{1}{9}\right)^{p-1} + \frac{1}{4^{p-1}}}{(p+2)\cdot\left(\frac{1}{3}\right)^{p+1}}
        < 5,
    \end{align*}
    where the last inequality is verified using Mathematica. 

    \medskip 

    \textbf{Inductive step:} Now suppose that $p > 2$. 

    It follows from~\eqref{equ:STS-a}, Lemma~\ref{LEMMA:inducibility-star}, and the inductive hypothesis that 
    \begin{align*}
        m
        & \le \frac{1}{1-10^{-3}} \cdot \frac{p \cdot L_{\mathcal{S},2,p-1}(\vec{x}) + L_{K_{m},1,p}(\vec{x})}{(p+2) \cdot h^{\ast}_{p}} \\
        & \le \frac{1}{1-10^{-3}} \cdot \frac{p \cdot \lambda_{2,p-1}(K_{3}^{3}) + g_{p}^{\ast}}{(p+2) \cdot h^{\ast}_{p}} 
         \le \frac{1}{1-10^{-3}} \cdot \frac{p \cdot h_{p-1}^{\ast} + g_{p}^{\ast}}{(p+2) \cdot h^{\ast}_{p}}
        < 7,
    \end{align*}
    where the last inequality follows from Lemma~\ref{LEMMA:T3-induction-number-of-parts}. 
\end{proof}

Now we are ready to prove Proposition~\ref{PORP:min-deg-STS-color}. 
\begin{proof}[Proof of Proposition~\ref{PORP:min-deg-STS-color}]
    Suppose to the contrary that $\mathcal{H}$ is not $3$-partite. Then it means that there is a surjective homomorphism from $\mathcal{H}$ to some STS $\mathcal{S}$ on $m \ge 7$ vertices. 
    For simplicity, let us assume that $V(\mathcal{S}) = [m]$. 
    Fix a surjective homomorphism $\psi$ from $\mathcal{H}$ to $\mathcal{S}$. 
    Let $V_i \coloneqq \psi^{-1}(i)$ and $x_i \coloneqq |V_i|/n$ for $i\in [m]$. Since $\psi$ is surjective, we have $x_i > 0$ for every $i\in [m]$. 
    In addition, by Corollary~\ref{CORO:Lp-degree-expression-Lagrangian}, for every $i\in [m]$ and for every $v\in V_i$, 
    \begin{align*}
        (1-10^{-3})(p+2) \cdot h^{\ast}_{p} \cdot n^{1+p}
        \le d_{\mathcal{H}}(v)
        \le D_{i}L_{\mathcal{S}, 2, p}(x_1, \ldots, x_{m}) \cdot n^{1+p}. 
    \end{align*}
    Then it follows from Lemma~\ref{LEMMA:tp-Lagrangian-STS} that $m = 3$, a contradiction. 
\end{proof}

\subsection{Vertex-extendability}
In this subsection, we show that $F_5$ is $(2,p)$-vertex-extendable with respect to $\mathfrak{S}$, which, as discussed in the previous subsection, would complete the proof of Theorem~\ref{THM:Lp-F5-p-large}. 


\begin{proposition}\label{PROP:vertex-extendability-F5}
    The $3$-graph $F_5$ is $(2,p)$-vertex-extendable with respect to $\mathfrak{S}$ for every real number $p \ge 1$. 
\end{proposition}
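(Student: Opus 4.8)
The plan is to verify the $(2,p)$-vertex-extendability of $F_5$ with respect to $\mathfrak{S}$ directly from the definition. So suppose $\mathcal H$ is an $n$-vertex $F_5$-free $3$-graph with $\delta_{2,p}(\mathcal H)\ge(1-\varepsilon)\cdot\mathrm{exdeg}_{2,p}(n,F_5)$, and suppose $\mathcal H-u\in\mathfrak S$ for some vertex $u$; the goal is to show $\mathcal H\in\mathfrak S$. The first step is to pin down what the near-extremal degree condition actually gives: by Theorem~\ref{THM:Lp-F5-p-large}(ii) (or rather the part of it that is already available through Theorem~\ref{THM:Lp-general-a}, i.e. the evaluation $\mathrm{ex}_{2,p}(n,\mathcal T_3)=\max\{\|\mathcal G\|_{2,p}\colon \mathcal G\in\mathfrak S,\ v(\mathcal G)=n\}$, combined with Fact~\ref{FACT:t-p-Lagrangian} and Lemma~\ref{LEMMA:tp-Lagrangian-STS}), we have $\mathrm{exdeg}_{2,p}(n,F_5)=(1+o(1))(p+2)h_p^\ast n^{1+p}$. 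So for small enough $\varepsilon$, $\delta_{2,p}(\mathcal H)\ge(1-10^{-3})(p+2)h_p^\ast n^{1+p}$, which is exactly the hypothesis of Proposition~\ref{PORP:min-deg-STS-color} once we know $\mathcal H\in\mathfrak S$.

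The heart of the argument is thus to first show $\mathcal H\in\mathfrak S$, and then upgrade to $3$-partiteness via Proposition~\ref{PORP:min-deg-STS-color}. For the first part, I would proceed as in the standard Bollob\'as/Frankl--F\"uredi link argument: since $\mathcal H-u\in\mathfrak S$, fix an STS $\mathcal S$ on $[m]$ and a homomorphism $\phi\colon V(\mathcal H-u)\to[m]$, with parts $V_1,\dots,V_m$. The minimum-degree condition forces every part $V_i$ to be large (of order $\Theta(n)$, by Lemma~\ref{LEMMA:tp-Lagrangian-STS} applied to the limiting weight vector, since a vanishing part would drop some partial derivative below the threshold), so in particular $m$ is bounded and — crucially — $m=3$ already for $\mathcal H-u$. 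Now one studies the link $L_{\mathcal H}(u)$, a graph on $V_1\cup V_2\cup V_3$. $F_5$-freeness of $\mathcal H$ says: $L_{\mathcal H}(u)$ contains no edge $\{x,y\}$ with $x,y$ in the same part together with a third vertex $z$ in the remaining part such that $\{x,z\},\{y,z\}$... more directly, $F_5$-freeness forces $L_{\mathcal H}(u)$ to be a \emph{triangle-free} graph whose ``non-crossing'' edges are very restricted; in fact one shows the non-crossing pairs in $L_{\mathcal H}(u)$ are uncovered in $\mathcal H$, and then a symmetrization/cleaning step removes them at negligible cost to the $(2,p)$-norm, after which $u$ can be placed into one of the three parts, giving $\mathcal H\in\mathfrak S$ (indeed $3$-partite). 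The near-extremality of $\delta_{2,p}$ is what guarantees the edges one removes are few enough not to violate freeness or to destroy the structure.

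**The main obstacle** I anticipate is the quantitative $F_5$-freeness analysis of the link graph $L_{\mathcal H}(u)$ for general $p\ge1$: one needs to argue that the ``bad'' (non-crossing or triangle-creating) edges at $u$ are so few that deleting them keeps $\delta_{2,p}$ above $(1-10^{-3})(p+2)h_p^\ast n^{1+p}$, which requires care because the $(t,p)$-degree is a nonlinear functional of the link and a single vertex's link can in principle carry a $\Theta(n^2)$ edge deficit. This is where Lemma~\ref{LEMMA:local-monotone} (local monotonicity) and Lemma~\ref{LEMMA:Lp-basic-property-local-Lipschitz} (local Lipschitzness) do the real work: monotonicity ensures that passing to a well-chosen $F_5$-free subgraph does not decrease any vertex degree, and the Lipschitz estimate controls the effect of deleting $o(n^2)$ edges through $u$. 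Once the link is shown to be (essentially) a complete tripartite graph plus $o(n^2)$ edges, absorbing $u$ into the part minimizing the degree loss, together with Proposition~\ref{PORP:min-deg-STS-color} to rule out $m\ge 7$, closes the argument. I would organize the write-up as: (1) reduce $\mathrm{exdeg}_{2,p}$ to $(p+2)h_p^\ast n^{1+p}$; (2) show $\mathcal H-u$ is in fact $3$-partite with balanced-enough parts; (3) analyze $L_{\mathcal H}(u)$ under $F_5$-freeness; (4) clean and re-embed $u$; (5) invoke Proposition~\ref{PORP:min-deg-STS-color} for the final $3$-partiteness.
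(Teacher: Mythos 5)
Your overall skeleton agrees with the paper's: compute $\mathrm{exdeg}_{2,p}(n,F_5)\approx(2+p)h_p^{\ast}n^{1+p}$, transfer the minimum-degree hypothesis from $\mathcal{H}$ to $\mathcal{H}-u$ via Lemma~\ref{LEMMA:Lp-basic-property-local-Lipschitz}, apply Proposition~\ref{PORP:min-deg-STS-color} to conclude that $\mathcal{H}-u$ is $3$-partite, and then analyse $L_{\mathcal{H}}(u)$ using $F_5$-freeness. However, your steps (3)--(4) contain a genuine gap. Vertex-extendability demands the exact conclusion $\mathcal{H}\in\mathfrak{S}$, so a ``symmetrization/cleaning step that removes the bad link edges at negligible cost to the $(2,p)$-norm'' proves nothing about $\mathcal{H}$ itself: after deleting edges you have only placed a proper subgraph of $\mathcal{H}$ into $\mathfrak{S}$. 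What is needed, and what the paper proves (Lemma~\ref{LEMMA:vertex-extendability-F5-3-partite}), is that the bad configurations do not exist at all: no edge of $L_{\mathcal{H}}(u)$ lies inside a part, and in fact all link edges cross a single pair of parts, so that $u$ can be added to the third part and $\mathcal{H}$ is exactly $3$-partite. Your auxiliary claim that ``the non-crossing pairs in $L_{\mathcal{H}}(u)$ are uncovered in $\mathcal{H}$'' is also false as stated (such a pair is covered by the edge through $u$); the correct mechanism is that if $\{u,x,y\}\in\mathcal{H}$ with $x,y$ in the same part, then $x$ and $y$ share a pair $\{w_1,w_2\}$ in their links in $\mathcal{H}-u$, and $\{uxy,\,xw_1w_2,\,yw_1w_2\}$ is a copy of $F_5$; a second embedding of the same flavour kills link edges meeting the ``wrong'' part.

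The missing quantitative ingredient that makes these exact embeddings possible is a pointwise codegree statement, not a norm statement: one must show that every vertex $v$ of $\mathcal{G}=\mathcal{H}-u$ satisfies $d_{K^3}(v)-d_{\mathcal{G}}(v)\le\varepsilon_3 n^2$, where $K^3$ is the complete $3$-partite $3$-graph on the parts, and that $d_{\mathcal{H}}(u)\ge\Omega_p(n^2)$. The paper obtains the former by combining Corollary~\ref{CORO:Lp-degree-expression-Lagrangian} with a compactness/continuity argument showing that the part ratios are close to a maximizer of $h_p$, hence $D_ih_p(x_1,x_2,x_3)\le(2+p)h_p^{\ast}+\varepsilon_2$, and then converting the resulting small $(2,p)$-degree deficit into an ordinary degree deficit via the refined inequality~\eqref{equ:LEMMA:local-monotone} together with $z^p-(z-1)^p\ge pz^{p-1}-p^2z^{p-2}$; the latter follows from~\eqref{equ:LEMMA:Lp-degree-expression}. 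Your plan never isolates this conversion (and your description of local monotonicity is reversed: passing to a subgraph can only decrease $(t,p)$-degrees), so the intersection-of-links arguments cannot be run. Finally, your step (5), re-invoking Proposition~\ref{PORP:min-deg-STS-color} on $\mathcal{H}$, is circular/unnecessary: once $u$ is placed, $\mathcal{H}$ is already $3$-partite and hence in $\mathfrak{S}$.
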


Before proving Proposition~\ref{PROP:vertex-extendability-F5}, let us present some useful results. 

\begin{proposition}\label{PROP:2-p-density-F5-p-large}
    For every real $p > 1$, $\pi_{2,p}(F_5) = 2 \cdot \lambda_{2,p}(K_3^{3}) = 2 h_{p}^{\ast}$. 
\end{proposition}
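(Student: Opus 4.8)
The plan is to prove the two inequalities $\pi_{2,p}(F_5) \ge 2h_p^{\ast}$ and $\pi_{2,p}(F_5) \le 2h_p^{\ast}$ separately. For the lower bound, I would exhibit a near-optimal construction: take a complete $3$-partite $3$-graph $\mathcal{G} = K^3(V_1,V_2,V_3)$ on $[n]$ where the part sizes $|V_i| = x_i n$ are chosen so that $(x_1,x_2,x_3) \in \Delta^2$ maximizes $h_p(x_1,x_2,x_3) = L_{K_3^3,2,p}(x_1,x_2,x_3)$. Since $\mathcal{G}$ is $K_3^3$-colorable and $K_3^3$ is $F_5$-free (indeed $F_5$ is not $3$-colorable, so no blowup of $K_3^3$ contains $F_5$ — one checks $F_5$ has chromatic number $> 3$ using that any proper $3$-coloring of the first two edges forces $\{3,4,5\}$ to be monochromatic in the remaining colour, contradiction), $\mathcal{G}$ is $F_5$-free. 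By Fact~\ref{FACT:t-p-Lagrangian}, $\norm{\mathcal{G}}_{2,p} = L_{\mathcal{G},2,p}(1/n,\dots,1/n)\cdot n^{2+p} = (h_p^{\ast} + o(1)) n^{2+p}$, and after dividing by $\binom{n}{2}(n-2)^p / (\tfrac{1}{2}$-type normalization$)$ — more precisely using the definition $\pi_{2,p}(F_5) = \lim 2\,\mathrm{ex}_{2,p}(n,F_5)/n^{2+p}$ — this yields $\pi_{2,p}(F_5) \ge 2h_p^{\ast}$.

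For the upper bound I would invoke the stability machinery already developed. Since $F_5$ is $(2,p)$-degree-stable with respect to $\mathfrak{S}$ (this is exactly Theorem~\ref{THM:Lp-F5-p-large}, whose proof via Theorem~\ref{THM:Lp-general-a} is being carried out in this section, with the vertex-extendability input being Proposition~\ref{PROP:vertex-extendability-F5}), Proposition~\ref{PROP:Lp-min-degree-extremal} does not directly apply to $\mathfrak{S}$, but the combination of Theorem~\ref{THM:Lp-general-a} with the reduction of Subsection~\ref{SUBSEC:F5-large-STS} (Proposition~\ref{PORP:min-deg-STS-color}) shows that the extremal $F_5$-free $3$-graphs on $n$ vertices with near-extremal $(2,p)$-norm are $3$-partite for large $n$. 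Hence for large $n$,
\[
\mathrm{ex}_{2,p}(n,F_5) = \max\left\{\norm{\mathcal{G}}_{2,p} \colon \mathcal{G} \text{ is } 3\text{-partite},\ v(\mathcal{G}) = n\right\} = \lambda_{2,p}(K_3^3)\cdot n^{2+p}(1+o(1)) = h_p^{\ast}\, n^{2+p}(1+o(1)),
\]
where the middle equality uses Fact~\ref{FACT:t-p-Lagrangian} applied to $\mathcal{G} \in \mathfrak{K}_3^3$ together with the fact that the maximum of $L_{\mathcal{G},2,p}$ over complete $3$-partite $3$-graphs on $n$ vertices converges to $\lambda_{2,p}(K_3^3)n^{2+p}$ (by rounding a maximizing point of $h_p$ on $\Delta^2$). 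Dividing by $n^{2+p}/2$ and taking the limit gives $\pi_{2,p}(F_5) \le 2h_p^{\ast}$.

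Alternatively — and this is cleaner if one wants to avoid forward-referencing the full proof of Theorem~\ref{THM:Lp-F5-p-large} — I would obtain the upper bound directly from the $p=1$ case plus Proposition~\ref{PROP:star-poly-Holder-inequ}-style interpolation, or better, from the known Frankl--F\"uredi/Bollob\'as result $\pi(F_5) = 2/9$ combined with a supersaturation/symmetrization argument: an $F_5$-free $3$-graph is, after deleting $o(n^3)$ edges, $\mathcal{S}$-colorable for some STS $\mathcal{S}$ (this is the $\le_{\mathrm{hom}}$ / removal-lemma content, cf.\ Proposition~\ref{PROP:tp-density-blowup-lemma} applied to $\mathcal{T}_3 \le_{\mathrm{hom}} \{F_5\}$), and then $\norm{\mathcal{H}}_{2,p} \le \lambda_{2,p}(\mathcal{S})\cdot n^{2+p} + o(n^{2+p})$ by Lemma~\ref{LEMMA:Lp-basic-property-global} and Fact~\ref{FACT:t-p-Lagrangian}; finally Lemma~\ref{LEMMA:tp-Lagrangian-STS} (or just Proposition~\ref{PROP:star-polynomial-sym-increase} together with the fact that a symmetrized STS of order $>3$ cannot be extremal) gives $\lambda_{2,p}(\mathcal{S}) \le \lambda_{2,p}(K_3^3) = h_p^{\ast}$. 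The main obstacle is purely bookkeeping: making sure the upper-bound argument used here does not circularly depend on the very vertex-extendability statement (Proposition~\ref{PROP:vertex-extendability-F5}) whose proof this proposition is meant to support — so I would phrase the upper bound through the edge-level stability ($\le_{\mathrm{hom}}$ plus removal lemma plus $\lambda_{2,p}(\mathcal{S}) \le h_p^{\ast}$), which is logically prior, rather than through the degree-stability conclusion of Theorem~\ref{THM:Lp-F5-p-large}.
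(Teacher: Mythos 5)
Your lower bound and the overall skeleton are fine, and you are right that the degree-stability route is off limits here: Proposition~\ref{PROP:2-p-density-F5-p-large} is used inside the proof of Proposition~\ref{PROP:vertex-extendability-F5}, so neither Theorem~\ref{THM:Lp-F5-p-large} nor a Proposition~\ref{PORP:min-deg-STS-color}-based degree-stability argument may be invoked. The problem is that the non-circular route you settle on has a genuine gap at the step ``an $F_5$-free $3$-graph is, after deleting $o(n^3)$ edges, $\mathcal{S}$-colorable for some STS $\mathcal{S}$''. Proposition~\ref{PROP:tp-density-blowup-lemma} (removal lemma plus $\mathcal{T}_3\le_{\mathrm{hom}}\{F_5\}$) only produces a $\mathcal{T}_3$-free subgraph, and $\mathcal{T}_3$-free (cancellative) $3$-graphs need not be STS-colorable. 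For instance, the $3$-graph with edges $\{1,2,3\},\{1,2,4\},\{4,5,6\},\{5,6,7\},\{3,7,8\}$ is $\mathcal{T}_3$-free (every symmetric difference of two edges has four or six vertices, or is an uncovered pair), yet it admits no homomorphism into any STS: the pair $\{\phi(1),\phi(2)\}$ lies in a unique triple of the STS, forcing $\phi(3)=\phi(4)$; likewise $\{\phi(5),\phi(6)\}$ forces $\phi(4)=\phi(7)$; and then the image of $\{3,7,8\}$ has at most two vertices. Consequently Fact~\ref{FACT:t-p-Lagrangian} cannot be applied to the pruned graph, and the claimed bound $\norm{\mathcal{H}}_{2,p}\le \lambda_{2,p}(\mathcal{S})\cdot n^{2+p}+o(n^{2+p})$ does not follow; Lemma~\ref{LEMMA:Lp-basic-property-global} only controls the cost of the deleted edges, not this step.

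The missing ingredient is symmetrization, which is exactly how the paper argues. Since $\mathcal{T}_3$ is blowup-invariant, symmetrizing an extremal $n$-vertex $\mathcal{T}_3$-free $3$-graph keeps it $\mathcal{T}_3$-free, and by Proposition~\ref{PROP:star-polynomial-sym-increase} (this is where $p\ge 1$ enters) the $(2,p)$-norm never decreases along the process; the final symmetrized graph lies in $\mathfrak{S}$ by symmetrized-stability. This is precisely the content of~\eqref{equ:THM:Lp-general-a}, which, as remarked after Theorem~\ref{THM:Lp-general-a}, is the ``trivial part'' of that theorem and does not use vertex-extendability, so there is no circularity in invoking it: $\mathrm{ex}_{2,p}(n,\mathcal{T}_3)=\max\left\{\norm{\mathcal{G}}_{2,p}\colon \mathcal{G}\in\mathfrak{S},\ v(\mathcal{G})=n\right\}$. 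From there, Fact~\ref{FACT:t-p-Lagrangian} gives the upper bound $2\max_{\mathcal{S}}\lambda_{2,p}(\mathcal{S})$, Lemma~\ref{LEMMA:tp-Lagrangian-STS} collapses this to $2\lambda_{2,p}(K_3^3)=2h_p^{\ast}$, your weighted $3$-partite blowup supplies the matching lower bound (in fact it is already contained in the displayed equality), and Proposition~\ref{PROP:tp-density-blowup-lemma} transfers the answer from $\mathcal{T}_3$ to $F_5$. Note that you do cite Proposition~\ref{PROP:star-polynomial-sym-increase}, but only for the comparison $\lambda_{2,p}(\mathcal{S})\le h_p^{\ast}$, where Lemma~\ref{LEMMA:tp-Lagrangian-STS} already suffices; the place where symmetrization is genuinely needed is the bridge from ``$\mathcal{T}_3$-free'' to ``member of $\mathfrak{S}$'' that your write-up skips.
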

\begin{proof}[Proof of Proposition~\ref{PROP:2-p-density-F5-p-large}]
    By Proposition~\ref{PROP:tp-density-blowup-lemma}, it suffices to show that $\pi_{2,p}(\mathcal{T}_3) = 2 \cdot \lambda_{2,p}(K_3^{3})$.  
    Since $\mathcal{T}_3$ is blowup-invariant and symmetrized-stable with respect to $\mathfrak{S}$, 
    it follows from~\eqref{equ:THM:Lp-general-a} that 
    \begin{align*}
        \mathrm{ex}_{2,p}(n,\mathcal{T}_3)
        = \max\left\{\norm{\mathcal{G}}_{2,p} \colon \text{$\mathcal{G} \in \mathfrak{S}$ and $v(\mathcal{G}) = n$} \right\}. 
    \end{align*}
    Combining with Fact~\ref{FACT:t-p-Lagrangian}, we obtain  
    \begin{align*}
        \pi_{2,p}(\mathcal{T}_3)
        = 2 \cdot \max\left\{\lambda_{2,p}(\mathcal{S}) \colon \text{$\mathcal{S}$ is an STS}\right\}
        = 2 \cdot \lambda_{2,p}(K_{3}^{3}),  
    \end{align*}
    where the last equality follows from Lemma~\ref{LEMMA:tp-Lagrangian-STS}. 
\end{proof}

A key ingredient in proving Proposition~\ref{PROP:vertex-extendability-F5} is the following extension of~{\cite[Lemma~4.4]{LMR23unif}}. 

\begin{lemma}\label{LEMMA:vertex-extendability-F5-3-partite}
    For every $\varepsilon>0$ there exist $\delta>0$ and $N_{0}>0$ such that the following holds for all $n \ge N_{0}$. 
    Let $V_1 \cup V_2 \cup V_{3} = [n]$ be a partition with $|V_i| \ge \varepsilon n$ for $i\in [3]$. 
    Suppose that $\mathcal{H}$ is an $F_5$-free $3$-graph on $[n]\cup \{v_{\ast}\}$ such that 
    \begin{enumerate}[label=(\roman*)]
        \item\label{LEMMA:vertex-extendability-F5-3-partite-1} $\mathcal{H}' \coloneqq \mathcal{H}-v_{\ast}$ is a subgraph of $K^{3} \coloneqq K^{3}[V_1, V_2, V_{3}]$,
        \item\label{LEMMA:vertex-extendability-F5-3-partite-2} $d_{\mathcal{H}'}(u) \ge d_{K^3}(u) - \delta n^{2}$ for all $u \in V(\mathcal{H}')$, and 
        \item\label{LEMMA:vertex-extendability-F5-3-partite-3} $d_{\mathcal{H}}(v_{\ast}) \ge \varepsilon n^{2}$. 
    \end{enumerate}
    Then $\mathcal{H}$ is $3$-partite. 
\end{lemma}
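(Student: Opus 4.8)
The plan is to analyze the link $L_{\mathcal{H}}(v_\ast)$, which is a graph on $[n]$, and show that it is essentially contained in a single bipartite piece $K[V_i,V_j]$; then the whole $3$-graph $\mathcal{H}$ is $3$-partite with parts $V_i$, $V_j$, $V_k \cup \{v_\ast\}$ (after possibly shuffling a few vertices). First I would record the structural constraint imposed by $F_5$-freeness: if $\{a,b\} \in L_{\mathcal{H}}(v_\ast)$, i.e.\ $\{v_\ast,a,b\} \in \mathcal{H}$, then there is no edge $e \in \mathcal{H}'$ with $|e \cap \{a,b\}| = 1$ together with a vertex outside — more precisely, the pair $\{v_\ast,a,b\}$ together with any edge of $\mathcal{H}'$ meeting $\{a,b\}$ in exactly one vertex and disjoint from $v_\ast$ would create an $F_5$ (with the roles $\{1,2,3\}=\{v_\ast,a,b\}$, $\{1,2,4\}$ or $\{3,4,5\}$ supplied by $\mathcal{H}'$). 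Unwinding which configurations are actually forbidden is the crux: an edge $\{v_\ast,a,b\}$ forces that $a$ and $b$ have almost no common "private" neighborhoods in $\mathcal{H}'$ extending to an $F_5$. Using the near-completeness hypothesis~\ref{LEMMA:vertex-extendability-F5-3-partite-2} (every $u$ has $\mathcal{H}'$-degree within $\delta n^2$ of its $K^3$-degree), one shows that $a,b$ cannot lie in two distinct parts $V_i, V_j$ unless we are willing to delete many edges — so in fact every edge of $L_{\mathcal{H}}(v_\ast)$, apart from at most $O(\delta n^2)$ exceptions, lies inside one fixed pair of parts.

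Concretely, the key step is: since $d_{\mathcal{H}}(v_\ast) \ge \varepsilon n^2$ by~\ref{LEMMA:vertex-extendability-F5-3-partite-3}, the graph $G := L_{\mathcal{H}}(v_\ast) \subseteq K[V_1,V_2,V_3]$ (it must be tripartite with these parts, since any edge of $G$ inside some $V_i$ would, with the near-complete $\mathcal{H}'$, immediately yield an $F_5$) has at least $\varepsilon n^2$ edges, hence has many edges in at least one pair, say $(V_1,V_2)$. Pick such an edge $\{a,b\}$ with $a \in V_1$, $b \in V_2$. Now for $\mathcal{H}$ to be $F_5$-free, $\{v_\ast,a,b\}$ cannot be extended: in particular there is no edge of $\mathcal{H}'$ of the form $\{a,x,y\}$ with $b \notin \{x,y\}$ and $\{x,y\}\cap\{b\}=\emptyset$ that, paired with an edge $\{b,z,w\}$ — wait, the cleanest route is the standard one from~\cite{LMR23unif}: the link $G = L_{\mathcal{H}}(v_\ast)$ must itself be "$F_5$-compatible" with $\mathcal{H}'$, and one shows the bipartite graph $G$ concentrated on $(V_1,V_2)$ forces $G[V_1,V_3]$ and $G[V_2,V_3]$ to be small, while $G[V_1,V_2]$ is large. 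Then every vertex of $V_3$ sends few $G$-edges, so one can move the $O(\delta n)$ misbehaving vertices around; in the end $v_\ast$ plays the role of a vertex of $V_3$, and $\mathcal{H}$ becomes tripartite with parts $V_1$, $V_2$, $V_3 \cup\{v_\ast\}$.

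The main obstacle I expect is the case analysis ruling out $G$ having substantial edges in two different pairs simultaneously — e.g.\ showing that $G$ cannot have $\Omega(n^2)$ edges in $(V_1,V_2)$ \emph{and} $\Omega(n^2)$ edges in $(V_1,V_3)$. There one must exhibit an explicit $F_5$: take $\{v_\ast,a,b\}$ with $a\in V_1,b\in V_2$ and $\{v_\ast,a',c\}$ with $a'\in V_1, c\in V_3$, then use near-completeness of $\mathcal{H}'$ to find an edge $\{b,c,w\}\in\mathcal{H}'$ (which exists since $b,c$ lie in distinct parts and have near-full degree), giving the three edges $\{v_\ast,a,b\}$, $\{v_\ast,a,?\}$... — getting the incidence pattern of $F_5$ exactly right (two edges sharing a pair, a third edge sharing one vertex with each) is the fiddly part and requires choosing the common pair among the $v_\ast$-edges carefully, or instead using one $v_\ast$-edge and two $\mathcal{H}'$-edges. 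This is precisely the kind of argument carried out in~{\cite[Lemma~4.4]{LMR23unif}} for $p=1$; since the hypotheses here are purely combinatorial (they do not mention $p$ at all), I would follow that proof essentially verbatim, with $\delta$ chosen small relative to $\varepsilon$. The quantitative bookkeeping — how $\delta$ depends on $\varepsilon$, and absorbing the $O(\delta n^2)$ exceptional edges into a genuine tripartition — is routine once the forbidden configurations are pinned down.
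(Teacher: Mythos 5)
Your overall strategy (study the link $L_{\mathcal{H}}(v_\ast)$, use $F_5$-freeness together with the near-completeness hypothesis to confine it to a single pair of parts) is the same as the paper's, and your first observation — that an edge of $L_{\mathcal{H}}(v_\ast)$ with two vertices in one $V_i$ is impossible — is exactly the paper's first claim. But there are two genuine gaps in how you finish.

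First, you only aim for an approximate conclusion: the link lies in one pair $(V_i,V_j)$ up to $O(\delta n^2)$ exceptional edges, after which you propose to ``move the misbehaving vertices around'' and absorb the exceptions into a tripartition. That cannot work here, and it is also not what is needed. Hypothesis~(ii) pins every vertex $u\in[n]$ to its part: since $d_{\mathcal{H}'}(u)\ge d_{K^3}(u)-\delta n^2$, a vertex $u_1\in V_1$ has almost all of $V_2\times V_3$ in its link, so it cannot be reassigned to $V_2$ or $V_3$ without creating an edge with two vertices in one part; hence even a single surviving link edge of $v_\ast$ meeting the ``wrong'' part destroys $3$-partiteness and cannot be shuffled away. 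The correct argument must therefore exclude \emph{every} such edge, not merely rule out two pairs each carrying $\Omega(n^2)$ link edges. The paper does this: after locating (by pigeonhole) at least $\varepsilon n^2/3$ link edges of $v_\ast$ crossing $V_2,V_3$, it takes any hypothetical edge $\{u_1,u_2\}\in L_{\mathcal{H}}(v_\ast)$ with $u_1\in V_1$, notes that $|L_{\mathcal{H}'}(u_1)\cap L_{\mathcal{H}}(v_\ast)|\ge \varepsilon n^2/3-\delta n^2>2n$, and picks $\{w_1,w_2\}$ in this intersection disjoint from $\{u_1,u_2\}$; then $\{v_\ast u_1u_2,\ v_\ast w_1w_2,\ u_1w_1w_2\}$ is a copy of $F_5$ (the pair $\{w_1,w_2\}$ is the shared pair), a contradiction. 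So the link is exactly bipartite between $V_2$ and $V_3$ and $\mathcal{H}$ is $3$-partite with $v_\ast$ joined to $V_1$ — no cleanup step is needed or possible.

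Second, your attempted explicit $F_5$ in the ``main obstacle'' paragraph does not have the right incidence pattern: two $v_\ast$-edges sharing only $v_\ast$ plus an $\mathcal{H}'$-edge meeting each in one vertex is not an $F_5$, since $F_5$ requires two edges sharing a \emph{pair} of vertices. You flag this yourself and defer to the argument of the cited reference, but the key device — choosing the second $v_\ast$-edge $\{w_1,w_2\}$ inside $L_{\mathcal{H}'}(u_1)$ so that the pair $\{w_1,w_2\}$ is shared by $\{v_\ast,w_1,w_2\}$ and $\{u_1,w_1,w_2\}$ — is precisely the missing idea, and without it the proof is not complete as written.
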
%
\begin{proof}[Proof of Lemma~\ref{LEMMA:vertex-extendability-F5-3-partite}]
    Fix $\varepsilon>0$. 
    Let $\delta>0$ be sufficiently small and $n$ sufficiently large. 
    Let $\mathcal{H}$ and $v_{\ast}$ be as assumed in Lemma~\ref{LEMMA:vertex-extendability-F5-3-partite}. 

    \begin{claim}\label{CLAIM:vertex-extendability-F5-a}
        For every $e\in L_{\mathcal{H}}(v_{\ast})$ and for every $i\in [3]$, we have $|e\cap V_i| \le 1$. 
    \end{claim}
    \begin{proof}[Proof of Claim~\ref{CLAIM:vertex-extendability-F5-a}]
        Suppose to the contrary that this is not true. 
        By symmetry, we may assume that there exist vertices $u_1, u_2\in V_1$ such that $\{v_{\ast},u_1, u_2\} \in \mathcal{H}$. 
        It follows from the Inclusion-Exclusion Principle, Assumptions~\ref{LEMMA:vertex-extendability-F5-3-partite-1}, and~\ref{LEMMA:vertex-extendability-F5-3-partite-2} that 
        \begin{align*}
            |L_{\mathcal{H}'}(u_1) \cap L_{\mathcal{H}'}(u_2)|
            \ge |L_{K^{3}}(u_1)| - 2\delta n^2
            \ge \varepsilon^2 n^2 - 2 \delta n^2 
            >0. 
        \end{align*}
        Choose an arbitrary $\{w_1, w_2\} \in L_{\mathcal{H}'}(u_1) \cap L_{\mathcal{H}'}(u_2)$. 
        Notice that $\{v_{\ast}u_1u_2, u_1w_1w_2, u_2w_1w_2\}$ is a copy of $F_5$ in $\mathcal{H}$, a contradiction. 
    \end{proof}
    It follows from Claim~\ref{CLAIM:vertex-extendability-F5-a} that $L_{\mathcal{H}}(v_{\ast})$ is a $3$-partite graph with parts $V_1, V_2, V_3$. 
    By the Pigeonhole Principle, we may assume that at least $d_{\mathcal{H}}(v_{\ast})/3 \ge \varepsilon n^2/3$ edges of $L_{\mathcal{H}}(v_{\ast})$ are crossing $V_2$ and $V_3$. 
    Suppose to the contrary that there exists $\{u_1, u_2\} \in L_{\mathcal{H}}(v_{\ast})$ with $u_1 \in V_1$. 
    Then similar to the proof of Claim~\ref{CLAIM:vertex-extendability-F5-a}, we have 
    \begin{align*}
            |L_{\mathcal{H}'}(u_1) \cap L_{\mathcal{H}}(v_{\ast})|
            \ge \varepsilon n^2/3 - \delta n^2
            > 2 n. 
    \end{align*}
    Therefore, there exists $\{w_1, w_2\} \in L_{\mathcal{H}'}(u_1) \cap L_{\mathcal{H}}(v_{\ast})$ that is disjoint from $\{u_1, u_2\}$. 
    However, $\{v_{\ast}u_1u_2, v_{\ast}w_1w_2, u_1w_1w_2\}$ is a copy of $F_5$ in $\mathcal{H}$, a contradiction.
    Therefore, $L_{\mathcal{H}}(v_{\ast})$ is a bipartite graph with parts $V_2$ and $V_3$, meaning that $\mathcal{H}$ is $3$-partite. 
\end{proof}

Now we are ready to prove Proposition~\ref{PROP:vertex-extendability-F5}. 

\begin{proof}[Proof of Proposition~\ref{PROP:vertex-extendability-F5}]
    Fix $0< \varepsilon \ll \varepsilon_1 \ll \varepsilon_2 \ll \varepsilon_3 \ll (1/3)^{p}$ to be sufficiently small and let $n$ be sufficiently large. 
    Let $\mathcal{H}$ be an $(n+1)$-vertex $F_5$-free $3$-graph with 
    \begin{align*}
        \delta_{2,p}(\mathcal{H}) 
        \ge \left(1-\frac{\varepsilon}{2}\right)\frac{(2+p) \cdot \pi_{2,p}(F_5) \cdot n^{1+p}}{2} 
        =  \left(1-\frac{\varepsilon}{2}\right) (2+p) h_{p}^{\ast} \cdot n^{1+p}. 
    \end{align*}
    Suppose that $v_{\ast} \in V(\mathcal{H})$ is a vertex satisfying $\mathcal{G} \coloneqq \mathcal{H}-v_{\ast} \in \mathfrak{S}$. 
    By Lemma~\ref{LEMMA:Lp-basic-property-local-Lipschitz} (with $B = \{v_{\ast}\}$),  
    \begin{align*}
        \delta_{2,p}(\mathcal{G})
        \ge \delta_{2,p}(\mathcal{H}) - o(n^{1+p})
        \ge \left(1-\varepsilon\right) (2+p) h_{p}^{\ast} \cdot n^{1+p}. 
    \end{align*}
    By Proposition~\ref{PORP:min-deg-STS-color}, this implies that $\mathcal{G}$ is $3$-partite. 
    In addition, by Lemma~\ref{LEMMA:Lp-uniform}, 
    \begin{align}\label{equ:proof:PROP:vertex-extendability-F5-1}
        \norm{\mathcal{G}}_{2,p}
        \ge \frac{1}{2+p} \sum_{v\in V} d_{2,p,\mathcal{G}}(v) - o(n^{2+p})
        \ge (1-2\varepsilon) h_{p}^{\ast} \cdot n^{2+p}. 
    \end{align}
    Let $V\coloneqq V(\mathcal{H}) \setminus \{v_{\ast}\}$ and $V_1 \cup V_2 \cup V_3 = V$ be a partition such that $\mathcal{G} \subset K^{3} \coloneqq K^{3}[V_1, V_2, V_3]$. 
    Let $x_i \coloneqq |V_i|/n$ for $i\in \{1,2,3\}$.
    By Fact~\ref{FACT:t-p-Lagrangian} and~\eqref{equ:proof:PROP:vertex-extendability-F5-1}, 
    \begin{align}\label{equ:proof:PROP:vertex-extendability-F5-1.5}
        h_{p}(x_1,x_2,x_3)
        = \frac{\norm{K^3}_{2,p}}{n^{2+p}}
        \ge \frac{\norm{\mathcal{G}}_{2,p}}{n^{2+p}}
        \ge (1-2\varepsilon) h_{p}^{\ast}. 
    \end{align}
    Let $x_{\ast} \coloneqq \min\{x_1,x_2,x_3\}$ and 
    \begin{align*}
        M 
        \coloneqq \max_{1\le i \le j \le 3}\left\{|D_{i,j} h_{p}(z_1,z_2,z_3)| \colon (z_1,z_2,z_3) \in \Delta^2,\ \min\{z_1,z_2,z_3\} \ge \frac{1}{4}\left(\frac{1}{3}\right)^{p+2}\right\}.
    \end{align*}
    Note that $M<\infty$ is a constant depending only on $p$. 
    First, it follows from $3 x_{\ast} 
        \ge h_{p}(x_{\ast},1,1)
        \ge h_{p}(x_1,x_2,x_3)
        \ge (1-2\varepsilon) h_{p}^{\ast}
        \ge \frac{1}{2} \cdot 3 \left(\frac{1}{3}\right)^{p+2}$
    that  
    \begin{align}\label{equ:proof:PROP:vertex-extendability-F5-2}
        x_{\ast} 
        \ge  \frac{1}{2}\left(\frac{1}{3}\right)^{p+2}
    \end{align}
    Second, by~\eqref{equ:proof:PROP:vertex-extendability-F5-1.5} and compactness (see e.g. the proof of~{\cite[Lemma~6.2]{CL24}}), there exists $(y_1,y_2,y_3) \in \Delta^2$ with $h_{p}(y_1,y_2,y_3) = h_{p}^{\ast}$ such that $\max_{i\in \{1,2,3\}} |y_i- x_i| \le \varepsilon_1$. 
    Hence, we have $\min\{y_1,y_2,y_3\} \ge x_{\ast} - \varepsilon_1 \ge \frac{1}{4}\left(\frac{1}{3}\right)^{p+2}$, which, together with Proposition~\ref{PROP:tp-Lagrangian-2-covered} and Taylor's remainder theorem, implies that for $i\in \{1,2,3\}$, 
    \begin{align*}
        D_i h_p(x_1,x_2,x_3)
        \le D_i h_p(y_1,y_2,y_3)  + M \cdot \max_{ i\in \{1,2,3\}} |y_i-x_i|
        \le (2+p) h_{p}^{\ast} + \varepsilon_2. 
    \end{align*}
    Combining with Corollary~\ref{CORO:Lp-degree-expression-Lagrangian}, we know that for every $i \in \{1,2,3\}$ and for every $v\in V_i$, 
    \begin{align*}
        d_{K^{3},2,p}(v) - d_{\mathcal{G},2,p}(v)
        & \le D_i h_p(x_1, x_2, x_3) \cdot n^{1+p} 
            - \left(1-\varepsilon\right) (2+p) h_{p}^{\ast} \cdot n^{1+p} \notag \\
        & \le 2 \varepsilon_{2} (2+p) h_{p}^{\ast} \cdot n^{1+p}. 
    \end{align*}
    By~\eqref{equ:LEMMA:local-monotone} and the fact that $z^p - (z-1)^p \ge p z^{p-1} - p^2 z^{p-2}$ for $z \ge 1$ and $p >0$, we obtain 
    \begin{align*}
        2 \varepsilon_{2} (2+p) h_{p}^{\ast} \cdot n^{1+p}
        \ge d_{K^{3},2,p}(v) - d_{\mathcal{G},2,p}(v)
        & \ge \sum_{T\in L_{K^3}(v)\setminus L_{\mathcal{G}}(v)} \left(d_{K^{3}}^{p}(T) - \left(d_{K^{3}}(T) - 1\right)^{p}\right) \\
        & \ge \sum_{T\in L_{K^3}(v)\setminus L_{\mathcal{G}}(v)} \left((x_{\ast} n)^{p} - \left(x_{\ast} n - 1\right)^{p}\right) \\
        & \ge |L_{K^3}(v)\setminus L_{\mathcal{G}}(v)| \left(p (x_{\ast} n)^{p-1} - p^2 (x_{\ast} n)^{p-2}\right) \\
        & \ge \left(d_{K^3}(v) - d_{\mathcal{G}}(v)\right) \frac{p (x_{\ast} n)^{p-1}}{2}, 
    \end{align*}
    which implies that 
    \begin{align}\label{equ:proof:PROP:vertex-extendability-F5-3}
        d_{K^3}(v) - d_{\mathcal{G}}(v)
        \le \frac{2\cdot 2 \varepsilon_{2} (2+p) h_{p}^{\ast} \cdot n^{1+p}}{p x_{\ast}^{p-1} n^{p-1}}
        \le \varepsilon_3 n^2. 
    \end{align}
    On the other hand, since $d_{\mathcal{H},2,p}(v_{\ast}) \ge \delta_{2,p}(\mathcal{H}) \ge \left(1-\frac{\varepsilon}{2}\right) (2+p) h_{p}^{\ast} \cdot n^{1+p}$, it follows from~\eqref{equ:LEMMA:Lp-degree-expression} that 
    \begin{align}\label{equ:proof:PROP:vertex-extendability-F5-4}
        d_{\mathcal{H}}(v_{\ast}) 
        \ge \frac{d_{\mathcal{H},2,p}(v_{\ast})}{p \binom{3}{2} n^{p-1}}
        \ge \frac{\left(1-\frac{\varepsilon}{2}\right) (2+p) h_{p}^{\ast}}{3p} n^2
        \ge \frac{1}{2} \frac{p+2}{3p} \cdot 3\left(\frac{1}{3}\right)^{p+2} n^2
        \ge \left(\frac{1}{3}\right)^{p+3} n^2. 
    \end{align}
    Now,~\eqref{equ:proof:PROP:vertex-extendability-F5-2},~\eqref{equ:proof:PROP:vertex-extendability-F5-3},~\eqref{equ:proof:PROP:vertex-extendability-F5-4} and Lemma~\ref{LEMMA:vertex-extendability-F5-3-partite} imply that $\mathcal{H}$ is $3$-partite. 
    In particular, $\mathcal{H} \in \mathfrak{S}$, proving Proposition~\ref{PROP:vertex-extendability-F5}. 
\end{proof}
\subsection{Proofs for Lemmas~\ref{LEMMA:T3-induction-number-of-parts} and~\ref{LEMMA:inducibility-star}}\label{SUBSEC:proof-F5-inequalities}
We prove Lemmas~\ref{LEMMA:T3-induction-number-of-parts} and~\ref{LEMMA:inducibility-star} in this subsection. 
First, let us present the proof of Lemma~\ref{LEMMA:inducibility-star}. 

\begin{proof}[Proof of Lemma~\ref{LEMMA:inducibility-star}]
    Suppose to the contrary that this is not true. 
    Let $n$ be the minimum integer such that there exists $(x_1, \ldots, x_{n}) \in \Delta^{n-1}$ with $\sum_{1\le i < j \le n} \left(x_i x_j^p + x_i^p x_j \right) > g_{p}^{\ast}$. 
    By symmetry, we may assume that $x_1 \ge x_2 \ge \cdots \ge x_n$. 
    It follows from the assumption that $n \ge 3$ and $x_n > 0$. 
    Let $y_i \coloneqq x_i$ for $i\in [n-2]$ and $y_{n-1} \coloneqq x_{n-1}+x_{n}$. 
    Then 
    \begin{align*}
        & \sum_{1\le i < j \le n-1} \left(y_i y_j^p + y_i^p y_j \right)
        - \sum_{1\le i < j \le n} \left(x_i x_j^p + x_i^p x_j \right) \\
        & = y_{n-1}\left(\sum_{i\in [n-2]}y_i^p\right) + y_{n-1}^{p}\left(\sum_{i\in [n-2]}y_i\right) \\
        & \quad - (x_{n-1}+x_{n})\left(\sum_{i\in [n-2]}x_i^p\right) - \left(x_{n-1}^{p}+x_{n}^{p}\right)\left(\sum_{i\in [n-2]}x_i\right) - \left(x_{n-1}x_{n}^{p}+ x_{n-1}^{p}x_{n}\right) \\
        & = \left(\left(x_{n-1}+x_{n}\right)^{p} - \left(x_{n-1}^{p}+x_{n}^{p}\right)\right)\left(\sum_{i\in [n-2]}x_i\right) - \left(x_{n-1}x_{n}^{p}+ x_{n-1}^{p}x_{n}\right) \\
        & \ge \left(\left(x_{n-1}+x_{n}\right)^{p} - \left(x_{n-1}^{p}+x_{n}^{p}\right)\right)\left(\sum_{i\in [n-2]}x_i\right) 
            - \left(x_{n}^{p}+ x_{n-1}^{p-1}x_{n}\right)\left(\sum_{i\in [n-2]}x_i\right) \\
        & = \left(\left(1+ \frac{x_n}{x_{n-1}}\right)^{p} - 2\left(\frac{x_n}{x_{n-1}}\right)^{p} - 1 - \frac{x_n}{x_{n-1}} \right) x_{n-1}^{p}\left(\sum_{i\in [n-2]}x_i\right) \\
        & \ge \left(\left(1+ \frac{x_n}{x_{n-1}}\right)^{2} - 2\left(\frac{x_n}{x_{n-1}}\right)^{2} - 1 - \frac{x_n}{x_{n-1}} \right) x_{n-1}^{p}\left(\sum_{i\in [n-2]}x_i\right) \\
        & = \left(\left(1- \frac{x_n}{x_{n-1}}\right) \frac{x_n}{x_{n-1}} \right) x_{n-1}^{p}\left(\sum_{i\in [n-2]}x_i\right)
        > 0, 
    \end{align*}
    contradicting the minimality of $n$. 
    Here we used the fact that $\sum_{i\in [n-2]}x_i \ge x_{n-1} \ge x_{n}$ and $0 \le \frac{x_n}{x_{n-1}} \le 1$. 
\end{proof}

Next, we present some estimations (Lemma~\ref{LEMMA:T3-inequalities}) for the values of $g_{p}^{\ast}$ and $h_{p}^{\ast}$, as determining the exact values of both appears quite difficult. 

The following lemma concerning $g_{p}$ follows easily from~{\cite[Lemma~5.1]{LMR23induced}}.  For completeness, we include its short proof here. 

\begin{lemma}\label{LEMMA:max-star-graph}
    For every real number $p > 3$, the function $x^{p}(1-x)+ x(1-x)^{p}$ is maximized on $[0,1/2]$ at a unique point $x_{\ast}$ with $x_{\ast} \in \left(\frac{1}{p+1}, \frac{1}{p-1}\right)$. 
\end{lemma}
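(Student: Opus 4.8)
The plan is to determine the sign of $f'(x)$ on $(0,1/2)$, where $f(x):=x^{p}(1-x)+x(1-x)^{p}$, by means of the substitution $t=x/(1-x)$, which is an increasing bijection from $(0,1/2)$ onto $(0,1)$. A direct differentiation gives
\[
f'(x)=p\,x^{p-1}(1-x)-x^{p}+(1-x)^{p}-p\,x(1-x)^{p-1}=(1-x)^{p}\,\phi(t),
\]
where $\phi(t):=1-pt+pt^{p-1}-t^{p}$ and $t:=x/(1-x)$. Since $(1-x)^{p}>0$, everything reduces to locating the zeros of $\phi$ on $(0,1]$: I would show that, besides the endpoint zero $t=1$ (which corresponds to the symmetric critical point $x=1/2$), the function $\phi$ has exactly one zero $t_{\ast}$ in $(0,1)$, with $\phi>0$ on $(0,t_{\ast})$ and $\phi<0$ on $(t_{\ast},1)$, and moreover $t_{\ast}\in(1/p,\,1/(p-2))$ — here $1/(p-2)<1$ precisely because $p>3$. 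Then $f$ is strictly increasing on $[0,x_{\ast}]$ and strictly decreasing on $[x_{\ast},1/2]$, where $x_{\ast}=t_{\ast}/(1+t_{\ast})$, so $x_{\ast}$ is the unique maximizer; and applying the increasing map $t\mapsto t/(1+t)$ to $t_{\ast}\in(1/p,1/(p-2))$ yields exactly $x_{\ast}\in(\frac{1}{p+1},\frac{1}{p-1})$.

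The substantive point is the global shape of $\phi$ on $(0,1)$, which I would extract from its derivative. One computes $\phi'(t)=p(-1+(p-1)t^{p-2}-t^{p-1})=:p\,\psi(t)$ and then $\psi'(t)=(p-1)\,t^{p-3}\,(p-2-t)$, which is strictly positive on $(0,1)$ whenever $p>3$. Hence $\psi$ increases strictly from $\psi(0^{+})=-1$ to $\psi(1)=p-3>0$, so $\phi'$ has a unique zero $t_{1}\in(0,1)$, and $\phi$ is strictly decreasing on $(0,t_{1})$ and strictly increasing on $(t_{1},1)$. Since $\phi(0)=1>0$ and $\phi(1)=0$, the strict increase on $(t_{1},1]$ forces $\phi(t_{1})<0$; thus on $(0,t_{1})$ the function $\phi$ decreases from $1$ to a negative value, producing a single zero $t_{\ast}\in(0,t_{1})$ with $\phi>0$ on $(0,t_{\ast})$, and $\phi<0$ on all of $(t_{\ast},1)$, exactly as required.

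For the bounds on $t_{\ast}$, since $\phi$ is positive precisely on $[0,t_{\ast})$ and negative on $(t_{\ast},1)$, it suffices to check $\phi(1/p)>0$ and $\phi(1/(p-2))<0$. The first is immediate: $\phi(1/p)=p^{-p}(p^{2}-1)>0$. For the second, writing $q:=p-2>1$ one finds $\phi(1/q)=q^{-(q+2)}(q^{2}+2q-1-2q^{q+1})$, so it reduces to $2q^{q+1}>q^{2}+2q-1$; this follows from $q^{q+1}\ge q^{2}$ (valid since $q\ge 1$) together with $2q^{2}-(q^{2}+2q-1)=(q-1)^{2}>0$ (using $q>1$, i.e.\ $p>3$). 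I expect the only real obstacle to be the unimodality argument for $\phi$ — that is, getting the sign of $\psi'$ right so as to conclude the decreasing-then-increasing profile; the change of variables and the two endpoint evaluations are routine.
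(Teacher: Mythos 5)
Your argument is correct, and it is genuinely different from the paper's. The paper substitutes $y=2x(1-x)$, rewrites $x^p(1-x)+x(1-x)^p$ as $s_p(y)=\frac{y}{2^p}\bigl((1-\sqrt{1-2y})^{p-1}+(1+\sqrt{1-2y})^{p-1}\bigr)$, and then imports the unimodality statement wholesale from Liu--Mubayi--Reiher (their Lemma~5.1 on star inducibility), which gives a unique maximizer $y_{\ast}\in\bigl(\tfrac{2p}{(p+1)^2},\tfrac{2}{p+1}\bigr)$; translating back yields $x_{\ast}\in\bigl(\tfrac{1}{p+1},\tfrac12-\tfrac12\sqrt{1-\tfrac{4}{p+1}}\bigr)\subset\bigl(\tfrac{1}{p+1},\tfrac{1}{p-1}\bigr)$. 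You instead work directly with $f'(x)=(1-x)^p\phi(t)$, $t=x/(1-x)$, $\phi(t)=1-pt+pt^{p-1}-t^p$, and establish the decreasing-then-increasing shape of $\phi$ from $\psi'(t)=(p-1)t^{p-3}(p-2-t)>0$ on $(0,1)$ (using $p>3$), which pins down a single sign change of $f'$ on $(0,1/2)$; the endpoint evaluations $\phi(1/p)=p^{-p}(p^2-1)>0$ and $\phi(1/(p-2))=q^{-(q+2)}(q^2+2q-1-2q^{q+1})<0$ (with $q=p-2>1$, via $2q^{q+1}\ge 2q^2>q^2+2q-1$) localize the zero, and the monotone map $t\mapsto t/(1+t)$ converts $(1/p,1/(p-2))$ into exactly $(\tfrac{1}{p+1},\tfrac{1}{p-1})$. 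All the computations check out (including the factorization of $f'$ and the sign bookkeeping at $t=1$, i.e.\ $x=1/2$). What your route buys is a short, self-contained elementary proof that also produces the interval for $x_{\ast}$ directly rather than as a subset of a translated interval; what the paper's route buys is brevity on the page and reuse of an already-proved external lemma whose proof covers the harder unimodality analysis in the $y$-variable.
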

\begin{proof}[Proof of Lemma~\ref{LEMMA:max-star-graph}]
    Let $y \coloneqq 2x(1-x) \in [0,1/2]$. 
    Then 
    \begin{align*}
        x^{p}(1-x)+ x(1-x)^{p}
        = \frac{y}{2^{p}} \left(\left(1-\sqrt{1-2y}\right)^{p-1}+ \left(1+\sqrt{1-2y}\right)^{p-1}\right) =: s_{p}(y). 
    \end{align*}
    It follows from~{\cite[Lemma~5.1]{LMR23induced}}\footnote{Even though the lemma is stated for $p \ge 4$, its proof works for all $p > 3$.} that there exists a unique $y_{\ast} \in \left(\frac{2p}{(p+1)^2}, \frac{2}{p+1}\right)$ such that $s_{p}(y)$ attains its maximum at $y_{\ast}$ on $[0,1/2]$.
    Therefore, $x^{p}(1-x)+ x(1-x)^{p}$ attains its maximum on $[0,1/2]$ at a unique point $x_{\ast} \in \left(\frac{1}{p+1}, \frac{1}{2}- \frac{1}{2}\sqrt{1-\frac{4}{p+1}}\right) \subset \left(\frac{1}{p+1}, \frac{1}{p-1}\right)$. 
\end{proof}

Using Lemmas~\ref{LEMMA:inducibility-star} and~\ref{LEMMA:max-star-graph}, we obtain the following inequalities for $g_{p}^{\ast}$. 

\begin{lemma}\label{LEMMA:g-p-ast-small}
    The following statements hold. 
    \begin{enumerate}[label=(\roman*)]
        \item\label{LEMMA:g-p-ast-small-1} $g_{p}^{\ast} = \frac{1}{2^{p}}$ if $2 \le p \le 3$, 
        \item\label{LEMMA:g-p-ast-small-2} $g_{p}^{\ast} \le \frac{1}{2^3} \left(\frac{p}{p+1}\right)^{p-3}$ if $p > 3$. 
    \end{enumerate}
\end{lemma}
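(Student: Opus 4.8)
\textbf{Proof proposal for Lemma~\ref{LEMMA:g-p-ast-small}.}

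The plan is to reduce the bivariate optimization of $g_p(x_1,x_2)=x_1^px_2+x_1x_2^p$ over $\Delta^1$ to the univariate function $\phi_p(x)\coloneqq x^p(1-x)+x(1-x)^p$ on $[0,1/2]$ (using the substitution $x_2=1-x_1$ and the symmetry $x_1\leftrightarrow x_2$, so that $g_p^\ast=\max_{x\in[0,1/2]}\phi_p(x)$), and then locate the maximizer. For part~\ref{LEMMA:g-p-ast-small-1}, when $2\le p\le 3$ I would invoke Lemma~\ref{LEMMA:inducibility-star}: it asserts that for $p\ge 2$ and every $(x_1,\dots,x_n)\in\Delta^{n-1}$ we have $\sum_{i<j}(x_ix_j^p+x_i^px_j)\le g_p^\ast$; applying this with $n=2$ is vacuous, but the point is that Lemma~\ref{LEMMA:inducibility-star} was proved by a smoothing argument showing the optimal configuration has all mass on two vertices, hence $g_p^\ast$ is attained at $(1/2,1/2)$ precisely when $2\le p\le 3$ — this is exactly where the local second-order condition $\phi_p''(1/2)\le 0$ holds. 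Concretely, $\phi_p(1/2)=2\cdot(1/2)^{p+1}=1/2^p$, and one checks that for $2\le p\le 3$ the point $x=1/2$ is a global max of $\phi_p$ on $[0,1/2]$ (for instance via $\phi_p'(x)=px^{p-1}(1-x)-x^p+(1-x)^p-px(1-x)^{p-1}$, which vanishes at $x=1/2$, and a sign analysis of $\phi_p'$ on $(0,1/2)$). This gives $g_p^\ast=1/2^p$.

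For part~\ref{LEMMA:g-p-ast-small-2}, when $p>3$ I would apply Lemma~\ref{LEMMA:max-star-graph}, which states that $\phi_p$ attains its maximum on $[0,1/2]$ at a unique point $x_\ast\in\left(\tfrac1{p+1},\tfrac1{p-1}\right)$. Thus $g_p^\ast=\phi_p(x_\ast)=x_\ast^p(1-x_\ast)+x_\ast(1-x_\ast)^p$. To bound this, I would use $x_\ast<\tfrac1{p-1}\le\tfrac12$ (valid for $p\ge3$) together with monotonicity estimates: the term $x_\ast(1-x_\ast)^p$ dominates (since $x_\ast$ is small), and I would estimate $x_\ast(1-x_\ast)^p\le x_\ast$ while $x_\ast^p(1-x_\ast)$ is comparatively negligible — but to get the clean bound $\tfrac1{2^3}\left(\tfrac{p}{p+1}\right)^{p-3}$ I expect one needs to be more careful. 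A cleaner route: write $\phi_p(x_\ast)\le \phi_p(x)$ evaluated cleverly, or bound $\phi_p(x_\ast)$ by comparing with $\phi_3$. Since for fixed $x\in(0,1/2)$ the map $p\mapsto \phi_p(x)/\big(x^3(1-x)+x(1-x)^3\big)$ behaves like $(x/(1-x))^{p-3}$-type factors, and using $x_\ast<\tfrac1{p-1}$ so that $1-x_\ast>\tfrac{p-2}{p-1}$, one gets factors of the form $\left(\tfrac{x_\ast}{1-x_\ast}\right)^{p-3}$ and $\left(\tfrac{1-x_\ast}{\text{something}}\right)^{p-3}$; the bound $x_\ast<1/(p+1)$ would then be substituted to produce $\left(\tfrac{1}{p+1}\cdot\tfrac{p+1}{p}\right)^{p-3}=\left(\tfrac1p\right)^{p-3}$-type quantities, and combined with $\phi_3(x)\le\phi_3(1/2)=1/2^3$ one recovers the stated form.

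The main obstacle I anticipate is the explicit estimate in part~\ref{LEMMA:g-p-ast-small-2}: Lemma~\ref{LEMMA:max-star-graph} only pins down $x_\ast$ to an interval of length $O(1/p^2)$, and turning $\phi_p(x_\ast)$ into the closed-form upper bound $\tfrac1{2^3}\left(\tfrac{p}{p+1}\right)^{p-3}$ requires showing that $\phi_p$ is increasing in $p$ in a controlled way over the relevant range of $x$, or equivalently bounding $\phi_p(x_\ast)$ by interpolating between $\phi_3$ and the asymptotics. I would handle this by writing $\phi_p(x_\ast) = x_\ast(1-x_\ast)\left(x_\ast^{p-1}+(1-x_\ast)^{p-1}\right)\le x_\ast(1-x_\ast)\left((1-x_\ast)^{p-1}+o(\cdot)\right)$ and then using $x_\ast(1-x_\ast)\le 1/4$ together with $(1-x_\ast)^{p-1}\le \left(\tfrac{p}{p+1}\right)^{p-1}$ (from $x_\ast>1/(p+1)$), comparing the resulting expression against $\tfrac1{2^3}\left(\tfrac p{p+1}\right)^{p-3}$ and checking the inequality holds for all $p>3$ by a short monotonicity argument in $p$ (or a Mathematica-verified check, as done elsewhere in the paper). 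The $x_\ast^p(1-x_\ast)$ term is then absorbed by slack, since $x_\ast^{p-1}\le(p+1)^{-(p-1)}$ is exponentially smaller than $(1-x_\ast)^{p-1}$.
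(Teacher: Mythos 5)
There are genuine gaps in both parts. For part~\ref{LEMMA:g-p-ast-small-1}, your appeal to Lemma~\ref{LEMMA:inducibility-star} does no work: that lemma only reduces an $n$-variable configuration to a two-variable one, while $g_p^{\ast}$ is already defined as a two-variable maximum, and nothing in its smoothing argument tells you \emph{where} in $\Delta^1$ the maximum sits. Your fallback is ``one checks'' that $x=1/2$ is the global maximizer of $\phi_p(x)=x^p(1-x)+x(1-x)^p$ for all $p\in[2,3]$ via a sign analysis of $\phi_p'$ on $(0,1/2)$; the second-order condition $\phi_p''(1/2)\le 0$ you cite is only local, and the global claim (which is exactly the content of the statement, and is borderline at $p=3$ since the maximizer leaves $1/2$ immediately for $p>3$) is never established. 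The paper avoids this entirely: it computes $g_2^{\ast}=1/4$ and $g_3^{\ast}=1/8$ by elementary means and then applies Proposition~\ref{PROP:star-poly-Holder-inequ} with $(p_1,p_2)=(2,3)$ to get $g_p^{\ast}\le (g_2^{\ast})^{3-p}(g_3^{\ast})^{p-2}=2^{-p}$; this H\"{o}lder interpolation in $p$ is the key step missing from your plan.

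For part~\ref{LEMMA:g-p-ast-small-2}, the concrete chain of estimates you commit to cannot work: bounding $g_p^{\ast}=x_{\ast}(1-x_{\ast})\bigl(x_{\ast}^{p-1}+(1-x_{\ast})^{p-1}\bigr)$ by $x_{\ast}(1-x_{\ast})\le 1/4$ and $(1-x_{\ast})^{p-1}\le\bigl(\tfrac{p}{p+1}\bigr)^{p-1}$ gives a main term $\tfrac14\bigl(\tfrac{p}{p+1}\bigr)^{p-1}$, and the ratio of this to the target $\tfrac18\bigl(\tfrac{p}{p+1}\bigr)^{p-3}$ is $2p^2/(p+1)^2>1$ for every $p>3$, so your bound strictly exceeds what is claimed and there is no slack to absorb anything. (The step $x_{\ast}(1-x_{\ast})\le 1/4$ is far too lossy, since in truth $x_{\ast}\approx 1/p$.) You also misquote Lemma~\ref{LEMMA:max-star-graph} in the middle of the argument: it gives $x_{\ast}>\tfrac1{p+1}$, not $x_{\ast}<\tfrac1{p+1}$, and only the former direction yields $1-x_{\ast}\le\tfrac{p}{p+1}$. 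The correct (and short) route, which your ``compare with $\phi_3$'' remark gestures at but never executes, is the factorization
\begin{align*}
    g_p^{\ast}
    = x_{\ast}^{p-3}\cdot x_{\ast}^{3}(1-x_{\ast}) + (1-x_{\ast})^{p-3}\cdot x_{\ast}(1-x_{\ast})^{3}
    \le \bigl(x_{\ast}^{3}(1-x_{\ast}) + x_{\ast}(1-x_{\ast})^{3}\bigr)(1-x_{\ast})^{p-3}
    \le \frac{1}{2^3}\left(\frac{p}{p+1}\right)^{p-3},
\end{align*}
using $x_{\ast}\le 1/2$ (so $x_{\ast}^{p-3}\le(1-x_{\ast})^{p-3}$), $g_3(x_{\ast},1-x_{\ast})\le g_3^{\ast}=1/8$, and $1-x_{\ast}\le p/(p+1)$ from Lemma~\ref{LEMMA:max-star-graph}. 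As written, neither part of your proposal closes.
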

\begin{proof}[Proof of Lemma~\ref{LEMMA:g-p-ast-small}]
    Notice that $g_{p}^{\ast} \ge g_{p}(1/2, 1/2) = 1/2^p$, so it suffices to prove the upper bound. 
    Simple calculations show that $g_{2}^{\ast} = 1/2^2$ and $g_{3}^{\ast} = 1/2^3$. 
    So it follows from Proposition~\ref{PROP:star-poly-Holder-inequ} (with $(p_1,p_2)=(2,3)$ and $p_2=3$) that 
    \begin{align*}
        g_{p}^{\ast} \le
        \left(g_{2}^{\ast}\right)^{3-p} \left(g_{3}^{\ast}\right)^{p-2}
        = \frac{1}{2^p}. 
    \end{align*}
    Next, we prove Lemma~\ref{LEMMA:g-p-ast-small}~\ref{LEMMA:g-p-ast-small-2}. 
    Let $x_{\ast} \in [0,1/2]$ be the unique point such that $g_{p}^{\ast} = g_{p}(x_{\ast}, 1-x_{\ast})$. 
    By Lemma~\ref{LEMMA:max-star-graph}, $x_{\ast}\ge \frac{1}{p+1}$, and hence, $1-x_{\ast} \le \frac{p}{p+1}$. 
    Therefore, 
    \begin{align*}
        g_{p}^{\ast} = g_{p}(x_{\ast}, 1-x_{\ast})
        & = x_{\ast}^{p}(1-x_{\ast}) + x_{\ast} (1-x_{\ast})^{p} \\
        & = x_{\ast}^{p-3} \cdot x_{\ast}^{3}(1-x_{\ast}) + (1-x_{\ast})^{p-3} \cdot x_{\ast} (1-x_{\ast})^{3}  \\
        & \le \left(x_{\ast}^{3}(1-x_{\ast}) + x_{\ast} (1-x_{\ast})^{3}\right) (1-x_{\ast})^{p-3} 
         \le \frac{1}{2^3} \left(\frac{p}{p+1}\right)^{p-3},  
    \end{align*}
    where the last inequality follows from $x_{\ast}^{3}(1-x_{\ast}) + x_{\ast} (1-x_{\ast})^{3} = g_{3}(x_{\ast},1-x_{\ast}) \le 1/2^3$ and $1-x_{\ast} \le p/(p+1)$. 
\end{proof}

In the next lemma, we establish additional inequalities for $g_{p}^{\ast}$ and use them to bound $h_{p}^{\ast}$. 

\begin{lemma}\label{LEMMA:T3-inequalities}
    The following inequalities hold. 
    \begin{enumerate}[label=(\roman*)]
        \item\label{LEMMA:T3-inequalities-1} $h_{p}^{\ast} \ge  h_{p}\left(\frac{1}{p+2}, \frac{1}{p+2}, \frac{p}{p+2}\right) > \frac{1}{e^2 (p+2)^2}$ for $p \ge 0$, 
        \item\label{LEMMA:T3-inequalities-2} $g_{p}\left(\frac{1}{p-1}, \frac{p-2}{p-1}\right) < \frac{p-2}{(p-1)^{p+1}} + \frac{1}{e(p-1)} $ for $p > 1$, 
        \item\label{LEMMA:T3-inequalities-3} 
        $h_{p}^{\ast} < \frac{3 \cdot g_{p}^{\ast}}{2e(p+1)}$ for $p \ge 0$, 
        \item\label{LEMMA:T3-inequalities-4} $g_{p}^{\ast}  \le \frac{p^2-p+2}{(p-2)(p+1)} \cdot g_{p}\left(\frac{1}{p-1}, \frac{p-2}{p-1}\right)$ for $p \ge 5$.      
    \end{enumerate}
    Here, $e = 2.718 \cdots$ is Euler's number.
\end{lemma}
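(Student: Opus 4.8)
The plan is to treat the four parts essentially independently: \ref{LEMMA:T3-inequalities-1} and \ref{LEMMA:T3-inequalities-2} are direct evaluations followed by an elementary one-variable inequality, \ref{LEMMA:T3-inequalities-3} hinges on a symmetrization-type identity, and \ref{LEMMA:T3-inequalities-4} is a localization-plus-calculus estimate. Throughout I will use the factorizations $g_{p}(x_1,x_2)=x_1x_2\bigl(x_1^{p-1}+x_2^{p-1}\bigr)$ and $h_{p}(x_1,x_2,x_3)=x_1x_2x_3\bigl(x_1^{p-1}+x_2^{p-1}+x_3^{p-1}\bigr)$, both immediate from the definitions.

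For part \ref{LEMMA:T3-inequalities-1}: substituting the indicated point gives $h_{p}\!\bigl(\tfrac1{p+2},\tfrac1{p+2},\tfrac{p}{p+2}\bigr)=\tfrac{p^{p}+2p}{(p+2)^{p+2}}$, and since $h_{p}^{\ast}$ is a maximum this is a lower bound for $h_{p}^{\ast}$. Dropping the $2p$ term and clearing denominators, the desired bound reduces to $e^{2/p}\ge 1+\tfrac2p$, which is $e^{x}\ge 1+x$; the case $p=0$ is checked directly. For part \ref{LEMMA:T3-inequalities-2}: the exact value is $g_{p}\!\bigl(\tfrac1{p-1},\tfrac{p-2}{p-1}\bigr)=\tfrac{(p-2)+(p-2)^{p}}{(p-1)^{p+1}}$, so the claim is equivalent to $\tfrac{(p-2)^{p}}{(p-1)^{p+1}}<\tfrac1{e(p-1)}$, i.e. $\bigl(1-\tfrac1{p-1}\bigr)^{p-1}<e^{-1}$, which follows from $n\ln(1-\tfrac1n)<-1$ for $n>1$ (the real content is $p\ge 2$, the range $1<p<2$ being degenerate).

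For part \ref{LEMMA:T3-inequalities-3} the key step is the identity
\[
2\,h_{p}(x_1,x_2,x_3)=x_1\,g_{p}(x_2,x_3)+x_2\,g_{p}(x_3,x_1)+x_3\,g_{p}(x_1,x_2),
\]
verified by expanding both sides. As $g_{p}$ is homogeneous of degree $p+1$, for $(x_1,x_2,x_3)\in\Delta^{2}$ we have $g_{p}(x_j,x_k)\le(x_j+x_k)^{p+1}g_{p}^{\ast}=(1-x_i)^{p+1}g_{p}^{\ast}$, hence
\[
h_{p}(x_1,x_2,x_3)\le\frac{g_{p}^{\ast}}{2}\sum_{i}x_i(1-x_i)^{p+1}\le\frac{3g_{p}^{\ast}}{2}\,\max_{t\in[0,1]}t(1-t)^{p+1}=\frac{3g_{p}^{\ast}}{2}\cdot\frac{(p+1)^{p+1}}{(p+2)^{p+2}}.
\]
It then remains to check $\tfrac{(p+1)^{p+1}}{(p+2)^{p+2}}<\tfrac1{e(p+1)}$, i.e. $\bigl(1+\tfrac1{p+1}\bigr)^{p+2}>e$, which is the standard fact that $n\mapsto(1+\tfrac1n)^{n+1}$ decreases to $e$.

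Part \ref{LEMMA:T3-inequalities-4} is the main obstacle. The plan is to invoke Lemma~\ref{LEMMA:max-star-graph}, which gives $g_{p}^{\ast}=x_{\ast}^{p}(1-x_{\ast})+x_{\ast}(1-x_{\ast})^{p}$ for some $x_{\ast}\in\bigl(\tfrac1{p+1},\tfrac1{p-1}\bigr)$. On this interval $x\mapsto x^{p}(1-x)$ is increasing (its critical point $\tfrac{p}{p+2}$ lies to the right, using $p\ge5$) and $x\mapsto x(1-x)^{p}$ is decreasing (its critical point is the left endpoint $\tfrac1{p+1}$), so each summand is bounded by its endpoint value, yielding
\[
g_{p}^{\ast}\le\frac{p-2}{(p-1)^{p+1}}+\frac{p^{p}}{(p+1)^{p+1}}.
\]
Plugging this together with the exact value of $g_{p}\!\bigl(\tfrac1{p-1},\tfrac{p-2}{p-1}\bigr)$ and $\tfrac{p^{2}-p+2}{(p-2)(p+1)}-1=\tfrac{4}{(p-2)(p+1)}$, the target inequality reduces, after clearing denominators and factoring out $(p-2)^{p-1}$, to showing that
\[
(p^{2}-3p+2)\Bigl(1+\frac{2}{p^{2}-p-2}\Bigr)^{p}\le p^{2}-p+2\qquad(p\ge5),
\]
which makes the factored quantity $\le 0$ and hence closes the reduction. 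To prove this one bounds $\bigl(1+\tfrac2{p^{2}-p-2}\bigr)^{p}\le e^{2p/((p-2)(p+1))}$ and compares the exponent with $\ln\tfrac{p^{2}-p+2}{p^{2}-3p+2}$ via two-term estimates of $\ln(1+x)$; this succeeds for $p$ beyond a small threshold, and the remaining bounded range of $p$ is verified directly. The comparison is genuinely tight — the two sides agree to within $O(1)$ once the factor $(p-2)^{p-1}$ is stripped away — which is precisely why the constant $\tfrac{p^{2}-p+2}{(p-2)(p+1)}$ is chosen as it is and why this step is the technical heart of the lemma; I note that the resulting $O(1/p)$ bound on $g_{p}^{\ast}$ (combined with part \ref{LEMMA:T3-inequalities-2}) is exactly what feeds into Lemma~\ref{LEMMA:T3-induction-number-of-parts}.
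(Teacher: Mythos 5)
Your parts \ref{LEMMA:T3-inequalities-1}--\ref{LEMMA:T3-inequalities-3} are correct and essentially coincide with the paper's argument: the point evaluations, the reduction of \ref{LEMMA:T3-inequalities-1} to $e^{2/p}\ge 1+\tfrac{2}{p}$, the reduction of \ref{LEMMA:T3-inequalities-2} to $\bigl(\tfrac{p-2}{p-1}\bigr)^{p}<e^{-1}$ (note your displayed exponent $p-1$ is not an equivalent restatement but a sufficient one, since the base lies in $(0,1)$ for $p\ge 2$ — the direction you use is the right one), and for \ref{LEMMA:T3-inequalities-3} the grouping $2h_p=\sum_i x_i\,g_p(x_j,x_k)$ plus homogeneity and $\max_{t\in[0,1]}t(1-t)^{p+1}=\tfrac{(p+1)^{p+1}}{(p+2)^{p+2}}<\tfrac{1}{e(p+1)}$ are exactly the paper's steps, only phrased at a generic point rather than at the maximizer.

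Part \ref{LEMMA:T3-inequalities-4} is where you genuinely diverge. The paper also localizes $x_{\ast}\in\bigl(\tfrac{1}{p+1},\tfrac{1}{p-1}\bigr)$ via Lemma~\ref{LEMMA:max-star-graph}, but then applies the Mean Value Theorem to $\tilde g_p$ at $x_1=\tfrac{1}{p-1}$ and controls the derivative term through the monotonicity of $(x^{p-1}+(1-x)^{p-1})((p+1)x-1)$ (Claim~\ref{CLAIM:derative-positive}), which yields the stated constant uniformly for $p\ge 5$ without any numerical case split. You instead bound the two summands of $g_p^{\ast}$ separately by endpoint monotonicity on the localization interval (minor slip: the critical point of $x^p(1-x)$ is $\tfrac{p}{p+1}$, not $\tfrac{p}{p+2}$; the conclusion is unaffected), obtaining $g_p^{\ast}\le \tfrac{p-2}{(p-1)^{p+1}}+\tfrac{p^{p}}{(p+1)^{p+1}}$, and reduce to the explicit inequality $(p^{2}-3p+2)\bigl(1+\tfrac{2}{p^{2}-p-2}\bigr)^{p}\le p^{2}-p+2$. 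Your algebra checks out: clearing denominators gives exactly $(p-2)^{p-1}\bigl[(p^{2}-3p+2)\bigl(1+\tfrac{2}{p^{2}-p-2}\bigr)^{p}-(p^{2}-p+2)\bigr]\le 4$, so making the bracket nonpositive indeed suffices, and that inequality is true for all $p\ge 5$ (asymptotically the left side is $p^{2}-p+O(1/p)$, and at $p=5$ it reads $20.3\le 22$). Your route buys transparency — no auxiliary derivative-monotonicity claim — at the cost that the target is tight: the two-term $\ln(1+x)$ estimates you propose only close it for roughly $p\gtrsim 6.4$ (they reduce to $p^{2}-7p+4\ge 0$), and the leftover range $[5,6.4]$ is a continuum, so ``verified directly'' must be replaced by an actual device (a sharper series bound, a monotonicity argument for the gap function, or an interval check in the spirit of the paper's Mathematica verifications). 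This is a fillable thin spot rather than a flaw, but as written it is the one step of your part \ref{LEMMA:T3-inequalities-4} that is not yet a proof.
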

\begin{proof}[Proof of Lemma~\ref{LEMMA:T3-inequalities}]
    We will use the following inequality$\colon$
    \begin{align*}
        \left(\frac{z}{z+k}\right)^{z+k}
        < \frac{1}{e^{k}}
        < \left(\frac{z}{z+k}\right)^{z} 
        \quad\text{for all}\quad 
        z \ge 0,\ k > 0. 
    \end{align*}
    Lemma~\ref{LEMMA:T3-inequalities}~\ref{LEMMA:T3-inequalities-1} follows from 
    \begin{align*}
        h_{p}\left(\frac{1}{p+2}, \frac{1}{p+2}, \frac{p}{p+2}\right)
        &  = 2p\left(\frac{1}{p+2}\right)^{p+2} + \frac{1}{(p+2)^2} \cdot \left(\frac{p}{p+2}\right)^p  \\
        & > \frac{1}{(p+2)^2} \cdot \left(\frac{p}{p+2}\right)^p
        > \frac{1}{(p+2)^2} \cdot\frac{1}{e^2}.  
    \end{align*}
    Lemma~\ref{LEMMA:T3-inequalities}~\ref{LEMMA:T3-inequalities-2} follows from 
    \begin{align*}
        g_{p}\left(\frac{1}{p-1}, \frac{p-2}{p-1}\right)
         = \frac{p-2}{(p-1)^{p+1}}  + \frac{1}{p-1}\left(\frac{p-2}{p-1}\right)^{p} 
         < \frac{p-2}{(p-1)^{p+1}} + \frac{1}{p-1} \cdot \frac{1}{e}. 
    \end{align*}
    %
    Now, let $(x_1, x_2, x_3) \in \Delta^2$ be a vector such that $h_{p}^{\ast}=  h_{p}(x_1, x_2, x_3)$. 
    Notice that 
    \begin{align*}
         x_1x_2x_3^{p}+x_1x_2^{p}x_3
        & = x_1 \left(x_2x_3^{p} + x_2^{p}x_3\right) \\
        & = x_1 \left(x_2+x_3\right)^{p+1} \cdot g_{p}\left(\frac{x_2}{x_2+x_3}, \frac{x_3}{x_2+x_3}\right) \\
        & \le x_1 \left(1-x_1\right)^{p+1} \cdot g_{p}^{\ast} 
        < \frac{g^*_{p}}{e (p+1)}, 
    \end{align*}
    where the last inequality follows from the fact that $x(1-x)^{p+1} \le \frac{(p+1)^{p+1}}{(p+2)^{p+2}} = \frac{1}{p+1} \left(\frac{p+1}{p+2}\right)^{p+2} < \frac{1}{e (p+1)}$ for $x\in [0,1]$ and $p \ge 0$. 
    Consequently, 
    \begin{align*}
        2 h_{p}^{\ast} 
         = \left(x_1x_2x_3^{p}+x_1x_2^{p}x_3\right) + \left(x_1x_2^{p}x_3+x_1^{p}x_2x_3\right) + 
        \left( x_1x_2x_3^{p}+x_1^{p}x_2x_3\right) 
        < \frac{3\cdot g^*_{p}}{e (p+1)}, 
    \end{align*}
    which proves Lemma~\ref{LEMMA:T3-inequalities}~\ref{LEMMA:T3-inequalities-3}. 
    
    The following claim will be useful for proving Lemma~\ref{LEMMA:T3-inequalities}~\ref{LEMMA:T3-inequalities-4}. 
    \begin{claim}\label{CLAIM:derative-positive}
    Suppose that $p \ge 5$ is a real number. 
    Then $(x^{p-1}+(1-x)^{p-1})((p+1)x-1)$ is increasing on the interval $\left(0, \frac{2}{p+1} \right)$. 
    \end{claim}
    \begin{proof}[Proof of Claim~\ref{CLAIM:derative-positive}]
        Let $f(X) \coloneqq (X^{p-1}+(1-X)^{p-1})((p+1)X-1)$.
        It suffices to show that $D f(x) \ge 0$ for $x\in \left(0, \frac{2}{p+1} \right)$. 
        Fix $x \in \left(0, \frac{2}{p+1} \right)$. 
        Since $p \ge 5$, we have $x \le \frac{2}{p+1} < \frac{1}{2}$. Therefore, $(x^{p-2}-(1-x)^{p-2})\leq 0$, and it follows that 
        \begin{align*}
            D f(x)
            & = (p-1)(x^{p-2}-(1-x)^{p-2})((p+1)x-1)
                + (p+1)(x^{p-1}+(1-x)^{p-1}) \\
            & \ge (p-1)(x^{p-2}-(1-x)^{p-2})\left((p+1)\cdot \frac{2}{p+1}-1 \right)
                + (p+1)(x^{p-1}+(1-x)^{p-1}) \\
            & = (p-1)(x^{p-2}-(1-x)^{p-2})
                + (p+1)(x^{p-1}+(1-x)^{p-1}) \\
            & \ge - (p-1) (1-x)^{p-2} + (p+1) (1-x)^{p-1} \\
            & = (p+1)(1-x)^{p-2} \left( \frac{2}{p+1}-x\right)
            \ge 0, 
        \end{align*}
        proving Claim~\ref{CLAIM:derative-positive}. 
    \end{proof}
    For convenience, let $\tilde{g}_{p}(x) \coloneqq g_{p}(x,1-x)$ for $x\in [0,1]$. 
    Suppose that $x_{\ast}$ is the unique point in $[0,1/2]$ where $\tilde{g}_{p}(x)$ attains its maximum. 
    By Lemma~\ref{LEMMA:max-star-graph}, $x_{\ast} \in \left(\frac{1}{p+1}, \frac{1}{p-1}\right)$. 
    Let $(x_1, x_2) \coloneqq \left(\frac{1}{p-1}, \frac{p-2}{p-1}\right)$. 
    It follows from the Mean Value Theorem that there exists $y_1 \in [x_{\ast}, x_1] \subset \left[\frac{1}{p+1}, \frac{1}{p-1}\right]$ such that $g_{p}^{\ast} = \tilde{g}_{p}(x_{\ast}) = \tilde{g}_{p}(x_1) + D \tilde{g}_{p}(y_1) \left(x_{\ast}-x_1\right)$. 
    Let $y_2 \coloneqq 1- y_1$ and $\alpha \coloneqq x_1 - x_{\ast}  \le \frac{1}{p-1} - \frac{1}{p+1} = \frac{2}{(p-1)(p+1)}$. 
    Simple calculations show that 
    \begin{align*}
        D \tilde{g}_{p}(y_1) \left(x_{\ast} - x_1\right)
        & = \alpha \left(y_1^{p-1}((p+1)y_1 - p) + y_2^{p-1}((p+1) y_1 - 1)\right) \\
        & \le \alpha \left(y_1^{p-1} + y_2^{p-1}\right) \left((p+1)y_1 - 1\right)
        \le \frac{2 \left(y_1^{p-1} + y_2^{p-1}\right) \left((p+1)y_1 - 1\right)}{(p-1)(p+1)}.
    \end{align*}
    Since $y_1 \le x_1 = \frac{1}{p-1} < \frac{2}{p+1}$ (recall that $p \ge 5$), by Claim~\ref{CLAIM:derative-positive}, the inequality above continues as  
    \begin{align*}
        D \tilde{g}_{p}(y_1) \left(x_{\ast} - x_1\right)
        & \le \frac{2 \left(x_1^{p-1} + x_2^{p-1}\right) \left((p+1)x_1 - 1\right)}{(p-1)(p+1)} \\
        & = \frac{2 \left(x_1^{p-1} + x_2^{p-1}\right) \frac{2 x_1 x_2}{x_2}}{(p-1)(p+1)} 
         = \frac{4 \left(x_1^{p}x_2 + x_1 x_2^{p}\right)}{(p-1)(p+1) x_2} 
        = \frac{4 \cdot g_{p}(x_1, x_2)}{(p-2)(p+1)}. 
    \end{align*}
    Therefore (recall that $\tilde{g}_{p}(x_1) = g_{p}(x_1, 1-x_1) = g_{p}(x_1, x_2)$), 
    \begin{align*}
       g_{p}^{\ast} 
       & = \tilde{g}_{p}(x_1) + D \tilde{g}_{p}(y_1) \left(x_{\ast} - x_1\right) \\
       & \le g_{p}(x_1, x_2) + \frac{4 \cdot g_{p}(x_1, x_2)}{(p-2)(p+1)}
       = \frac{p^2-p+2}{(p-2)(p+1)} \cdot g_{p}(x_1, x_2), 
    \end{align*}
    proving Lemma~\ref{LEMMA:T3-inequalities}~\ref{LEMMA:T3-inequalities-4}. 
\end{proof}

Now we are ready to prove Lemma~\ref{LEMMA:T3-induction-number-of-parts}. 
\begin{proof}[Proof of Lemma~\ref{LEMMA:T3-induction-number-of-parts}]
    We consider the following two cases. 
    
    \textbf{Case 1:} $2 \le p \le  8$. 

    It follows from Lemma~\ref{LEMMA:T3-inequalities}~\ref{LEMMA:T3-inequalities-3} and Lemma~\ref{LEMMA:g-p-ast-small} that 
    \begin{align*}
        \frac{p \cdot h_{p-1}^{\ast}  +  g_{p}^{\ast} }{(p+2)\cdot h_{p}^{\ast}}
        & \le \frac{p \cdot \frac{3}{2ep} \cdot g_{p-1}^{\ast}  +  g_{p}^{\ast} }{(p+2)\cdot h_{p}^{\ast} } \\
        & \le \frac{\frac{3}{2e} \cdot \max\left\{\frac{1}{2^{p-1}}, \frac{1}{2^3}\left(\frac{p-1}{p}\right)^{p-4}\right\}  +  \max\left\{\frac{1}{2^{p}}, \frac{1}{2^3}\left(\frac{p}{p+1}\right)^{p-3}\right\} }{(p+2)\cdot h_{p}\left(\frac{1}{p+2}, \frac{1}{p+2}, \frac{p}{p+2}\right)} 
         < 6.2, 
    \end{align*}
    where the last inequality is verified using Mathematica.

    \medskip 

    \textbf{Case 2:} $p > 8$.

    Let $\Phi \coloneqq \frac{p \cdot h_{p-1}^{\ast}  +  g_{p}^{\ast} }{(p+2)\cdot h_{p}\left(\frac{1}{p+2}, \frac{1}{p+2}, \frac{p}{p+2}\right)}$. 
    Since $h_{p}^{\ast} \ge h_{p}\left(\frac{1}{p+2}, \frac{1}{p+2}, \frac{p}{p+2}\right)$, we obtain $\frac{p \cdot h_{p-1}^{\ast}  +  g_{p}^{\ast} }{(p+2)\cdot h_{p}^{\ast} } \le \Phi$. So it suffices to prove that $\Phi < 6.88$. 
    By Lemma~\ref{LEMMA:T3-inequalities}, $\Phi$ is upper bounded by 
    \begin{align*}
        & \quad \frac{p \cdot \frac{3}{2ep} \cdot \frac{(p-1)^2+3}{(p-1)^2-1} \cdot g_{p-1}\left(\frac{1}{p-2}, \frac{p-3}{p-2}\right)  + \frac{p^2-p+2}{(p-2)(p+1)} \cdot g_{p}\left(\frac{1}{p-1}, \frac{p-2}{p-1}\right)}{(p+2) \cdot \frac{1}{(p+2)^2 e^2}} \\
        & = (p+2)e^2 \left(\frac{3}{2e} \cdot \frac{(p-1)^2+3}{(p-1)^2-1} \cdot g_{p-1}\left(\frac{1}{p-2}, \frac{p-3}{p-2}\right)  + \frac{p^2-p+2}{(p-2)(p+1)} \cdot g_{p}\left(\frac{1}{p-1}, \frac{p-2}{p-1}\right) \right)\\
        & = \frac{3e}{2} \cdot \frac{(p-1)^2+3}{(p-1)^2-1} \cdot \left(\frac{(p+2)(p-3)}{(p-2)^p} + \frac{p+2}{p-2} \cdot \frac{1}{e}\right)  \\
        & \quad + \frac{p^2-p+2}{(p-2)(p+1)} \cdot \left(\frac{(p+2)(p-2)}{(p-1)^{p+1}} + \frac{p+2}{p-1} \cdot \frac{1}{e}\right)e^2 . 
    \end{align*}
    It is not hard to verify that $\frac{(p-1)^2+3}{(p-1)^2-1}, \frac{(p+2)(p-3)}{(p-2)^{p}}, \frac{p+2}{p-2}, \frac{p^2-p+2}{(p-2)(p+1)}, \frac{(p+2)(p-2)}{(p-1)^{p+1}},  \frac{p+2}{p-1}$ are all decreasing in $p$ on $[8, \infty)$. So plugging $p=8$ into the inequality above we obtain  
    \begin{align*}
        \Phi
        \le \frac{65}{24}+\frac{72163555 e}{47029248}+\frac{580 e^2}{363182463}
        < 6.88,
    \end{align*}
    proving Lemma~\ref{LEMMA:T3-induction-number-of-parts}.
\end{proof}

\section{Proofs for Theorems~\ref{THM:Lp-F5-p-small-a},~\ref{THM:Lp-F5-p-small-b}, and~\ref{THM:Lp-clique-expansion-p-small}}\label{SEC:Proof-Lp-F5-small}
In this section, we prove Theorems~\ref{THM:Lp-F5-p-small-a},~\ref{THM:Lp-F5-p-small-b}, and~\ref{THM:Lp-clique-expansion-p-small}. 
Note from the first inequality in~\eqref{equ:PROP:tp-norm-Jensen-small} that to bound $\norm{\mathcal{H}}_{t,p}$ of an $\mathcal{F}$-free $r$-graph $\mathcal{H}$ when $p < 1$, it suffices to bound the product $|\mathcal{H}|^{p} \cdot |\partial_{r-t}\mathcal{H}|^{1-p}$. 
This is where the results on the feasible region problem introduced in~\cite{LM21feasible} can be applied. 
Extending the classical Kruskal--Katona Theorem~\cite{KR63,KA68} and the Tur\'{a}n problem, the feasible region problem of $\mathcal{F}$ studies the maximum size of an $n$-vertex $\mathcal{F}$-free $r$-graph $\mathcal{H}$ under the constraint that $|\partial_{r-t}\mathcal{H}|$ is fixed. 
By using results from~\cite{LM21feasible} (specifically Theorems~\ref{THM:LM-feasible-region-T3} and~\ref{THM:LM-feasible-region-expansion} below), we can reduce the task of bounding $|\mathcal{H}|^{p} \cdot |\partial_{r-t}\mathcal{H}|^{1-p}$ to a simple optimization problem with only one-variable, namely $|\partial_{r-t}\mathcal{H}|$. 
Similarly, the stability for the $(t,p)$-norm Tur\'{a}n problem can be derived easily by applying the corresponding stability theorems on the feasible region problem established in~\cite{LM23KKstability,Liu20a}.   

Given integers $r > t \ge 1$ and a real number $p > 0$, for every $n$-vertex $r$-graph $\mathcal{H}$ let 
\begin{align*}
    \rho_{t,p}(\mathcal{H})
    \coloneqq \frac{\norm{\mathcal{H}}_{t,p}}{\binom{n}{t} \cdot n^{p(r-t)}}
    \quad\text{and}\quad 
    \rho(\mathcal{H})
    \coloneqq \frac{|\mathcal{H}|}{\binom{n}{r}}. 
\end{align*}
First, we use the following theorem on $\mathcal{T}_{3}$ to prove Theorems~\ref{THM:Lp-F5-p-small-a},~\ref{THM:Lp-F5-p-small-b}. 
\begin{theorem}[{\cite[Theorems~4.4 and~4.6]{LM21feasible}}]\label{THM:LM-feasible-region-T3}
    Suppose that $\mathcal{H}$ is an $n$-vertex $\mathcal{T}_3$-free $3$-graph. 
    Then 
    \begin{align*}
        \rho(\mathcal{H})
        \le \min\left\{\frac{\left(\rho(\partial\mathcal{H})\right)^{3/2}}{\sqrt{6}},\ \rho(\partial\mathcal{H}) \cdot \left(1-\rho(\partial\mathcal{H})\right)\right\} + o(1). 
    \end{align*}
\end{theorem}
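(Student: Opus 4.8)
The statement is quoted verbatim from~{\cite[Theorems~4.4 and~4.6]{LM21feasible}}, so strictly speaking there is nothing to prove; the plan is simply to indicate how one would reconstruct it. The bound is a minimum of two terms, and each is proved separately. For the first term, $\rho(\mathcal H)\le (\rho(\partial\mathcal H))^{3/2}/\sqrt6+o(1)$, I would run a Kruskal--Katona-type shadow argument tailored to $\mathcal T_3$-free $3$-graphs. Recall $\mathcal T_3=\{F_5,K_4^{3-}\}$; being $K_4^{3-}$-free means that for every pair $\{u,v\}\in\partial\mathcal H$ the link $L_{\mathcal H}(\{u,v\})$ is a set of vertices no two of which are themselves joined through a common third edge in a forbidden way — concretely one uses that $\mathcal T_3$-free $3$-graphs are ``locally linear''-like, so the codegree of most pairs is bounded and the edges are spread out. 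Double-counting incidences between edges and their $2$-shadows, $3|\mathcal H|=\sum_{P\in\partial\mathcal H}d_{\mathcal H}(P)$, and applying convexity (Jensen) together with the structural restriction that forces $\sum_P d_{\mathcal H}(P)^2$ to be small, yields $|\mathcal H|=O\big(|\partial\mathcal H|^{3/2}\big)$ with the sharp constant $1/\sqrt6$; the constant is pinned down by the extremal example (a Steiner-type configuration / blow-up of a single edge), matching the $2h_p^\ast$-type extremal structures appearing elsewhere in this paper.

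For the second term, $\rho(\mathcal H)\le \rho(\partial\mathcal H)(1-\rho(\partial\mathcal H))+o(1)$, I would argue via a link/codegree count. Writing $x\coloneqq\rho(\partial\mathcal H)$, so $|\partial\mathcal H|=(1+o(1))x\binom n2$, one bounds $|\mathcal H|$ by summing $d_{\mathcal H}(\{u,v\})$ over $\{u,v\}\in\partial\mathcal H$ and using $F_5$-freeness to show that the neighborhoods cannot be too large: for each pair $P$ in the shadow, the vertices of $L_{\mathcal H}(P)$ together with the ``non-edges at $P$'' partition $V(\mathcal H)$, and $F_5$-freeness plus $K_4^{3-}$-freeness forces $d_{\mathcal H}(P)\lesssim (1-x)n$ on average (the complement of the shadow ``absorbs'' the obstruction). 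Dividing by $\binom n3$ gives the product form $x(1-x)$. Combining the two estimates and taking the pointwise minimum gives the claimed inequality.

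The main obstacle — and the reason~\cite{LM21feasible} needed two separate theorems — is that the two bounds are genuinely different regimes: the $|\partial\mathcal H|^{3/2}$ bound is sharp when the shadow is sparse (small $x$), while the $x(1-x)$ bound is sharp when the shadow is dense, and neither is uniformly better. Getting the sharp constant $1/\sqrt6$ in the sparse regime is the delicate part, as it requires exploiting the full strength of $\mathcal T_3$-freeness (not just an easy codegree bound) and identifying the exact extremal configuration; a crude application of Kruskal--Katona alone loses the constant. Since the result is cited rather than reproved here, in the paper I would just invoke it directly and move on to its application in Section~\ref{SEC:Proof-Lp-F5-small}, where it is combined with~\eqref{equ:PROP:tp-norm-Jensen-small} to bound $|\mathcal H|^{p}\,|\partial\mathcal H|^{1-p}$ by a one-variable optimization in $x=\rho(\partial\mathcal H)$.
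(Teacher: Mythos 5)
This statement is an imported result: the paper gives no proof of it and simply cites Liu--Mubayi (Theorems~4.4 and~4.6 of~\cite{LM21feasible}), exactly as you propose, so your treatment coincides with the paper's. Your accompanying heuristic sketch is not needed and should not be leaned on as written --- for instance, in the extremal near-balanced $3$-partite example the cross-pair codegrees are about $n/3$, so neither ``bounded codegrees'' nor smallness of $\sum_{P} d_{\mathcal{H}}^{2}(P)$ holds, and the actual mechanism behind the $\rho(\partial\mathcal{H})\left(1-\rho(\partial\mathcal{H})\right)$ bound is that $\mathcal{T}_3$-freeness forces each link $L_{\mathcal{H}}(P)$ to span no pair of $\partial\mathcal{H}$ --- but since the paper itself does not reprove the theorem, there is nothing further to compare.
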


\begin{proof}[Proof of Theorems~\ref{THM:Lp-F5-p-small-a} and~\ref{THM:Lp-F5-p-small-b}]
    The constructions for the lower bounds come from balanced blowups of STSs, so it suffices to focus on the upper bound. 
    Since $\mathcal{T}_{3} \le_{\mathrm{hom}} F_5$,
    by Proposition~\ref{PROP:tp-density-blowup-lemma}, it suffices to show that 
    \begin{align*}
        \pi_{2,p}(\mathcal{T}_3) 
        \le 
            \begin{cases}
                \frac{2}{3^{1+p}}, & \quad\text{if}\quad p \in [1/2, 1), \\
                \frac{p^p}{(p+1)^{p+1}}, & \quad\text{if}\quad p \in (0,1/2).  
            \end{cases}
    \end{align*}
    Let $n$ be a sufficiently large integer, and $\mathcal{H}$ be an $n$-vertex $\mathcal{T}_3$-free $3$-graph with $\norm{\mathcal{H}}_{2,p} = \mathrm{ex}_{2,p}(n,\mathcal{T}_3)$. 
    Let $x \coloneqq \rho(\partial\mathcal{H})$ and $y \coloneqq \rho(\mathcal{H})$. 
    By~\eqref{equ:PROP:tp-norm-Jensen-small}, 
    \begin{align*}
        \frac{\norm{\mathcal{H}}_{2,p}}{\binom{n}{2}\cdot n^{p}}
        \le 2! \cdot \left(\binom{3}{2}\cdot \frac{\rho(\mathcal{H})}{3!}\right)^{p} \cdot \left(\frac{\rho(\partial\mathcal{H})}{2!}\right)^{1-p} + o(1) 
        = y^{p} x^{1-p} + o(1). 
    \end{align*}
    Combining with Theorem~\ref{THM:LM-feasible-region-T3}, we obtain 
    \begin{align}\label{equ:proof-THM:Lp-F5-p-small}
        \pi_{2,p}(\mathcal{T}_3) -o(1)
        = \frac{\norm{\mathcal{H}}_{2,p}}{\binom{n}{2}\cdot n^{p}}
        \le y^{p} x^{1-p}
        \le \min\left\{ \frac{x^{1+\frac{p}{2}}}{6^{\frac{p}{2}}},\ x(1-x)^{p} \right\}. 
    \end{align}
    Simple calculations using Fact~\ref{FACT:inequality-b}~\ref{FACT:inequality-b-2} give the desired upper bound for $\pi_{2,p}(\mathcal{T}_3)$. 

    Next, we prove the stability part. 
    Suppose that $\mathcal{H}$ is an $n$-vertex $F_5$-free $3$-graph with $\norm{\mathcal{H}}_{2,p} = (1-o(1)) \pi_{2,p}(F_5) n^{2+p}/2$. 
    Since $\mathcal{T}_{3}\le_{\mathrm{hom}} F_5$, it follows from Proposition~\ref{PROP:tp-density-blowup-lemma} that there exists a $\mathcal{T}_{3}$-free subgraph $\mathcal{G} \subset \mathcal{H}$ with 
    \begin{align*}
        \norm{\mathcal{G}}_{2,p} 
        = \norm{\mathcal{H}}_{2,p} -o(n^{t+p(r-t)}) 
        = (1-o(1)) \frac{\pi_{2,p}(F_5)}{2}n^{2+p}
        = (1-o(1)) \frac{\pi_{2,p}(\mathcal{T}_{3})}{2}n^{2+p}.
    \end{align*}
    Let $x \coloneqq \rho(\partial\mathcal{G})$ and $y \coloneqq \rho(\mathcal{G})$. 
    Similar to~\eqref{equ:proof-THM:Lp-F5-p-small}, we obtain 
    \begin{align*}
        \pi_{2,p}(\mathcal{T}_3) -o(1)
        \le \frac{\norm{\mathcal{G}}_{2,p}}{\binom{n}{2}\cdot n^{p}}
        \le y^{p} x^{1-p}
        \le \min\left\{ \frac{x^{1+\frac{p}{2}}}{6^{\frac{p}{2}}},\ x(1-x)^{p} \right\}.
    \end{align*}
    If $p \in [1/2,1]$, then simple calculations show that 
    \begin{align*}
        |x - 1/3| = o(1)
        \quad\text{and}\quad 
        |y - 2/9| =o(1), 
    \end{align*}
    which, by~{\cite[Theorem~1.5]{LM23KKstability}}, implies that $\mathcal{G}$ is $3$-partite after removing $o(n^3)$ edges. 

    If $p = 1/k$ for some $k \in 6\mathbb{N}+\{0,2\}$, then simple calculations show that 
    \begin{align*}
        |x - k/(k+1)| = o(1)
        \quad\text{and}\quad 
        |y - k/(k+1)^2| =o(1), 
    \end{align*}
    which, by~{\cite[Theorem~1.6]{Liu20a}}, implies that $\mathcal{G}$ is $\mathcal{S}$-coloralbe for some $\mathcal{S} \in \mathrm{STS}(k+1)$ after removing $o(n^3)$ edges. 
\end{proof}

Next, we use the following theorem on expansions to prove Theorem~\ref{THM:Lp-clique-expansion-p-small}. 

Let $\ell \ge r \ge 2$ be integers.
An $r$-graph $F$ is a \textbf{weak expansion} of $K_{\ell+1}$ if it can be obtained from $K_{\ell+1}$ by adding $r-2$ vertices into each edge. 
A key difference from the expansion $H_{\ell+1}^{r}$ of $K_{\ell+1}$ is that these added $(r-2)$-sets do not need to be pairwise disjoint. 
We use $\mathcal{K}_{\ell+1}^{r}$ to denote the collection of all weak expansions of $K_{\ell+1}$. 
A useful fact is that $\mathcal{K}_{\ell+1}^{r} \le_{\mathrm{hom}} H_{F}^{r}$ for every graph $F$ with $\chi(F) = \ell+1$ (see~{\cite[Section~3]{MU06}}). 

\begin{theorem}[{\cite[Theorem~1.7]{LM21feasible}}]\label{THM:LM-feasible-region-expansion}
    Let $\ell \ge r > t \ge 1$ be integers. 
    Suppose that $\mathcal{H}$ is an $n$-vertex $\mathcal{K}_{\ell+1}^{r}$-free $r$-graph. 
    Then 
    \begin{align*}
        \rho(\partial_{r-t}\mathcal{H})
        \le t! \binom{\ell}{t} \left(\frac{1}{\ell}\right)^{t} + o(1)
        \quad\text{and}\quad 
        \rho(\mathcal{H}) 
        \le r! \binom{\ell}{r} \left(\frac{\rho(\partial_{r-t}\mathcal{H})}{t! \binom{\ell}{t}}\right)^{\frac{r}{t}} + o(1). 
    \end{align*}
\end{theorem}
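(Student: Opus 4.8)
The plan is to deduce Theorem~\ref{THM:LM-feasible-region-expansion} (which is~\cite[Theorem~1.7]{LM21feasible}) from structural facts about the \emph{covering graph} of $\mathcal{H}$ together with classical clique-counting inequalities for $K_{\ell+1}$-free graphs. Set $G \coloneqq \partial_{r-2}\mathcal{H}$, the graph on $V(\mathcal{H})$ whose edges are the pairs lying inside some edge of $\mathcal{H}$; for a graph $F$ write $k_s(F)$ for the number of copies of $K_s$ in $F$. Since every $E \in \mathcal{H}$ spans an $r$-clique of $G$ and every $T \in \partial_{r-t}\mathcal{H}$ a $t$-clique, we have $|\mathcal{H}| \le k_r(G)$ and $|\partial_{r-t}\mathcal{H}| \le k_t(G)$; when $t=2$ the latter is even an equality, $|\partial_{r-2}\mathcal{H}| = k_2(G) = |G|$. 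The first key input, a lemma of Mubayi-type~\cite{MU06}, is that $\mathcal{K}_{\ell+1}^r$-freeness of $\mathcal{H}$ forces $G$ to become $K_{\ell+1}$-free after deleting $o(n^2)$ edges: a weak expansion of $K_{\ell+1}$ inside $\mathcal{H}$ is essentially a copy of $K_{\ell+1}$ in $G$ whose $\binom{\ell+1}{2}$ pairs are covered by pairwise distinct edges of $\mathcal{H}$, and by supersaturation for $K_{\ell+1}$ in graphs --- after a standard pruning of $\mathcal{H}$ and $G$ that affects only $o(n^2)$ pairs --- one finds such a configuration whenever $G$ has $\Omega(n^{\ell+1})$ copies of $K_{\ell+1}$. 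Deleting $o(n^2)$ edges of $G$ changes $k_r(G)$ and $k_t(G)$ by $o(n^r)$ and $o(n^t)$ respectively, so we may assume $G$ is $K_{\ell+1}$-free.

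For the first displayed bound I would invoke Zykov's theorem: among all $n$-vertex $K_{\ell+1}$-free graphs, the Tur\'{a}n graph $T^2(n,\ell)$ maximizes $k_t$, whence $|\partial_{r-t}\mathcal{H}| \le k_t(G) \le k_t(T^2(n,\ell)) = (1+o(1))\binom{\ell}{t}(n/\ell)^t = (1+o(1))\, t!\binom{\ell}{t}(1/\ell)^t\binom{n}{t}$, that is, $\rho(\partial_{r-t}\mathcal{H}) \le t!\binom{\ell}{t}(1/\ell)^t + o(1)$.

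For the second bound the engine is the inequality $k_r(G) \le \binom{\ell}{r}\bigl(k_t(G)/\binom{\ell}{t}\bigr)^{r/t} + o(n^r)$ for $K_{\ell+1}$-free $G$, equivalently that the sequence $s \mapsto \bigl(k_s(G)/\binom{\ell}{s}\bigr)^{1/s}$ is essentially non-increasing. For complete $\ell$-partite $G = K(a_1,\dots,a_\ell)$ one has $k_s(G) = e_s(a_1,\dots,a_\ell)$ and this is exactly Maclaurin's inequality for elementary symmetric polynomials; the reduction of a general $K_{\ell+1}$-free $G$ to this case can be done either by Zykov-type symmetrization or, more cleanly, by algebraic shifting (which preserves the clique $f$-vector and the absence of an $(\ell+1)$-clique) followed by the Frankl--F\"{u}redi--Kalai colored Kruskal--Katona theorem. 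Combined with $|\mathcal{H}| \le k_r(G)$ this already proves the bound when $t=2$, since there $|\partial_{r-2}\mathcal{H}| = k_2(G)$ exactly and the asymptotics $\binom{n}{s} = (1+o(1))n^s/s!$ do the rest (all the maps are homogeneous, of degrees $r$ and $t$, and $r/t\cdot t = r$). For general $t$ one needs in addition to compare $k_t(G)$ with $|\partial_{r-t}\mathcal{H}|$ --- these can genuinely differ, since a $t$-clique of $G$ need not lie inside a single edge of $\mathcal{H}$ --- which is handled by treating $\partial_{r-t}\mathcal{H}$ as a $t$-graph that is near-$\mathcal{K}_{\ell+1}^t$-free with $2$-shadow $G$ and running the $t$-uniform instance of the same ideas (or by induction on $r-t$).

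I expect the main obstacle to be precisely this second bound: the normalisation by $\binom{\ell}{r}$ and $\binom{\ell}{t}$ makes the required clique inequality strictly stronger than plain Kruskal--Katona, so the $\mathcal{K}_{\ell+1}^r$-free structure (through $K_{\ell+1}$-freeness of $G$) must be used, and making Zykov symmetrization respect a fixed value of $k_t(G)$ is delicate --- boosting $k_r(G)$ can simultaneously boost $k_t(G)$ --- which is why the algebraic-shifting/Frankl--F\"{u}redi--Kalai route is cleaner, at the cost of checking that a shifted complex of dimension $\le \ell-1$ behaves like the clique complex of an $\ell$-partite graph. The general-$t$ bookkeeping relating $\partial_{r-t}\mathcal{H}$ to $G$, and the precise form of the covering-graph lemma, are the remaining points requiring care; everything else --- Maclaurin, Zykov/Tur\'{a}n, the supersaturation and removal-lemma estimates, and the binomial asymptotics --- is routine.
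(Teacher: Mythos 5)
You cannot be measured against an in-paper argument here: the paper does not prove this statement at all, but imports it verbatim from \cite{LM21feasible} (Theorem~1.7 there) and uses it as a black box in Section~\ref{SEC:Proof-Lp-F5-small}. Judged on its own terms, your opening reduction is sound: with $G = \partial_{r-2}\mathcal{H}$, one can indeed show that $\mathcal{K}_{\ell+1}^{r}$-freeness forces $G$ to contain only $O(n^{\ell})$ copies of $K_{\ell+1}$ (for a fixed pair $\{u,v\}$, any $(\ell-1)$-set $W$ such that every edge of $\mathcal{H}$ through $\{u,v\}$ meets $W$ must meet one fixed edge of the link, so there are only $O(n^{\ell-2})$ such $W$), and then the graph removal lemma plus Zykov's theorem yields your first inequality. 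The trouble is with the second inequality. Your engine $k_r(G)\le\binom{\ell}{r}\bigl(k_t(G)/\binom{\ell}{t}\bigr)^{r/t}+o(n^r)$ for $K_{\ell+1}$-free $G$ is true, but not by the route you sketch: algebraic shifting preserves the $f$-vector and the absence of faces of size $\ell+1$, yet the shifted complex is in general not an $\ell$-colorable (balanced) complex, so Frankl--F\"{u}redi--Kalai does not apply to it, and the Maclaurin-type inequality is simply false for arbitrary complexes with no faces of size $\ell+1$: for $\ell=r=3$, $t=2$, the full $2$-skeleton of the simplex on four vertices has $k_3=4$ and $k_2=6$, while $\binom{3}{3}\bigl(k_2/\binom{3}{2}\bigr)^{3/2}=2\sqrt{2}<4$. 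So ``checking that a shifted complex of dimension $\le\ell-1$ behaves like the clique complex of an $\ell$-partite graph'' is not a routine verification; it is exactly the content of the Kalai--Eckhoff conjecture, i.e.\ you must invoke Frohmader's theorem (flag complexes have the face vectors of balanced complexes) together with Frankl--F\"{u}redi--Kalai, or give a genuine substitute (your Zykov-symmetrization alternative is not carried out and has the difficulty you yourself flag).

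The second, independent gap is the case $t>2$, which your chain never reaches. You bound $|\mathcal{H}|\le k_r(G)\lesssim\binom{\ell}{r}\bigl(k_t(G)/\binom{\ell}{t}\bigr)^{r/t}$, but the theorem demands a bound in terms of $|\partial_{r-t}\mathcal{H}|$, and the only comparison available, $|\partial_{r-t}\mathcal{H}|\le k_t(G)$, points the wrong way: an upper bound for $|\mathcal{H}|$ in terms of the possibly much larger quantity $k_t(G)$ does not imply one in terms of $|\partial_{r-t}\mathcal{H}|$. Neither of your proposed repairs closes this. Running ``the $t$-uniform instance'' on $\partial_{r-t}\mathcal{H}$ would bound the shadow by its own shadow, not $|\mathcal{H}|$ by $|\partial_{r-t}\mathcal{H}|$; and the induction on $r-t$ requires both that $\partial_{1}\mathcal{H}$ be (essentially) $\mathcal{K}_{\ell+1}^{r-1}$-free and the single-step estimate $|\mathcal{H}|\lesssim\binom{\ell}{r}\bigl(|\partial_{1}\mathcal{H}|/\binom{\ell}{r-1}\bigr)^{r/(r-1)}$, and the latter suffers from exactly the same mismatch, since $|\partial_{1}\mathcal{H}|\le k_{r-1}(G)$ again goes in the unhelpful direction. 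In short, your sketch genuinely proves only the $t=2$ case (where $|\partial_{r-2}\mathcal{H}|=k_2(G)$ exactly), modulo the Frohmader-type input above; for general $t$ you would still have to supply the core of the argument of \cite{LM21feasible}.
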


\begin{proof}[Proof of Theorem~\ref{THM:Lp-clique-expansion-p-small}]
    Let $r \ge 2$ be an integer and $F$ be a graph with $\chi(F) = \ell + 1> r$.
    Since $\mathcal{K}_{\ell+1}^{r} \le_{\mathrm{hom}} H_{F}^{r}$, by Proposition~\ref{PROP:tp-density-blowup-lemma}, it suffices for the first part of Theorem~\ref{THM:Lp-clique-expansion-p-small} to show that 
    \begin{align*}
        \pi_{t,p}(\mathcal{K}_{\ell+1}^{r})
        \le t!\binom{\ell}{t}\binom{\ell-t}{r-t}^{p}\left(\frac{1}{\ell}\right)^{t+p(r-t)}. 
    \end{align*}
    Let $n$ be a sufficiently large integer, and $\mathcal{H}$ be an $n$-vertex $\mathcal{K}_{\ell+1}^{r}$-free $r$-graph with $\norm{\mathcal{H}}_{t,p} = \mathrm{ex}_{t,p}(n,\mathcal{K}_{\ell+1}^{r})$. 
    Let $x \coloneqq \rho(\partial_{r-t}\mathcal{H})$ and $y \coloneqq \rho(\mathcal{H})$. 
    By~\eqref{equ:PROP:tp-norm-Jensen-small}, 
    \begin{align*}
        \pi_{t,p}(\mathcal{K}_{\ell+1}^{r}) 
        = \frac{\norm{\mathcal{H}}_{t,p}}{\binom{n}{t}\cdot n^{p(r-t)}} + o(1)
        & \le t! \cdot \left(\binom{r}{t}\cdot \frac{\rho(\mathcal{H})}{r!}\right)^{p} \cdot \left(\frac{\rho(\partial_{r-t}\mathcal{H})}{t!}\right)^{1-p} + o(1)  \\
        & = \frac{y^{p} x^{1-p}}{\left((r-t)!\right)^{p}} + o(1). 
    \end{align*}
    Combining with Theorem~\ref{THM:LM-feasible-region-T3}, we obtain $x \le t! \binom{\ell}{t} \left(\frac{1}{\ell}\right)^{t} + o(1)$ and 
    \begin{align*}
        \pi_{t,p}(\mathcal{K}_{\ell+1}^{r}) 
        & \le \frac{1}{\left((r-t)!\right)^{p}} \left(r!\binom{\ell}{r}\left(\frac{x}{t!\binom{\ell}{t}}\right)^{\frac{r}{t}}\right)^{p} x^{1-p} + o(1)  \\
        & \le \frac{1}{\left((r-t)!\right)^{p}} \left(r!\binom{\ell}{r}\left(\frac{1}{\ell}\right)^{r}\right)^{p} \left(t! \binom{\ell}{t} \left(\frac{1}{\ell}\right)^{t}\right)^{1-p} + o(1) \\
        & = t! \binom{\ell}{t} \binom{\ell-t}{r-t}^{p} \left(\frac{1}{\ell}\right)^{t+p(r-t)} + o(1),  
    \end{align*}
    completing the proof of first part 
    of Theorem~\ref{THM:Lp-clique-expansion-p-small}. 
    The proof for the stability part closely follows that of Theorem~\ref{THM:Lp-F5-p-small-a} with~{\cite[Theorem~1.5]{LM23KKstability}} replaced by~{\cite[Theorem~1.8]{LM23KKstability}}, so we omit the details here. 
\end{proof}

\section{Concluding remarks}\label{SEC:Remark}
Given integers $r > t \ge 1$ and an integer $p \ge 1$, let $S_{t,p}^{r}$ denote the $r$-graph with $p$ edges $\{E_1, \ldots, E_{p}\}$ such that there exists a $t$-set $T$ with $E_i \cap E_j = T$ for $1\le i < j \le p$. 
Given an $r$-graph, let $\mathrm{hom}(S_{t,p}^{r}, \mathcal{H})$ and $\mathrm{inj}(S_{t,p}^{r}, \mathcal{H})$ denote the number of homomorphisms\footnote{Both $S_{t,p}^{r}$ and $\mathcal{H}$ are vertex-labelled.} and injective homomorphisms from $S_{t,p}^{r}$ to $\mathcal{H}$, respectively.  
It is easy to see that $\mathrm{hom}(S_{t,p}^{r}, \mathcal{H}) = t! \left((r-t)!\right)^{p} \cdot \norm{\mathcal{H}}_{t,p}$ and $\mathrm{inj}(S_{t,p}^{r}, \mathcal{H}) = \mathrm{hom}(S_{t,p}^{r}, \mathcal{H}) + O(n^{t+p(r-t)-1})$ for every $\mathcal{H} \in \mathfrak{G}$. 

For a family $\mathcal{F}$ of $r$-graphs, determining the maximum value of $\mathrm{inj}(S_{t,p}^{r}, \mathcal{H})$ in an $n$-vertex $\mathcal{F}$-free $r$-graph $\mathcal{H}$ is equivalent to solving the generalized Tur\'{a}n problem $\mathrm{ex}(n,S_{t,p}^{r}, \mathcal{F})$, a central topic in Extremal Combinatorics (see e.g.~\cite{Erdos62,AS16}). 
The results presented in this paper (Theorems~\ref{THM:Lp-F5-p-large},~\ref{THM:Lp-clique-expansion-p-large},~\ref{THM:Lp-general-a}, and~\ref{THM:Lp-general-b}) can be shown to apply to $\mathrm{inj}(S_{t,p}^{r}, \mathcal{H})$ with minor modifications to the current proofs. 

Recall that Theorem~\ref{THM:Lp-F5-p-small-b} determined $\pi_{2,p}(F_5)$ for $p$ around points in $\{k^{-1} \colon k \in 6\mathbb{N}^{+} + \{0,2\}\}$. 
It seems plausible to conjecture that for every $p \in (0,1/2)$, the (asymptotic) extremal construction for the $(2,p)$-norm Tur\'{a}n problem of $F_5$ is a blowup of some STS. 

\begin{conjecture}
    For every $p \in (0,1/2)$, 
    \begin{align*}
        \pi_{2,p}(F_5)
        = \max\left\{2 \cdot \lambda_{2,p}(\mathcal{S}) \colon \text{$\mathcal{S}$ is an STS}\right\}. 
    \end{align*}
\end{conjecture}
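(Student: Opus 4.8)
The plan is to follow the same two-step strategy that yields Theorem~\ref{THM:Lp-F5-p-small-b}, but to push it from neighbourhoods of the points $\{1/k \colon k \in 6\mathbb{N}^{+}+\{0,2\}\}$ to the whole interval $(0,1/2)$. First I would reduce $F_5$ to $\mathcal{T}_3 = \{K_{4}^{3-}, F_5\}$: since $\mathcal{T}_3 \le_{\hom} F_5$, Proposition~\ref{PROP:tp-density-blowup-lemma} gives $\pi_{2,p}(F_5) = \pi_{2,p}(\mathcal{T}_3)$. The lower bound $\pi_{2,p}(F_5) \ge \max_{\mathcal{S}} 2\lambda_{2,p}(\mathcal{S})$ is immediate: every blowup of an STS $\mathcal{S}$ is $\mathcal{T}_3$-free (since $\mathcal{T}_3$ is blowup-invariant and no STS contains a member of $\mathcal{T}_3$), and by Fact~\ref{FACT:t-p-Lagrangian} weighted blowups of $\mathcal{S}$ realise $\norm{\mathcal{H}}_{2,p}$ arbitrarily close to $\lambda_{2,p}(\mathcal{S}) \cdot n^{2+p}$, i.e.\ $(2,p)$-density arbitrarily close to $2\lambda_{2,p}(\mathcal{S})$. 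So the conjecture is equivalent to the matching upper bound $\pi_{2,p}(\mathcal{T}_3) \le \max_{\mathcal{S}} 2\lambda_{2,p}(\mathcal{S})$.

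For the upper bound I would argue through the feasible region of $\mathcal{T}_3$. For an $n$-vertex $\mathcal{T}_3$-free $\mathcal{H}$, set $x \coloneqq \rho(\partial\mathcal{H})$ and $y \coloneqq \rho(\mathcal{H})$; the first inequality in~\eqref{equ:PROP:tp-norm-Jensen-small} gives $\rho_{2,p}(\mathcal{H}) \le y^p x^{1-p} + o(1)$, so $\pi_{2,p}(\mathcal{T}_3) \le 2\sup\{y^p x^{1-p} \colon (x,y) \in \Omega(\mathcal{T}_3)\}$, where $\Omega(\mathcal{T}_3)$ is the asymptotic feasible region, with upper boundary a function $g_{\mathcal{T}_3}(x)$. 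Using only the Liu--Mubayi bound $g_{\mathcal{T}_3}(x) \le \min\{x^{3/2}/\sqrt{6},\, x(1-x)\}$ from Theorem~\ref{THM:LM-feasible-region-T3} reproduces, for $p \in (0,1/2)$, exactly the bound $p^p/(p+1)^{p+1}$ of Theorem~\ref{THM:Lp-F5-p-small-b} (the supremum of $y^p x^{1-p}$ then sits on the branch $y = x(1-x)$ at $x = 1/(1+p)$), and this is tight only when $1/(1+p)$ is the shadow density $\tfrac{m}{m+1}$ of a balanced STS blowup, i.e.\ when $p = 1/k$ with $k+1 \in 6\mathbb{N}+\{1,3\}$. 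To obtain the conjecture one must therefore replace the Liu--Mubayi bound by the exact $g_{\mathcal{T}_3}$. The guess I would try to prove is that $g_{\mathcal{T}_3}$ is the upper envelope of the weighted-STS-blowup curves (the map sending a weighting $\vec{w}$ of an STS $\mathcal{S}$ to the pair $(\rho(\partial),\rho)$ of the corresponding blowup), so that $\sup\{y^p x^{1-p}\colon (x,y)\in\Omega(\mathcal{T}_3)\} = \max_{\mathcal{S}} \max_{\vec{w}} L_{\mathcal{S},2,p}(\vec{w}) = \max_{\mathcal{S}} \lambda_{2,p}(\mathcal{S})$; the remaining optimization---which STS is optimal for a given $p$, and with which weighting---is then a compactness argument, and one expects the optimal $\mathcal{S}$ to have roughly $1 + \lceil 1/p \rceil$ vertices.

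The main obstacle is precisely this exact determination of $g_{\mathcal{T}_3}$ (equivalently, of $\Omega(\mathcal{T}_3)$), which is currently open: Theorem~\ref{THM:LM-feasible-region-T3} and its piecewise-linear refinement in~\cite{LM21feasible} are tight only near the STS-blowup values of $x$, and that restriction is exactly what forces the ``$p$ near $1/k$'' hypothesis. Establishing the envelope formula for $g_{\mathcal{T}_3}$ on all of $[0,1)$ would itself encode new Tur\'an/codegree statements for $\mathcal{T}_3$-type configurations. A further complication is that the $p \ge 1$ machinery does not transfer: symmetrization (Proposition~\ref{PROP:star-polynomial-sym-increase}) fails for $p < 1$ because $x \mapsto x^p$ is concave, so there is no degree-stability and no obvious local move pushing $\norm{\mathcal{H}}_{2,p}$ toward an STS blowup. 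An alternative route---a direct stability statement proving, via hypergraph regularity and a flag-algebra-type computation, that every $\mathcal{T}_3$-free $\mathcal{H}$ with near-extremal $(2,p)$-norm is $o(n^3)$-close to a weighted blowup of an STS---would, I expect, still require essentially the same control of the feasible region. So the crux I anticipate is combining the reduction above with new structural results on $\mathcal{T}_3$-free hypergraphs of prescribed shadow size.
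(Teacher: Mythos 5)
The statement you are treating is stated in the paper only as a conjecture; the paper contains no proof of it, so there is no argument of the authors to compare yours against. Judged on its own terms, your proposal is a reduction, not a proof, and it is worth separating what is solid from what is not. The lower bound and the reduction are correct: $\mathcal{T}_3 \le_{\mathrm{hom}} F_5$ gives $\pi_{2,p}(F_5)=\pi_{2,p}(\mathcal{T}_3)$ via Proposition~\ref{PROP:tp-density-blowup-lemma}, and blowing up an STS $\mathcal{S}$ with part sizes proportional to an optimal weight vector for $L_{\mathcal{S},2,p}$ yields $\mathcal{T}_3$-free $3$-graphs with $\norm{\cdot}_{2,p}=(1-o(1))\lambda_{2,p}(\mathcal{S})\,n^{2+p}$, so $\pi_{2,p}(F_5)\ge \max_{\mathcal{S}}2\lambda_{2,p}(\mathcal{S})$ for every $p>0$. (There is a harmless normalization slip in your upper-bound chain: the correct statement is $\pi_{2,p}(\mathcal{T}_3)\le \sup\{y^px^{1-p}\}$ with the factor $2$ absorbed into the $(x,y)$-scaling, not $\pi_{2,p}(\mathcal{T}_3)\le 2\sup\{y^px^{1-p}\}$; your two factor-of-$2$ errors cancel.)

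The upper bound, however, has two genuine gaps, only the first of which you flag. First, as you say, the exact upper boundary $g_{\mathcal{T}_3}$ of the feasible region is unknown, and asserting that it is the upper envelope of the weighted-STS-blowup curves is itself an open problem of essentially the same difficulty as the conjecture, so this step trades one open statement for another. Second, and more structurally, the projection of $\mathcal{H}$ to the pair $(x,y)=(\rho(\partial\mathcal{H}),\rho(\mathcal{H}))$ is intrinsically lossy for $p<1$: by concavity of $z\mapsto z^p$, a weighted blowup of an STS with weight vector $\vec{w}$ satisfies $y^px^{1-p}\ge 2L_{\mathcal{S},2,p}(\vec{w})$ with equality only when all covered pairs have equal codegree, i.e.\ only for the balanced weighting. (For $K_3^3$ with weights $(a,b,c)$ one has $y^px^{1-p}=2(3abc)^p(ab+bc+ca)^{1-p}\ge 2h_p(a,b,c)$, strict unless $a=b=c$.) Consequently, even granting the exact envelope, $\sup\{y^px^{1-p}\colon (x,y)\in\Omega(\mathcal{T}_3)\}$ can strictly exceed $\max_{\mathcal{S}}2\lambda_{2,p}(\mathcal{S})$ whenever the conjectured extremal weighting is unbalanced — which is exactly what one expects for $p$ away from the points $1/k$, and is the real reason Theorem~\ref{THM:Lp-F5-p-small-b} is confined to small neighbourhoods of those points, where the optimum is a balanced blowup and Jensen is tight. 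So a proof along your lines needs more than the feasible region: it needs a refinement that retains the codegree distribution (for instance an exact Lagrangian-type feasible region, or a stability argument capable of handling unbalanced blowups), and neither your envelope claim nor your alternative regularity route supplies that; the symmetrization machinery of Proposition~\ref{PROP:star-polynomial-sym-increase} is unavailable for $p<1$, as you note.
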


In general, one could consider the $(t,p)$-norm version of every extremal problem (provided it is meaningful), here we list only a few of them. 

The following question is an extension of the Erd\H{o}s--Rademacher Problem~\cite{Erdos55}. 

\begin{problem}
    Let $n \ge r > t \ge 1$ be integers and $p > 0$ be a real number. 
    Let $F$ be an $r$-graph. 
    Suppose that $m$ is an integer greater than $\mathrm{ex}_{t,p}(n,F)$. 
    Determine 
    \begin{align*}
        \min\left\{\mathrm{inj}(F,\mathcal{H}) \colon \norm{\mathcal{H}}_{t,p} = m \text{ and } v(\mathcal{H}) = n\right\}. 
    \end{align*}
\end{problem}

The following question is an extension of the feasible region problem introduced in~\cite{LM21feasible}. 
\begin{problem}
    Let $n \ge r > t \ge 1$ be integers and $p > 0$ be a real number. 
    Let $\mathcal{F}$ be a family of $r$-graphs. 
    For every feasible positive integer $m$, determine 
    \begin{align*}
        \min\left\{\norm{\mathcal{H}}_{t,p} \colon |\partial\mathcal{H}| = m,\ v(\mathcal{H}) = n, \text{ and $\mathcal{H}$ is $\mathcal{F}$-free} \right\}. 
    \end{align*}
\end{problem}
\textbf{Remark.}
The case $\mathcal{F} = \emptyset$ is  an extension of the Kruskal--Katona Theorem~\cite{KA68,KR63}. 

The follow question is an extension of the Kleitman--West Problem (see e.g.~\cite{AK78,Har91,AC99,DGS16,RW18,GLM21}). 
\begin{problem}
    Let $n \ge r > t \ge 1$ be integers and $p > 0$ be a real number. 
    For every feasible positive integer $m$, determine 
    \begin{align*}
        \min\left\{\norm{\mathcal{H}}_{t,p} \colon |\mathcal{H}| = m \text{ and } v(\mathcal{H}) = n\right\}. 
    \end{align*}
\end{problem}
\bibliographystyle{alpha}
\bibliography{LpNorm}
\begin{appendix}
\section{Non-vertex-extendability for $p < 1$}\label{APPENDIX:SEC:construction}
\begin{theorem}
    The $3$-graph $F_5$ is not $(2,1/2)$-vertex-extendable with respect to $\mathfrak{K}^3_{3}$, where $\mathfrak{K}^3_{3}$ is the collection of all $3$-partite $3$-graphs.
\end{theorem}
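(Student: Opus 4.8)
The plan is to \emph{refute} vertex-extendability directly: for every $\varepsilon>0$ and every $N_0$ I will produce an $F_5$-free $3$-graph $\mathcal{H}$ on $n\ge N_0$ vertices, together with a vertex $v_*$, such that $\mathcal{H}-v_*$ is $3$-partite, $\mathcal{H}$ is \emph{not} $3$-partite, and $\delta_{2,1/2}(\mathcal{H})\ge (1-\varepsilon)\,\mathrm{exdeg}_{2,1/2}(n,F_5)$. This contradicts the existence of the $\varepsilon,N_0$ promised by the definition of $(2,1/2)$-vertex-extendability with respect to $\mathfrak{K}^3_3$.

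Construction. Fix a large integer $m$ with $n:=3m+1\ge N_0$ and set $s:=\lceil 2m^{3/4}\rceil$. Take disjoint $m$-sets $W_1,W_2,W_3$ and an extra vertex $v_*$, so $V(\mathcal{H})=W_1\cup W_2\cup W_3\cup\{v_*\}$. Choose disjoint $P,P'\subseteq W_1$ and $Q\subseteq W_2$, $Q'\subseteq W_3$, each of size $s$, and let $L$ be the graph on $W_1\cup W_2\cup W_3$ consisting of the complete bipartite graph between $P$ and $Q$ together with the complete bipartite graph between $P'$ and $Q'$. Define
\[
\mathcal{G} := K^3[W_1,W_2,W_3]\setminus\Bigl(\{\{x,y,z\}: x\in P,\ y\in Q,\ z\in W_3\}\ \cup\ \{\{x,y,z\}: x\in P',\ z\in Q',\ y\in W_2\}\Bigr),
\]
that is, $\mathcal{G}$ is the balanced complete $3$-partite $3$-graph with every pair of $L$ made \emph{uncovered}. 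Finally put $\mathcal{H}:=\mathcal{G}\cup\{\{v_*\}\cup e : e\in L\}$, so that the link of $v_*$ in $\mathcal{H}$ is exactly $L$ and $\mathcal{H}-v_*=\mathcal{G}$.

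I would then verify four statements. (i) $\mathcal{H}-v_*=\mathcal{G}$ is $3$-partite, being a subgraph of $K^3[W_1,W_2,W_3]$. (ii) $\mathcal{H}$ is not $3$-partite: since $\mathcal{G}$ contains the complete $3$-partite $3$-graph on $W_1\setminus(P\cup P'),\,W_2\setminus Q,\,W_3\setminus Q'$, any homomorphism $\psi\colon\mathcal{H}\to K_3^3$ is constant with three distinct values on these sets; tracing which colour each remaining vertex may take forces $\psi(v_*)$ to equal the colour of $W_3$ (from an edge $\{v_*,x,y\}$ with $x\in P,\,y\in Q$) and simultaneously the colour of $W_2$ (from an edge $\{v_*,x',y'\}$ with $x'\in P',\,y'\in Q'$), a contradiction. (iii) $\mathcal{H}$ is $F_5$-free — this is the heart of the argument, done by case analysis on the position of $v_*$ in a hypothetical copy of $F_5$ with edges $\{p_1p_2p_3,p_1p_2p_4,p_3p_4p_5\}$, using three structural facts: $\chi(F_5)=4$, so no copy lies inside the $3$-partite $\mathcal{G}$; every pair of $L$ is uncovered in $\mathcal{G}$; and the $L$-neighbourhood of every vertex lies inside a single $W_i$. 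If $v_*=p_5$, then $\{p_3,p_4\}\in L$ while $\{p_1p_2p_3\},\{p_1p_2p_4\}\in\mathcal{G}$ would force the pair $\{p_1,p_2\}$ to cross two different pairs of parts — impossible. If $v_*\in\{p_3,p_4\}$, then $\{p_1,p_2\}\in L$ is uncovered, so $\{p_1p_2p_4\}\notin\mathcal{G}$. If $v_*\in\{p_1,p_2\}$, the two $L$-edges at $v_*$ put two of $p_3,p_4$ into the same part, so $\{p_3p_4p_5\}$ is not a transversal triple and hence not an edge of $\mathcal{G}$. (iv) $\delta_{2,1/2}(\mathcal{H})\ge(1-\varepsilon)\,\mathrm{exdeg}_{2,1/2}(n,F_5)$: by Theorem~\ref{THM:Lp-F5-p-small-a}, $\mathrm{exdeg}_{2,1/2}(n,F_5)=(1+o(1))\tfrac52 m^{3/2}$. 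For $v_*$, Lemma~\ref{LEMMA:Lp-degree-expression} gives $d_{\mathcal{H},2,1/2}(v_*)\ge\sum_{T\in L}\bigl(d_{\mathcal{H}}^{1/2}(T)-(d_{\mathcal{H}}(T)-1)^{1/2}\bigr)=|L|=2s^2\ge 8m^{3/2}$, because each $T\in L$ is uncovered in $\mathcal{G}$ and so $d_{\mathcal{H}}(T)=1$. For $v\in V(\mathcal{G})$, by Lemma~\ref{LEMMA:local-monotone} it suffices to bound $d_{\mathcal{K}_v,2,1/2}(v)$ from below for some complete $3$-partite subgraph $\mathcal{K}_v\subseteq\mathcal{G}$ containing $v$ with all parts of size $\ge m-2s=(1-o(1))m$; a direct calculation then yields $d_{\mathcal{K}_v,2,1/2}(v)\ge\tfrac52 m^{3/2}-O(m^{5/4})\ge(1-\varepsilon)\tfrac52 m^{3/2}$ once $m$ is large.

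The main obstacle is part (iii): it is precisely what dictates the delicate choice of $L$ (cross-pairs that are simultaneously uncovered in $\mathcal{G}$ and have single-part neighbourhoods), since any coarser choice — a link supported inside one part, or one whose pairs remain covered — creates a copy of $F_5$ together with the near-complete $3$-partite $\mathcal{G}$; this is exactly the phenomenon ruled out by Lemma~\ref{LEMMA:vertex-extendability-F5-3-partite} when $p\ge1$, and it survives for $p<1$ only because a vertex of degree $\Theta(n^{1/(1+p)})=o(n^2)$ can still achieve near-extremal $(2,p)$-degree. A secondary point is the arithmetic in (iv): $s$ must be small enough that deleting $\Theta(sm)$ triples at any vertex costs only an $o(1)$ fraction of its $(2,1/2)$-degree, yet large enough that $|L|=2s^2$ already exceeds $\mathrm{exdeg}_{2,1/2}(n,F_5)$, and $s\asymp m^{3/4}$ does both. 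The case of general $p\in(0,1)$ is identical with $s\asymp m^{1/(1+p)}$.
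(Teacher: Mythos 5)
Your overall strategy — exhibit, for every $\varepsilon$ and $N_0$, an $F_5$-free $\mathcal{H}$ with near-extremal minimum $(2,1/2)$-degree, a vertex $v_*$ whose deletion leaves a $3$-partite graph, while $\mathcal{H}$ itself is not $3$-partite — is exactly the paper's, and your construction is a close variant of the one in the appendix: the paper makes the link of $v_*$ a complete bipartite graph $A_1\times A_2$ (crossing $V_1,V_2$) plus one extra pair $\{u_1,u_2\}$ crossing $V_2,V_3$, with all link pairs made uncovered in the rest; you use two complete bipartite pieces $P\times Q$ and $P'\times Q'$ crossing different pairs of parts. Your verifications (i)--(iii) and the bound $d_{\mathcal{H},2,1/2}(v_*)\ge|L|=2s^2\gg m^{3/2}$ are sound, and the parameter choice $s\asymp m^{3/4}$ is the right one.

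There is, however, a genuine gap in step (iv) for the ordinary vertices: you invoke Lemma~\ref{LEMMA:local-monotone} to pass from $\mathcal{H}$ to a complete $3$-partite subgraph $\mathcal{K}_v\subseteq\mathcal{G}$, but that lemma is stated (and is only true) for $p\ge 1$. For $p<1$ local monotonicity fails: adding edges can \emph{decrease} a vertex's $(2,p)$-degree, because the marginal terms $d^{p}(T)-(d(T)-1)^{p}$ shrink as $d(T)$ grows. A minimal counterexample with $p=1/2$: take $\mathcal{H}'=\{vab\}$ and $\mathcal{H}=\mathcal{H}'\cup\{abx_1,\dots,abx_{D-1}\}$; then $d_{\mathcal{H}',2,1/2}(v)=3$ while $d_{\mathcal{H},2,1/2}(v)=2+\sqrt{D}-\sqrt{D-1}<3$. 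So the inequality $d_{\mathcal{H},2,1/2}(v)\ge d_{\mathcal{K}_v,2,1/2}(v)$ does not follow from anything in the paper and is false in general; indeed the failure of monotonicity for $p<1$ is part of why the whole $p<1$ regime behaves differently. The fix is to do what the paper's proof does: apply Lemma~\ref{LEMMA:Lp-degree-expression} to $\mathcal{H}$ directly, drop the (nonnegative) terms coming from link pairs destroyed by the construction, and lower-bound each surviving term using an upper bound on pair degrees \emph{in $\mathcal{H}$ itself}, namely $d_{\mathcal{H}}(T)\le m+1$, so that each marginal term is at least $\sqrt{d_{\mathcal H}(T)}-\sqrt{d_{\mathcal H}(T)-1}\ge \tfrac{1}{2\sqrt{m+1}}$; together with the codegree terms this gives $d_{\mathcal{H},2,1/2}(v)\ge \tfrac52 m^{3/2}-O(m^{5/4})$, which suffices. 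A peripheral slip: your closing remarks about general $p$ have the wrong exponent — one needs $s^2\gtrsim \mathrm{exdeg}_{2,p}\asymp n^{1+p}$, i.e.\ $s\asymp m^{(1+p)/2}$ (and the special vertex has degree $\Theta(n^{1+p})=o(n^2)$), not $s\asymp m^{1/(1+p)}$; this does not affect the $p=1/2$ case you actually argue.
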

\begin{proof}
    Fix $\delta,\varepsilon_1,\varepsilon$ to be sufficiently small and $n$ to be sufficiently large with $0\le n^{-4}\ll \delta\ll \varepsilon_1\ll \varepsilon\ll 1$.
    Let
    \begin{equation}\label{eq: definition-of-dn}
        d(n)\coloneqq (1-\varepsilon)\mathrm{exdeg}_{2, 1/2}=(1-\varepsilon)\cdot \dfrac{ \left(2+1/2\right) \mathrm{ex}_{2,1/2}(n,F_5)}{n}.
    \end{equation}
    \begin{claim}\label{CLAIM: eq-lower-bound}
        $d(n)\le (1-\dfrac{\varepsilon}{2})\cdot \dfrac{5}{2}\left(\dfrac{n}{3}\right)^{\frac{3}{2}}.$
    \end{claim}
    \begin{proof}
        By Theorem~\ref{THM:Lp-F5-p-small-a}, 
        $\lim_{n\to \infty}\frac{2  \mathrm{ex}_{2,1/2}(n,F_5)}{n^{2+1/2}}=\pi_{2,\frac{1}{2}}(F_5)=\frac{2}{3^{1.5}}.$
        So by $n$ is sufficiently large, 
        $ \mathrm{ex}_{2,\frac{1}{2}}(n,F_5)\le   \left(\frac{1}{3^{1.5}} +\delta\right)n^{2.5}\le (1+9\delta )\cdot \frac{n^{2.5}}{3^{1.5}}.$
        Then \eqref{eq: definition-of-dn} continues as
        \begin{align*}
            d(n) \le (1-\varepsilon)\cdot \dfrac{(1+1/2)(1+9\delta)\cdot \dfrac{n^{2.5}}{3^{1.5}}}{n}  =(1-\varepsilon)(1+9\delta)\cdot \dfrac{5}{2}\left(\dfrac{n}{3}\right)^{\frac{3}{2}}\le (1-\frac{\varepsilon}{2}) \cdot \dfrac{5}{2}\left(\dfrac{n}{3}\right)^{\frac{3}{2}},
        \end{align*}
        where the last inequality holds by $0< \delta \ll \varepsilon\ll 1$.
    \end{proof}
    To prove the theorem, it suffices to show there exists an $(n+1)$-vertex $F_5$-free $3$-graph $\mathcal{H}$ with $\delta_{2,\frac{1}{2}}(\mathcal{H})\ge d(n)$ and $\mathcal{H}-v_\ast\in \mathfrak{K}^3_{3}$ for some $v_\ast\in V(\mathcal{H})$, but $\mathcal{H}\notin \mathfrak{K}^3_{3}.$
\usetikzlibrary{arrows.meta,patterns}
\usetikzlibrary{arrows.meta}
\makeatletter

\pgfdeclarearrow{
  name = ipe _linear,
  defaults = {
    length = +1bp,
    width  = +.666bp,
    line width = +0pt 1,
  },
  setup code = {
    \pgfarrowssetbackend{0pt}
    \pgfarrowssetvisualbackend{
      \pgfarrowlength\advance\pgf@x by-.5\pgfarrowlinewidth}
    \pgfarrowssetlineend{\pgfarrowlength}
    \ifpgfarrowreversed
      \pgfarrowssetlineend{\pgfarrowlength\advance\pgf@x by-.5\pgfarrowlinewidth}
    \fi
    \pgfarrowssettipend{\pgfarrowlength}
    \pgfarrowshullpoint{\pgfarrowlength}{0pt}
    \pgfarrowsupperhullpoint{0pt}{.5\pgfarrowwidth}
    \pgfarrowssavethe\pgfarrowlinewidth
    \pgfarrowssavethe\pgfarrowlength
    \pgfarrowssavethe\pgfarrowwidth
  },
  drawing code = {
    \pgfsetdash{}{+0pt}
    \ifdim\pgfarrowlinewidth=\pgflinewidth\else\pgfsetlinewidth{+\pgfarrowlinewidth}\fi
    \pgfpathmoveto{\pgfqpoint{0pt}{.5\pgfarrowwidth}}
    \pgfpathlineto{\pgfqpoint{\pgfarrowlength}{0pt}}
    \pgfpathlineto{\pgfqpoint{0pt}{-.5\pgfarrowwidth}}
    \pgfusepathqstroke
  },
  parameters = {
    \the\pgfarrowlinewidth,%
    \the\pgfarrowlength,%
    \the\pgfarrowwidth,%
  },
}

\pgfdeclarearrow{
  name = ipe _pointed,
  defaults = {
    length = +1bp,
    width  = +.666bp,
    inset  = +.2bp,
    line width = +0pt 1,
  },
  setup code = {
    \pgfarrowssetbackend{0pt}
    \pgfarrowssetvisualbackend{\pgfarrowinset}
    \pgfarrowssetlineend{\pgfarrowinset}
    \ifpgfarrowreversed
      \pgfarrowssetlineend{\pgfarrowlength}
    \fi
    \pgfarrowssettipend{\pgfarrowlength}
    \pgfarrowshullpoint{\pgfarrowlength}{0pt}
    \pgfarrowsupperhullpoint{0pt}{.5\pgfarrowwidth}
    \pgfarrowshullpoint{\pgfarrowinset}{0pt}
    \pgfarrowssavethe\pgfarrowinset
    \pgfarrowssavethe\pgfarrowlinewidth
    \pgfarrowssavethe\pgfarrowlength
    \pgfarrowssavethe\pgfarrowwidth
  },
  drawing code = {
    \pgfsetdash{}{+0pt}
    \ifdim\pgfarrowlinewidth=\pgflinewidth\else\pgfsetlinewidth{+\pgfarrowlinewidth}\fi
    \pgfpathmoveto{\pgfqpoint{\pgfarrowlength}{0pt}}
    \pgfpathlineto{\pgfqpoint{0pt}{.5\pgfarrowwidth}}
    \pgfpathlineto{\pgfqpoint{\pgfarrowinset}{0pt}}
    \pgfpathlineto{\pgfqpoint{0pt}{-.5\pgfarrowwidth}}
    \pgfpathclose
    \ifpgfarrowopen
      \pgfusepathqstroke
    \else
      \ifdim\pgfarrowlinewidth>0pt\pgfusepathqfillstroke\else\pgfusepathqfill\fi
    \fi
  },
  parameters = {
    \the\pgfarrowlinewidth,%
    \the\pgfarrowlength,%
    \the\pgfarrowwidth,%
    \the\pgfarrowinset,%
    \ifpgfarrowopen o\fi%
  },
}

\newdimen\ipeminipagewidth
\def\ipestretchwidth#1{%
  \pgfmathsetlength{\ipeminipagewidth}{#1/\ipenodestretch}}

\tikzstyle{ipe import} = [
  x=1bp, y=1bp,
%
  ipe node stretch/.store in=\ipenodestretch,
  ipe stretch normal/.style={ipe node stretch=1},
  ipe stretch normal,
  ipe node/.style={
    anchor=base west, inner sep=0, outer sep=0, scale=\ipenodestretch
  },
%
  ipe mark scale/.store in=\ipemarkscale,
  ipe mark tiny/.style={ipe mark scale=1.1},
  ipe mark small/.style={ipe mark scale=2},
  ipe mark normal/.style={ipe mark scale=3},
  ipe mark large/.style={ipe mark scale=5},
  ipe mark normal, 
  ipe circle/.pic={
    \draw[line width=0.2*\ipemarkscale]
      (0,0) circle[radius=0.5*\ipemarkscale];
    \coordinate () at (0,0);
  },
  ipe disk/.pic={
    \fill (0,0) circle[radius=0.6*\ipemarkscale];
    \coordinate () at (0,0);
  },
  ipe fdisk/.pic={
    \filldraw[line width=0.2*\ipemarkscale]
      (0,0) circle[radius=0.5*\ipemarkscale];
    \coordinate () at (0,0);
  },
  ipe box/.pic={
    \draw[line width=0.2*\ipemarkscale, line join=miter]
      (-.5*\ipemarkscale,-.5*\ipemarkscale) rectangle
      ( .5*\ipemarkscale, .5*\ipemarkscale);
    \coordinate () at (0,0);
  },
  ipe square/.pic={
    \fill
      (-.6*\ipemarkscale,-.6*\ipemarkscale) rectangle
      ( .6*\ipemarkscale, .6*\ipemarkscale);
    \coordinate () at (0,0);
  },
  ipe fsquare/.pic={
    \filldraw[line width=0.2*\ipemarkscale, line join=miter]
      (-.5*\ipemarkscale,-.5*\ipemarkscale) rectangle
      ( .5*\ipemarkscale, .5*\ipemarkscale);
    \coordinate () at (0,0);
  },
  ipe cross/.pic={
    \draw[line width=0.2*\ipemarkscale, line cap=butt]
      (-.5*\ipemarkscale,-.5*\ipemarkscale) --
      ( .5*\ipemarkscale, .5*\ipemarkscale)
      (-.5*\ipemarkscale, .5*\ipemarkscale) --
      ( .5*\ipemarkscale,-.5*\ipemarkscale);
    \coordinate () at (0,0);
  },
%
  /pgf/arrow keys/.cd,
  ipe arrow normal/.style={scale=1},
  ipe arrow tiny/.style={scale=.4},
  ipe arrow small/.style={scale=.7},
  ipe arrow large/.style={scale=1.4},
  ipe arrow normal,
  /tikz/.cd,
%
  ipe arrows/.style={
    ipe normal/.tip={
      ipe _pointed[length=1bp, width=.666bp, inset=0bp,
                   quick, ipe arrow normal]},
    ipe pointed/.tip={
      ipe _pointed[length=1bp, width=.666bp, inset=0.2bp,
                   quick, ipe arrow normal]},
    ipe linear/.tip={
      ipe _linear[length = 1bp, width=.666bp,
                  ipe arrow normal, quick]},
    ipe fnormal/.tip={ipe normal[fill=white]},
    ipe fpointed/.tip={ipe pointed[fill=white]},
    ipe double/.tip={ipe normal[] ipe normal},
    ipe fdouble/.tip={ipe fnormal[] ipe fnormal},
    ipe arc/.tip={ipe normal},
    ipe farc/.tip={ipe fnormal},
    ipe ptarc/.tip={ipe pointed},
    ipe fptarc/.tip={ipe fpointed},
  },
  ipe arrows, 
]

\tikzset{
  rgb color/.code args={#1=#2}{%
    \definecolor{tempcolor-#1}{rgb}{#2}%
    \tikzset{#1=tempcolor-#1}%
  },
}
\definecolor{red}{rgb}{1,0,0}
\definecolor{blue}{rgb}{0,0,1}
\definecolor{brown}{rgb}{0.647,0.165,0.165}
\definecolor{darkblue}{rgb}{0,0,0.545}
\definecolor{darkcyan}{rgb}{0,0.545,0.545}
\definecolor{darkgray}{rgb}{0.663,0.663,0.663}
\definecolor{darkgreen}{rgb}{0,0.392,0}
\definecolor{darkmagenta}{rgb}{0.545,0,0.545}
\definecolor{darkorange}{rgb}{1,0.549,0}
\definecolor{darkred}{rgb}{0.545,0,0}
\definecolor{gold}{rgb}{1,0.843,0}
\definecolor{gray}{rgb}{0.745,0.745,0.745}
\definecolor{green}{rgb}{0,1,0}
\definecolor{lightblue}{rgb}{0.678,0.847,0.902}
\definecolor{lightcyan}{rgb}{0.878,1,1}
\definecolor{lightgray}{rgb}{0.827,0.827,0.827}
\definecolor{lightgreen}{rgb}{0.565,0.933,0.565}
\definecolor{lightyellow}{rgb}{1,1,0.878}
\definecolor{navy}{rgb}{0,0,0.502}
\definecolor{orange}{rgb}{1,0.647,0}
\definecolor{pink}{rgb}{1,0.753,0.796}
\definecolor{purple}{rgb}{0.627,0.125,0.941}
\definecolor{seagreen}{rgb}{0.18,0.545,0.341}
\definecolor{turquoise}{rgb}{0.251,0.878,0.816}
\definecolor{violet}{rgb}{0.933,0.51,0.933}
\definecolor{yellow}{rgb}{1,1,0}
\definecolor{black}{rgb}{0,0,0}
\definecolor{white}{rgb}{1,1,1}

\begin{center}
\begin{tikzpicture}[ipe import, font=\Large, scale=0.5]
  \draw[ultra thick]
    (80, 560) ellipse[x radius=64, y radius=144];
  \draw[ultra thick]
    (256, 560) ellipse[x radius=64, y radius=144];
  \draw[ultra thick]
    (432, 560) ellipse[x radius=64, y radius=144];
  \pic[ipe mark scale=3.0, red]
     at (416, 736) {ipe disk};
  \draw[red, thick, dashed]
    (64, 656) rectangle (112, 608);
  \draw[red, thick, dashed]
    (240, 656) rectangle (288, 608);
  \pic[ipe mark scale=3.0, red]
     at (272, 464) {ipe disk};
  \pic[ipe mark scale=3.0, red]
     at (448, 464) {ipe disk};
  \filldraw[shift={(416, 736)}, cm={1,0,0.6,0.6,(0,0)}, red, thin, fill opacity=0.3]
    (0, 0)
     .. controls (-160, -144) and (-200, -152) .. (-220, -156)
     .. controls (-240, -160) and (-240, -160) .. (-240, -160)
     .. controls (-240, -160) and (-240, -160) .. (-224, -157.3333)
     .. controls (-208, -154.6667) and (-176, -149.3333) .. (-146.6667, -149.3333)
     .. controls (-117.3333, -149.3333) and (-90.6667, -154.6667) .. (-77.3333, -157.3333)
     .. controls (-64, -160) and (-64, -160) .. (-64, -160)
     .. controls (-64, -160) and (-64, -160) .. (-69.3333, -154.6667)
     .. controls (-74.6667, -149.3333) and (-85.3333, -138.6667) .. (-74.6667, -112)
     .. controls (-64, -85.3333) and (-32, -42.6667) .. (-16, -21.3333)
     .. controls (0, 0) and (0, 0) .. (0, 0);
  \filldraw[shift={(416.001, 736.003)}, cm={1,0,0.3333,0.9444,(0,0)}, red, thin, fill opacity=0.3]
    (0, 0)
     .. controls (16, -176) and (-16, -232) .. (-32, -260)
     .. controls (-48, -288) and (-48, -288) .. (-48, -288)
     .. controls (-48, -288) and (-48, -288) .. (-40, -282.6667)
     .. controls (-32, -277.3333) and (-16, -266.6667) .. (13.3333, -266.6667)
     .. controls (42.6667, -266.6667) and (85.3333, -277.3333) .. (106.6667, -282.6667)
     .. controls (128, -288) and (128, -288) .. (128, -288)
     .. controls (128, -288) and (128, -288) .. (114.6667, -277.3333)
     .. controls (101.3333, -266.6667) and (74.6667, -245.3333) .. (49.3333, -178.6667)
     .. controls (24, -112) and (0, 0) .. (0, 0);
  \node[ipe node  , text=red]
     at (432, 736.004) {$v_\ast$};
  \node[ipe node  , text=red]
     at (256, 448) {$u_1$};
  \node[ipe node  , text=red]
     at (448, 448) {$u_2$};
  \node[ipe node  , text=red]
     at (32, 640) {$A_1$};
  \node[ipe node  , text=red]
     at (288, 640) {$A_2$};
  \node[ipe node  ]
     at (64, 384) {$V_1$};
  \node[ipe node  ]
     at (256, 384) {$V_2$};
  \node[ipe node  ]
     at (448, 384) {$V_3$};
  \node[ipe node  , text=red]
     at (304, 720) {$E_\ast$};
  \filldraw[shift={(80, 640)}, cm={1,0,-0.25,1,(0,0)}, black, fill opacity=0.3]
    (0, 0)
     .. controls (96, -32) and (136, -16) .. (156, -8)
     .. controls (176, 0) and (176, 0) .. (176, 0)
     .. controls (176, 0) and (176, 0) .. (189.3333, -13.3333)
     .. controls (202.6667, -26.6667) and (229.3333, -53.3333) .. (258.6667, -74.6667)
     .. controls (288, -96) and (320, -112) .. (336, -120)
     .. controls (352, -128) and (352, -128) .. (352, -128)
     .. controls (352, -128) and (352, -128) .. (264, -96)
     .. controls (176, -64) and (0, 0) .. (0, 0);
  \filldraw[black, thick, line cap=round, fill opacity=0.3]
    (80, 640)
     -- (80, 640);
  \node[ipe node  , fill opacity=0.8]
     at (160, 544) {$E_0$};
  \filldraw[black, thin, fill opacity=0.3]
    (448, 464)
     .. controls (320, 512) and (296, 488) .. (284, 476)
     .. controls (272, 464) and (272, 464) .. (272, 464)
     .. controls (272, 464) and (272, 464) .. (264, 480)
     .. controls (256, 496) and (240, 528) .. (208, 557.3333)
     .. controls (176, 586.6667) and (128, 613.3333) .. (104, 626.6667)
     .. controls (80, 640) and (80, 640) .. (80, 640)
     .. controls (80, 640) and (80, 640) .. (172, 596)
     .. controls (264, 552) and (448, 464) .. (448, 464);
\end{tikzpicture}
\end{center}
    %
    Given disjoint sets $V_1,V_2,V_3$ of size $n/3$ and let $T_3(n,3)\coloneqq \left\{\{v_1,v_2,v_3\}\colon v_i\in V_i\textit{ for }i\in [3]\right\}.$
    For $i\in [2]$, fix a subset $A_i\subset V_i$ of size $\varepsilon_1 n$.
    Pick two vertices $u_1\in V_2\setminus A_2$ and $u_2\in V_3$.
    Let $$\mathcal{H}_1\coloneqq T_3(n,3) \setminus E_0$$ where
    $E_0\coloneqq \left\{ \{a_1, u_1, u_2\},\{a_1, a_2, v_3\} \colon a_1\in A_1,a_2\in A_2,v_3\in V_3\right\}.$
    Add a new vertex $v_\ast$ to $\mathcal{H}_1$ and obtain $3$-graph
    $$\mathcal{H}\coloneqq \mathcal{H}_1\cup E_\ast\cup \left\{\{v_\ast, u_1,u_2\}\right\},$$
    where
    $E_\ast\coloneqq \left\{\{a_1, a_2, v_\ast\}\colon a_1\in A_1,a_2\in A_2\right\}.$
    Next we going to show $\mathcal{H}$ is the required counterexample.
    \begin{claim}\label{CLIAM: H-not-3-partite}
        $\mathcal{H}-v_\ast\in \mathfrak{K}^3_{3}$, but $\mathcal{H}\notin \mathfrak{K}^3_{3}$.
    \end{claim}
    \begin{proof}
        Notices that $\mathcal{H}-v_\ast=\mathcal{H}_1\subset T_3(n,3)$, so we have $\mathcal{H}-v_\ast\in \mathfrak{K}^3_{3}$.
        
        Suppose $\mathcal{H}\in \mathfrak{K}^3_{3} $ with $S_1,S_2,S_3$ be its corresponding vertices partition.
        For each $i\in [3]$, we fix a vertex $w_i$ with $w_i\in V_i\setminus \left( A_1\cup A_2\cup \{u_1,u_2\}\right)$.
        Observe that $\{w_1,w_2,w_3\}\in \mathcal{H}$, then we may assume $w_i\in S_i$ for $i\in [3]$.

        Pick a vertex $a_1\in A_1$. 
        Notice $\{a_1,w_2,w_3\}\in \mathcal{H}$, then $a_1\in S_1$.
        And similarly, we conclude $u_1\in S_2,u_2\in S_3$.
        Therefore, $\{v_\ast, u_1,u_2\}\in \mathcal{H}$ implies $v_\ast\in S_1$.
        Recall $a_1\in S_1$ and note $\{v_\ast, a_1,a_2\}\in \mathcal{H}$ for some $a_2\in A_2$, then $v_\ast\notin S_1$ which leads to a contradiction.
    \end{proof}
    \begin{claim}\label{CLAIM: H-is-F5-free}
        $\mathcal{H}$ is $F_5$-free.
    \end{claim}
    \begin{proof}
        Suppose otherwise that there exists a copy of $F_5\coloneqq \{e_1,e_2,e_3\}$ in $\mathcal{H}$.
        Since $\mathcal{H}_1\subset T_3(n,3)$, $\mathcal{H}_1$ is $F_5$-free.
        Then $v_\ast\in V(F_5)$, and we assume $e_1\coloneqq \{v_\ast, a_1,a_2\},e_2\coloneqq \{v_\ast, b_1,b_2\}$ to be the edges containing $v_\ast$ in $F_5$.
        Due to $F_5\subset \mathcal{H}$, we obtain $e_1,e_2\in L_{\mathcal{H}}(v_\ast)=E_\ast\cup \left\{\{v_\ast, u_1,u_2\}\right\}.$
        By symmetry, we may assume that $e_1\in E_\ast$ with $a_1\in A_1,a_2\in A_2$.

        Suppose $|e_1\triangle e_2|=2$.
        By symmetry, we may assume $e_1\triangle e_2=\{a_2,b_2\}$ and $a_1=b_1$. 
        Notice that $e_2\neq \left\{\{v_\ast, u_1,u_2\}\right\}$ and recall that $e_2\in E_\ast\cup \left\{\{v_\ast, u_1,u_2\}\right\}$, then $e_2\in E_\ast$ and hence $b_2\in A_2$.
        The structure of $F_5$ shows $\{a_2,b_2\}=e_1\triangle e_2\subset e_3$.
        However, there is no $3$-edge in $\mathcal{H}$ that intersects the set $A_2$ twice, which implies $e_3\notin \mathcal{H}$, a contradiction.

        Suppose $|e_1\triangle e_2|=1$.
        The structure of $F_5$ implies
        $$e_3\in E'\coloneqq \left\{\{a_1,a_2,b_1\},\{a_1,a_2,b_2\},\{b_1,b_2,a_1\},\{b_1,b_2,a_2\}\right\}.$$
        By $e_2\in E_\ast\cup \left\{\{v_\ast, u_1,u_2\}\right\}$, we may assume $b_1=u_1,b_2=u_2$ or $b_1\in A_1,b_2\in A_2$.
        In both cases, one can easily check $E'\cap \mathcal{H}=\emptyset$.
        By above we obtain $e_3\notin \mathcal{H}$, a contradiction.
    \end{proof}
    By Lemma~\ref{LEMMA:Lp-degree-expression}, each vertex $v\in V(\mathcal{H})$ has
    \begin{align}\label{eq: expansion-of-degree-in-3-graph}
        d_{\mathcal{H},2,\frac{1}{2}}(v)&= \sum_{u\in \partial L_{\mathcal{H}}(v )}d_{\mathcal{H}}^{\frac{1}{2}}( \{u,v\})+\sum_{T\in L_{\mathcal{H}}(v )}\left(d^{\frac{1}{2}}_{\mathcal{H}} (T)-\left(d_{\mathcal{H}}(T)-d_{\mathcal{H}}(T\cup\{v\})\right)^{\frac{1}{2}}\right)\notag\\
        &= \sum_{u\in \partial L_{\mathcal{H}}(v )}d_{\mathcal{H}}^{\frac{1}{2}}( \{u,v\})+\sum_{T\in L_{\mathcal{H}}(v )}\left(d^{\frac{1}{2}}_{\mathcal{H}} (T)-\left(d_{\mathcal{H}}(T)-1\right)^{\frac{1}{2}}\right) 
    \end{align}
    \begin{claim}\label{CLAIM: v-star-degree}
        $d_{\mathcal{H},2,\frac{1}{2}} (v_\ast)\ge d(n).$
    \end{claim}
    \begin{proof}
        Let $$E_A\coloneqq  \left\{ \{a_1, a_2\} \colon a_1\in A_1,a_2\in A_2\right\}.$$
        Observe that $E_A\subset L_\mathcal{H}(v_\ast),$ and any $T\in E_A$ has $d_\mathcal{H}(T)=d_{\mathcal{H}}(T\cup\{v_\ast\})=1$, then \eqref{eq: expansion-of-degree-in-3-graph} continues as 
        \begin{align*}
            d_{\mathcal{H},2,\frac{1}{2}} (v_\ast)&\ge \sum_{T\in L_\mathcal{H}(v_\ast)}\left(d^{\frac{1}{2}}_\mathcal{H} (T)-\left(d_\mathcal{H}(T)-1\right)^{\frac{1}{2}}\right)\\
            &\ge  \sum_{T\in E_A}\left(d^{\frac{1}{2}}_\mathcal{H} (T)-\left(d_\mathcal{H}(T)-1\right)^{\frac{1}{2}}\right)\\
            &= \sum_{T\in E_A }\left(1^\frac{1}{2}-\left(1-1\right)^\frac{1}{2}\right)=|E_A|=\left(\varepsilon_1 n\right)^2\ge d(n) ,
        \end{align*}
        where the last inequality holds by $n\gg (1/\varepsilon_1)^{4}$ and Claim~\ref{CLAIM: eq-lower-bound}.
    \end{proof}
    \begin{claim}\label{CLAIM: min-degree-greater-than-dn}
        $\delta_{2,\frac{1}{2}} (\mathcal{H})\ge d(n).$
    \end{claim}
    \begin{proof}
        By Claim~\ref{CLAIM: v-star-degree}, it suffices to show that $d_{\mathcal{H},2,\frac{1}{2}} (v)\ge d(n)$ holds for all $v\in V(\mathcal{H})$ with $v\neq v_\ast$.
        The inequality \eqref{eq: expansion-of-degree-in-3-graph} continues as 
        \begin{align}\label{eq: construction-lower-bound-for-v-1}
            d_{\mathcal{H},2,\frac{1}{2}}(v)\ge \sum_{u\in \partial L_{\mathcal{H}_1}(v )}d_{\mathcal{H}}^{\frac{1}{2}}( \{u,v\})+\sum_{T\in L_{\mathcal{H}_1}(v )}\left(d^{\frac{1}{2}}_{\mathcal{H}} (T)-\left(d_{\mathcal{H}}(T)-1\right)^{\frac{1}{2}}\right). 
        \end{align}
        Observe that $|\partial L_{\mathcal{H}_1}(v)|\ge \dfrac{2n}{3}-\varepsilon_1 n\ge 2(1-3\varepsilon_1)\dfrac{n}{3}$ and each $u\in \partial L_{\mathcal{H}_1}(v)$ has $d_{\mathcal{H}}(\{u,v\})\ge \frac{n}{3}-\varepsilon_1 n= (1-3\varepsilon_1)\frac{n}{3}.$
        Then
        $$ \sum_{u\in \partial L_{\mathcal{H}_1}(v )}d_{\mathcal{H}}^{\frac{1}{2}}( \{u,v\})\ge |\partial L_{\mathcal{H}_1}(v)|\cdot \left((1-3\varepsilon_1)\frac{n}{3}\right)^{\frac{1}{2}}\ge  2 \left((1-3\varepsilon_1)\frac{n}{3}\right)^{\frac{3}{2}}.  $$
        Fact~\ref{FACT:inequality-a}~\ref{FACT:inequality-a-2} implies that $(1-3\varepsilon_1)^{\frac{3}{2}}\ge 1-\frac{9}{2}\varepsilon_1\ge 1-10\varepsilon_1$.
        Therefore, the inequality continues as
        \begin{equation}\label{eq: construction-lower-bound-for-v-2}
            \sum_{u\in \partial L_{\mathcal{H}_1}(v )}d_{\mathcal{H} }^{\frac{1}{2}}( \{u,v\}) \ge 2 \left((1-3\varepsilon_1)\frac{n}{3}\right)^{\frac{3}{2}}\ge 2(1-10\varepsilon_1)\left(\dfrac{n}{3}\right)^{\frac{3}{2}}. 
        \end{equation}
        Since every $T\in L_{\mathcal{H}_1}(v)$ has $d_{\mathcal{H}}(T)\in [1, \frac{n}{3}]$, it follows from Fact~\ref{FACT:inequality-a}~\ref{FACT:inequality-a-1} that
        \begin{align}
            d^{\frac{1}{2}}_{\mathcal{H}}(T)-(d_{\mathcal{H}}(T)-1)^{\frac{1}{2}}&= d^{\frac{1}{2}}_{\mathcal{H}}(T)\cdot \left(1-\left(1-\dfrac{1}{d_{\mathcal{H}}(T)}\right)^{\frac{1}{2}}\right)\notag\\
            &\ge d^{\frac{1}{2}}_{\mathcal{H}}(T)\cdot  \dfrac{1}{2d_{\mathcal{H}}(T)}
            = \frac{1}{2}\cdot d^{-\frac{1}{2}}_{\mathcal{H}}(T)\ge\dfrac{1}{2}\left(\dfrac{n}{3}\right)^{-\frac{1}{2}}\notag.
        \end{align}  
        Observe that $|L_{\mathcal{H}_1}(v)|\ge \left(\frac{n}{3}\right)^2-\varepsilon_1 n^2-1\ge (1-10\varepsilon_1)\left(\frac{n}{3}\right)^2.$
        Therefore,
        \begin{align}\label{eq: construction-lower-bound-for-v-3}
            \sum_{T\in L_{\mathcal{H}_1}(v)}d^{\frac{1}{2}}_{\mathcal{H}}(T)-(d_{\mathcal{H}}(T)-1)^{\frac{1}{2}}&\ge |L_{\mathcal{H}_1}(v)|\cdot \dfrac{1}{2} \left(\dfrac{n}{3}\right)^{-\frac{1}{2}}\ge (1-10\varepsilon_1)\dfrac{1}{2}\left(\dfrac{n}{3}\right)^{\frac{3}{2}}.
        \end{align}

        By \eqref{eq: construction-lower-bound-for-v-2} and \eqref{eq: construction-lower-bound-for-v-3}, inequality \eqref{eq: construction-lower-bound-for-v-1} continues as
        $$d_{\mathcal{H},2,\frac{1}{2}}(v)\ge  (1-10\varepsilon_1)2\left(\dfrac{n}{3}\right)^{\frac{3}{2}}+  (1-10\varepsilon_1)\dfrac{1}{2}\left(\dfrac{n}{3}\right)^{\frac{3}{2}}=(1-10\varepsilon_1)\dfrac{5}{2}\left(\dfrac{n}{3}\right)^{\frac{3}{2}}\ge d(n),$$
        where the last inequality holds by Claim~\ref{CLAIM: eq-lower-bound}.
    \end{proof}
    Claim~\ref{CLAIM: H-is-F5-free},~\ref{CLIAM: H-not-3-partite} and ~\ref{CLAIM: min-degree-greater-than-dn} shows $\mathcal{H}$ is a counterexample, which proves that $F_5$ is not $(2,1/2)$-vertex-extendable with respect to $\mathfrak{K}^3_{3}$.
\end{proof}
\section{Existence of the limit for $p < 1$}\label{APPENDIX:SEC:limit}
\begin{proposition}\label{PROP:limit-exits-small-p}
    Let $r\ge 2$ be an integer and $\mathcal{F}$ be a family of $r$-graphs.
    For every $t\in [r-1]$ and $p\in (0,1]$, the limit $\lim\limits_{n\to  \infty} \frac{\mathrm{ex}_{t,p}(n,\mathcal{F})}{n^{t+(r-t)p}}$ exists.
\end{proposition}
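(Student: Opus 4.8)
Write $s \coloneqq t + (r-t)p$ for the degree of homogeneity of $\norm{\cdot}_{t,p}$; note that $s > 1$, since $t \ge 1$, $r-t \ge 1$, and $p > 0$. The plan is to run a Katona--Nemetz--Simonovits style averaging, but with one-vertex deletions rather than passing to a random subset of a fixed small size: when $p \le 1$ the map $x \mapsto x^{p}$ is concave, so the usual subset-averaging argument pushes Jensen's inequality in the wrong direction and the resulting bound on $\norm{\mathcal{H}[S]}_{t,p}$ is too weak. Deleting one vertex at a time, together with the sharp second-order estimate of Fact~\ref{FACT:inequality-a}~\ref{FACT:inequality-a-1} and the double-counting identities of Lemma~\ref{LEMMA:tail-estimate-Lp-degree-sum}, produces a recursive inequality of the shape
\begin{align*}
    \mathrm{ex}_{t,p}(n,\mathcal{F})
    \le \frac{n}{n-s}\,\mathrm{ex}_{t,p}(n-1,\mathcal{F}) + O\!\left(n^{s-1-\delta_p}\right),
\end{align*}
where $\delta_p = p/4 > 0$, and convergence of $\mathrm{ex}_{t,p}(n,\mathcal{F})/n^{s}$ then follows routinely.

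For the main step I would fix a large $n$ and an $n$-vertex $\mathcal{F}$-free $r$-graph $\mathcal{H}$ with $\norm{\mathcal{H}}_{t,p} = \mathrm{ex}_{t,p}(n,\mathcal{F})$. Since every $\mathcal{H}-v$ is $\mathcal{F}$-free on $n-1$ vertices, $\sum_{v}\norm{\mathcal{H}-v}_{t,p} \le n\cdot\mathrm{ex}_{t,p}(n-1,\mathcal{F})$. For a matching lower bound, note that for $v \notin T \in \partial_{r-t}\mathcal{H}$ one has $d_{\mathcal{H}-v}(T) = d_{\mathcal{H}}(T) - d_{\mathcal{H}}(T\cup\{v\})$ with $0 \le d_{\mathcal{H}}(T\cup\{v\}) \le d_{\mathcal{H}}(T)$, so applying Fact~\ref{FACT:inequality-a}~\ref{FACT:inequality-a-1} with $x = d_{\mathcal{H}}(T\cup\{v\})/d_{\mathcal{H}}(T)$ and multiplying by $d_{\mathcal{H}}^{p}(T)$ gives
\begin{align*}
    d_{\mathcal{H}-v}^{p}(T)
    \ge d_{\mathcal{H}}^{p}(T) - p\,d_{\mathcal{H}}^{p-1}(T)\,d_{\mathcal{H}}(T\cup\{v\}) - d_{\mathcal{H}}^{p-2}(T)\,d_{\mathcal{H}}^{2}(T\cup\{v\}).
\end{align*}
Summing over all $v$ and all $T\in\partial_{r-t}\mathcal{H}$ with $v\notin T$: the first term contributes $(n-t)\norm{\mathcal{H}}_{t,p}$ by a trivial count; the second contributes exactly $p(r-t)\norm{\mathcal{H}}_{t,p}$ by~\eqref{equ:double-count-1}; and the third equals the left-hand side of~\eqref{equ:double-count-2} (the terms with $d_{\mathcal{H}}(T\cup\{v\})=0$ vanish, so the index set is exactly $\bigcup_v \partial_{r-t-1}L_{\mathcal{H}}(v)$), hence is $O(n^{t+p(r-t)-\delta_p}) = O(n^{s-\delta_p})$. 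Therefore $\sum_{v}\norm{\mathcal{H}-v}_{t,p} \ge (n-s)\,\mathrm{ex}_{t,p}(n,\mathcal{F}) - O(n^{s-\delta_p})$, which combined with the upper bound rearranges to the displayed recursion.

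Finally, set $a_n \coloneqq \mathrm{ex}_{t,p}(n,\mathcal{F})/n^{s}$. This sequence is bounded, since $d_{\mathcal{H}}(T) \le n^{r-t}$ yields $\mathrm{ex}_{t,p}(n,\mathcal{F}) \le \binom{n}{t} n^{p(r-t)}$ and hence $a_n \le \binom{n}{t}/n^{t} \le 1/t!$. Dividing the recursion by $n^{s}$ and using the elementary identity $\frac{n}{n-s}\left(1-\tfrac{1}{n}\right)^{s} = 1 + O(n^{-2})$ gives $a_n \le (1 + c_n)\,a_{n-1} + \varepsilon_n$ for all large $n$, with $c_n = O(n^{-2}) \ge 0$ and $\varepsilon_n = O(n^{-1-\delta_p}) \ge 0$, so $\sum_n c_n < \infty$ and $\sum_n \varepsilon_n < \infty$. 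Iterating from an index $m$ to $n$ and bounding $\prod_{k=m+1}^{n}(1+c_k) \le \exp(\sum_{k>m}c_k) =: P_m$, we get $\limsup_n a_n \le P_m\, a_m + P_m \sum_{j>m}\varepsilon_j$ for every $m$; choosing $m$ along a subsequence with $a_m \to \liminf_n a_n$ and using $P_m \to 1$, $\sum_{j>m}\varepsilon_j \to 0$ yields $\limsup_n a_n \le \liminf_n a_n$. Thus $\lim_n \mathrm{ex}_{t,p}(n,\mathcal{F})/n^{s}$ exists, which is the assertion of Proposition~\ref{PROP:limit-exits-small-p}. The only point needing care is precisely the switch from subset-averaging to one-vertex deletion: the crude bound $(a-b)^p \ge a^p - b^p$ would lose a factor of order $n^{1-p}$ (fatal for $p<1$), whereas the second-order expansion above loses only the summable error controlled by~\eqref{equ:double-count-2}.
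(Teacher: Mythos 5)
Your proposal is correct and follows essentially the same route as the paper's Appendix proof: both perform a single-vertex deletion from an extremal $\mathcal{F}$-free $r$-graph, control the loss via the second-order expansion of $(d_{\mathcal{H}}(T)-d_{\mathcal{H}}(T\cup\{v\}))^{p}$ through~\eqref{equ:double-count-1} and~\eqref{equ:double-count-2} (the content of Lemma~\ref{LEMMA:Lp-uniform}), and then deduce convergence of the normalized sequence from a summable-perturbation argument. The only cosmetic differences are that the paper picks one low-$(t,p)$-degree vertex by pigeonhole and invokes an ``almost-decreasing bounded sequence converges'' lemma, while you average over all $n$ deletions and iterate the recursion $a_n \le (1+c_n)a_{n-1}+\varepsilon_n$ directly.
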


We will use the following standard theorems from Analysis in the proof of Proposition~\ref{PROP:limit-exits-small-p}. 

\begin{theorem}[The Monotone Convergence Theorem]\label{THM: monotone-convergence-theorem}
    If a sequence of real numbers is decreasing and bounded below, then it converges.
\end{theorem}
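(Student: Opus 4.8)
The plan is to deduce this from the completeness (least-upper-bound) property of $\mathbb{R}$, which is the standard route. Let $(a_n)_{n\ge 1}$ be a sequence of real numbers that is decreasing, i.e. $a_{n+1}\le a_n$ for all $n$, and bounded below, i.e. there is $c\in\mathbb{R}$ with $a_n\ge c$ for all $n$. First I would consider the set $S\coloneqq\{a_n : n\ge 1\}$. Since $S$ is nonempty and bounded below by $c$, the completeness of $\mathbb{R}$ guarantees that $L\coloneqq\inf S$ exists as a real number, and $L\ge c$.

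Next I would show that $a_n\to L$ as $n\to\infty$. Fix $\varepsilon>0$. Since $L+\varepsilon>L=\inf S$, the number $L+\varepsilon$ is not a lower bound for $S$, so there exists an index $N$ with $a_N<L+\varepsilon$. Now I would use monotonicity: for every $n\ge N$ we have $a_n\le a_N<L+\varepsilon$, because the sequence is decreasing. On the other hand, $L$ is a lower bound for $S$, so $a_n\ge L>L-\varepsilon$ for all $n$. Combining the two inequalities gives $L-\varepsilon<a_n<L+\varepsilon$, that is $|a_n-L|<\varepsilon$, for all $n\ge N$. Since $\varepsilon>0$ was arbitrary, this is precisely the statement that $\lim_{n\to\infty}a_n=L$, so the sequence converges (to its infimum).

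There is essentially no obstacle here: the only substantive ingredient is the order-completeness of $\mathbb{R}$ (existence of infima of bounded-below sets), which I would take as the defining property of the real numbers; everything else is a one-line $\varepsilon$–$N$ argument driven by monotonicity. If one prefers to avoid invoking completeness directly, an alternative is to observe that a decreasing bounded-below sequence is Cauchy — but verifying the Cauchy condition still ultimately rests on the same completeness axiom to conclude convergence, so the infimum argument is the cleanest and is what I would present.
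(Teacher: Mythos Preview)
Your proof is correct and is the standard infimum-plus-$\varepsilon$ argument for the Monotone Convergence Theorem. The paper does not actually prove this statement at all: it is quoted as a ``standard theorem from Analysis'' and invoked as a black box in the proof of Proposition~\ref{PROP:limit-exits-small-p}, so there is nothing to compare your approach against.
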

\begin{fact}[The $p$-series Test]\label{FACT: p-series-test}
    If $\alpha<-1$, then sequence $\left(\sum_{m=1}^n m^\alpha\right)_{n=1}^\infty$ converges.
\end{fact}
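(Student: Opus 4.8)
The plan is to prove Fact~\ref{FACT: p-series-test} by showing that the sequence of partial sums $S_n := \sum_{m=1}^{n} m^{\alpha}$ is monotone and bounded, and then invoking the Monotone Convergence Theorem (Theorem~\ref{THM: monotone-convergence-theorem}). Monotonicity is immediate: since every term $m^{\alpha}$ is positive, $(S_n)_{n\ge 1}$ is strictly increasing, so $(-S_n)_{n\ge 1}$ is decreasing; once we produce a uniform lower bound for $(-S_n)_n$, i.e.\ a uniform upper bound for $(S_n)_n$, Theorem~\ref{THM: monotone-convergence-theorem} applied to $(-S_n)_n$ yields convergence of $(-S_n)_n$, hence of $(S_n)_n$. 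So the only real content is a uniform upper bound on $S_n$.

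For the upper bound I would use a dyadic blocking argument. Write $\beta := 1 + \alpha$, so $\beta < 0$ precisely because $\alpha < -1$. Since $\alpha < 0$, the map $m \mapsto m^{\alpha}$ is decreasing, so for each integer $k \ge 0$ the block $\sum_{m=2^{k}}^{2^{k+1}-1} m^{\alpha}$ consists of $2^{k}$ terms each at most $(2^{k})^{\alpha}$, hence is at most $2^{k}\cdot 2^{k\alpha} = 2^{k\beta}$. Given $n$, choose an integer $K$ with $2^{K} - 1 \ge n$; then, using positivity of the terms,
\begin{align*}
    S_n
    \le \sum_{m=1}^{2^{K}-1} m^{\alpha}
    = \sum_{k=0}^{K-1} \sum_{m=2^{k}}^{2^{k+1}-1} m^{\alpha}
    \le \sum_{k=0}^{K-1} 2^{k\beta}
    \le \sum_{k=0}^{\infty} 2^{k\beta}
    = \frac{1}{1 - 2^{\beta}},
\end{align*}
where the last geometric series converges because $2^{\beta} < 1$. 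Thus $S_n \le (1 - 2^{\beta})^{-1}$ for every $n$, which is the desired uniform bound.

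Combining the two observations completes the argument. There is essentially no obstacle: the statement is a classical fact and the only mild points of care are (i) applying Theorem~\ref{THM: monotone-convergence-theorem}, which is stated for decreasing sequences bounded below, to $(-S_n)_n$ rather than to $(S_n)_n$ directly, and (ii) keeping straight that the block comparison $m^{\alpha} \le (2^{k})^{\alpha}$ only needs $\alpha < 0$, whereas the stronger hypothesis $\alpha < -1$ is exactly what forces $\beta = 1 + \alpha < 0$ so that $\sum_{k\ge 0} 2^{k\beta}$ is a convergent geometric series. (If one prefers to avoid the dyadic grouping, the same bound follows from the telescoping estimate $m^{\alpha} \le \tfrac{1}{-\beta}\big((m-1)^{\beta} - m^{\beta}\big)$ for $m \ge 2$, obtained from the Mean Value Theorem applied to $x \mapsto x^{\beta}$; I would use whichever reads more cleanly in the final write-up.)
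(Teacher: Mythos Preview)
Your proof is correct. The paper itself does not supply a proof of Fact~\ref{FACT: p-series-test}; it simply cites it as a standard result from analysis and uses it as a black box in the proof of Lemma~\ref{LEMMA: almost-decrease-sequence-converge}. Your dyadic (Cauchy condensation) argument is one of the classical proofs and is entirely adequate here; the alternative telescoping route you mention via the Mean Value Theorem would work equally well.
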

\begin{lemma}\label{LEMMA: almost-decrease-sequence-converge}
    If $(a_n)_{n=1}^\infty$ is a bounded sequence with $a_{n-1}\ge a_n-n^\alpha$ where $\alpha<-1$ for all $n\in \mathbb{N}^{\ge 2}$, then the sequence $(a_n)_{n=1}^\infty$ converges.
\end{lemma}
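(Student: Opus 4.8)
\textbf{Proof proposal for Lemma~\ref{LEMMA: almost-decrease-sequence-converge}.}

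The plan is to reduce the convergence of $(a_n)$ to the Monotone Convergence Theorem (Theorem~\ref{THM: monotone-convergence-theorem}) by correcting the sequence with a convergent tail. First I would set $b_n \coloneqq a_n - \sum_{m=n+1}^{\infty} m^{\alpha}$; note that since $\alpha < -1$, Fact~\ref{FACT: p-series-test} guarantees that the series $\sum_{m=1}^{\infty} m^{\alpha}$ converges, so the tail $R_n \coloneqq \sum_{m=n+1}^{\infty} m^{\alpha}$ is a well-defined nonnegative real number for every $n \ge 1$, and $R_n \to 0$ as $n \to \infty$. Consequently $(b_n)$ differs from $(a_n)$ by a sequence tending to $0$, so $(a_n)$ converges if and only if $(b_n)$ does, and to the same limit.

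Next I would verify that $(b_n)$ is decreasing and bounded below. For monotonicity: using the hypothesis $a_{n-1} \ge a_n - n^{\alpha}$ and the identity $R_{n-1} = R_n + n^{\alpha}$, we compute
\begin{align*}
    b_{n-1}
    = a_{n-1} - R_{n-1}
    \ge \left(a_n - n^{\alpha}\right) - \left(R_n + n^{\alpha}\right)
\end{align*}
wait — this gives $b_{n-1} \ge b_n - 2n^{\alpha}$, which is not quite what I want. Let me instead define $b_n \coloneqq a_n + R_n$ with $R_n = \sum_{m=n+1}^\infty m^\alpha$; then $R_{n-1} = R_n + n^\alpha$, so
\begin{align*}
    b_{n-1} = a_{n-1} + R_{n-1} \ge \left(a_n - n^{\alpha}\right) + \left(R_n + n^{\alpha}\right) = a_n + R_n = b_n,
\end{align*}
so $(b_n)$ is decreasing. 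For the lower bound: $(a_n)$ is bounded by hypothesis, say $a_n \ge -C$ for all $n$, and $R_n \ge 0$, so $b_n \ge -C$; thus $(b_n)$ is bounded below. By Theorem~\ref{THM: monotone-convergence-theorem}, $(b_n)$ converges, hence $a_n = b_n - R_n$ converges as well.

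I do not anticipate a genuine obstacle here; the only thing requiring care is choosing the sign of the correction term correctly so that the inequality $a_{n-1} \ge a_n - n^\alpha$ combines with the telescoping of the tail $R_n$ to yield monotonicity of the corrected sequence (the right choice is $b_n = a_n + R_n$, not $a_n - R_n$). Everything else is a direct appeal to the $p$-series test and the Monotone Convergence Theorem quoted in the excerpt.
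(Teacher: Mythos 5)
Your proposal is correct and is essentially the paper's own argument: the paper subtracts the partial sum $\sum_{m=1}^{n} m^{\alpha}$ while you add the tail $R_n = \sum_{m=n+1}^{\infty} m^{\alpha}$, and since these two corrected sequences differ only by the constant $\sum_{m=1}^{\infty} m^{\alpha}$, both reduce the claim to the Monotone Convergence Theorem plus the $p$-series test in the same way. The only cosmetic issue is the false start (the discarded choice $b_n = a_n - R_n$) left in the write-up, which should be removed in a final version.
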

\begin{proof}
    Given the required sequence $(a_n)_{n=1}^\infty$.
    Let $b_n\coloneqq a_n-\sum_{m=1}^{n}m^\alpha$ for all $n\in \mathbb{N}^{\ge 1}$.
    For $n\in \mathbb{N}^{\ge 2}$ we have $a_{n-1}\ge a_n-n^\alpha$, so $b_{n-1}-b_n=a_{n-1}-a_n+n^\alpha\ge 0$.
    Imply that $(b_n)_{n=1}^\infty$ is decreasing.
    By Fact~\ref{FACT: p-series-test}, $\left(\sum_{m=1}^n m^\alpha\right)_{n=1}^\infty$ converges, thus bounded.
    Combining with $(a_n)_{n=1}^\infty$ being bounded, we obtain $(b_n)_{n=1}^\infty$ is also bounded.
    By Theorem~\ref{THM: monotone-convergence-theorem}, $(b_n)_{n=1}^\infty$ converges.
    By $(b_n)_{n=1}^\infty$ and $\left(\sum_{m=1}^n m^\alpha\right)_{n=1}^\infty$ both converge, the sequence $(a_n)_{n=1}^\infty$ also converges.
\end{proof}
\begin{proof}[Proof of Proposition~\ref{PROP:limit-exits-small-p}]
    For $n\ge 1$, let $\pi_n\coloneqq \frac{\mathrm{ex}_{t,p}(n,\mathcal{F})}{n^k}$ where $k\coloneqq t+(r-t)p.$  
    Given a sufficiently small $\varepsilon>0$ and a sufficiently large $N$ with $ N\gg \frac{1}{\varepsilon}$.
    
    Given an $n\ge N$ let $\mathcal{H}$ be an $n$-vertex $\mathcal{F}$-free $r$-graph with $\lVert \mathcal{H}\rVert_{t,p}=\mathrm{ex}_{t,p}(n,\mathcal{F}).$
    By Lemma~\ref{LEMMA:tail-estimate-Lp-degree-sum} and the Pigeonhole Principle, there exists a vertex $v_\ast\in V(\mathcal{H})$ and a real number $\delta_p\in (0,1)$ with 
    $$d_{\mathcal{H},t,p}(v_\ast)\le  \dfrac{k\cdot \lVert \mathcal{H}\rVert_{t,p}+\varepsilon n^{k-\delta_p}}{n} =k\cdot \dfrac{\mathrm{ex}_{t,p}(n,\mathcal{F})}{n} +\varepsilon n^{k+\alpha} ,$$
    where $\alpha\coloneqq -\delta_p-1$.
    Note $\alpha\in (-2,-1)$.
    Notice that $(n-1)$-vertex $r$-graph $\mathcal{H}_1\coloneqq \mathcal{H}-v_\ast$ keeps $\mathcal{F}$-free, then 
    \begin{equation}\label{equ:rate-increase}
        \mathrm{ex}_{t,p}(n-1,\mathcal{F})\ge \lVert \mathcal{H}_1\rVert_{t,p}=\lVert \mathcal{H} \rVert_{t,p} - d_{\mathcal{H},t,p}(v_\ast)\ge \mathrm{ex}_{t,p}(n,\mathcal{F})- k\cdot \dfrac{\mathrm{ex}_{t,p}(n,\mathcal{F})}{n} -\varepsilon n^{k+\alpha}.
    \end{equation}
    \begin{claim}\label{CLAIM: general-decrease}
        $\pi_{n-1}\ge \pi_n -{n^\alpha }.$
    \end{claim}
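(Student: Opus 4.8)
The plan is to deduce Claim~\ref{CLAIM: general-decrease} directly from inequality~\eqref{equ:rate-increase} by dividing through by an appropriate power of $n$ and comparing $n^{k}$ with $(n-1)^{k}$. First I would rewrite the right-hand side of~\eqref{equ:rate-increase} as $\mathrm{ex}_{t,p}(n,\mathcal{F})\left(1-\frac{k}{n}\right) - \varepsilon n^{k+\alpha}$, i.e.\ as $\pi_n n^{k}\left(1-\frac{k}{n}\right) - \varepsilon n^{k+\alpha}$. Dividing both sides by $(n-1)^{k}$ gives
\begin{align*}
    \pi_{n-1}
    = \frac{\mathrm{ex}_{t,p}(n-1,\mathcal{F})}{(n-1)^{k}}
    \ge \pi_n \cdot \frac{n^{k}}{(n-1)^{k}}\left(1-\frac{k}{n}\right) - \frac{\varepsilon n^{k+\alpha}}{(n-1)^{k}}.
\end{align*}
So the task reduces to two elementary estimates: that the factor $\frac{n^{k}}{(n-1)^{k}}\bigl(1-\frac{k}{n}\bigr)$ is at least $1$, and that the error term $\frac{\varepsilon n^{k+\alpha}}{(n-1)^{k}}$ is at most $n^{\alpha}$ (for $n \ge N$ large, using $\varepsilon$ small).

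For the first estimate, write $\frac{n^{k}}{(n-1)^{k}} = \left(1-\frac{1}{n}\right)^{-k}$ and apply Fact~\ref{FACT:inequality-a}: since $k = t+(r-t)p \ge 1$, part~\ref{FACT:inequality-a-2} (applied with $x = 1/n$ and exponent $k$, noting $(1-x)^{k}\le 1-kx+k^2x^2$ and hence $(1-x)^{-k}\ge \frac{1}{1-kx+k^2x^2}$) yields
\begin{align*}
    \left(1-\frac{1}{n}\right)^{-k}\left(1-\frac{k}{n}\right)
    \ge \frac{1-\frac{k}{n}}{1-\frac{k}{n}+\frac{k^2}{n^2}}
    \ge 1,
\end{align*}
where the last inequality holds because the numerator is at most the denominator once $n$ is large enough that $1-k/n>0$. (One must record here that $\pi_n \ge 0$, which is immediate, so multiplying the inequality $\left(1-\frac{1}{n}\right)^{-k}\left(1-\frac{k}{n}\right)\ge 1$ by $\pi_n$ preserves the direction.) For the error term, since $\alpha < -1$ we have $k+\alpha < k$, and $\frac{n^{k+\alpha}}{(n-1)^{k}} = n^{\alpha}\cdot\left(\frac{n}{n-1}\right)^{k} \le 2^{k} n^{\alpha}$ for $n \ge 2$; choosing $\varepsilon \le 2^{-k}$ (which is fine since $\varepsilon$ was taken sufficiently small relative to $N$ and the fixed constant $k$) makes $\frac{\varepsilon n^{k+\alpha}}{(n-1)^{k}} \le n^{\alpha}$. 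Combining the two estimates gives $\pi_{n-1} \ge \pi_n - n^{\alpha}$, which is exactly the claim.

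I do not anticipate a genuine obstacle here: the whole argument is bookkeeping around~\eqref{equ:rate-increase}, and the only mild subtlety is making sure the constants line up — specifically that $\alpha \in (-2,-1)$ (so the $p$-series test in Lemma~\ref{LEMMA: p-series-test} will later apply), that $\varepsilon$ may be shrunk to absorb the harmless factor $2^{k}$ since $k$ depends only on $r,t,p$, and that all the Bernoulli-type inequalities are invoked in the regime $n \ge N$ with $N$ large. Once Claim~\ref{CLAIM: general-decrease} is in hand, Lemma~\ref{LEMMA: almost-decrease-sequence-converge} applied to the bounded sequence $(\pi_n)_{n\ge 1}$ (boundedness being clear from $\mathrm{ex}_{t,p}(n,\mathcal{F}) \le \|K_n^{r}\|_{t,p} = O(n^{k})$) finishes the proof of Proposition~\ref{PROP:limit-exits-small-p}.
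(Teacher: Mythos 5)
There is a genuine error in your first ``elementary estimate''. You claim that
\begin{align*}
    \frac{n^{k}}{(n-1)^{k}}\left(1-\frac{k}{n}\right)
    = \left(1-\frac{1}{n}\right)^{-k}\left(1-\frac{k}{n}\right)
    \ge 1,
\end{align*}
but this is false: since $k = t+(r-t)p > 1$, Bernoulli's inequality (Fact~\ref{FACT:inequality-a}~\ref{FACT:inequality-a-2}, lower bound) gives $\left(1-\frac{1}{n}\right)^{k}\ge 1-\frac{k}{n}$, so the factor above is at most $1$, with strict inequality for $k>1$. Your own justification betrays this: you write that ``the numerator is at most the denominator'', which shows $\frac{1-k/n}{1-k/n+k^{2}/n^{2}}\le 1$, not $\ge 1$. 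So the step in which you conclude $\pi_{n-1}\ge \pi_{n}\cdot 1 - \text{(error)}$ does not go through, and the whole point of the claim --- controlling the deficit coming from replacing $n^{k}$ by $(n-1)^{k}$ --- is skipped rather than addressed.

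The argument can be repaired, and the repair is exactly what the paper's proof does: with $c_{n}\coloneqq \left(1-\frac{1}{n}\right)^{-k}\left(1-\frac{k}{n}\right)$ one has, from the upper bound in Fact~\ref{FACT:inequality-a}~\ref{FACT:inequality-a-2}, $1-c_{n}\le \frac{k^{2}/n^{2}}{1-k/n}\le 2k^{2}n^{-2}$ for large $n$, so the deficit term $\pi_{n}(1-c_{n})$ is $O(n^{-2})$ (using that $\pi_{n}$ is bounded). Since $\alpha = -\delta_{p}-1 \in (-2,-1)$, this $O(n^{-2})$ term is $o(n^{\alpha})$ and can be absorbed, together with your error term $\frac{\varepsilon n^{k+\alpha}}{(n-1)^{k}}\le 2\varepsilon n^{\alpha}$ (for $\varepsilon$ small), into the allowance $n^{\alpha}$, yielding $\pi_{n-1}\ge \pi_{n}-n^{\alpha}$. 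Note that the membership $\alpha>-2$ is used precisely here, not only in the later $p$-series step, so your proposal as written is missing the one inequality that makes the claim true; the rest of your bookkeeping (the bound on the $\varepsilon$-term and the appeal to Lemma~\ref{LEMMA: almost-decrease-sequence-converge}) is fine and matches the paper.
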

    \begin{proof}
        Note $k\ge 0$ and $\frac{1}{n}\in [0,1]$.
        It follows from~\eqref{equ:rate-increase} and Fact~\ref{FACT:inequality-a}~\ref{FACT:inequality-a-2} that
        \begin{align*}
            \dfrac{\mathrm{ex}_{t,p}(n-1,\mathcal{F})}{(n-1)^k}
            &\ge \dfrac{\mathrm{ex}_{t,p}(n,\mathcal{F})}{(n-1)^k}-k\cdot \dfrac{\mathrm{ex}_{t,p}(n,\mathcal{F})}{n(n-1)^k}-\dfrac{\varepsilon n^{k+\alpha}}{(n-1)^k} \notag\\
            &\ge \dfrac{\mathrm{ex}_{t,p}(n,\mathcal{F})}{n^k}+\left( \dfrac{\mathrm{ex}_{t,p}(n,\mathcal{F})}{(n-1)^k}-\dfrac{\mathrm{ex}_{t,p}(n,\mathcal{F})}{n^k}-k\cdot \dfrac{\mathrm{ex}_{t,p}(n,\mathcal{F})}{n(n-1)^k}\right)- 2\varepsilon n^\alpha \notag\\
            &= \dfrac{\mathrm{ex}_{t,p}(n,\mathcal{F})}{n^k}+\left( 1-\left(1-\dfrac{1}{n}\right)^k-k\cdot\dfrac{1}{n}\right)\dfrac{\mathrm{ex}_{t,p}(n,\mathcal{F})}{(n-1)^k}- 2\varepsilon n^\alpha \notag\\
            &\overset{\text{Fact}~\ref{FACT:inequality-a}~\ref{FACT:inequality-a-2}}{\ge} \dfrac{\mathrm{ex}_{t,p}(n,\mathcal{F})}{n^k}-\left(\dfrac{k}{n}\right)^2\cdot \dfrac{\mathrm{ex}_{t,p}(n,\mathcal{F})}{(n-1)^k}-2\varepsilon n^\alpha\notag\\
            &\ge \dfrac{\mathrm{ex}_{t,p}(n,\mathcal{F})}{n^k}-k^2n^{-2}\cdot 2-2\varepsilon n^\alpha\ge \dfrac{\mathrm{ex}_{t,p}(n,\mathcal{F})}{n^k}-n^\alpha.\notag
        \end{align*}
    \end{proof}
    Note sequence $\left(\pi_n\right)_{n=N}^\infty$ is bounded.
    By Lemma~\ref{LEMMA: almost-decrease-sequence-converge}, sequence $\left(\pi_n\right)_{n=N}^\infty$ converges.
    Hence $\lim\limits_{n\to  \infty} \frac{\mathrm{ex}_{t,p}(n,\mathcal{F})}{n^{t+(r-t)p}}$ exists.
\end{proof}
\section{Estimations for $\alpha_{k}$}\label{APPENDIX:SEC:alpha-k}
\begin{lemma}\label{LEMMA:F5-Lp-p-small-calculations}
  Let $\ell(x)$ denote the piecewise linear function that connects points
  in\\$\left\{\left(\frac{k-1}{k}, \frac{k-1}{k^2}\right) \colon k \in
    \mathbb{N}^{+}\right\}$. That is,
  \begin{align*}
    \ell(x)
    \coloneqq -\frac{k^2-k-1}{k(k+1)}x+\frac{k-1}{k+1}
    \quad\text{for}\quad x \in \left[\frac{k-1}{k}, \frac{k}{k+1}\right]. 
  \end{align*}
  For every real number~$p \in [0, 1)$, let also~$g(p) \coloneqq \max\left\{\ell(x)^{p} \cdot
    x^{1-p} \colon x\in [0,1)\right\}$. Then for every~~$k \in \mathbb{N}_{\ge
    3}$, and~$p \in \left[ \frac{1}{k - 1}, \frac{k - 1}{k(k - 2)} \right)$,
  \begin{equation}\label{eq:gp}
    g(p) 
    = \ell\left(\frac{k-1}{k}\right)^{p} \cdot \left(\frac{k-1}{k}\right)^{1-p}
    = \frac{k-1}{k^{p+1}}.
  \end{equation}
\end{lemma}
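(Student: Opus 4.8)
\textbf{Proof proposal for Lemma~\ref{LEMMA:F5-Lp-p-small-calculations}.}

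The plan is to analyze the one-variable optimization $g(p) = \max\{\ell(x)^p x^{1-p} \colon x \in [0,1)\}$ by exploiting the piecewise-linear structure of $\ell$. First I would fix $k \ge 3$ and $p \in [\frac{1}{k-1}, \frac{k-1}{k(k-2)})$, and on each interval $I_j := [\frac{j-1}{j}, \frac{j}{j+1}]$ write $\ell(x) = a_j x + b_j$ with $a_j = -\frac{j^2-j-1}{j(j+1)}$ and $b_j = \frac{j-1}{j+1}$. On $I_j$ the function to maximize is $\varphi_j(x) := (a_j x + b_j)^p x^{1-p}$; I would compute $D\varphi_j$ and observe that, since $p \in (0,1)$ and $a_j x + b_j > 0$ on $I_j$, the sign of $D\varphi_j(x)$ is governed by a linear expression in $x$, so $\varphi_j$ is unimodal on $I_j$ with interior critical point $x_j^* = \frac{(1-p) b_j}{-a_j p + a_j(1-p) \cdot(\text{correction})}$ — more cleanly, $D\varphi_j(x) = 0$ iff $p a_j x + (1-p)(a_j x + b_j) = 0$, i.e. $x = x_j^* := -\frac{(1-p) b_j}{a_j}$ (note $a_j < 0$ for $j \ge 2$, so $x_j^* > 0$). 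Thus on $I_j$ the maximum of $\varphi_j$ is attained either at $x_j^*$ if $x_j^* \in I_j$, or at an endpoint of $I_j$ otherwise.

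The key claim I would then establish is that, for the given range of $p$, the global maximum over $[0,1)$ occurs exactly at the breakpoint $x = \frac{k-1}{k}$, where $\ell(\frac{k-1}{k}) = \frac{k-1}{k^2}$ and the value is $\left(\frac{k-1}{k^2}\right)^p \left(\frac{k-1}{k}\right)^{1-p} = \frac{k-1}{k^{p+1}}$, as in~\eqref{eq:gp}. To do this I would show two monotonicity facts: (a) on the intervals $I_j$ with $j \le k-1$ (to the left of the claimed maximizer), $\varphi_j$ is increasing, so the maximum over $\bigcup_{j \le k-1} I_j$ is the right endpoint value $\ell(\frac{k-1}{k})^p (\frac{k-1}{k})^{1-p}$; and (b) on the intervals $I_j$ with $j \ge k$ (to the right), the relevant critical point $x_j^*$ either falls below $I_j$ or the endpoint comparison still favors $x = \frac{k-1}{k}$. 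For (a), increasingness of $\varphi_j$ on $I_j$ is equivalent to $x_j^* \ge \frac{j}{j+1}$, i.e. $-\frac{(1-p) b_j}{a_j} \ge \frac{j}{j+1}$; plugging in $a_j, b_j$ this becomes $(1-p)(j-1) j \ge (j^2-j-1) \cdot$ (something), which rearranges to a linear inequality in $p$ of the form $p \le c_j$ for an explicit $c_j$, and I would check $\frac{k-1}{k(k-2)} \le c_j$ for all $j \le k-1$ (the binding case being $j = k-1$, which is where the exact bound $\frac{k-1}{k(k-2)}$ presumably comes from). For (b), the condition $x_k^* \le \frac{k-1}{k}$ (left endpoint of $I_k$), i.e. the function already decreasing at the start of $I_k$, similarly reduces to $p \ge \frac{1}{k-1}$, which is the lower bound in our hypothesis; and for $j > k$ one gets the decreasing behavior a fortiori by comparing the $c_j$'s.

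Having pinned the maximizer at $x = \frac{k-1}{k}$ on the strength of these interval-by-interval comparisons, the value computation is immediate from $\ell(\frac{k-1}{k}) = \frac{k-1}{k^2}$: $g(p) = \left(\frac{k-1}{k^2}\right)^p \left(\frac{k-1}{k}\right)^{1-p} = (k-1)^p k^{-2p} (k-1)^{1-p} k^{-(1-p)} = (k-1) k^{-(p+1)}$, giving~\eqref{eq:gp}. The main obstacle I anticipate is the bookkeeping in step (a)--(b): verifying that the single pair of bounds $\frac{1}{k-1} \le p < \frac{k-1}{k(k-2)}$ simultaneously forces increasingness on \emph{all} left intervals and decreasingness starting at the right interval, which requires checking that the threshold $c_j$ for interval $I_j$ is monotone in $j$ in the right direction so that the $j=k-1$ and $j=k$ cases are the only binding ones. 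This is elementary but error-prone algebra with the rational functions $a_j, b_j$; I would handle it by substituting $u = 1/j$ or by clearing denominators and reducing each inequality to a polynomial inequality in $j$ and $p$ that can be checked directly. A minor additional point is to dispose of the endpoint $x \to 0$ (where $\varphi \to 0$) and $x \to 1$ (where $\ell(x) \to 0$, so $\varphi \to 0$), confirming the maximum is genuinely interior-to-$[0,1)$ and hence captured by the breakpoint analysis.
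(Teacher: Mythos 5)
Your proposal is correct and follows essentially the same route as the paper's proof: the same decomposition into the pieces $I_j=\left[\frac{j-1}{j},\frac{j}{j+1}\right]$, the same critical-point computation $x_j^{*}=-\frac{(1-p)b_j}{a_j}$, and the same conclusion that the hypotheses force increase on all pieces with $j\le k-1$ and decrease on all pieces with $j\ge k$, pinning the maximum at the breakpoint $x=\frac{k-1}{k}$ (the paper merely packages the per-interval thresholds as the conditions $i<p^{(1)}$ and $i>p^{(2)}$ together with $k-1<p^{(1)}<p^{(2)}<k$, which is your threshold comparison read in the other direction). Two harmless nits: the decreasingness condition on $I_k$ actually reduces to $p\ge\frac{k+1}{k^2}$, which is implied by (not identical to) $p\ge\frac{1}{k-1}$, and the piece $j=1$ (where $a_1>0$ and your $c_j$ is undefined) needs the trivial separate remark that there $\ell_1(x)^p x^{1-p}=x/2^{p}$ is increasing, exactly as the paper does.
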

\begin{proof}
  For every~$i \in \mathbb{N}^{+}$, let~$\ell_i(x) \coloneqq
  -\frac{i^2-i-1}{i(i+1)}x+\frac{i-1}{i+1}$ for every~$x \in [0,
  1]$. Also, for a real number~$p \in [0, 1)$ and~$i$ as above, let~$t_{i, p}(x) \coloneqq
  \ell_i(x)^p x^{1 - p}$, and let~$g_i(p) \coloneqq \max \left\{ t_{i, p}(x) : x \in
    \left[\frac{i - 1}{i}, \frac{i}{i + 1}\right] \right\}$. Clearly
  \begin{equation}
    \label{eq:ell-i-domain}
    \ell(x) = \ell_i(x) \quad \text{for } x \in \left[\frac{i - 1}{i}, \frac{i}{i + 1}\right].
  \end{equation}
  Thus, note that
  \begin{equation}
    \label{eq:max-over-small-pieces}
    g(p) = \max_{i \in \mathbb{N}^{+}} g_i(p).
  \end{equation}
  Also, for~$i \in \mathbb{N}_{\geq 2}$ and~$p \in [0, 1)$, we have that
  \begin{align}
    \label{eq:piece-intersection}
    t_{i - 1, p}\left(\frac{i - 1}{i}\right)
    &= \ell_{i - 1} \left(\frac{i -
      1}{i}\right)^p \cdot \left(\frac{i - 1}{i}\right)^{1 - p} \nonumber & \\
    &= \ell \left(\frac{i -
      1}{i}\right)^p \cdot \left(\frac{i - 1}{i}\right)^{1 - p} &
                                                              \quad\text{by~\eqref{eq:ell-i-domain}}
                                                              \nonumber \\
    &= \ell_{i} \left(\frac{i -
      1}{i}\right)^p \cdot \left(\frac{i - 1}{i}\right)^{1 - p} &
                                                              \quad\text{by~\eqref{eq:ell-i-domain}}
                                                              \nonumber \\
    &= t_{i, p}\left(\frac{i - 1}{i}\right).
  \end{align}
  For~$p \in (0, 1)$, let
  \[p^{(1)} \coloneqq \frac{\sqrt{4p^2 + 1} + 1}{2p}, \quad \text{ and } \quad
    p^{(2)} \coloneqq \frac{\sqrt{4p + 1} + 1}{2p},\] and note that~$1 < p^{(1)} <
  p^{(2)}$.

  \begin{claim}
    Fix~$p \in (0, 1)$, then for every~$i \in \mathbb{N}^{+}$,
    \begin{enumerate}[label=(\alph*)]
    \item\label{item:k-increasing} if~$i < p^{(1)}$,
      then~$t_{i,p}$ is increasing over~$\left[\frac{i-1}{i},
        \frac{i}{i+1}\right]$, hence,~$g_i(p) = t_{i, p}\left(\frac{i}{i+1}\right)$.
    \item\label{item:i-decreasing} If~$i > p^{(2)}$,
      then~$t_{i,p}$ is decreasing over~$\left[\frac{i-1}{i},
        \frac{i}{i+1}\right]$, hence,~$g_i(p) = t_{i, p}\left(\frac{i-1}{i}\right)$.
    \end{enumerate}
  \end{claim}
  \begin{proof}
    We analyze the first derivative of the function~$t_{i,
      p}$ to show that it is positive on the first case and negative on the
    second. For any~$i \in \mathbb{N}^{+}$, let~$a_i \coloneqq \frac{i^2 - i - 1}{i (i
      + 1)}$ and~$b_i \coloneqq \frac{i - 1}{i + 1}$ so that~$\ell_i (x) = -a_ix +
    b$. Fix~$p \in (0, 1)$ and~$i \in \mathbb{N}^{+}$. We then have that
    \begin{equation}
      \label{eq:derivative_tipx}
      D t_{i, p}(x) = \ell_i(x)^{p - 1}\left(\frac{b_i (1 - p)}{x^p} - a_i x^{1 -
          p}\right).
    \end{equation}
    Also, the following equality will be useful later
    \begin{equation}
      \label{eq:helper_derivative_point}
      \frac{b_i (1 - p)}{\left(\frac{b_i(1 - p)}{a_i}\right)^p} - a_i
      \left(\frac{b_i(1 - p)}{a_i}\right)^{1 - p} = a_i^p b_i^{1 - p}(1 - p)^{1 - p}
      - a_i^p b_i^{1 - p}(1 - p)^{1 - p} = 0.
    \end{equation}
    To prove~\ref{item:k-increasing}, suppose that~$i <
    p^{(1)}$. Assume first that~$i = 1$ and observe that~$a_1 = -1/2$ and~$b_1 =
    0$. Then~$t_{1, p}(x) = (-a_1x + b_1)^p x^{1 - p} = \left(\frac{x}{2}\right)^p
    x^{1 - p} =
    \frac{x}{2^p}$, which is increasing in its entire domain. Assume now that~$2 \leq i
    < p^{(1)}$. Then~$(2pi - 1)^2 < 4p^2 + 1$, which implies that~$-i < -p(i^2 -
    1)$. By adding~$i^2 - 1$ to both sides of the inequality we have that~$i^2 - i -
    1 < i^2 - 1 - p(i^2 - 1) = (1 - p)(i - 1)(i + 1)$. Multiplying by~$\frac{i}{(i^2
      - i + 1)(i + 1)}$ to both sides results in~$\frac{i}{i + 1} < \frac{i - 1}{i +
      1} \cdot \frac{i(i + 1)}{i^2 - i + 1} \cdot (1 - p) = \frac{b_i (1 -
      p)}{a_i}$. Suppose also that~$x \in \left[\frac{i-1}{i},
      \frac{i}{i+1}\right]$. Then~$x \leq \frac{i}{i+1} < \frac{b_i(1 -
      p)}{a_i}$. By using this upper bound
    on~$x$ in~\eqref{eq:derivative_tipx} we get
    \begin{equation*}
      D t_{i, p}(x) > \ell_i(x)^{p - 1}\left(\frac{b_i (1 - p)}{\left(\frac{b_i(1 - p)}{a_i}\right)^p} - a_i
        \left(\frac{b_i(1 - p)}{a_i}\right)^{1 - p}\right)
      \stackrel{\eqref{eq:helper_derivative_point}}{=} 0,
    \end{equation*}
    which proves~\ref{item:k-increasing}.

    To prove~\ref{item:i-decreasing}, suppose that~$i > p^{(2)}$. This implies
    that~$i \geq 2$ and that~$(2pi - i)^2 > 4p + 1$, which implies
    that~$-i - 1 > -pi^2$. By adding~$i^2$ to both sides of the inequality we have
    that~$i^2 - i - 1 > i^2 - pi^2 = (1 - p)i^2$. Multiplying
    by~$\frac{i - 1}{i(i^2 - i - 1)}$ to both sides results
    in~$\frac{i - 1}{i} > \frac{i - 1}{i + 1} \cdot \frac{i (i + 1)}{i^2 - i - 1} \cdot(1 -
    p) = \frac{b_i (1 - p)}{a_i}$. Suppose also
    that~$x \in \left[\frac{i-1}{i}, \frac{i}{i+1}\right]$.
    Then~$x \geq \frac{i - 1}{i} > \frac{b_i(1 - p)}{a_i}$. By using this lower bound
    on~$x$ in~\eqref{eq:derivative_tipx} we get
    \begin{equation*}
      D t_{i, p}(x) < \ell_i(x)^{p - 1}\left(\frac{b_i (1 - p)}{\left(\frac{b_i(1 - p)}{a_i}\right)^p} - a_i
        \left(\frac{b_i(1 - p)}{a_i}\right)^{1 - p}\right)
      \stackrel{\eqref{eq:helper_derivative_point}}{=} 0,
    \end{equation*}
    which proves~\ref{item:i-decreasing} and completes the proof of the claim.
  \end{proof}

  Let~$k \in \mathbb{N}_{\ge 3}$,
  and~$p \in \left[ \frac{1}{k - 1}, \frac{k - 1}{k(k - 2)} \right)$ be fixed. We will
  first show that
  \begin{equation}
    \label{eq:ps-between-ks}
    k - 1 < p^{(1)} < p^{(2)} < k.
  \end{equation}

  For the leftmost inequality, the fact that~$p < \frac{k - 1}{k(k - 2)}$ implies
  that~$pk^2 - 2pk - k + 1 < 0$. Multiplying both sides of the inequality by~$4p$
  results in~$4p^2k^2 - 8p^2k - 4pk + 4p < 0$. We now add~$4p^2 + 1$ to both sides to
  get~$4p^2k^2 + 4p^2 + 1 - 8p^2k - 4pk + 4p = \left(2pk - 2p - 1\right)^2 < 4p^2 +
  1$, which after taking square roots from both sides implies
  that~$2pk - 2p < \sqrt{4p^2 + 1} + 1$. Dividing both sides by~$2p$ we
  obtain~$k - 1 < \frac{\sqrt{4p^2 + 1} + 1}{2p} = p^{(1)}$, as required.

  For the rightmost inequality, observe
  that~$p \geq \frac{1}{k - 1} = \frac{k + 1}{k^2 - 1} > \frac{k + 1}{k^2}$. This
  implies that~$pk^2 - k - 1 > 0$. Multiplying both sides of the inequality by~$4p$
  results in~$4p^2k^2 - 4pk - 4p > 0$. We now add~$4p + 1$ to both sides to
  get~$4p^2k^2 - 4pk + 1 = \left(2pk - 1\right)^2 > 4p + 1$, which after taking
  square roots from both sides implies that~$2pk > \sqrt{4p + 1} + 1$, and after
  dividing both sides by~$2p$ we obtain~$k > \frac{\sqrt{4p + 1} + 1}{2p} = p^{(2)}$,
  which completes the proof of~\eqref{eq:ps-between-ks}.

  We will now determine~$g(p)$ by using \eqref{eq:max-over-small-pieces}, and considering the
  cases~$i \le k - 1$ and~$i \geq k$ separately. In both cases, we analyze the
  sequence~$g_i(p)$, showing that it is non-decreasing in the former case, and
  non-increasing in the latter.

  For~$2 \le i \le k - 1$, we have that
  \begin{equation*}
    g_{i - 1}(p) \stackrel{\eqref{eq:ps-between-ks}\ref{item:k-increasing}}{=} t_{i-1, p}\left(\frac{i - 1}{i}\right)
    \stackrel{\eqref{eq:piece-intersection}}{=} t_{i, p} \left(\frac{i - 1}{i}\right) \leq g_i(p),
  \end{equation*}
  where the last inequality follows from the definition of~$g_i(p)$. Therefore
  \begin{align}
    \label{eq:first-subdomain}
    \max_{i = 1}^{k - 1} g_i(p) = g_{k - 1}(p) \stackrel{\eqref{eq:ps-between-ks}\ref{item:k-increasing}}{=} t_{k - 1,
    p}\left(\frac{k - 1}{k}\right).
  \end{align}
  For~$i \ge k + 1$, by the definition of~$g_{i - 1}(p)$ we have that
  \begin{equation*}
    g_{i - 1}(p) \ge t_{i - 1, p}\left(\frac{i - 1}{i}\right) \stackrel{\eqref{eq:piece-intersection}}{=} t_{i, p}\left(\frac{i - 1}{i}\right) \stackrel{\eqref{eq:ps-between-ks}\ref{item:i-decreasing}}{=} g_i(p),
  \end{equation*}
  and therefore
  \begin{align}
    \label{eq:second-subdomain}
    \max_{i \ge k} g_i(p) = g_{k}(p) \stackrel{\eqref{eq:ps-between-ks}\ref{item:i-decreasing}}{=} t_{k, p}\left(\frac{k -
    1}{k}\right) \stackrel{\eqref{eq:piece-intersection}}{=} t_{k - 1, p}\left(\frac{k - 1}{k}\right).
  \end{align}
  We now combine these results to obtain
  \begin{align}
    g(p)
    &= \max\left\{ \max_{i = 1}^{k - 1} g_i(p), \max_{i \ge k} g_i(p)
      \right\} & \quad\text{by~\eqref{eq:max-over-small-pieces}}\nonumber & \\
    &= t_{k - 1, p}\left(\frac{k - 1}{k}\right) \nonumber &
                                                            \quad\text{by~\eqref{eq:first-subdomain} and~\eqref{eq:second-subdomain}}\\
    &= \ell_{k - 1}\left(\frac{k - 1}{k}\right)^p \cdot \left(\frac{k - 1}{k}\right)^{1 - p}
      \nonumber & \quad\text{by definition}\\
    &= \ell\left(\frac{k - 1}{k}\right)^p \cdot \left(\frac{k - 1}{k}\right)^{1 - p} & \quad\text{by~\eqref{eq:ell-i-domain}} \nonumber.
  \end{align}
  This completes the proof.
\end{proof}

\section{Proof for Corollary~\ref{CORO:Lp-degree-expression-Lagrangian}}\label{APPENDIX:proof:CORO:Lp-degree-expression-Lagrangian}
\begin{corollary}\label{APPENDIX:CORO:Lp-degree-expression-Lagrangian}
    Let $r > t \ge 1$ be integers and $p > 1$ be a real number. 
    Suppose that $V_1 \cup \cdots \cup V_m = [n]$ is a partition and $\mathcal{H} = \mathcal{G}(V_1, \ldots, V_m)$ is a blowup of an $m$-vertex $r$-graph $\mathcal{G}$. Then for every $i \in [m]$ and $v\in V_i$, 
    \begin{align*}
        D_i L_{\mathcal{G},t,p}(x_1, \ldots, x_m) - o(1)
        \le \frac{d_{t,p,\mathcal{H}}(v)}{n^{t-1+p(r-t)}}
        \le D_i L_{\mathcal{G},t,p}(x_1, \ldots, x_m).  
    \end{align*}
    where $x_i \coloneqq |V_i|/n$ for $i\in [m]$. 
\end{corollary}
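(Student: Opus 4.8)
Proof proposal.

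The plan is to reduce everything to the identity
\[
  d_{\mathcal{H},t,p}(v)
  = L_{\mathcal{G},t,p}(\vec{a}) - L_{\mathcal{G},t,p}(\vec{a}-\vec{e}_i),
\]
where $\vec{a}\coloneqq(|V_1|,\dots,|V_m|)$, $\vec{e}_i$ is the $i$-th standard basis vector, and $v\in V_i$. This is immediate once one checks two things from the definitions. First, $\norm{\mathcal{G}(W_1,\dots,W_m)}_{t,p}=L_{\mathcal{G},t,p}(|W_1|,\dots,|W_m|)$ for all pairwise disjoint $W_1,\dots,W_m$: a $t$-set of positive degree in a blowup occupies $t$ distinct parts, there are $\prod_{j\in S}|W_j|$ such sets over a fixed pattern $S\in\partial_{r-t}\mathcal{G}$, and each has degree $\sum_{I\in L_{\mathcal{G}}(S)}\prod_{j\in I}|W_j|$, so the definition of $L_{\mathcal{G},t,p}$ reproduces $\norm{\cdot}_{t,p}$ exactly. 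Second, for $v\in V_i$ the $r$-graph $\mathcal{H}-v$ is precisely the blowup $\mathcal{G}(V_1,\dots,V_i\setminus\{v\},\dots,V_m)$. One can alternatively arrive at this same expression by expanding $d_{\mathcal{H},t,p}(v)$ through Lemma~\ref{LEMMA:Lp-degree-expression} and grouping the two sums according to the ``part pattern'' of each $(t-1)$- or $t$-set (the route signalled in the statement, and the one I would write up in the appendix); but the difference-of-Lagrangians form is the conceptual heart.

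Next I would exploit convexity in the single direction $\vec{e}_i$. The function $\phi(s)\coloneqq L_{\mathcal{G},t,p}(\vec{a}-s\vec{e}_i)$ on $s\in[0,1]$ is a sum of terms each of the shape $c\,(AX_i+B)^p$ or $c\,X_i(AX_i+B)^p$ with $c,A,B\ge 0$ and $p\ge 1$; a routine second-derivative computation shows each is convex and $C^1$ on $[0,\infty)$ (this is exactly where $p\ge 1$ is needed, and where Fact~\ref{FACT:inequality-a}, inequality~\eqref{equ:FACT:inequality-a-4}, $x_1^{p}-(x_1-x_2)^{p}\le p x_1^{p-1}x_2$, does the analogous work if one instead argues through Lemma~\ref{LEMMA:Lp-degree-expression}). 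Hence $\phi'$ is nondecreasing, and with $-\phi'(0)=D_i L_{\mathcal{G},t,p}(\vec{a})$ and $-\phi'(1)=D_i L_{\mathcal{G},t,p}(\vec{a}-\vec{e}_i)$, the formula $d_{\mathcal{H},t,p}(v)=\phi(0)-\phi(1)=-\int_0^1\phi'(s)\,ds$ gives at once
\[
  D_i L_{\mathcal{G},t,p}(\vec{a}-\vec{e}_i)\ \le\ d_{\mathcal{H},t,p}(v)\ \le\ D_i L_{\mathcal{G},t,p}(\vec{a}).
\]

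Finally I would pass to the normalized form using homogeneity and continuity. The map $D_i L_{\mathcal{G},t,p}$ is homogeneous of degree $t-1+p(r-t)$, so $D_i L_{\mathcal{G},t,p}(\vec{a})=n^{t-1+p(r-t)}\,D_i L_{\mathcal{G},t,p}(x_1,\dots,x_m)$, which is the upper bound. For the lower bound, write $\vec{a}-\vec{e}_i=(n-1)\vec{x}'$ with $\vec{x}'\in\Delta^{m-1}$ and $\lVert \vec{x}'-(x_1,\dots,x_m)\rVert_\infty=O(1/n)$; by homogeneity $D_i L_{\mathcal{G},t,p}(\vec{a}-\vec{e}_i)=(n-1)^{t-1+p(r-t)}\,D_i L_{\mathcal{G},t,p}(\vec{x}')$, and since $p\ge 1$ the function $D_i L_{\mathcal{G},t,p}$ is continuous on the compact simplex $\Delta^{m-1}$, hence uniformly continuous, so $D_i L_{\mathcal{G},t,p}(\vec{x}')=D_i L_{\mathcal{G},t,p}(x_1,\dots,x_m)+o(1)$; combined with $(n-1)^{t-1+p(r-t)}=(1+o(1))n^{t-1+p(r-t)}$ this yields the lower bound after dividing by $n^{t-1+p(r-t)}$. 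The only step that needs genuine care — the main obstacle — is precisely this error-term accounting: when $1\le p<2$ the increments of $t\mapsto t^{p-1}$ are not Lipschitz near $0$ and some parts $V_j$ may be tiny or empty, so a naive term-by-term estimate only produces an $O(n^{t-1+p(r-t)})$ gap; invoking (uniform) continuity of $D_i L_{\mathcal{G},t,p}$ on $\Delta^{m-1}$, available exactly because $p\ge 1$, is the clean way to upgrade this to $o(n^{t-1+p(r-t)})$.
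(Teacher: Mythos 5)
Your proof is correct, but it takes a genuinely different route from the paper. The paper expands $d_{\mathcal{H},t,p}(v)$ via Lemma~\ref{LEMMA:Lp-degree-expression}, rewrites the two sums in terms of the patterns of $\mathcal{G}$ using the blowup structure, and then linearizes $a^{p}-(a-b)^{p}\approx p\,a^{p-1}b$ with an error that is nonnegative by~\eqref{equ:FACT:inequality-a-4} and of size $o(n^{t-1+p(r-t)})$; both bounds then follow because the exact quantity is $D_iL_{\mathcal{G},t,p}(\vec{x})\,n^{t-1+p(r-t)}$ minus this one-sided error. You instead start from the exact identity $d_{\mathcal{H},t,p}(v)=L_{\mathcal{G},t,p}(\vec a)-L_{\mathcal{G},t,p}(\vec a-\vec e_i)$ (valid since $\norm{\mathcal{G}(W_1,\dots,W_m)}_{t,p}=L_{\mathcal{G},t,p}(|W_1|,\dots,|W_m|)$ and $\mathcal{H}-v$ is again a blowup), use convexity of $L_{\mathcal{G},t,p}$ along the direction $\vec e_i$ (each term is $c(AX_i+B)^p$ or linear in $X_i$, with $c,A,B\ge0$ and $p\ge1$) to get the error-free sandwich $D_iL_{\mathcal{G},t,p}(\vec a-\vec e_i)\le d_{\mathcal{H},t,p}(v)\le D_iL_{\mathcal{G},t,p}(\vec a)$, and then convert via homogeneity of degree $t-1+p(r-t)$ plus uniform continuity of $D_iL_{\mathcal{G},t,p}$ on $\Delta^{m-1}$. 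Your approach buys a cleaner statement (a discrete mean-value inequality with no Taylor remainder, hence no delicate second-order bookkeeping when $1<p<2$ or when links are sparse) and makes the uniformity of the $o(1)$ over all partitions transparent by compactness; the paper's approach stays inside its Lemma~\ref{LEMMA:Lp-degree-expression} framework, which it uses repeatedly elsewhere, and exhibits the error explicitly with a sign. Both arguments are complete for the statement as given.
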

\begin{proof}[Proof of Corollary~\ref{APPENDIX:CORO:Lp-degree-expression-Lagrangian}]
    For simplicity, let us assume that $V(\mathcal{G}) = [m]$. 
    By relabelling the sets $V_1, \ldots, V_m$ we may assume that $v\in V_1$. 
    First, it follows from the definition that 
    \begin{align*}
        & \quad D_1 L_{\mathcal{G},t,p}(x_1, \ldots, x_m) \\
        & = \sum_{S \in \partial_{r-t}L_{\mathcal{G}}(1)} 
            x_{S} \left(\sum_{I\in L_{\mathcal{G}}(S\cup\{v\})} x_{I} \right)^p 
            + \sum_{T \in \partial_{r-t-1}L_{\mathcal{G}}(1)} 
            x_T \cdot p \left(\sum_{I\in L_{\mathcal{G}}(T)} x_{I}\right)^{p-1} \frac{\sum_{I\in L_{\mathcal{G}}(T), 1 \in I} x_{I}}{x_1} \\
        & = \sum_{S \in \partial_{r-t}L_{\mathcal{G}}(1)} 
            x_{S} \left(\sum_{I\in L_{\mathcal{G}}(S\cup\{v\})} x_{I} \right)^p 
            + \sum_{T \in \partial_{r-t-1}L_{\mathcal{G}}(1)} 
            x_T \cdot p \left(\sum_{I\in L_{\mathcal{G}}(T)} x_{I}\right)^{p-1} \sum_{J\in L_{\mathcal{G}}(T\cup \{1\})} x_{J}.  
    \end{align*}
    Since $\mathcal{H}$ is a blowup of $\mathcal{G}$, 
    \begin{align*}
        \sum_{S \in \partial_{r-t}L_{\mathcal{H}}(v)} 
            d_{\mathcal{H}}^{p}(S\cup\{v\})
        & = \sum_{S \in \partial_{r-t}L_{\mathcal{G}}(1)} 
            x_{S} \cdot n^{t-1} \left(\sum_{I\in L_{\mathcal{G}}(S\cup\{v\})} x_{I}  \cdot  n^{r-t}\right)^p \\
        & = \sum_{S \in \partial_{r-t}L_{\mathcal{G}}(1)} 
            x_{S} \left(\sum_{I\in L_{\mathcal{G}}(S\cup\{v\})} x_{I} \right)^p n^{t-1+p(r-t)}, 
    \end{align*}
    and 
    \begin{align*}
        & \quad \sum_{T \in \partial_{r-t-1}L_{\mathcal{H}}(v)} 
            \left( d_{\mathcal{H}}^{p}(T) - \left(d_{\mathcal{H}}(T) - d_{\mathcal{H}}(T\cup \{v\})\right)^{p} \right) \\
        & = \sum_{T \in \partial_{r-t-1}L_{\mathcal{G}}(1)} 
            x_T  \cdot n^{t}\left( \left(\sum_{I\in L_{\mathcal{G}}(T)} x_{I}  \cdot  n^{r-t}\right)^p - \left(\sum_{I\in L_{\mathcal{G}}(T)} x_{I}  \cdot  n^{r-t} 
            - \sum_{J\in L_{\mathcal{G}}(T\cup \{1\})} x_{J}  \cdot  n^{r-t-1} \right)^{p} \right) \\
        & = \sum_{T \in \partial_{r-t-1}L_{\mathcal{G}}(1)} 
            x_T \left(p \left(\sum_{I\in L_{\mathcal{G}}(T)} x_{I}\right)^{p-1} \sum_{J\in L_{\mathcal{G}}(T\cup \{1\})} x_{J} \right) n^{t-1+p(r-t)} - o(n^{t-1+p(r-t)}),   
    \end{align*}
    where the error term $o(n^{t-1+p(r-t)})$ can be assumed to be non-negative due to~\eqref{equ:FACT:inequality-a-4}. 
    Combining with Lemma~\ref{LEMMA:Lp-degree-expression}, we obtain 
    \begin{align*}
        d_{t,p,\mathcal{H}}(v) 
        & = \sum_{S \in \partial_{r-t}L_{\mathcal{H}}(v)} 
            d_{\mathcal{H}}^{p}(S\cup \{v\})
            + \sum_{T \in \partial_{r-t-1}L_{\mathcal{H}}(v)} 
            \left( d_{\mathcal{H}}^{p}(T) - \left(d_{\mathcal{H}}(T) - d_{\mathcal{H}}(T\cup \{v\})\right)^{p} \right) \\
        & = D_1 L_{\mathcal{G},t,p}(x_1, \ldots, x_m) \cdot n^{t-1+p(r-t)} - o(n^{t-1+p(r-t)}), 
    \end{align*}
    completing the proof of Corollary~\ref{CORO:Lp-degree-expression-Lagrangian}. 
\end{proof}
\end{appendix}
\end{document}